\DeclareMathAlphabet{\mathcalligra}{T1}{calligra}{m}{n}
\newcommand{\specialcell}[1]{\ifmeasuring@#1\else\omit$\displaystyle#1$\ignorespaces\fi}
\definecolor{join}{RGB}{0,77,178}
\definecolor{darkblue}{rgb}{0.0,0,0.7} 
\newcommand{\darkblue}{\color{darkblue}} 
\definecolor{darkred}{rgb}{0.7,0,0} 
 \definecolor{lightgrey}{rgb}{0.7,0.7,0.7}
\definecolor{meet}{RGB}{255,205,111}
\definecolor{join}{RGB}{0,77,178}
\newcommand{\textcyr}[1]{{\fontencoding{OT2}\fontfamily{wncyr}\fontseries{m}\fontshape{n}\selectfont #1}}\newcommand{\Sha}{{\mbox{\textcyr{Sh}}}}
\newtheorem{theorem}{Theorem}[section]
\newtheorem{proposition}[theorem]{Proposition}
\newtheorem{corollary}[theorem]{Corollary}
\newtheorem{lemma}[theorem]{Lemma}
\theoremstyle{definition}
\newtheorem{definition}[theorem]{Definition}
\newtheorem{example}[theorem]{Example}
\Crefname{conjecture}{Conjecture}{Conjectures}
\newcommand{\defn}[1]{\emph{\darkblue #1}}
\newcommand{\rr}{\mathcal{r}}
\newcommand{\calw}{\mathcal{w}}
\newcommand{\pop}{\mathsf{Pop}}
\newcommand{\w}{\mathbf{w}}
\newcommand{\s}{\mathbf{s}}
\newcommand{\des}{\mathrm{des}}
\newcommand{\Pop}{\mathsf{Pop}}
\newcommand{\Snap}{\mathsf{Snap}}
\newcommand{\Crackle}{\mathsf{Crackle}}
\newcommand{\Pow}{\mathsf{Pow}}
\newcommand{\iotab}{\overline{\iota}}
\newcommand{\HH}{\mathcal{H}}
\newcommand{\cov}{\mathrm{cov}}
\newcommand{\inv}{\mathrm{inv}}
\newcommand{\Inv}{\mathrm{Inv}}
\newcommand{\Sort}{\mathrm{Sort}}
\newcommand{\NC}{\mathrm{NC}}
\newcommand{\covsha}{\cov_{\text{\Sha}}}
\newcommand{\TTsha}{\mathcal L_{\text{\Sha}}}
\newcommand{\len}{\mathrm{len}}
\newcommand{\lift}{\mathrm{lift}}
\newcommand{\id}{\mathbbm{1}}
\renewcommand{\tt}{\ell}
\newcommand{\ttt}{\mathcal{l}}
\newcommand{\bb}{\delta}
\newcommand{\TT}{\mathcal{L}}
\renewcommand{\AA}{\mathcal{A}}
\renewcommand{\HH}{\mathcal{H}}
\newcommand{\RA}{\mathcal{R}}
\renewcommand{\t}{\mathbf{t}}
\newcommand{\PP}{\mathrm{Sal}}
\newcommand{\Sal}{\mathrm{Sal}}
\newcommand{\gal}{\mathrm{gal}}
\newcommand{\Shard}{\mathrm{Shard}}
\newcommand{\Weak}{\mathrm{Weak}}
\newcommand{\CC}{\mathbb{C}}
\newcommand{\RR}{\mathcal{R}}
\newcommand{\galb}{\delta}
\newcommand{\BB}{\mathbf{B}}
\newcommand{\PPP}{\mathbf{P}}
\title[Pop, Crackle, Snap (and Pow): Some Facets of Shards]{Pop, Crackle, Snap (and Pow): \\ Some Facets of Shards}
\author[C.~Defant]{Colin Defant}
\address[C.~Defant]{Massachusetts Institute of Technology}
\email{colindefant@gmail.com}
\author[N.~Williams]{Nathan Williams}
\address[N.~Williams]{University of Texas at Dallas}
\email{nathan.williams1@utdallas.edu}
\begin{document}

\begin{abstract}  
Reading cut the hyperplanes in a real central arrangement $\mathcal H$ into pieces called \emph{shards}, which reflect order-theoretic properties of the arrangement.  We show that shards have a natural interpretation as certain generators of the fundamental group of the complement of the complexification of $\HH$.  Taking only positive expressions in these generators yields a new poset that we call the \emph{pure shard monoid}.

When $\mathcal H$ is simplicial, its poset of regions is a lattice, so it comes equipped with a pop-stack sorting operator $\mathsf{Pop}$.  In this case, we use $\mathsf{Pop}$ to define an embedding $\mathsf{Crackle}$ of Reading's shard intersection order into the pure shard monoid.  When $\mathcal H$ is the reflection arrangement of a finite Coxeter group, we also define a poset embedding $\mathsf{Snap}$ of the shard intersection order into the positive braid monoid; in this case, our three maps are related by $\mathsf{Snap}=\mathsf{Crackle} \cdot \mathsf{Pop}$.
\end{abstract}

\maketitle

\section{Introduction}
\label{sec:intro}

\subsection{Introduction \texorpdfstring{(\Cref{sec:intro})}{}}
Throughout this paper, we let $\mathcal H$ be a finite central irreducible real hyperplane arrangement. Salvetti introduced a certain CW complex associated to $\mathcal H$ and used it to provide a presentation of the fundamental group of the complement of the complexification of $\mathcal H$. We prove that Salvetti's generating set is parameterized by shards, which Reading introduced and used to define his shard intersection order in the case when $\mathcal H$ is simplicial. We introduce the \emph{pure shard monoid}, which is the monoid generated by Salvetti's generators; it comes equipped with a natural partial order that we believe deserves further attention. In the case when $\HH$ is an arrangement of rank $2$ with $m$ hyperplanes, we prove that the interval from the identity element to the \emph{full twist} in the pure shard monoid is a planar lattice with rank generating function $1+\left(\sum_{k=1}^{m-1} \left(2\binom{m}{k}-2\right)q^k \right)+q^m$ and with $m 2^{m-2}$ maximal chains. 

We then assume $\HH$ is simplicial. In this case, there is a known characterization of the shard intersection order involving the pop-stack sorting operator $\mathsf{Pop}$ on the poset of regions of $\mathcal H$. We introduce a new map $\mathsf{Crackle}$, and we prove that it is a poset embedding of the shard intersection order into the pure shard monoid. 

Next, we specialize further to the case when $\mathcal H$ is the reflection arrangement of a finite Coxeter group $W$. We introduce another map $\mathsf{Snap}$, and we prove that it is a poset embedding of the shard intersection order on $W$ into the weak order on the positive braid monoid. The restriction of $\mathsf{Snap}$ to the set $\mathrm{Sort}(W,c)$ of $c$-sortable elements of $W$ originally arose in connection with Deodhar decompositions of noncrossing Catalan varieties. In this setting, we obtain as a corollary that $\mathsf{Snap}$ restricts to a poset embedding of the shard intersection order on $\mathrm{Sort}(W,c)$---which is isomorphic to the noncrossing partition lattice of $W$---into the weak order on the positive braid monoid. 

Finally, we turn back to arbitrary finite central irreducible real arrangements and define a fourth map $\Pow$. We prove that $\Pow$ is a poset embedding of the poset of regions of $\HH$ into the pure shard monoid.

\subsection{The Salvetti Complex \texorpdfstring{ (\Cref{sec:salvetti})}{}}\label{subsec:shards_and_loops}
We write $\RR$ for the set of regions (connected components of the complement) of $\HH$, and we fix a base region $B\in\RR$ and a point $x_B \in B$.  
Write $\CC^n \setminus \HH_\CC$ for the complexified hyperplane complement and $\pi_1(\CC^n \setminus \HH_\CC, x_B)$ for its fundamental group with base point $x_B$.    Let $\Weak(\HH,B):=(\RR,\leq)$ be the usual poset of regions of $\HH$ with respect to the base region $B$.  

Following~\cite{salvetti1987topology,deligne1972immeubles}, we construct a CW complex $\PP(\HH)$ by gluing together oriented dual zonotopes for $\HH$ along compatible faces---one zonotope for each choice of base region $B$, oriented from $B$ to $-B$.  The resulting CW complex $\PP(\HH)$ has the same fundamental group as the complexified hyperplane complement: \[\pi_1(\PP(\HH),B) = \pi_1(\CC^n \setminus \HH_\CC,x_B).\]  The 1-skeleton of $\Sal(\HH)$ is given by orienting all edges of $\Weak(\HH,B)$ away from $B$, and then for each edge $e$, adding in a reversed edge $e^*$. An illustration is given in~\Cref{fig:rank2}.

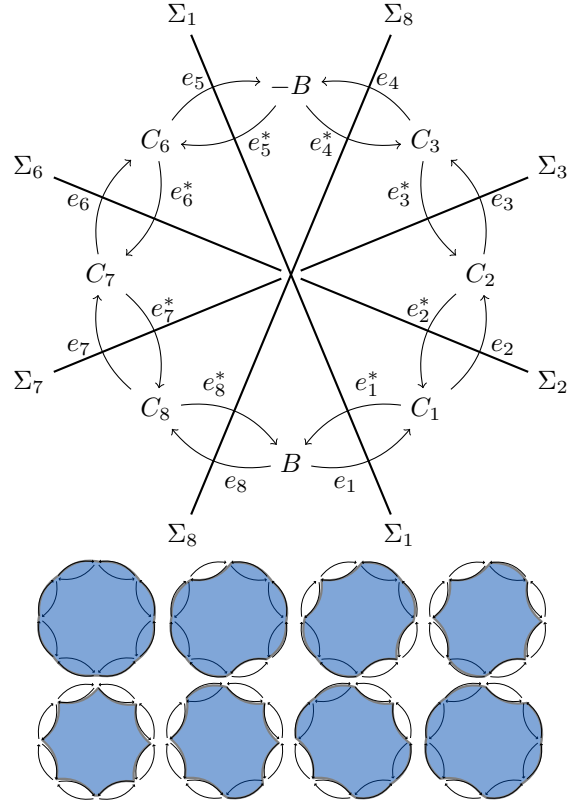
\begin{figure}[htbp]
\raisebox{-0.5\height}{\begin{tikzpicture}[scale=2.5,->]
\node (0) at (0,0) {};
\node (1) at (1,0) {$C_2$};
\node (2) at (1.38,.57) {$\Sigma_3$};
\node (3) at (.71,.71) {$C_3$};
\node (4) at (.57,1.38) {$\Sigma_8$};
\node (5) at (0,1) {$-B$};
\node (6) at (-.57,1.38) {$\Sigma_1$};
\node (7) at (-.71,.71) {$C_6$};
\node (8) at (-1.38,.57) {$\Sigma_6$};
\node (9) at (-1,0) {$C_7$};
\node (10) at (-1.38,-.57) {$\Sigma_7$};
\node (11) at (-.71,-.71) {$C_8$};
\node (12) at (-.57,-1.38) {$\Sigma_8$};
\node (13) at (0,-1) {$B$};
\node (14) at (.57,-1.38) {$\Sigma_1$};
\node (15) at (.71,-.71) {$C_1$};
\node (16) at (1.38,-.57) {$\Sigma_2$};
\draw[-,thick] (12) to (4);
\draw[-,thick] (14) to (6);
\draw[-,thick,shorten >=.1pt] (10) to (0);
\draw[-,thick,shorten >=.1pt] (8) to (0);
\draw[-,thick,shorten >=.1pt] (2) to (0);
\draw[-,thick,shorten >=.1pt] (16) to (0);
\path (13) edge[bend right] node [pos=0.3, below] {$e_1$} (15);
\path (15) edge[bend right] node [pos=0.3, above] {$e_1^*$} (13);
\path (15) edge[bend right] node [pos=0.5, right] {$e_2$} (1);
\path (1) edge[bend right] node [pos=0.2,left] {$e_2^*$} (15);
\path (1) edge[bend right] node [pos=0.5, right] {$e_3$} (3);
\path (3) edge[bend right] node [pos=0.3, left] {$e_3^*$} (1);
\path (3) edge[bend right] node [pos=0.3,above] {$e_4$} (5);
\path (5) edge[bend right] node [pos=0.2,below] {$e_4^*$} (3);
\path (7) edge[bend left] node [pos=0.3,above] {$e_5$} (5);
\path (5) edge[bend left] node [pos=0.2,below] {$e_5^*$} (7);
\path (9) edge[bend left] node [pos=0.5,left] {$e_6$} (7);
\path (7) edge[bend left] node [pos=0.3,right] {$e_6^*$} (9);
\path (11) edge[bend left] node [pos=0.5,left] {$e_7$} (9);
\path (9) edge[bend left] node [pos=0.2,right] {$e_7^*$} (11);
\path (13) edge[bend left] node [pos=0.3,below] {$e_8$} (11);
\path (11) edge[bend left] node [pos=0.3,above] {$e_8^*$} (13);
\end{tikzpicture}}

\scalebox{0.3}{\begin{tikzpicture}[scale=2.5,->]
\node (0) at (0,0) {};
\node (1) at (1,0) {};
\node (2) at (.92,.38) {};
\node (3) at (.71,.71) {};
\node (4) at (.38,.92) {};
\node (5) at (0,1) {};
\node (6) at (-.38,.92) {};
\node (7) at (-.71,.71) {};
\node (8) at (-.92,.38) {};
\node (9) at (-1,0) {};
\node (10) at (-.92,-.38) {};
\node (11) at (-.71,-.71) {};
\node (12) at (-.38,-.92) {};
\node (13) at (0,-1) {};
\node (14) at (.38,-.92) {};
\node (15) at (.71,-.71) {};
\node (16) at (.92,-.38) {};

\path (13) edge[bend right]  (15);
\path (15) edge[bend right]   (13);
\path (15) edge[bend right]   (1);
\path (1) edge[bend right]   (15);
\path (1) edge[bend right]  (3);
\path (3) edge[bend right]  (1);
\path (3) edge[bend right]  (5);
\path (5) edge[bend right]  (3);
\path (7) edge[bend left]  (5);
\path (5) edge[bend left]  (7);
\path (9) edge[bend left]  (7);
\path (7) edge[bend left]  (9);
\path (11) edge[bend left] (9);
\path (9) edge[bend left]  (11);
\path (13) edge[bend left]  (11);
\path (11) edge[bend left] (13);
\fill[color=join,line width=1mm,draw=black,line join=round,opacity=.5] 
                   (13)  to [bend right] (15) 
                          to [bend right] (1)
                          to [bend right] (3)
						  to [bend right] (5)
						  to [bend right] (7)
						  to [bend right] (9)
						  to [bend right] (11)
						  to [bend right] (13) --cycle;
\end{tikzpicture}}
\scalebox{0.3}{\begin{tikzpicture}[scale=2.5,->]
\node (0) at (0,0) {};
\node (1) at (1,0) {};
\node (2) at (.92,.38) {};
\node (3) at (.71,.71) {};
\node (4) at (.38,.92) {};
\node (5) at (0,1) {};
\node (6) at (-.38,.92) {};
\node (7) at (-.71,.71) {};
\node (8) at (-.92,.38) {};
\node (9) at (-1,0) {};
\node (10) at (-.92,-.38) {};
\node (11) at (-.71,-.71) {};
\node (12) at (-.38,-.92) {};
\node (13) at (0,-1) {};
\node (14) at (.38,-.92) {};
\node (15) at (.71,-.71) {};
\node (16) at (.92,-.38) {};

\path (13) edge[bend right]  (15);
\path (15) edge[bend right]  (13);
\path (15) edge[bend right]  (1);
\path (1) edge[bend right] (15);
\path (1) edge[bend right]  (3);
\path (3) edge[bend right]  (1);
\path (3) edge[bend right]  (5);
\path (5) edge[bend right]  (3);
\path (7) edge[bend left]  (5);
\path (5) edge[bend left]  (7);
\path (9) edge[bend left]  (7);
\path (7) edge[bend left]  (9);
\path (11) edge[bend left]  (9);
\path (9) edge[bend left] (11);
\path (13) edge[bend left]  (11);
\path (11) edge[bend left]  (13);
\fill[color=join,line width=1mm,draw=black,line join=round,opacity=.5] 
                   (13)  to [bend left] (15) 
                          to [bend right] (1)
                          to [bend right] (3)
              to [bend right] (5)
              to [bend left] (7)
              to [bend right] (9)
              to [bend right] (11)
              to [bend right] (13) --cycle;
\end{tikzpicture}}
\scalebox{0.3}{\begin{tikzpicture}[scale=2.5,->]
\node (0) at (0,0) {};
\node (1) at (1,0) {};
\node (2) at (.92,.38) {};
\node (3) at (.71,.71) {};
\node (4) at (.38,.92) {};
\node (5) at (0,1) {};
\node (6) at (-.38,.92) {};
\node (7) at (-.71,.71) {};
\node (8) at (-.92,.38) {};
\node (9) at (-1,0) {};
\node (10) at (-.92,-.38) {};
\node (11) at (-.71,-.71) {};
\node (12) at (-.38,-.92) {};
\node (13) at (0,-1) {};
\node (14) at (.38,-.92) {};
\node (15) at (.71,-.71) {};
\node (16) at (.92,-.38) {};

\path (13) edge[bend right](15);
\path (15) edge[bend right](13);
\path (15) edge[bend right](1);
\path (1) edge[bend right](15);
\path (1) edge[bend right](3);
\path (3) edge[bend right](1);
\path (3) edge[bend right](5);
\path (5) edge[bend right](3);
\path (7) edge[bend left](5);
\path (5) edge[bend left](7);
\path (9) edge[bend left](7);
\path (7) edge[bend left](9);
\path (11) edge[bend left](9);
\path (9) edge[bend left](11);
\path (13) edge[bend left](11);
\path (11) edge[bend left](13);
\fill[color=join,line width=1mm,draw=black,line join=round,opacity=.5] 
                   (13)  to [bend left] (15) 
                          to [bend left] (1)
                          to [bend right] (3)
              to [bend right] (5)
              to [bend left] (7)
              to [bend left] (9)
              to [bend right] (11)
              to [bend right] (13) --cycle;
\end{tikzpicture}}
\scalebox{0.3}{\begin{tikzpicture}[scale=2.5,->]
\node (0) at (0,0) {};
\node (1) at (1,0) {};
\node (2) at (.92,.38) {};
\node (3) at (.71,.71) {};
\node (4) at (.38,.92) {};
\node (5) at (0,1) {};
\node (6) at (-.38,.92) {};
\node (7) at (-.71,.71) {};
\node (8) at (-.92,.38) {};
\node (9) at (-1,0) {};
\node (10) at (-.92,-.38) {};
\node (11) at (-.71,-.71) {};
\node (12) at (-.38,-.92) {};
\node (13) at (0,-1) {};
\node (14) at (.38,-.92) {};
\node (15) at (.71,-.71) {};
\node (16) at (.92,-.38) {};

\path (13) edge[bend right](15);
\path (15) edge[bend right](13);
\path (15) edge[bend right](1);
\path (1) edge[bend right](15);
\path (1) edge[bend right](3);
\path (3) edge[bend right](1);
\path (3) edge[bend right](5);
\path (5) edge[bend right](3);
\path (7) edge[bend left](5);
\path (5) edge[bend left](7);
\path (9) edge[bend left](7);
\path (7) edge[bend left](9);
\path (11) edge[bend left](9);
\path (9) edge[bend left](11);
\path (13) edge[bend left](11);
\path (11) edge[bend left](13);
\fill[color=join,line width=1mm,draw=black,line join=round,opacity=.5] 
                   (13)  to [bend left] (15) 
                          to [bend left] (1)
                          to [bend left] (3)
              to [bend right] (5)
              to [bend left] (7)
              to [bend left] (9)
              to [bend left] (11)
              to [bend right] (13) --cycle;
\end{tikzpicture}}

\scalebox{0.3}{\begin{tikzpicture}[scale=2.5,->]
\node (0) at (0,0) {};
\node (1) at (1,0) {};
\node (2) at (.92,.38) {};
\node (3) at (.71,.71) {};
\node (4) at (.38,.92) {};
\node (5) at (0,1) {};
\node (6) at (-.38,.92) {};
\node (7) at (-.71,.71) {};
\node (8) at (-.92,.38) {};
\node (9) at (-1,0) {};
\node (10) at (-.92,-.38) {};
\node (11) at (-.71,-.71) {};
\node (12) at (-.38,-.92) {};
\node (13) at (0,-1) {};
\node (14) at (.38,-.92) {};
\node (15) at (.71,-.71) {};
\node (16) at (.92,-.38) {};

\path (13) edge[bend right](15);
\path (15) edge[bend right](13);
\path (15) edge[bend right](1);
\path (1) edge[bend right](15);
\path (1) edge[bend right](3);
\path (3) edge[bend right](1);
\path (3) edge[bend right](5);
\path (5) edge[bend right](3);
\path (7) edge[bend left](5);
\path (5) edge[bend left](7);
\path (9) edge[bend left](7);
\path (7) edge[bend left](9);
\path (11) edge[bend left](9);
\path (9) edge[bend left](11);
\path (13) edge[bend left](11);
\path (11) edge[bend left](13);
\fill[color=join,line width=1mm,draw=black,line join=round,opacity=.5] 
                   (13)  to [bend left] (15) 
                          to [bend left] (1)
                          to [bend left] (3)
              to [bend left] (5)
              to [bend left] (7)
              to [bend left] (9)
              to [bend left] (11)
              to [bend left] (13) --cycle;
\end{tikzpicture}}
\scalebox{0.3}{\begin{tikzpicture}[scale=2.5,->]
\node (0) at (0,0) {};
\node (1) at (1,0) {};
\node (2) at (.92,.38) {};
\node (3) at (.71,.71) {};
\node (4) at (.38,.92) {};
\node (5) at (0,1) {};
\node (6) at (-.38,.92) {};
\node (7) at (-.71,.71) {};
\node (8) at (-.92,.38) {};
\node (9) at (-1,0) {};
\node (10) at (-.92,-.38) {};
\node (11) at (-.71,-.71) {};
\node (12) at (-.38,-.92) {};
\node (13) at (0,-1) {};
\node (14) at (.38,-.92) {};
\node (15) at (.71,-.71) {};
\node (16) at (.92,-.38) {};

\path (13) edge[bend right](15);
\path (15) edge[bend right](13);
\path (15) edge[bend right](1);
\path (1) edge[bend right](15);
\path (1) edge[bend right](3);
\path (3) edge[bend right](1);
\path (3) edge[bend right](5);
\path (5) edge[bend right](3);
\path (7) edge[bend left](5);
\path (5) edge[bend left](7);
\path (9) edge[bend left](7);
\path (7) edge[bend left](9);
\path (11) edge[bend left](9);
\path (9) edge[bend left](11);
\path (13) edge[bend left](11);
\path (11) edge[bend left](13);
\fill[color=join,line width=1mm,draw=black,line join=round,opacity=.5] 
                   (13)  to [bend right] (15) 
                          to [bend left] (1)
                          to [bend left] (3)
              to [bend left] (5)
              to [bend right] (7)
              to [bend left] (9)
              to [bend left] (11)
              to [bend left] (13) --cycle;
\end{tikzpicture}}
\scalebox{0.3}{\begin{tikzpicture}[scale=2.5,->]
\node (0) at (0,0) {};
\node (1) at (1,0) {};
\node (2) at (.92,.38) {};
\node (3) at (.71,.71) {};
\node (4) at (.38,.92) {};
\node (5) at (0,1) {};
\node (6) at (-.38,.92) {};
\node (7) at (-.71,.71) {};
\node (8) at (-.92,.38) {};
\node (9) at (-1,0) {};
\node (10) at (-.92,-.38) {};
\node (11) at (-.71,-.71) {};
\node (12) at (-.38,-.92) {};
\node (13) at (0,-1) {};
\node (14) at (.38,-.92) {};
\node (15) at (.71,-.71) {};
\node (16) at (.92,-.38) {};

\path (13) edge[bend right](15);
\path (15) edge[bend right](13);
\path (15) edge[bend right](1);
\path (1) edge[bend right](15);
\path (1) edge[bend right](3);
\path (3) edge[bend right](1);
\path (3) edge[bend right](5);
\path (5) edge[bend right](3);
\path (7) edge[bend left](5);
\path (5) edge[bend left](7);
\path (9) edge[bend left](7);
\path (7) edge[bend left](9);
\path (11) edge[bend left](9);
\path (9) edge[bend left](11);
\path (13) edge[bend left](11);
\path (11) edge[bend left](13);
\fill[color=join,line width=1mm,draw=black,line join=round,opacity=.5] 
                   (13)  to [bend right] (15) 
                          to [bend right] (1)
                          to [bend left] (3)
              to [bend left] (5)
              to [bend right] (7)
              to [bend right] (9)
              to [bend left] (11)
              to [bend left] (13) --cycle;
\end{tikzpicture}}
\scalebox{0.3}{\begin{tikzpicture}[scale=2.5,->]
\node (0) at (0,0) {};
\node (1) at (1,0) {};
\node (2) at (.92,.38) {};
\node (3) at (.71,.71) {};
\node (4) at (.38,.92) {};
\node (5) at (0,1) {};
\node (6) at (-.38,.92) {};
\node (7) at (-.71,.71) {};
\node (8) at (-.92,.38) {};
\node (9) at (-1,0) {};
\node (10) at (-.92,-.38) {};
\node (11) at (-.71,-.71) {};
\node (12) at (-.38,-.92) {};
\node (13) at (0,-1) {};
\node (14) at (.38,-.92) {};
\node (15) at (.71,-.71) {};
\node (16) at (.92,-.38) {};

\path (13) edge[bend right](15);
\path (15) edge[bend right](13);
\path (15) edge[bend right](1);
\path (1) edge[bend right](15);
\path (1) edge[bend right](3);
\path (3) edge[bend right](1);
\path (3) edge[bend right](5);
\path (5) edge[bend right](3);
\path (7) edge[bend left](5);
\path (5) edge[bend left](7);
\path (9) edge[bend left](7);
\path (7) edge[bend left](9);
\path (11) edge[bend left](9);
\path (9) edge[bend left](11);
\path (13) edge[bend left](11);
\path (11) edge[bend left](13);
\fill[color=join,line width=1mm,draw=black,line join=round,opacity=.5] 
                   (13)  to [bend right] (15) 
                          to [bend right] (1)
                          to [bend right] (3)
              to [bend left] (5)
              to [bend right] (7)
              to [bend right] (9)
              to [bend right] (11)
              to [bend left] (13) --cycle;
\end{tikzpicture}}

\caption{{\it Top:} the hyperplane arrangement for the dihedral group $I_2(4)$ has eight regions, and its four hyperplanes are cut into six shards.  {\it Bottom:} the eight 2-cells of $\Sal(\HH)$, indicated in blue.  One 2-cell is attached for each of the eight homotopies $e_1e_2e_3e_4 \cong e_8e_7e_6e_5, e_2e_3e_4e_5^*\cong e_1^*e_8e_7e_6, \ldots, e_8^*e_1e_2e_3 \cong e_7e_6e_5e_4^*$.}
\label{fig:rank2}
\end{figure}

Given regions $D,D'\in\RR$, we fix a positive minimal gallery $\gal(D',D)$ from $D'$ to $D$ in $\Sal(\HH)$; any two such galleries from $D'$ to $D$ are homotopic. If $e$ is the edge $C' \xrightarrow{e} C$, we define the corresponding loop $\tt_e\in\pi_1(\Sal(\HH),B)$ by
\begin{equation}\label{eq:tt}
\tt_e := \gal(B,C') \cdot e e^*\cdot \gal(B,C')^{-1}\in\pi_1(\Sal(\HH),B).
\end{equation}
Because of homotopies, this definition of the loop does not depend on the choice of the gallery. Write $\TT_{\mathrm{edge}}=\TT_{\mathrm{edge}}(\HH,B)$ for the set of all such loops $\tt_e$. By definition, the group $\pi_1(\PP(\HH),B)$ is generated by $\TT_{\mathrm{edge}}$.

\subsection{Shards \texorpdfstring{(\Cref{sec:shards})}{}}
\emph{Shards} are certain closed polyhedral subsets of the hyperplanes in $\HH$ that were introduced by Reading in \cite{reading2003order}; we write $\Sha(\HH,B)$ for the set of shards.   Each cover relation $C'\lessdot C$ in $\Weak(\HH,B)$ can be labeled by a shard $\Sigma(C'\lessdot C)$, which is the unique shard separating the region $C'$ from the region $C$; in this case, we call $\Sigma(C'\lessdot C)$ a \defn{lower shard} of $C$. Let $\covsha(C)$ be the set of lower shards of $C$.

Now assume $\HH$ is simplicial. Then $\Weak(\HH,B)$ is a semidistributive lattice~\cite{bjorner1990hyperplane}, and the set of shards forms an elegant geometric realization of the set of join-irreducible elements. Furthermore, shard intersections encode the canonical join representations of $\Weak(\HH,B)$. The map $C\mapsto \bigcap\covsha(C)$ defines a bijection from $\RR$ to the set of arbitrary intersections of shards. In~\cite{reading2011noncrossing}, Reading introduced another poset $\Shard(\HH,B):=(\RR,\preceq)$ called the \defn{shard intersection order}, which is defined by \[C\preceq D\quad\text{if and only if}\quad \bigcap \covsha(C) \supseteq\bigcap \covsha(D).\]

As with $\Weak(\HH,B)$, the poset $\Shard(\HH,B)$ is a lattice---but while $\Weak(\HH,B)$ is ``tall and slender'' (with height equal to the number of hyperplanes and with the number of atoms equal to the dimension), $\Shard(\HH,B)$ is ``short and wide'' (with height equal to dimension and with the number of atoms equal to the number of shards). When $\HH$ is the reflection arrangement of a finite Coxeter group $W$, the relationship between noncrossing partitions and sortable elements allowed Reading to embed the $W$-noncrossing partition lattice into the shard intersection order, thereby giving a uniform proof that the noncrossing partition lattice is indeed a lattice.   An example is illustrated in~\Cref{fig:shard}.

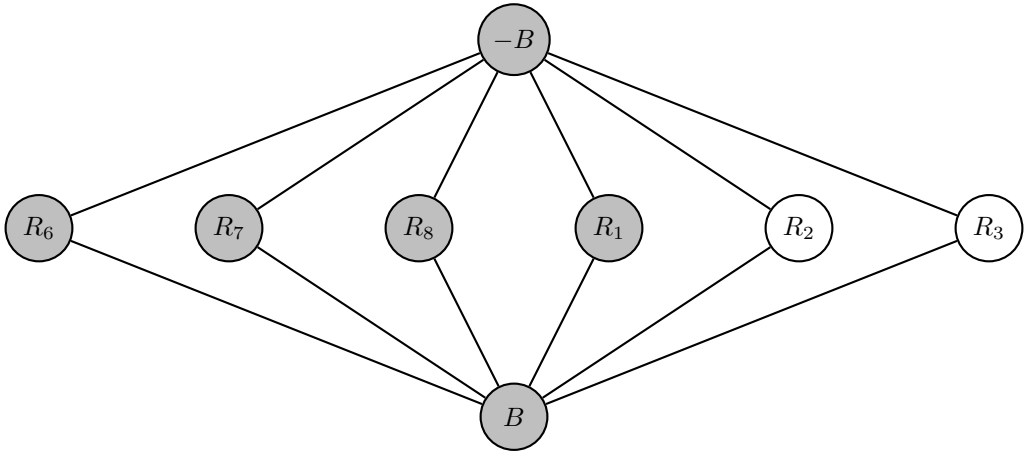
\begin{figure}[htbp]
\begin{tikzpicture}[scale=2.5]
\node[circle,draw,thick,minimum size=2.5em,thick,fill=lightgray] (B) at (0,0) {$B$};
\node[circle,draw,thick,minimum size=2.5em,thick,fill=lightgray] (R1) at (.5,1) {$R_1$};
\node[circle,draw,thick,minimum size=2.5em,] (R2) at (1.5,1) {$R_2$};
\node[circle,draw,thick,minimum size=2.5em,] (R3) at (2.5,1) {$R_3$};
\node[circle,draw,thick,minimum size=2.5em,fill=lightgray] (mB) at (0,2) {$-B$};\node[circle,draw,thick,minimum size=2.5em,fill=lightgray] (R6) at (-2.5,1) {$R_6$};
\node[circle,draw,thick,minimum size=2.5em,fill=lightgray] (R7) at (-1.5,1) {$R_7$};
\node[circle,draw,thick,minimum size=2.5em,fill=lightgray] (R8) at (-.5,1) {$R_8$};
\draw[-,thick] (B) to (R1) to (mB);
\draw[-,thick] (B) to (R2) to (mB);
\draw[-,thick] (B) to (R3) to (mB);
\draw[-,thick] (B) to (R6) to (mB);
\draw[-,thick] (B) to (R7) to (mB);
\draw[-,thick] (B) to (R8) to (mB);
\end{tikzpicture}
\caption{The poset $\Shard(\HH,B)$ for the arrangement of~\Cref{fig:rank2}, which is the reflection arrangement of the dihedral group $I_2(4)$.   Gray indicates elements in the image of Reading's embedding of the $I_2(4)$-noncrossing partition lattice (with respect to a certain Coxeter element); see \Cref{sec:noncrossing}.}
\label{fig:shard}
\end{figure}

\subsection{Shards and the Salvetti Complex \texorpdfstring{(\Cref{sec:shard_generators})}{}}
Whenever we have an edge $C'\xrightarrow{e} C$ in $\Sal(\HH)$, we will write $\Sigma(e)$ for the associated shard $\Sigma(C'\lessdot C)$.  Although the generators in $\TT_{\mathrm{edge}}$ are a priori indexed by cover relations in $\Weak(\HH,B)$, our first theorem says that they are really indexed by the much smaller set of shards.

\begin{theorem}\label{thm:shard_generators}
Let $\HH$ be a central real hyperplane arrangement. Given edges $C' \xrightarrow{e} C$ and $D' \xrightarrow{f} D$ in $\Sal(\HH)$, we have $\tt_{e} \simeq \tt_f$ if and only if $\Sigma(e) = \Sigma(f)$.
\end{theorem}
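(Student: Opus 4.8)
The plan is to reduce both implications to the case where $\HH$ has rank two---where one can compute directly in $\pi_1(\CC^2\setminus\HH_\CC)$---and then to propagate the conclusion to arbitrary rank using the description of shards together with the $2$-cells of $\Sal(\HH)$.

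\emph{The forward implication.} Suppose $\tt_e\simeq\tt_f$. Under the abelianization $\pi_1(\CC^n\setminus\HH_\CC)\twoheadrightarrow H_1(\CC^n\setminus\HH_\CC)\cong\mathbb{Z}^{\HH}$ the loop $\tt_e$ maps to the standard generator indexed by the hyperplane $H$ carrying the wall crossed by $e$, so $\tt_e\simeq\tt_f$ already forces the walls crossed by $e$ and $f$ to lie in a common hyperplane $H$. Recall that the shards contained in $H$ are the closures of the connected components of $H$ minus the union of the codimension-two flats $X\subset H$ along which $H$ is cut---that is, the flats $X$ for which $H$ is not a wall of the base region of the localized arrangement $\HH_X$. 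Hence, if the two walls lie on distinct shards of $H$, some such $X$ separates them inside $H$. The open inclusion $\CC^n\setminus\HH_\CC\hookrightarrow\CC^n\setminus(\HH_X)_\CC$ induces a homomorphism $\pi_1(\CC^n\setminus\HH_\CC)\to\pi_1(\CC^n\setminus(\HH_X)_\CC)$, and a naturality check for the construction of $\tt_e$ shows that this homomorphism carries $\tt_e$ to the corresponding loop of $\Sal(\HH_X)$ attached to the localization $\overline{e}$ of $e$ at $X$. Since $\HH_X$ is a rank-two arrangement in which $H$ is cut along $X$ and $\overline{e},\overline{f}$ sit on the two distinct rays---hence distinct shards---of $H$, the forward implication follows from the rank-two case.

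\emph{The reverse implication.} Suppose $\Sigma(e)=\Sigma(f)$. By the description of shards just recalled, the walls crossed by $e$ and $f$ are joined by a chain of walls of $H$ in which consecutive walls share a codimension-two face $G\subset X$ with $H$ \emph{not} cut along $X$; so it suffices to prove $\tt_e\simeq\tt_f$ when the walls crossed by $e\colon C'\lessdot C$ and $f\colon D'\lessdot D$ meet along one such $G$. Then $C',C,D',D$ are all incident to $X$, and near $X$ they realize regions of the rank-two arrangement $\HH_X$, in which $H$ is a wall of the base region; thus $H$ is uncut there, its two rays form one shard, and these rays label one cover relation incident to $\overline{B}$ and one incident to $-\overline{B}$. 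The $2\lvert\HH_X\rvert$-gon $2$-cell of $\Sal(\HH)$ attached at $X$ yields a relation among galleries which, combined with the defining formula for $\tt_e$ and $\tt_f$, gives $\tt_e\simeq\tt_f$: this is the rank-two computation, carried out near $X$ and conjugated by a gallery from $B$ to a region incident to $X$, a factor which cancels with its inverse. So we are again reduced to the rank-two case, now its reverse implication.

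\emph{The rank-two case, and the expected obstacle.} When $\HH$ has rank two with $m$ hyperplanes, $\CC^2\setminus\HH_\CC\cong\CC^{*}\times(\CC\setminus\{c_1,\dots,c_{m-1}\})$, so $\pi_1\cong\mathbb{Z}\times F_{m-1}$ with central factor generated by the product of all $m$ meridians. The two walls of $B$ lie on the two uncut hyperplanes, each of which is a single shard labeling two cover relations (one incident to $B$, one to $-B$), while each of the remaining $m-2$ hyperplanes is cut into two rays, each a shard labeling a single cover relation; so in rank two there is content only when the walls of $e$ and $f$ lie on a common hyperplane $H$. If $H$ is uncut, the reverse implication asks that the two cover relations it labels give homotopic loops; I would prove this via the explicit $2m$-gon homotopy in $\Sal(\HH)$, equivalently by sliding the meridian of $H$ across the origin, which meets no obstruction precisely because $H$ is a wall of $B$. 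If $H$ is cut, the forward implication asks that the loops attached to its two rays be \emph{distinct}; since no abelian quotient detects this, one restricts further to the rank-two subarrangement consisting of $H$ and the two walls of $B$, where $\pi_1\cong\mathbb{Z}\times F_2$, computes the two loops explicitly, and checks that they differ even modulo the central $\mathbb{Z}$. This last verification, together with the bookkeeping needed to carry the $2$-cell homotopy of the reverse implication through in the presence of the remaining hyperplanes, is the part I expect to require genuine work; the rest of the argument is formal.
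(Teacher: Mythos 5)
Your proposal follows essentially the same route as the paper's: the abelianization $H_1(\PP(\HH),B)\cong\mathbb Z^{\HH}$ (\Cref{thm:abelianization}) handles distinct hyperplanes; a chain of rank-$2$ steps with $2$-cell homotopies handles the $\Sigma(e)=\Sigma(f)$ direction (\Cref{prop:shard1}); and localization to a full rank-$2$ subarrangement plus a rank-$2$ non-commutativity computation handles the same-hyperplane, distinct-shard direction (\Cref{prop:shard2} via \Cref{lem:rank22,lem:A2}). The one genuine formal difference is how you pass to the rank-$2$ subarrangement: you use the inclusion-induced homomorphism $\pi_1(\CC^n\setminus\HH_\CC)\to\pi_1(\CC^n\setminus(\HH_X)_\CC)$ and invoke a ``naturality check,'' whereas the paper defines a purely combinatorial quotient map $\iotab_\AA$ on Salvetti complexes (\Cref{lem:quotient_sec4}) that collapses edges whose walls leave $\AA$ and identifies edges crossing the same wall of $\AA$. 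These agree, but the paper's version sidesteps the compatibility you would otherwise need to verify between the topological inclusion and the two Salvetti embeddings. Your description of the separating flat $X$ and the reduction to the three-hyperplane case with $\pi_1\cong\mathbb Z\times F_2$ is correct and matches the paper's use of a cutting hyperplane $H'$ and the full rank-$2$ subarrangement through $H\cap H'$.

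You explicitly defer the two pieces of ``genuine work''---the distinctness check in $\mathbb Z\times F_2$ modulo the center, and the $2$-cell bookkeeping along the shard---and those are precisely the parts the paper spells out. For the former, \Cref{lem:A2} avoids identifying $\pi_1$ with $\mathbb Z\times F_2$ and instead writes down a direct homomorphism $\tt_{e_1}\mapsto y^{-1}x^{-1}$, $\tt_{e_2}\mapsto x$, $\tt_{e_3}\mapsto y$ to $F_2$ that kills the full twist; your quotient-by-center approach is equivalent but requires first establishing the splitting of the central extension. For the latter, \Cref{prop:shard1} walks a positive gallery from $C$ to $D$ so that consecutive hits of the shard differ by a single full rank-$2$ step, and performs the two $2$-cell homotopies explicitly; this is the same chain-of-walls idea you describe, with the conjugating gallery $\gal(B,E_{k_i}')$ playing the role of the factor you say ``cancels with its inverse.'' So: same approach, correct in outline, with the localization step framed topologically rather than combinatorially, and with the terminal rank-$2$ verifications acknowledged but not carried out.
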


By~\Cref{thm:shard_generators}, it makes sense write $\TTsha=\TT_{\mathrm{edge}}$, indexing the loops in $\TT_{\mathrm{edge}}$ by shards.  Thus, for any shard $\Sigma \in \Sha(\HH,B)$, we define the \defn{shard loop} $\tt_\Sigma=\tt_e$, where $e$ is any edge such that $\Sigma(e)=\Sigma$. 

We define the \defn{pure shard monoid}, denoted $\PPP^+(\HH,B)$, to be the submonoid of $\pi_1(\PP(\HH),B)$ generated by $\TTsha$.  That is, an element of $\pi_1(\Sal(\HH),B)$ is in $\PPP^+(\HH,B)$ if and only if it can be represented by a word in the alphabet $\TTsha$. This allows us to endow $\PPP^+(\HH,B)$ with a partial order $\leq$ by declaring that $p\leq p'$ if there is a word over $\TTsha$ representing $p'$ that contains a prefix representing $p$. \Cref{fig:e_to_delta_P} shows the interval $[\id,\Delta^2]_{\PPP^+}$ between the identity element and the \emph{full twist} $\Delta^2$ (see \Cref{sec:pure_shard}) in the pure shard monoid of the arrangement from \Cref{fig:rank2}.  This interval is ``tall and wide,'' but it is \emph{not} a lattice in general (see \Cref{fig:intervala3}), preventing the use of Garside theory to study $\pi_1(\Sal(\HH),B)$.  

Our first main result about the pure shard monoid is the following. 

\begin{theorem}\label{thm:self-dual}
Let $\HH$ be a central arrangement. The interval $[\id,\Delta^2]_{\PPP^+}$ in the pure shard monoid is self-dual. 
\end{theorem}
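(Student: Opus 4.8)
The plan is to exhibit an order-reversing bijection of the interval onto itself, assembled from the ``complement in $\Delta^2$'' operation together with a word-reversal anti-automorphism of $\PPP^+$. Write $I:=[\id,\Delta^2]_{\PPP^+}$; unwinding the definition of the order, $I$ is the set of elements of $\PPP^+$ that left-divide $\Delta^2$, ordered by left-divisibility. I will use that $\PPP^+$ is a submonoid of the group $\pi_1(\PP(\HH),B)$, hence left- and right-cancellative, and that the full twist $\Delta^2$ is central in $\pi_1(\PP(\HH),B)$ and lies in $\PPP^+$ (see \Cref{sec:pure_shard}). By centrality, if $p\in\PPP^+$ and $pq=\Delta^2$ with $q\in\PPP^+$, then also $qp=p^{-1}(pq)p=\Delta^2$; so the left-divisors and the right-divisors of $\Delta^2$ in $\PPP^+$ form the same set $I$, and by cancellativity $q$ is uniquely determined by $p$. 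Hence $\kappa\colon I\to I$, $\kappa(p):=p^{-1}\Delta^2$, is well defined, and $\kappa(\kappa(p))=q^{-1}\Delta^2=q^{-1}(qp)=p$, so $\kappa$ is an involution of $I$.

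The next step is to note that $\kappa$ interchanges left- and right-divisibility: if $p\le p'$, write $p'=pz$ with $z\in\PPP^+$; then $p\,\kappa(p)=\Delta^2=p'\kappa(p')=pz\,\kappa(p')$, and left-cancelling $p$ gives $\kappa(p)=z\,\kappa(p')$, so $\kappa(p')$ right-divides $\kappa(p)$. Thus $\kappa$ is an order-reversing bijection from $(I,\le)$ onto $I$ equipped with \emph{right}-divisibility. To conclude I need to identify this last poset with $(I,\le)$ again, and for that I would invoke the reversal map $\tau$ taking a word $\tt_{\Sigma_1}\cdots\tt_{\Sigma_k}$ over $\TTsha$ to $\tt_{\Sigma_k}\cdots\tt_{\Sigma_1}$. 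Granting that $\tau$ descends to a well-defined map $\PPP^+\to\PPP^+$, it is automatically an involutive anti-automorphism (it reverses products: $\tau(pq)=\tau(q)\tau(p)$), and, provided $\tau(\Delta^2)=\Delta^2$, it restricts to a bijection of $I$ carrying left-divisibility to right-divisibility. Composing, $\tau\circ\kappa$ — equivalently the map $p\mapsto\tau(p)^{-1}\Delta^2$ — is an order-reversing bijection of $(I,\le)$, establishing self-duality.

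The one nonformal point, and the main obstacle, is the well-definedness of $\tau$: one must show that two words over $\TTsha$ representing the same element of $\pi_1(\PP(\HH),B)$ have reversals representing the same element, and that $\Delta^2$ has a reversal-invariant expression over $\TTsha$. I would attack this via a presentation of $\PPP^+$ read off from the $2$-cells of $\Sal(\HH)$ (see \Cref{sec:salvetti}): after rewriting the edge-loop relations in terms of the shard loops $\tt_\Sigma$, one hopes for a relator set stable under reversal — as the classical braid relations are palindromic — so that reversal descends. A geometric substitute would be an anti-automorphism of $\pi_1(\PP(\HH),B)$ built from loop reversal together with the antipodal symmetry $x\mapsto -x$ of $\HH$, once one verifies it preserves the submonoid $\PPP^+$ and fixes $\Delta^2$. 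Some such ingredient is genuinely needed: the weaker map $\kappa$ alone is not order-reversing for left-divisibility in closely related monoids (e.g.\ in the interval $[\id,\Delta^2]$ of a braid monoid $B_n^+$), so the passage from ``$\kappa$ reverses left-into-right'' to an honest self-duality must use a reversal-type symmetry of $\PPP^+$.
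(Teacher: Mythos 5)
Your overall architecture is exactly what the paper uses: the map $p\mapsto p^{-1}\Delta^2$ alone converts left-divisibility into right-divisibility, so to get an order-reversing map on $(I,\le)$ one must compose with a reversal-type anti-automorphism that fixes $\Delta^2$. That decomposition is correct, and your remark that ``some such ingredient is genuinely needed'' is on the mark.

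The gap is in the specific choice of that anti-automorphism. Pure word reversal $\tau\colon\tt_{\Sigma_1}\cdots\tt_{\Sigma_k}\mapsto\tt_{\Sigma_k}\cdots\tt_{\Sigma_1}$ is \emph{not} well-defined, and the relations are not palindromic. This already fails in rank $2$ with $m=3$ hyperplanes. There, $\pi_1(\PP(\HH),B)$ has presentation $\langle\rr_1,\rr_2,\rr_3:\rr_3\rr_2\rr_1=\rr_1\rr_3\rr_2=\rr_2\rr_1\rr_3\rangle$ (cf.\ Lemma~\ref{lem:A2} and Theorem~\ref{thm:relations}); writing $z=\Delta^2=\rr_3\rr_2\rr_1$, the group is $F_2\times\langle z\rangle$ with $\rr_1=\rr_2^{-1}\rr_3^{-1}z$. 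Then
\[
\rr_1\rr_2\rr_3 \;=\; z\,[\rr_2^{-1},\rr_3^{-1}],\qquad
\rr_2\rr_3\rr_1 \;=\; z\,[\rr_2,\rr_3],
\]
both of which differ from $z$ and from each other, while $\rr_3\rr_2\rr_1$ and $\rr_2\rr_1\rr_3$ are both equal to $z$. So $\tau$ maps a relation pair to a non-relation pair, and the reversal of a word for $\Delta^2$ is not a word for $\Delta^2$. Thus the ``relator set stable under reversal'' hope does not pan out, and the braid-group analogy does not carry over.

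The correct map is your ``geometric substitute'': the \emph{antipodally twisted} reversal $\kappa(\tt_{\Sigma_1}\cdots\tt_{\Sigma_k})=\tt_{-\Sigma_k}\cdots\tt_{-\Sigma_1}$, where $-\Sigma$ is the opposite shard. This is precisely what the paper constructs. Well-definedness is proved not via a presentation but geometrically: the antipodal symmetry induces an isomorphism $\Psi\colon\pi_1(\Sal(\HH),B)\to\pi_1(\Sal(\HH),-B)$ with $\Psi(\tt_\Sigma)=\widehat\tt_{-\Sigma}$, and conjugation by $\gal(B,-B)$ satisfies $\gal(B,-B)\,\widehat\tt_\Sigma\,\gal(B,-B)^{-1}\cong\tt_\Sigma^{-1}$ (Eq.~\eqref{eq:conjugate_delta}). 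Composing these with a group inverse gives $\kappa$ as a well-defined involutive anti-automorphism of $\PPP^+(\HH,B)$; the identity $\kappa(\Delta^2)=\Delta^2$ then follows by applying $\kappa$ to the word for $\Delta^2$ coming from a gallery $B\to-B$ and observing that the result is the word for $\Delta^2$ coming from the negated gallery. In short: you identified the missing ingredient correctly, but your primary route (untwisted $\tau$) is a dead end, while the alternative you mention in passing is the essential one.
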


In general, the combinatorics of the pure shard monoid can be quite involved. However, for a rank-2 arrangement with $m$ hyperplanes, we will prove the following precise theorem, which tells us that there are exactly $m2^{m-2}$ words over the alphabet $\TTsha$ representing $\Delta^2$.

\begin{theorem}\label{thm:rank2interval}
Let $\HH$ be hyperplane arrangement of rank $2$ with $m$ hyperplanes.  The interval $[\id,\Delta^2]_{\PPP^+}$ is a planar lattice with rank generating function $1+\left(\sum_{k=1}^{m-1} \left(2\binom{m}{k}-2\right)q^k \right)+q^m$ and with $m 2^{m-2}$ maximal chains. 
\end{theorem}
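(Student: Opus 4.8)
The plan is to make the rank-$2$ pure shard monoid completely explicit and then read off all the asserted data. Up to combinatorial equivalence, $\HH$ consists of $m$ lines through the origin, cutting the circle into $2m$ regions; the poset $\Weak(\HH,B)$ is the union of two length-$m$ maximal chains from $B$ to $-B$, the two walls of $B$ are the only basic hyperplanes, and each of the remaining $m-2$ hyperplanes is cut into two shards, so $\lvert\Sha(\HH,B)\rvert=2m-2$ and, by \Cref{thm:shard_generators}, there are $2m-2$ shard loops. Projectivizing realizes $\CC^2\setminus\HH_\CC$ as a $\CC^\times$-bundle over $\mathbb{P}^1$ minus $m$ points, so $\pi_1(\Sal(\HH),B)\cong F_{m-1}\times\mathbb{Z}$ with the central factor generated by the full twist $\Delta^2$. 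I would fix coordinates so that the $m$ "counterclockwise" meridians $\mu_1,\dots,\mu_m$ around the $m$ hyperplanes satisfy $\mu_1\cdots\mu_m=\Delta^2$, with $\mu_1,\dots,\mu_{m-1}$ a free basis and hence $\mu_m=(\mu_1\cdots\mu_{m-1})^{-1}\Delta^2$; the two basic shard loops are $\mu_1$ and $\mu_m$, and the remaining $m-2$ shard loops are the "clockwise" meridians, which one checks can be taken to be $\nu_i=(\mu_1\cdots\mu_i)\,\mu_i\,(\mu_1\cdots\mu_i)^{-1}$ for $2\le i\le m-1$.

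The grading is then immediate. The total–winding homomorphism $\tau\colon\pi_1\to\mathbb{Z}$, which sends every meridian, hence every shard loop, to $1$, satisfies $\tau(\Delta^2)=m$; so every word over $\TTsha$ representing $\Delta^2$ has length exactly $m$, the interval $[\id,\Delta^2]_{\PPP^+}$ is precisely the set of prefixes of such words, and it is graded with rank function $\tau$. Moreover the image of a word for $\Delta^2$ in $\pi_1^{\mathrm{ab}}\cong\mathbb{Z}^m$ must be $(1,\dots,1)$, which forces the word to use exactly one shard loop around each hyperplane; in particular it uses $\mu_1$ and $\mu_m$, and for each cut hyperplane $H_i$ it uses either $\mu_i$ or $\nu_i$, so such words fall into $2^{m-2}$ classes indexed by subsets $S\subseteq\{2,\dots,m-1\}$. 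For a fixed $S$ I would use the identity $\nu_i(\mu_1\cdots\mu_{i-1})=\mu_1\cdots\mu_i$ to show, by induction on $m$ and by isolating the highest-index generator occurring, that the only ordering of the corresponding multiset with $\mu_m$ in last position is $w_S\,\mu_m$, where $w_S$ lists the chosen $\nu_i$ in decreasing order of $i$ followed by the remaining $\mu_j$ in increasing order. Since $\Delta^2$ is central, all $m$ cyclic rotations of $w_S\,\mu_m$ also represent $\Delta^2$, they are pairwise distinct (the letter $\mu_m$ occurs once), and they exhaust the reduced words; hence there are exactly $m\,2^{m-2}$ reduced words, equivalently $m\,2^{m-2}$ maximal chains.

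It remains to analyze the prefix order on these prefixes and identify $[\id,\Delta^2]_{\PPP^+}$ with an explicit lattice. Collecting the prefixes of the $m\,2^{m-2}$ reduced words by length and reducing in $F_{m-1}\times\mathbb{Z}$, I would show that for $0<k<m$ the rank-$k$ elements split into a "counterclockwise" family and a "clockwise" family, each naturally indexed by the $k$-subsets of $[m]$ with one degenerate subset omitted, while rank $0$ and rank $m$ contribute only $\id$ and $\Delta^2$; this gives the rank generating function $1+\sum_{k=1}^{m-1}\bigl(2\binom mk-2\bigr)q^k+q^m$, and \Cref{thm:self-dual} reduces the verification to $k\le m/2$. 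Then I would write out the cover relations explicitly, place the two boundary chains — the prefixes of the all-counterclockwise word and of the all-clockwise word — down the left and right sides, arrange each rank in a single row in the induced linear order, check that no two Hasse edges cross (planarity), and verify that any two elements have a unique minimal upper bound and a unique maximal lower bound (the lattice property), a finite case check organized by the direction/subset labels.

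The steps in the first two paragraphs are bookkeeping. The real work is the classification of reduced words — especially the inductive free-cancellation argument forcing the order $w_S$ — and, most of all, verifying that the prefix order is a lattice: in general $[\id,\Delta^2]_{\PPP^+}$ is \emph{not} a lattice (cf.\ the discussion and \Cref{fig:intervala3} preceding the theorem), so establishing that the rank-$2$ intervals are lattices, and that they are moreover planar, is precisely where the explicit model has to be pushed through.
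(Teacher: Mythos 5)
Your outline gets the grading, the abelianization/winding-number argument, and the $m\,2^{m-2}$ count of maximal chains essentially right, and the $F_{m-1}\times\mathbb{Z}$ model is a reasonable substitute for the paper's approach (which instead runs an induction on $m$ by pushing everything through the subarrangement quotient maps $\iotab_{\AA}$ of \Cref{lem:quotient_sec4}, reducing to $m=3$). The paper's version of your ``ending with $\mu_m$'' normal form is the notion of a \emph{unimodal} word $\ttt_I\rr_{[m]\setminus I}$, and the reduction is carried out via the relations $\ttt_i\ttt_{i+1}\cdots\ttt_{m-1}\rr_m=\rr_m\rr_{m-1}\cdots\rr_i$ coming from \Cref{lem:reverse_order_pow}; your free-group cancellation is a plausible alternative, though you would need to make the inductive ``isolate the highest-index generator'' step precise (and double-check the conjugation direction in your formula for $\nu_i$ against Theorem~\ref{thm:gens_to_other_shards}).

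The real gap is in the last step. You propose to establish planarity and then \emph{separately} verify the lattice property by ``a finite case check,'' but no such case check can work for all $m$ at once: the interval has $\Theta(2^m)$ elements, and the lattice property is exactly the kind of global condition that one cannot check rank by rank without a uniform argument. The paper sidesteps this entirely by invoking Birkhoff's observation that a finite planar poset with a unique minimal and a unique maximal element is automatically a lattice (\cite[Ex.~7, p.~20]{birkhoff1940lattice}); once you do that, the lattice property is a free consequence of planarity and you should delete the separate verification from your plan. Even then, ``arrange each rank in a row in the induced order and check that no two edges cross'' is not yet a proof of planarity for general $m$; the paper makes this precise by partitioning the Hasse diagram into four overlapping pieces (elements $\rr_I$ with $m\in I$, elements $\ttt_I$ with $1\notin I$, and their mirrors), exhibiting an explicit linear order on each rank, and running a reverse induction on $i$ over the subintervals $Q_i=[\ttt_i,\ttt_{\{i,\ldots,m\}}]_{\PPP^+}$, which recursively split into two copies of $Q_{i+1}$ joined by exactly two cover relations. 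You should replace the ``check it'' step with an argument of this recursive shape; without it, and without Birkhoff, the planar-lattice assertion is unproved.
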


\subsection{Pop \texorpdfstring{(\Cref{sec:pop})}{}}\label{subsec:pop_intro}
In the next three subsections, we discuss three incarnations of the shard intersection order given by three maps $\Pop$, $\Crackle$, and $\Snap$. The first one, defined using the map $\Pop$, is not new, but it inspired our terminology for the other two. For these three subsections, we assume that $\HH$ is simplicial so that $\Weak(\HH,B)$ is a semidistributive lattice. 

Let $L$ be a locally finite meet-semilattice with meet operation denoted by $\wedge$. Motivated by work on \emph{pop-stacks} from enumerative combinatorics and theoretical computer science~\cite{Ungar,ClaessonPop,AlbertVatter}, the first author defined the \defn{pop-stack sorting operator} $\pop\colon L\to L$ in \cite{defant2022meeting} (see also \cite{defant2021stack}) by
\begin{equation}
    \pop(x):=x\wedge\bigwedge\{y\in L:y\lessdot x\}.
\end{equation}
In the case when $L=\Weak(\HH,B)$, we can use $\pop$ to characterize $\Shard(\HH,B)$. For $C\in\RR$, let $\Sigma([\pop(C),C])$ be the set of shards that label the cover relations in the interval $[\pop(C),C]$; that is,
\[\Sigma([\pop(C),C]):=\left\{\Sigma(D'\lessdot D):\pop(C)\leq D'\lessdot D\leq C\right\}.\]
By~\cite[Proposition 5.7]{reading2011noncrossing}, we have
\begin{equation}\label{eq:pop_shard_intersection}
    C'\preceq C\quad\text{if and only if}\quad \Sigma([\pop(C'),C'])\subseteq\Sigma([\pop(C),C]).
\end{equation}

\subsection{Crackle \texorpdfstring{(\Cref{sec:crackle})}{}}\label{subsec:crackle_intro}

We define the \defn{crackle map} $\Crackle\colon\RR\to \pi_1(\Sal(\HH),B)$ by
\begin{equation}\label{eq:crackle}    
\Crackle(C) := \gal(B,\pop(C))\cdot\gal(\pop(C),C)\cdot\gal(C,\pop(C))\cdot \gal(B,\pop(C))^{-1}.
\end{equation}
This map generalizes the shard loops $\tt_\Sigma$ of~\Cref{eq:tt} and~\Cref{thm:shard_generators}: if $J$ is a join-irreducible region of $\Weak(\HH,B)$, then $\pop(J)$ is the unique region covered by $J$, so $\Crackle(J)=\tt_{\Sigma(\pop(J)\lessdot J)}$. 

Just as~\Cref{eq:pop_shard_intersection} characterized $\Shard(\HH,B)$ using $\pop$, we can characterize $\Shard(\HH,B)$ using $\Crackle$. As above, we write $[\id,\Delta^2]_{\PPP^+}$ for the interval between the identity element $\id$ and the full twist $\Delta^2$ in the pure shard monoid $\PPP^+(\HH,B)$. Recall that if $P$ and $Q$ are posets, then a map $\psi\colon P\to Q$ is called a \defn{poset embedding} if it is a poset isomorphism from $P$ to its image $\psi(P)\subseteq Q$. 

\begin{theorem}\label{thm:crackle_hom}
The map $\Crackle$ is a poset embedding from $\Shard(\HH,B)$ into the interval $[\id,\Delta^2]_{\PPP^+}$.
\end{theorem}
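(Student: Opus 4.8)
The plan is to reduce the statement to the known characterization \eqref{eq:pop_shard_intersection} of the shard intersection order in terms of pop-stack intervals, by showing that $\Crackle(C)$ records exactly the multiset — in fact, as we will argue, the \emph{set} — of shard loops $\tt_\Sigma$ for $\Sigma\in\Sigma([\pop(C),C])$. First I would unwind the definition \eqref{eq:crackle}: the loop $\Crackle(C)$ is conjugated by $\gal(B,\pop(C))$, so up to the change of basepoint from $B$ to $\pop(C)$ it equals $\gal(\pop(C),C)\cdot\gal(C,\pop(C))$, a loop based at $\pop(C)$ that runs up a positive minimal gallery from $\pop(C)$ to $C$ and back down. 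The key geometric input is that in the interval $[\pop(C),C]$ of $\Weak(\HH,B)$ — which, because $\HH$ is simplicial and $\pop(C)=C\wedge\bigwedge\{D: D\lessdot C\}$, is a Boolean-like "bottom cube" whose atoms biject with $\covsha(C)$ — a positive minimal gallery and its reverse decompose, via the Salvetti $2$-cell relations (the homotopies $e_1e_2\cdots \cong e'_1e'_2\cdots$ visible in \Cref{fig:rank2}), into a product of the elementary "there-and-back" loops $ee^*$ associated to the cover relations crossed. Pushing these $ee^*$ factors out to the basepoint $B$ along $\gal(B,\pop(C))$ and invoking \Cref{thm:shard_generators}, each contributes the shard loop $\tt_{\Sigma(D'\lessdot D)}$, and these $\Sigma(D'\lessdot D)$ range over $\Sigma([\pop(C),C])$. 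Thus $\Crackle(C)$ is a positive word in $\TTsha$, hence lies in $\PPP^+(\HH,B)$; that it lies below $\Delta^2$ follows since $[\pop(C),C]$ embeds in the full weak order up to $-B$, whose "there-and-back" word is a representative of $\Delta^2$ (cf.\ \Cref{sec:pure_shard}).

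Next I would establish that $\Crackle$ is order-preserving and order-reflecting. Given $C'\preceq C$, \eqref{eq:pop_shard_intersection} gives $\Sigma([\pop(C'),C'])\subseteq\Sigma([\pop(C),C])$; I would produce, from a linear extension of $[\pop(C),C]$ refining one of $[\pop(C'),C']$, a word over $\TTsha$ representing $\Crackle(C)$ that begins with a prefix representing $\Crackle(C')$, using commutations among the $\tt_\Sigma$ that hold inside the pure shard monoid (these follow from the local structure of the Salvetti complex over the rank-$2$ and rank-$3$ subarrangements, i.e.\ the relations packaged in $\PPP^+$). This gives $\Crackle(C')\leq\Crackle(C)$. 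Conversely, if $\Crackle(C')\leq\Crackle(C)$ in $\PPP^+$, I would argue that the set of shards appearing in any positive word for an element is an invariant — because every defining relation of $\PPP^+$ preserves the underlying set (indeed the multiset, arguing rank by rank as in the rank-$2$ analysis behind \Cref{thm:rank2interval}) of letters — so a prefix relation forces $\Sigma([\pop(C'),C'])\subseteq\Sigma([\pop(C),C])$, whence $C'\preceq C$ by \eqref{eq:pop_shard_intersection} again. Injectivity of $\Crackle$ is then immediate: $\Crackle(C')=\Crackle(C)$ forces the two shard sets to coincide, and $C\mapsto\Sigma([\pop(C),C])$ is injective since it determines $\bigcap\covsha(C)$ and hence $C$ by Reading's bijection.

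The main obstacle I anticipate is the claim that the \emph{set} (and ideally multiset) of shard-letters occurring in a positive $\TTsha$-word is a well-defined invariant of the element of $\PPP^+(\HH,B)$ it represents — this is precisely what makes the "prefix" partial order $\leq$ interact cleanly with \eqref{eq:pop_shard_intersection}. Unlike in the braid or Artin setting, $\PPP^+$ is not obviously cancellative or Garside (the interval $[\id,\Delta^2]_{\PPP^+}$ need not be a lattice, as noted before \Cref{fig:intervala3}), so I cannot simply quote Garside normal forms; instead I would have to analyze the relations among the $\tt_\Sigma$ directly. Every such relation is supported on a rank-$\le 3$ subarrangement (since $2$-cells of $\Sal(\HH)$ live over rank-$2$ flats and the braid-type relations over rank-$3$ flats), so it suffices to check that, in each rank-$2$ and rank-$3$ case, the relations among the shard loops only permute and possibly merge occurrences of letters while preserving their underlying multiset; the rank-$2$ case is essentially \Cref{thm:rank2interval}, and the rank-$3$ case would be the genuinely new computation. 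Once this invariance is in hand, the rest of the argument is the bookkeeping sketched above.
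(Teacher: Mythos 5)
Your proposal identifies the right scaffolding (the pop--shard characterization \eqref{eq:pop_shard_intersection}, passing $\Crackle(C)$ to a product of shard loops over $[\pop(C),C]$), but it hinges on an invariance claim that is false, and that claim carries the whole reverse direction.

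\textbf{The invariance claim fails.} You assert that the set (indeed multiset) of shard letters appearing in a positive $\TTsha$-word is an invariant of the element it represents, because the defining relations of $\PPP^+(\HH,B)$ preserve it. The rank-$2$ analysis you cite actually \emph{disproves} this. With the notation of \Cref{sec:rank2_enumeration} (say $m=4$), \Cref{lem:reverse_order_pow} gives the relation $\ttt_2\ttt_3\rr_4 = \rr_4\rr_3\rr_2$, i.e.\ $\tt_{\Sigma_6}\tt_{\Sigma_7}\tt_{\Sigma_4} = \tt_{\Sigma_4}\tt_{\Sigma_3}\tt_{\Sigma_2}$: the two sides use disjoint shard sets apart from $\Sigma_4$. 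Consequently the set of shards used is not even constant across the $\TTsha$-words for $\Delta^2 = \Crackle(-B)$, so your claimed invariant is not an invariant even restricted to $\Crackle$-values. This destroys your argument that $\Crackle(C')\leq\Crackle(C)$ forces $\Sigma([\pop(C'),C'])\subseteq\Sigma([\pop(C),C])$. The paper replaces this with the more delicate \Cref{lem:all_shards}: $J_\Sigma\preceq C$ iff \emph{some} $\TTsha$-word for $\Crackle(C)$ uses $\tt_\Sigma$. That characterizes which shards \emph{can} appear rather than which shards \emph{must}; its proof is a genuine piece of work (passing to the abelianization to pin down the hyperplane, then using quotient maps $\iotab_A$ to a carefully chosen full rank-$2$ subarrangement to pin down the shard).

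\textbf{The forward direction is also underspecified.} You propose taking ``a linear extension of $[\pop(C),C]$ refining one of $[\pop(C'),C']$,'' but these intervals are generally incomparable subsets of $\Weak(\HH,B)$; one is not contained in the other. What saves the argument in the paper is Reading's \Cref{thm:two_ways}: when $D\preceq C$, the interval $[\pop(D),D]$ is isomorphic, via a shard-label-preserving map $\omega_C$, to a subinterval of $[\pop(C),C]$. Combined with the subarrangement machinery ($\HH_C$, the bijection $\iota_C^\pop$, and the lift $\Phi_C$ of \Cref{lem:lift_up}), this lets one rewrite $\Crackle(D)$ as an honest prefix of a word for $\Crackle(C)$. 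Without $\omega_C$ (or something playing its role), ``commutations among the $\tt_\Sigma$'' is not enough; the monoid is not Garside or even cancellative, so you have no normal form to lean on, as you yourself note. In short: your outline is pointed in the right direction but is missing both of the substantive lemmas (\Cref{thm:two_ways}/\Cref{lem:lift_up} for one implication, \Cref{lem:all_shards} for the other), and the surrogate you offer for the latter is false.
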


\Cref{thm:crackle_hom} is illustrated in~\Cref{fig:e_to_delta_P,fig:intervala3}.

\begin{figure}[htbp]
\begin{tikzpicture}[scale=1.5]
\node[circle,draw,thick,minimum size=2.5em,fill=lightgray] (1) at (0,0) {$\id$};

\node[circle,draw,thick,minimum size=2.5em,fill=lightgray] (5) at (-.5,1) {$\tt_{\Sigma_6}$};
\node[circle,draw,thick,minimum size=2.5em,fill=lightgray] (11) at (-1.5,1) {$\tt_{\Sigma_7}$};
\node[circle,draw,thick,minimum size=2.5em,fill=lightgray] (9) at (-2.5,1) {$\tt_{\Sigma_8}$};
\node[circle,draw,thick,minimum size=2.5em,fill=lightgray] (14) at (2.5,1) {$\tt_{\Sigma_1}$};
\node[circle,draw,thick,minimum size=2.5em,fill=white] (23) at (1.5,1) {$\tt_{\Sigma_2}$};
\node[circle,draw,thick,minimum size=2.5em,fill=white] (19) at (.5,1) {$\tt_{\Sigma_3}$};

\node[gray] (4) at (-.5,2) {$\bullet$};
\node (6) at (-1.5,2) {$\bullet$};
\node[gray] (13) at (-2.5,2) {$\bullet$};
\node (7) at (-3.5,2) {$\bullet$};
\node[gray] (10) at (-4.5,2) {$\bullet$};
\node (17) at (4.5,2) {$\bullet$};
\node (15) at (3.5,2) {$\bullet$};
\node[gray] (24) at (2.5,2) {$\bullet$};
\node (21) at (1.5,2) {$\bullet$};
\node (18) at (.5,2) {$\bullet$};

\node[gray] (3) at (-.5,3) {$\bullet$};
\node[gray] (12) at (-1.5,3) {$\bullet$};
\node[gray] (8) at (-2.5,3) {$\bullet$};
\node (16) at (2.5,3) {$\bullet$};
\node (22) at (1.5,3) {$\bullet$};
\node[gray] (20) at (.5,3) {$\bullet$};

\node[circle,draw,thick,minimum size=2.5em,fill=lightgray] (2) at (0,4) {$\Delta^2$};

\draw[-,thick] (1) to node[midway, fill=white] {6} (5);
\draw[-,thick] (1) to node[midway, fill=white] {7} (11);
\draw[-,thick] (1) to node[midway, fill=white] {8} (9);
\draw[-,thick] (1) to node[midway, fill=white] {1} (14);
\draw[-,thick] (1) to node[midway, fill=white] {2} (23);
\draw[-,thick] (1) to node[midway, fill=white] {3} (19);

\draw[-,thick] (5) to node[midway, fill=white] {7} (4);
\draw[-,thick] (5) to node[midway, fill=white] {8} (6);
\draw[-,thick] (11) to node[midway, fill=white] {8} (13);
\draw[-,thick] (9) to node[midway, fill=white] {3} (13);
\draw[-,thick] (9) to node[midway, fill=white] {2} (7);
\draw[-,thick] (9) to node[midway, fill=white] {1} (10);
\draw[-,thick] (14) to node[midway, fill=white] {8} (17);
\draw[-,thick] (14) to node[midway, fill=white] {7} (15);
\draw[-,thick] (14) to node[midway, fill=white] {6} (24);
\draw[-,thick] (23) to node[midway, fill=white] {1} (24);
\draw[-,thick] (19) to node[midway, fill=white] {1} (21);
\draw[-,thick] (19) to node[midway, fill=white] {2} (18);

\draw[-,thick] (4) to node[midway, fill=white] {8} (3);
\draw[-,thick] (6) to node[midway, fill=white] {3} (3);
\draw[-,thick] (13) to node[midway, fill=white] {2} (3);
\draw[-,thick] (13) to node[midway, fill=white] {1} (12);
\draw[-,thick] (7) to node[midway, fill=white] {1} (8);
\draw[-,thick] (10) to node[midway, fill=white] {6} (8);
\draw[-,thick] (17) to node[midway, fill=white] {3} (16);
\draw[-,thick] (15) to node[midway, fill=white] {8} (16);
\draw[-,thick] (24) to node[midway, fill=white] {8} (22);
\draw[-,thick] (24) to node[midway, fill=white] {7} (20);
\draw[-,thick] (21) to node[midway, fill=white] {6} (20);
\draw[-,thick] (18) to node[midway, fill=white] {1} (20);

\draw[-,thick] (3) to node[midway, fill=white] {1} (2);
\draw[-,thick] (12) to node[midway, fill=white] {6} (2);
\draw[-,thick] (8) to node[midway, fill=white] {7} (2);
\draw[-,thick] (16) to node[midway, fill=white] {2} (2);
\draw[-,thick] (22) to node[midway, fill=white] {3} (2);
\draw[-,thick] (20) to node[midway, fill=white] {8} (2);
\end{tikzpicture}
\caption{The interval $[\id,\Delta^2]_{\PPP^+}$ in the pure shard monoid $\PPP^+(\HH,B)$ between the identity element and the full twist $\Delta^2$, where $(\HH,B)$ is as in~\Cref{fig:rank2}.  An edge $p \lessdot p'$ is labeled $i$ when $p'=p \cdot \tt_{\Sigma_i}$. Circled elements are in the image of $\Crackle$.  An element is colored gray if it appears as a prefix of a word for $\Delta^2$ using only generators corresponding to noncrossing shards; see \Cref{sec:noncrossing}.}
\label{fig:e_to_delta_P}
\end{figure}

\subsection{Snap \texorpdfstring{(\Cref{sec:snap})}{}}\label{subsec:snap_intro}
We now specialize to the case when $\HH$ is the reflection arrangement of a finite Coxeter group $W$.  We identify the base region $B$ with the identity element of $W$; the free transitive action of $W$ on $\RR$ then allows us to identity regions of $\HH$ with elements of $W$. We write ${\Shard(W):=\Shard(\HH,B)}$, $\Weak(W):=\Weak(\HH,B)$, $\PPP^+(W)=\PPP^+(\HH,B)$, etc.

In this setting, the group $\PPP(W) := \pi_1(\mathbb C^n\setminus \HH_{\mathbb C},x_B)$ is called the \defn{pure braid group} of $W$, while the group $\BB(W) := \pi_1((\mathbb C^n\setminus \HH_{\mathbb C})/W,x_B)$ is called the \defn{braid group} of $W$. The Coxeter group $W$ fits into the following well-known exact sequence with its braid and pure braid groups:
\[1 \to \PPP(W) \to \BB(W) \xrightarrow{\varphi} W \to 1.\]  Let ${\bf S}$ be the set of simple generators of $\BB(W)$ obtained by lifting the set $S$ of simple reflections of $W$. The generators in $\bf S$ satisfy the same braid relations as the corresponding simple reflections of $W$; the difference is that $W$ also includes the relations stating that the simple reflections are involutions. Thus, the projection $\varphi\colon \BB(W)\to W$ is the quotient map that sends each generator ${\bf s}\in{\bf S}$ to the corresponding $s\in S$ and imposes these additional relations. The submonoid of $\BB(W)$ generated by ${\bf S}$ is called the \defn{positive braid monoid} of $W$ and is denoted by $\BB^+(W)$.  The weak order $(W,\leq)$ is defined by saying $u \leq v$ if and only if any reduced word for $u$ appears as a prefix of some reduced word for $v$.  Analogously, the weak order $\Weak(\BB^+(W)):=(\BB^+(W),\leq)$ is defined by saying ${\bf u} \leq \mathbf{v}$ if and only if any word over $\bf S$ representing $\bf u$ appears as a prefix of some word over $\bf S$ representing $\mathbf{v}$. In this setting, the full twist $\Delta^2$ is equal to the lift $\w_\circ^2$ of the long element $w_\circ$ of $W$; we will also write $\Delta=\w_\circ$.

We can rephrase \Cref{thm:shard_generators} when $\HH$ is the reflection arrangement of $W$ as follows.
\begin{corollary}\label{thm:shard_generators2}
Suppose $u,v\in W$ and $s,t\in S$ are such that $u \lessdot us$ and $v \lessdot vt$.  Let ${\bf u}, {\bf v}, {\bf s}, {\bf t}$ be the lifts of $u,v,s,t$, respectively, to $\BB^+(W)$. We have ${\bf u} \s {\bf u}^{-1} = \mathbf{v} \mathbf{t} \mathbf{v}^{-1}$ if and only if $\Sigma(u\lessdot us) = \Sigma(v \lessdot vt)$.
\end{corollary}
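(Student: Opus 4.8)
\emph{Proof idea.} The plan is to read this off from \Cref{thm:shard_generators} after identifying the loop $\tt_e$ inside the braid group. Pass to the quotient CW complex $\Sal(\HH)/W$ (a model for $(\CC^n\setminus\HH_\CC)/W$), whose fundamental group is $\BB(W)$ and which contains $\pi_1(\Sal(\HH),B)=\PPP(W)$ as the subgroup of the $W$-cover $\Sal(\HH)\to\Sal(\HH)/W$. Under this identification, a positive minimal gallery $\gal(B,C)$ for the region $C$ labeled by $w\in W$ projects to the positive lift $\mathbf{w}\in\BB^+(W)$ of $w$ (distinct reduced galleries differ by the elementary homotopies built into $\Sal(\HH)$, which project to the braid relations), and for the edge $e$ of the cover relation $u\lessdot us$ the round trip $ee^*$ projects to a meridian around the hyperplane $uH_s$, i.e.\ to $\mathbf{s}^2$. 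Therefore, viewed in $\BB(W)$,
\[\tt_e=\mathbf{u}\,\mathbf{s}^2\,\mathbf{u}^{-1}=(\mathbf{u}\,\mathbf{s}\,\mathbf{u}^{-1})^2,\qquad \tt_f=(\mathbf{v}\,\mathbf{t}\,\mathbf{v}^{-1})^2,\]
so \Cref{thm:shard_generators} says precisely that $\Sigma(u\lessdot us)=\Sigma(v\lessdot vt)$ holds if and only if $(\mathbf{u}\mathbf{s}\mathbf{u}^{-1})^2=(\mathbf{v}\mathbf{t}\mathbf{v}^{-1})^2$ in $\BB(W)$.

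One direction of the corollary is now immediate: if $\mathbf{u}\mathbf{s}\mathbf{u}^{-1}=\mathbf{v}\mathbf{t}\mathbf{v}^{-1}$, square and apply \Cref{thm:shard_generators}. For the converse, we must upgrade the equality of squares to the equality of the reflections. Conjugating the identity $(\mathbf{u}\mathbf{s}\mathbf{u}^{-1})^2=(\mathbf{v}\mathbf{t}\mathbf{v}^{-1})^2$ by $\mathbf{u}^{-1}$, it is enough to show: if $\mathbf{x}\in\BB(W)$ satisfies $\mathbf{x}^2=\mathbf{s}^2$ (a meridian), then $\mathbf{x}=\mathbf{s}$. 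This is special to meridians---$\BB(W)$ has many non-unique roots, e.g.\ in type $A_2$ the central element $\Delta^2$ has the distinct cube roots $\mathbf{s}\mathbf{t}$ and $\mathbf{t}\mathbf{s}$---so some real input is required. The argument I would use: $\mathbf{x}^2=\mathbf{s}^2$ forces $\mathbf{x}\in Z_{\BB(W)}(\mathbf{s}^2)$; invoking the theory of centralizers in spherical-type Artin groups (a non-periodic element has the same centralizer as each of its nontrivial powers) together with the fact that $\mathbf{s}$ is non-periodic once $\HH$ has rank at least $2$---no power of $\mathbf{s}$ is a power of $\Delta=\mathbf{w}_\circ$, and the rank-$1$ case is trivial---we get $Z_{\BB(W)}(\mathbf{s}^2)=Z_{\BB(W)}(\mathbf{s})$, so $\mathbf{x}$ commutes with $\mathbf{s}$; then $(\mathbf{x}\mathbf{s}^{-1})^2=\mathbf{x}^2\mathbf{s}^{-2}=1$, and torsion-freeness of $\BB(W)$ (Deligne) yields $\mathbf{x}=\mathbf{s}$. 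Unwinding the conjugation gives $\mathbf{u}\mathbf{s}\mathbf{u}^{-1}=\mathbf{v}\mathbf{t}\mathbf{v}^{-1}$.

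The step I expect to be the crux is this last one. The route above reduces it to the centralizer identity $Z_{\BB(W)}(\mathbf{s}^2)=Z_{\BB(W)}(\mathbf{s})$ plus torsion-freeness, and what must be checked carefully is that this centralizer fact is available in exactly the form needed, uniformly over the spherical types. A more self-contained alternative is to revisit the homotopy in the proof of \Cref{thm:shard_generators} witnessing $\tt_e\simeq\tt_f$ and verify that it can be performed ``one half-loop at a time,'' producing directly a homotopy in $\Sal(\HH)/W$ between half-meridians representing $\mathbf{u}\mathbf{s}\mathbf{u}^{-1}$ and $\mathbf{v}\mathbf{t}\mathbf{v}^{-1}$; there the work is in making the half-loop bookkeeping rigorous.
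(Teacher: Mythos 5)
Your framing is correct: $\tt_e$ becomes $(\mathbf{u}\mathbf{s}\mathbf{u}^{-1})^2$ in $\BB(W)$, so the direction ``equal half-loops $\Rightarrow$ equal shards'' follows by squaring, and the paper's presentation of this corollary as a mere rephrasing of \Cref{thm:shard_generators} glosses over exactly the square-root extraction that you correctly flag as the real content. The gap is in the centralizer route. The step $Z_{\BB(W)}(\mathbf{s}^2)=Z_{\BB(W)}(\mathbf{s})$ carries all the weight, and ``a non-periodic element has the same centralizer as each of its nontrivial powers'' is not a theorem one can cite off the shelf for arbitrary spherical Artin groups. It holds for a half-twist in type $A$ by Nielsen--Thurston considerations (both centralizers are the stabilizer of the underlying curve), but a uniform argument across all types means digging into the literature on centralizers of parabolic subgroups (Godelle, Paris, Digne--Michel), and you would still owe the check that $\mathbf{s}$ is non-periodic --- all of which is much heavier machinery than this corollary should cost.

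The second route you sketch is the right one, and the bookkeeping you identify as the crux closes more easily than you suggest. In the rank-$2$ step of the proof of \Cref{prop:shard1}, write $\mathbf{p}\in\BB^+(W)$ for the positive braid read off the gallery $E_{k_i}'\to E_{k_{i+1}}'$ and $\mathbf{q}$ for that of $E_{k_i}\to E_{k_{i+1}}$, and let $\alpha,\beta\in{\bf S}$ be the labels of $d_{i+1}=e_m$ and $d_i=e_{2m}$. Since $e_1\cdots e_m$ and $e_{2m}e_{2m-1}\cdots e_{m+1}$ are positive minimal galleries with the same endpoints, they yield the same element of $\BB^+(W)$, giving $\mathbf{p}\alpha=\beta\mathbf{q}$. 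The point is that $\mathbf{p}=\mathbf{q}$: the gallery $E_{k_i}\to E_{k_{i+1}}$ is the image of $E_{k_i}'\to E_{k_{i+1}}'$ under left multiplication by the reflection through $H_\Sigma$ (as $E_{k_i}$ is $E_{k_i}'$ reflected across $H_\Sigma$, and likewise for $E_{k_{i+1}}$), and left multiplication by a fixed element of $W$ preserves edge labels, which record the simple generator applied on the right. Hence $\mathbf{p}\alpha\mathbf{p}^{-1}=\beta$; and because $\inv(B,E_{k_i}')$ is disjoint from $\AA_i$, the concatenated gallery from $B$ through $E_{k_i}'$ to $E_{k_{i+1}}'$ is minimal, so $\lift(E_{k_{i+1}}')=\lift(E_{k_i}')\,\mathbf{p}$. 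Conjugating gives $\lift(E_{k_{i+1}}')\,\alpha\,\lift(E_{k_{i+1}}')^{-1}=\lift(E_{k_i}')\,\beta\,\lift(E_{k_i}')^{-1}$, and chaining over $i$ proves the remaining direction with no appeal to centralizers, torsion-freeness, or square roots.
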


For $u,v\in W$, we let $u^v=v^{-1}uv$. Write $\mathbf{c}=(\s_1,\ldots,\s_n)$ for an ordering of the elements of $\bf S$. For $\s \in{\bf S}$ and a positive braid $\w=\s_{i_1} \cdots \s_{i_k}\in \BB^+(W)$ with projection $w=\varphi(\w) \in W$, let $\s^\w = (t,j)$, where $t=s^w$ and $j$ counts the number of times $t$ appears in the sequence $s^{s_{i_k}},s^{s_{i_k}s_{i_{k-1}}},\ldots,s^{s_{i_k}s_{i_{k-1}}\cdots s_1}$.

In~\cite{galashin2022rational}, a new set of noncrossing $W$-Catalan objects was introduced as the set of subwords of $\mathbf{c}^{h+1}$ that represent the full twist $\Delta^2=\w_\circ^2$ and satisfy an additional \emph{Deodhar condition}. When interpreted in the positive braid monoid $\BB^+(W)$, this Deodhar condition is equivalent to restricting to those $c$-sortable elements $\w$ in the interval $[\id,\w_\circ^2]_{\BB^+}$ with the property that for each descent $\s$ of $\w$, we have $\s^\w = (t,j)$ with $j$ even---this is a nonstandard \emph{Deodhar embedding} of the $c$-sortable elements into the interval $[\id,\w_\circ^2]_{\BB^+}$ (different from the usual lift of $W$ into $\BB^+(W)$).  The second author speculated that restricting the $2$nd $c$-Fuss--Cambrian lattice to the image of this Deodhar embedding would recover the noncrossing partition lattice $\NC(W,c)$.  As the $2$nd $c$-Fuss--Cambrian lattice is a subposet of $\Weak(\BB^+(W))$, it makes sense to generalize this Deodhar embedding of $c$-sortable elements to all elements of $W$.

The \defn{snap map} $\Snap\colon W\to \BB^+(W)$ is our generalization of the Deodhar embedding.  We write $\pop$ for the pop-stack sorting operators on the lattice $\Weak(W)$ and the meet-semilattice $\Weak(\BB^+(W))$, relying on the argument of the operator to indicate the context. For $w\in W$, let $\des(w)$ denote the right descent set of $w$, let $w_\circ(\des(w))$ be the longest element of the parabolic subgroup of $W$ generated by $\des(w)$, and write $\w$ and $\w_\circ(\des(w))$ for the usual lifts of $w$ and $w_\circ(\des(w))$ to $\BB^+(W)$.  Define
\begin{equation}\label{eq:snap}\Snap(w):=\pop(\w) \cdot (\w_\circ(\des(w)))^2.\end{equation}  Since $\Crackle(w)=\pop(\w) \cdot (\w_\circ(\des(w)))^2 \cdot \pop(\w)^{-1}$ in $\PPP^+(W) \subseteq \BB(W)$, it follows that
\[\Snap(w)=\Crackle(w)\pop(\w).\]

\begin{figure}[htbp]
\begin{tikzpicture}[scale=1.5]
\node[circle,draw,thick,minimum size=2.5em,fill=lightgray] (e) at (0,0) {$\id$};

\node (s) at (-1,1) {$\s$};
\node[circle,draw,thick,minimum size=2.5em,fill=lightgray] (ss) at (-2,2) {$\s\s$};
\node (sst) at (-3,3) {$\s\s\t$};
\node (ssts) at (-4,4) {$\s\s\t\s$};

\node (st) at (-1,2) {$\s\t$};
\node[circle,draw,thick,minimum size=2.5em,fill=lightgray] (stt) at (-2,3) {$\s\t\t$};
\node (stts) at (-3,4) {$\s\t\t\s$};
\node (sttst) at (-4,5) {$\s\t\t\s\t$};

\node (sts) at (-1,3) {$\s\t\s$};
\node[circle,draw,thick,minimum size=2.5em,fill=lightgray] (stss) at (-2,4) {$\s\t\s\s$};
\node (stsst) at (-3,5) {$\s\t\s\s\t$};
\node (stssts) at (-4,6) {$\s\t\s\s\t$};

\node (d) at (0,4) {$\Delta$};
\node (ds) at (-1,5) {$\Delta \s$};
\node (dst) at (-1,6) {$\Delta \s\t$};
\node (dsts) at (-1,7) {$\Delta \s\t\s$};
\node[circle,draw,thick,minimum size=2.5em,fill=lightgray] (dd) at (0,8) {$\Delta^2$};
\node (dt) at (1,5) {$\Delta \t$};
\node (dts) at (1,6) {$\Delta \t\s$};
\node (dtst) at (1,7) {$\Delta \t\s\t$};

\node (t) at (1,1) {$\t$};
\node[circle,draw,thick,minimum size=2.5em,fill=lightgray] (tt) at (2,2) {$\t\t$};
\node (tts) at (3,3) {$\t\t\s$};
\node (ttst) at (4,4) {$\t\t\s\t$};

\node (ts) at (1,2) {$\t\s$};
\node[circle,draw,thick,minimum size=2.5em] (tss) at (2,3) {$\t\s\s$};
\node (tsst) at (3,4) {$\t\s\s\t$};
\node (tssts) at (4,5) {$\t\s\s\t\s$};

\node (tst) at (1,3) {$\t\s\t$};
\node[circle,draw,thick,minimum size=2.5em] (tstt) at (2,4) {$\t\s\t\t$};
\node (tstts) at (3,5) {$\t\s\t\t\s$};
\node (tsttst) at (4,6) {$\t\s\t\t\s\t$};

\draw[-,thick] (e) to (t) to (ts) to (tst) to (d) to (ds) to (dst) to (dsts) to (dd);
\draw[-,thick] (e) to (s) to (st) to (sts) to (d) to (dt) to (dts) to (dtst) to (dd);
\draw[-,thick] (s) to (ss) to (sst) to (ssts) to (ds);
\draw[-,thick] (ts) to (tss) to (tsst) to (tssts) to (dts);
\draw[-,thick] (sts) to (stss) to (stsst) to (stssts) to (dsts);
\draw[-,thick] (t) to (tt) to (tts) to (ttst) to (dt);
\draw[-,thick] (st) to (stt) to (stts) to (sttst) to (dst);
\draw[-,thick] (tst) to (tstt) to (tstts) to (tsttst) to (dtst);
\end{tikzpicture}
\caption{The interval $[\id,\Delta^2]_{\BB^+}$ in $\Weak(\BB^+(I_2(4)))$, where $I_2(4)$ is the dihedral group of order $8$ with simple reflections $s$ and $t$. The reflection arrangement of $I_2(4)$ is shown in~\Cref{fig:rank2}.  Circled elements are in the image of $\Snap$.  Gray indicates that the element is the image of a $c$-sortable element under $\Snap$, where $c=st$; see~\Cref{sec:noncrossing}.}
\label{fig:e_to_delta_B}
\end{figure}
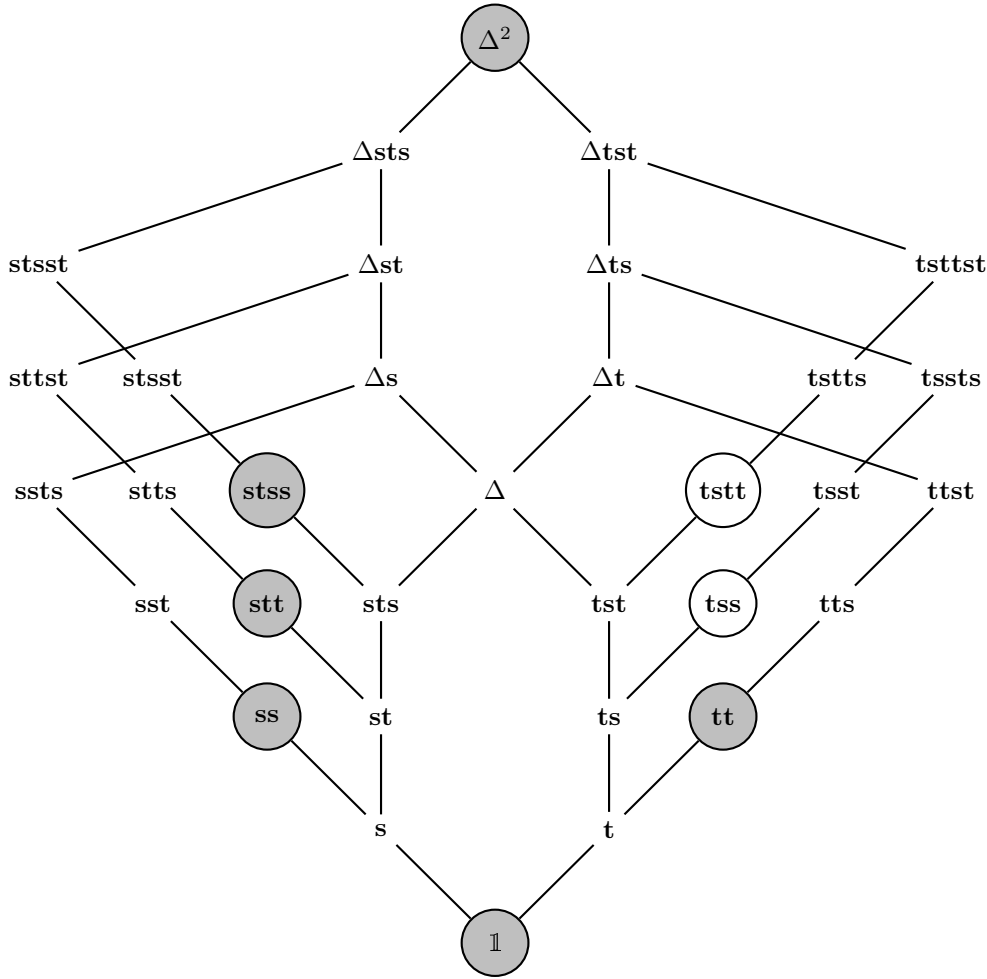

Just as the shard intersection order was characterized via $\pop$ in~\Cref{eq:pop_shard_intersection} and via $\Crackle$ in~\Cref{thm:crackle_hom}, it is also characterized via $\Snap$.

\begin{theorem}\label{thm:snap}
The map $\Snap$ is a poset embedding from $\Shard(W)$ into $[\id,\Delta^2]_{\BB^+}$.
\end{theorem}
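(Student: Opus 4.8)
The plan is to put $\Snap(w)$ into closed form, check that it lands in $[\id,\Delta^2]_{\BB^+}$ and that $\Snap$ is injective, and then match the weak order on the image of $\Snap$ with the shard intersection order via the characterization \eqref{eq:pop_shard_intersection}. Throughout write $D=\des(w)$ and $D'=\des(w')$.

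\emph{Closed form.} First I would check that the pop operator on the meet-semilattice $\Weak(\BB^+(W))$ sends the lift $\w$ of $w$ to $\lift(\pop(w))$: the elements of $\Weak(\BB^+(W))$ covered by $\w$ are exactly the lifts of the elements $ws$ ($s\in D$) covered by $w$, and since every prefix of a lift of an element of $W$ is again the lift of an element of $W$, left-gcds of such lifts in $\BB^+(W)$ coincide with the lifts of the corresponding meets in $\Weak(W)$. Combined with the standard fact that $w=\pop(w)\,w_\circ(D)$ is a reduced factorization with $\des(\pop(w))\cap D=\emptyset$, this gives $\w=\lift(\pop(w))\cdot\w_\circ(D)$ in $\BB^+(W)$, whence
\[
\Snap(w)=\lift(\pop(w))\cdot\big(\w_\circ(D)\big)^2=\w\cdot\w_\circ(D).
\]
Because the left descent set of $\w_\circ(D)$ equals $D$, which is also the right descent set of $\w$, the two-factor word $\w\cdot\w_\circ(D)$ is the left-greedy (Garside) normal form of $\Snap(w)$ when $w\neq\id$. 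In particular $\Snap(w)$ has at most two simple factors, so $\Snap(w)\in[\id,\Delta^2]_{\BB^+}$, and $\w$ is the largest simple left-divisor of $\Snap(w)$. For injectivity, note that from $\mathbf b=\Snap(w)$ one recovers $\pop(w)=\varphi(\mathbf b)$, then $\big(\w_\circ(D)\big)^2=\lift(\pop(w))^{-1}\mathbf b$, whose support (the set of simple reflections occurring in it) is exactly $D$, and finally $w=\pop(w)\,w_\circ(D)$; so $\Snap$ is injective, with image $\{\,\lift(q)\,(\w_\circ(E))^2: q\in W,\ E\subseteq S,\ \des(q)\cap E=\emptyset\,\}$ under the bijection $w\leftrightarrow(\pop(w),\des(w))$.

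\emph{Matching the orders.} It remains to prove $w\preceq w'$ in $\Shard(W)$ iff $\Snap(w)\le\Snap(w')$ in $\Weak(\BB^+(W))$; by \eqref{eq:pop_shard_intersection} the left side is $\Sigma([\pop(w),w])\subseteq\Sigma([\pop(w'),w'])$. Since $\w,\w'$ are the largest simple left-divisors of $\Snap(w),\Snap(w')$, the relation $\Snap(w)\le\Snap(w')$ forces $\w\le\w'$, i.e.\ $w\le w'$ in weak order; conversely $w\preceq w'$ also implies $w\le w'$ in weak order. Granting $w\le w'$ and writing $\w'=\w\,\mathbf q$ with $\mathbf q=\lift(w^{-1}w')$, left-cancellativity in $\BB^+(W)$ turns $\Snap(w)\le\Snap(w')$ into $\w_\circ(D)\le\mathbf q\cdot\w_\circ(D')$, and since $\w_\circ(D)$ is the left-lcm in $\BB^+(W)$ of the atoms $\s$ with $s\in D$, this is equivalent to the purely combinatorial condition
\[
D=\des(w)\ \subseteq\ \{\,s\in S:\ \s\text{ left-divides }\mathbf q\cdot\w_\circ(D')\,\}.
\]
I would then identify this last condition with $\Sigma([\pop(w),w])\subseteq\Sigma([\pop(w'),w'])$. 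To organize the bookkeeping I would lean on $\Snap(w)=\Crackle(w)\cdot\pop(\w)$ and \Cref{thm:crackle_hom}: by that theorem, $w\preceq w'$ is equivalent to the existence of a word over $\TTsha$ representing $\Crackle(w')$ with a prefix representing $\Crackle(w)$, and unwinding this through the relation between $\Snap$ and $\Crackle$ — together with the gallery description of the shard loops — reduces the matching to a left-divisibility statement inside the parabolic braid monoid $\BB^+(W_{D'})$, where the full twist $\big(\w_\circ(D')\big)^2$ is central; the content is that the ``leftover'' element $\pop(\w)^{-1}\cdot(\text{the relevant suffix})\cdot\pop(\w')$ is a positive braid (respectively, a product of shard loops), which is exactly a divisibility among sub-full-twists of $\big(\w_\circ(D')\big)^2$.

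\emph{The main obstacle} is this final matching: translating the braid condition $\w_\circ(D)\le\mathbf q\,\w_\circ(D')$ into the shard-intersection inclusion $\Sigma([\pop(w),w])\subseteq\Sigma([\pop(w'),w'])$. The difficulty is that the two intervals $[\pop(w),w]$ and $[\pop(w'),w']$ sit at different basepoints $\pop(w)\neq\pop(w')$, so their sets of shard labels have to be transported between basepoints before they can be compared — and that transport is precisely what the factor $\mathbf q=\lift(w^{-1}w')$ and the conjugation relating $\Snap$ to $\Crackle$ encode, which is why the argument must take place inside $\BB^+(W_{D'})$ and use centrality of its full twist. Everything else — the closed form, membership in $[\id,\Delta^2]_{\BB^+}$, injectivity, and the reduction of $\Snap(w)\le\Snap(w')$ to the divisibility condition above — is formal Garside-theoretic bookkeeping.
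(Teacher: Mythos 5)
Your setup is correct and your Garside-theoretic reduction is genuine progress, but the proposal has a real gap in both directions of the equivalence, and you acknowledge this yourself. Concretely: you have correctly reduced $\Snap(w)\leq\Snap(w')$ to the two conditions $w\leq w'$ (from comparing Garside heads) and $\w_\circ(D)\leq\mathbf q\,\w_\circ(D')$ (from left-cancellation), and further to the condition that every $\s$ with $s\in\des(w)$ left-divides $\mathbf q\,\w_\circ(D')$. What remains is to prove that this last condition, together with $w\leq w'$, is equivalent to $w\preceq w'$ — and that is the entire mathematical content of the theorem. The ``lean on $\Crackle$ and transport between basepoints'' sketch at the end does not actually carry out this step: once you rewrite $\Snap(v)=\Crackle(u)\,c\,\pop(\v)$ with $c$ a product of shard loops, you need $\pop(\u)^{-1}c\,\pop(\v)$ to be a positive braid, which is precisely the statement to be proved, not a reformulation that makes it easier. (There is also a subtlety that shard loops are elements of the pure braid group but are not positive in $\BB^+(W)$, so positivity of such a conjugated expression is far from formal.)

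The paper closes both gaps with tools you do not invoke. For $\Snap(u)\leq\Snap(v)\Rightarrow u\preceq v$, it uses \Cref{lem:inv_well_defined} and \Cref{lem:Inv} to attach an inversion \emph{multiset} $\Inv(\cdot)$ to positive braids: $\Inv(\Snap(w))$ records $\inv(w)$ with multiplicity one and the reflections of $\langle\cov(w)\rangle$ with multiplicity two, so multiset containment delivers both $\inv(u)\subseteq\inv(v)$ and $\langle\cov(u)\rangle\subseteq\langle\cov(v)\rangle$, which by \eqref{eq:shard_reformulation} is exactly $u\preceq v$. For the converse $u\preceq v\Rightarrow\Snap(u)\leq\Snap(v)$, the paper uses the basepoint-transport isomorphism $\omega_v$ from \Cref{thm:two_ways} (together with \Cref{lem:omega_increasing}) to produce elements $x,y$ satisfying $yx=w_\circ(\des(u))\,y$ with the right length-additivity, and then promotes this identity to $\BB^+(W)$ to build the required positive braid from $\Snap(u)$ to $\Snap(v)$; this is exactly the basepoint-transport mechanism you identify as the obstruction, made precise. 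Your Garside reduction is a nice alternative framing (the paper does not phrase things via Garside heads or normal forms), and your injectivity argument and the containment in $[\id,\Delta^2]_{\BB^+}$ via canonical length two are fine, but as written the proposal does not constitute a proof.
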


\Cref{thm:snap} is illustrated in \Cref{fig:e_to_delta_B}. 

\subsection{Pow \texorpdfstring{(\Cref{sec:pow})}{}}
In her history ``The Untold Tale of Pow!, the Fourth Rice Krispies Elf: A look into the era when the cereal mascots were more than just Snap!, Crackle! and Pop!''~\cite{smith}, Smith writes: 
\begin{quotation}
``Lost in the shuffle, however, was a fourth Rice Krispies elf named Pow! His short life is a time-capsule of an era when everyone was dreaming big.''
\end{quotation}
Inspired by this fourth mascot, we introduce a map $\Pow: \RR \to \PPP^+(\HH,B)$ in the general setting when $\HH$ is a central irreducible real hyperplane arrangement. Given a region $C\in\RR$ and a positive minimal gallery \[B = C_0 \xrightarrow{e_1} C_{1} \xrightarrow{e_{2}} \cdots \xrightarrow{e_{k-1}} C_{k-1}\xrightarrow{e_k}C_k=C,\] we let \[\Pow(C):=\tt_{\Sigma(e_k)}\tt_{\Sigma(e_{k-1})}\cdots \tt_{\Sigma(e_1)}.\] 
(We prove in \Cref{prop:pow_well_defined} that $\Pow$ is well-defined.)  Just as $\Crackle$ embeds the ``short and wide'' poset $\Shard(\HH,B)$ into the ``tall and wide'' interval $[\id,\Delta^2]_{\PPP^+}$, the map $\Pow$ embeds the ``tall and slender'' poset $\Weak(\HH,B)$ into $[\id,\Delta^2]_{\PPP^+}$. 

\begin{theorem}\label{thm:pow_hom}
The map $\Pow$ is a poset embedding from $\Weak(\HH,B)$ into $[\id,\Delta^2]_{\PPP^+}$.
\end{theorem}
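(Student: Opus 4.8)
The plan is to show three things in sequence: that $\Pow$ is well-defined, that it is order-preserving, and that it reflects the order (hence is injective and an embedding onto its image). For well-definedness, one must check that the expression $\tt_{\Sigma(e_k)}\cdots\tt_{\Sigma(e_1)}$ does not depend on the choice of positive minimal gallery $B=C_0\to\cdots\to C_k=C$. Since any two positive minimal galleries from $B$ to $C$ differ by a sequence of elementary moves across rank-$2$ subarrangements (the ``diamond moves'' in the weak order), it suffices to verify invariance under a single such move. Geometrically, a diamond move replaces one half of the boundary of a rank-$2$ face with the other half; in $\Sal(\HH)$ the two halves are homotopic rel endpoints, and I would translate this into an identity among the corresponding shard loops $\tt_\Sigma$ in $\pi_1(\Sal(\HH),B)$. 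This is essentially the same computation that underlies \Cref{thm:shard_generators} and the homotopies attached in $\Sal(\HH)$ (cf.\ the $2$-cells in \Cref{fig:rank2}), so I expect it to follow from a localization-to-rank-$2$ argument together with the presentation of the fundamental group.

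Next, to see that $\Pow$ lands in $[\id,\Delta^2]_{\PPP^+}$ and is order-preserving: if $C'\lessdot C$ is a cover in $\Weak(\HH,B)$, choose a positive minimal gallery from $B$ to $C'$ and extend it by the single edge $e$ with $C'\xrightarrow{e}C$; then by definition $\Pow(C)=\tt_{\Sigma(e)}\cdot\Pow(C')$. In particular $\Pow(C')$ is obtained from (a word for) $\Pow(C)$ by deleting the \emph{leftmost} letter. The partial order on $\PPP^+$ was defined via \emph{prefixes}, so I should instead run the gallery in the opposite orientation: reading $\Pow(C)=\tt_{\Sigma(e_k)}\cdots\tt_{\Sigma(e_1)}$, every initial segment $\tt_{\Sigma(e_k)}\cdots\tt_{\Sigma(e_j)}$ is itself of the form $\Pow(\text{some region along a gallery from }B\text{ to }C)$ read backwards — more precisely, one checks that prefixes of the word for $\Pow(C)$ are exactly the elements $\Pow(D)$ as $D$ ranges over the regions on minimal galleries lying weakly below $C$. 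Taking $C=-B$, whose positive minimal galleries have length equal to the number of hyperplanes and whose $\Pow$-image I must identify with $\Delta^2$, shows $\Pow(C)\leq\Delta^2$ for all $C$, and that $C'\leq C$ in $\Weak(\HH,B)$ implies $\Pow(C')\leq\Pow(C)$ in $\PPP^+$. Identifying $\Pow(-B)=\Delta^2$ is a matter of unwinding the definition of the full twist from \Cref{sec:pure_shard} together with the fact that a minimal gallery $B\rightsquigarrow -B$ crosses each hyperplane exactly once; in the Coxeter case this is the statement that the lift of $w_\circ^2$ decomposes accordingly.

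The last and hardest step is to show that $\Pow$ reflects the order: if $\Pow(C')\leq\Pow(C)$ in $\PPP^+$, then $C'\leq C$ in $\Weak(\HH,B)$. The natural approach is to construct a left inverse. The separation set (inversion set) of a region $C$ — the set of hyperplanes separating $B$ from $C$ — determines $C$ and orders $\Weak(\HH,B)$ by inclusion. I would define, for each shard loop $\tt_\Sigma$, the hyperplane $H_\Sigma\supseteq\Sigma$ containing it, and show that for any word $w=\tt_{\Sigma_k}\cdots\tt_{\Sigma_1}$ representing an element of the form $\Pow(D)$, the multiset $\{H_{\Sigma_1},\ldots,H_{\Sigma_k}\}$ is in fact a set, equal to the separation set of $D$. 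Granting this, a prefix relation $\Pow(C')\leq\Pow(C)$ forces the separation set of $C'$ to be a subset of that of $C$, hence $C'\leq C$. The crux is the well-definedness of ``$H_\Sigma$ as a function of $\tt_\Sigma$'' and the claim that the separation multiset of $\Pow(D)$ depends only on $D$ and not on the word — this should again reduce, via \Cref{thm:shard_generators}, to the rank-$2$ case, where it can be checked by hand. I expect this reflecting-the-order step, specifically the construction of the hyperplane-separation invariant on $\PPP^+$ and the proof of its word-independence, to be the main obstacle; everything else is bookkeeping with galleries and homotopies in $\Sal(\HH)$.
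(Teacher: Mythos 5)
Your three-step outline matches the paper's, but the details in each step are not quite right, and your sense of where the difficulty lies is inverted relative to the actual proof.

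For order-preservation, the claim that ``every initial segment $\tt_{\Sigma(e_k)}\cdots\tt_{\Sigma(e_j)}$ is itself of the form $\Pow(\text{some region})$ read backwards'' is false: with $\Pow(C)=\tt_{\Sigma(e_k)}\cdots\tt_{\Sigma(e_1)}$, the region $C_{j-1}$ along the gallery has $\Pow(C_{j-1})=\tt_{\Sigma(e_{j-1})}\cdots\tt_{\Sigma(e_1)}$, which is the complementary \emph{suffix}, not the prefix you named (forward or backward). The mechanism that actually repairs the orientation mismatch is the reversal identity of \Cref{lem:reverse_order_pow}: if $f_1,\ldots,f_k$ are the edges of the gallery from $-C$ to $-B$ obtained by negating the one from $B$ to $C$, then $\tt_{\Sigma(e_k)}\cdots\tt_{\Sigma(e_1)}=\tt_{\Sigma(f_1)}\cdots\tt_{\Sigma(f_k)}$. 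Applying the same identity to the truncated gallery through $D=C_m$ shows $\Pow(D)=\tt_{\Sigma(f_1)}\cdots\tt_{\Sigma(f_m)}$, which is an honest prefix of $\Pow(C)=\tt_{\Sigma(f_1)}\cdots\tt_{\Sigma(f_k)}$. You gesture at ``running the gallery in the opposite orientation,'' which is the right instinct, but you never produce the identity that makes it work, and the specific rephrasing you wrote down is incorrect. (This same lemma, incidentally, is what drives the well-definedness step after reducing to rank $2$.)

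For order-reflection, your plan---define a hyperplane-multiset invariant on $\TTsha$-words, prove it is word-independent, show it computes the separation set---is exactly the abelianization argument, just rebuilt from scratch. By \Cref{thm:abelianization}, $H_1(\PP(\HH),B)$ is free abelian on $\{\overline{\tt}_H\}_{H\in\HH}$ and the canonical map sends $\tt_\Sigma\mapsto\overline{\tt}_{H_\Sigma}$; so the multiset you want to attach to a word is automatically a well-defined invariant of the group element (no separate word-independence proof needed), and $\Pow(C)\mapsto\prod_{H\in\inv(C)}\overline{\tt}_H$, whence $\Pow(D)\leq\Pow(C)$ forces $\inv(D)\subseteq\inv(C)$. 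You predicted this to be ``the main obstacle''; in the paper it is the easiest step, a two-line corollary of \Cref{thm:abelianization}. The genuinely content-bearing ingredient in the whole argument is \Cref{lem:reverse_order_pow}, which you did not isolate.
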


\Cref{thm:pow_hom} is illustrated in \Cref{fig:e_to_delta_P2}. 

\subsection{Future Work \texorpdfstring{(\Cref{sec:future})}{}}
The final section of the paper proposes some ideas for further investigation. 

\section{The Salvetti Complex}\label{sec:salvetti}

\subsection{Real Hyperplane Arrangements}
We recall some definitions regarding real hyperplane arrangements, referring the reader to~\cite{stanley2004introduction} for additional details.

A real \defn{hyperplane} is an affine subspace of $\mathbb R^n$ of codimension 1, and a \defn{hyperplane arrangement} (or just \defn{arrangement}, for short) is a set of distinct hyperplanes in $\mathbb R^n$.  A hyperplane is \defn{linear} if it contains the origin, and an arrangement is \defn{central} if all of its hyperplanes are linear. A \defn{subarrangement} of an arrangement $\HH$ is simply a subset of $\HH$. We say $\HH$ is \defn{irreducible} if there do not exist a linear automorphism $\theta$ of $\mathbb R^n$ and a partition $\HH=\HH_1\sqcup\HH_2$ with $\HH_1,\HH_2\neq\emptyset$ such that all of the normal vectors of hyperplanes in $\theta(\HH_1)$ are orthogonal to all of the normal vectors of hyperplanes in $\theta(\HH_2)$. Throughout this paper, we always assume that $\HH$ is a finite central irreducible real hyperplane arrangement in $\mathbb R^n$; note that this forces $\HH$ to contain at least $3$ hyperplanes. 

The \defn{rank} of an arrangement (or subarrangement) is the codimension of the intersection of its hyperplanes.    We call a subarrangement $\AA \subseteq \HH$ \defn{full} if it contains all hyperplanes of $\HH$ containing a particular subset of $\mathbb R^n$.   

A \defn{region} (or \defn{chamber}) of $\HH$ is a connected component of $\mathbb R^n \setminus \bigcup\HH$; we write $\RR=\RR_\HH$ for the set of regions of $\HH$.  We will fix a choice of a \defn{base region} $B$.  A \defn{bounding hyperplane} of a region $C\in\RR$ is a hyperplane in $\HH$ whose intersection with the boundary of $C$ is $(n-1)$-dimensional.  We say $\HH$ is \defn{simplicial} if every region has exactly $n$ bounding hyperplanes.

\subsection{The Poset of Regions}
Let $C, C' \in \RA$ be two regions of $\HH$, and choose points $x \in C$ and $x' \in C'$.  The hyperplanes intersecting the line segment with endpoints $x$ and $x'$ are said to \defn{separate} $C$ from $C'$, and we write $\inv(C,C')$ for the set of all hyperplanes separating $C$ from $C'$. We write $-C$ for the unique region such that $\inv(C,-C)=\HH$.  Let $\inv(C)=\inv(B,C)$, where $B$ is our fixed base region.  The \defn{poset of regions} of $\HH$ is the poset $\Weak(\HH,B):=(\RR,\leq)$, where we write $C\leq C'$ if $\inv(C)\subseteq\inv(C')$.

\begin{theorem}[{\cite[Theorem 3.4]{bjorner1990hyperplane}}]\label{thm:simplicial_lattice}
If $\HH$ is a simplicial hyperplane arrangement, then the poset of regions $\Weak(\HH,B)$ is a lattice.
\end{theorem}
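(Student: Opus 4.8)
\emph{Reduction to meets.} Since $\HH$ is central, $\Weak(\HH,B)$ is finite with least element $B$ (where $\inv(B)=\emptyset$) and greatest element $-B$ (where $\inv(-B)=\HH$). A finite poset with a greatest element in which every two elements admit a meet is automatically a lattice, because the join of $C_1,C_2$ is then the meet of the (nonempty) set of their common upper bounds. So it suffices to produce $C_1\wedge C_2$ for any two regions $C_1,C_2$.

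\emph{Biconvexity and the candidate meet.} Recall from \cite{bjorner1990hyperplane} that $C\mapsto\inv(C)$ is an order embedding of $\Weak(\HH,B)$ into $(2^{\HH},\subseteq)$ whose image is the family of \emph{biconvex} subsets: those $A\subseteq\HH$ such that both $A$ and $\HH\setminus A$ are \emph{convex}, where $A$ is convex if, for every full rank-$2$ subarrangement $\HH'$ and every $H_1,H_2\in A\cap\HH'$, the set $A\cap\HH'$ contains every hyperplane of $\HH'$ lying between $H_1$ and $H_2$. Convexity is preserved under intersection, so $A:=\inv(C_1)\cap\inv(C_2)$ is convex, whence $\HH\setminus A$ is coconvex (its complement is convex). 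Writing $\mathrm{cl}$ for the convex-closure operator (the intersection of all convex supersets), the natural candidate for the meet is the region $D$ with
\[\inv(D)=\HH\setminus\mathrm{cl}(\HH\setminus A).\]
It is routine that this set lies in $A$, is coconvex, and contains every biconvex subset of $A$; hence \emph{as soon as it is also convex} it is biconvex, so it equals $\inv(D)$ for a genuine region $D$, and then $D=C_1\wedge C_2$.

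\emph{The key lemma.} The problem is thereby reduced to the following statement, which is where the hypothesis is used: if $\HH$ is simplicial and $Z\subseteq\HH$ is coconvex, then $\mathrm{cl}(Z)$ is biconvex --- it is convex by construction, so the content is that it remains coconvex. I would prove this by induction on the rank $n$. The rank-$1$ case is vacuous; the rank-$2$ case follows from the explicit list of biconvex sets (in a rank-$2$ arrangement whose $m$ lines are read $L_1,\dots,L_m$ around the base region, these are exactly $\emptyset$, the prefixes $\{L_1,\dots,L_i\}$, and the suffixes $\{L_i,\dots,L_m\}$, so the convex closure of any coconvex set is a prefix, a suffix, or everything). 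For the inductive step I would use that convexity --- and hence the convex-closure operator --- is detected one full rank-$2$ subarrangement at a time, together with the fact that every localization of a simplicial arrangement remains simplicial (applied to flats of codimension $\geq 3$, so that the induction hypothesis is available); the essential remaining content already appears in rank $3$, where one must control how a chain of forced ``betweenness'' insertions at one codimension-$2$ flat can propagate to another.

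\emph{Main obstacle, and a byproduct.} The genuine difficulty is precisely this last rank-$3$ bookkeeping: for non-simplicial arrangements the convex closure of a coconvex set can truly cease to be coconvex --- this is exactly how \cite{bjorner1990hyperplane} exhibit posets of regions that are not lattices --- so one must exploit the rigidity forced by every region having exactly $n$ walls to rule out that a hyperplane pulled into $\mathrm{cl}(Z)$ must later be expelled to restore coconvexity. Everything else in the argument is formal. Finally, running the same lemma with the base region $-B$ in place of $B$ yields all joins directly, giving a second proof that $\Weak(\HH,B)$ is a lattice.
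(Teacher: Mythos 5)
This theorem is cited, not proved, in the paper: the authors simply quote \cite[Theorem~3.4]{bjorner1990hyperplane}. So the relevant comparison is with Björner--Edelman--Ziegler's own argument, which proceeds quite differently from your sketch: they analyze minimal upper bounds of a pair of regions directly and use the fact that the facet normals of a simplicial region span $\mathbb R^n$ to force uniqueness, rather than passing through a convex-closure operator on sets of hyperplanes.

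Your proposal has a genuine gap, and you say so yourself: the entire content of the theorem is pushed into the ``key lemma'' (convex closure of a coconvex set stays coconvex when $\HH$ is simplicial), and for that lemma you only dispose of ranks $1$ and $2$ and then write that the rank-$3$ bookkeeping is ``the genuine difficulty,'' without resolving it. Since non-simplicial rank-$3$ arrangements already furnish counterexamples to both the lemma and the lattice property, this is exactly the step where simpliciality must be exploited, and it cannot be left as a promissory note. There is also a second, more subtle problem with the setup: you ``recall'' from \cite{bjorner1990hyperplane} that $C\mapsto\inv(C)$ maps $\RR$ bijectively onto the family of biconvex subsets of $\HH$. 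The forward containment ($\inv(C)$ is biconvex) is standard, but the surjectivity --- that every biconvex subset is realized as some $\inv(C)$ --- is not a prerequisite you can invoke; for a general central arrangement it is false, and for simplicial arrangements it is essentially equivalent to the lattice statement you are trying to prove (once you have it, you can build $D$ from the biconvex set $\HH\setminus\mathrm{cl}(\HH\setminus A)$ exactly as you propose, but that is the conclusion, not a recalled lemma). So the argument as written is circular at the point where you declare the candidate set ``equals $\inv(D)$ for a genuine region $D$.'' To turn this into a proof you would need to establish both the biconvexity surjectivity and the closure lemma from scratch for simplicial arrangements, which is more or less re-deriving the theorem by a different route than BEZ's.
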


We write $C' \xrightarrow{e} C$ when $e$ is the edge in the Hasse diagram of $\Weak(\HH,B)$ corresponding to a cover relation $C'\lessdot C$.  In this case, we write $H_e$ for the unique hyperplane in $\inv(C)\setminus \inv(C')$, and we call $H_e$ a \defn{lower cover} of $C$. Let $\cov(C)$ be the set of lower covers of $C$.

\subsection{The Salvetti Complex}

Let $\HH = \{H_i\}_{i=1}^N$. The \defn{Salvetti complex} is a combinatorial model for the complement in $\mathbb C^n$ of the complexified hyperplane arrangement $\HH_\CC$~\cite{salvetti1987topology}. Choose $\alpha_i$ to be a normal vector to $H_i$. We write $Z(\HH)$ for the \defn{dual zonotope} of $\HH$, defined as the Minkowski sum $\alpha_1+\cdots+\alpha_N$.  As a polytope, the zonotope $Z(\HH)$ comes equipped with the structure of a CW complex.  For $C \in \RR$, write $Z(\HH,C)$ for an oriented copy of $Z(\HH)$, where each face of dimension at least $1$ is given an orientation pointing from $C$ to $-C$.  The Salvetti complex is then the CW complex defined as the union of the CW complexes $Z(\HH,C)$ over all $C \in \RR$:
\begin{equation}
    \Sal(\HH) := \bigcup_{C \in \RR} Z(\HH,C).
\end{equation}
This is not a disjoint union: for any $C,C'\in\RR$, we identify faces in $Z(\HH,C)$ and $Z(\HH,C')$ that have equivalent orientations. Vertices of $\Sal(\HH)$ correspond to regions of $\HH$, so it makes sense to think of our fixed base region $B$ as a point in $\Sal(\HH)$. 

\begin{theorem}[{\cite{salvetti1987topology}}]
    If $\HH$ is a central real arrangement, then $\Sal(\HH)$ embeds in $\CC^n \setminus \HH_\CC$ and is a deformation retract of $\CC^n \setminus \HH_\CC$.  Furthermore, \[ \pi_1(\Sal(\HH),B)= \pi_1(\CC^n \setminus \HH_\CC,x_B),\] where $B \in \RR$ and $x_B\in B$. 
\label{thm:salvetti_kpi1}
\end{theorem}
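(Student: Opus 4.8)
The plan is to present $\CC^n\setminus\HH_\CC$, up to deformation retraction, as a CW complex whose cells match the faces of the oriented dual zonotopes assembled into $\Sal(\HH)$; the statement about fundamental groups is then immediate, because a deformation retract is a homotopy equivalence and, by construction, the $1$-skeleton of the retract is the graph obtained from the Hasse diagram of $\Weak(\HH,B)$ by adjoining a reversed copy of each edge, with the basepoint $B$ a vertex of both complexes.

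First I would separate real and imaginary parts. Writing $z=x+iy$ identifies $\CC^n$ with $\RR^n\oplus i\RR^n$, and linearity of each $H_i$ gives $H_{i,\CC}=H_i\oplus iH_i$, whence
\[
\CC^n\setminus\HH_\CC=\{\,x+iy:\text{ no hyperplane of }\HH\text{ contains both }x\text{ and }y\,\}.
\]
Taking products of the face stratification of $\HH$ on the two factors decomposes $\CC^n$ into the relatively open polyhedral cones $F\times G$ with $F,G$ faces of $\HH$, and $\HH_\CC=\bigcup_i(H_i\times H_i)$ is a union of these; hence $M:=\CC^n\setminus\HH_\CC$ is the union of the \emph{admissible} pieces $F\times G$, namely those for which the Tits product $F\circ G$ is a chamber (equivalently, no $H_i$ contains both $F$ and $G$). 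Since $M$ is invariant under positive scaling, it deformation retracts radially onto its intersection with a large closed ball centered at the origin, and intersecting the decomposition above with that ball turns it into a finite regular CW complex in which $\HH_\CC$ is a subcomplex.

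Next I would build an explicit deformation retraction onto a copy of $\Sal(\HH)$. One route is Salvetti's piecewise-linear flow: on an admissible point $z=x+iy$ with $x\in F$ and $y\in G$, slide $x$ toward a fixed interior point of the chamber $F\circ G$ while contracting the component of $y$ tangent to $F$; this pushes $z$ into the cell indexed by the chamber $F\circ G$ together with the face supporting $G$, a cell whose dimension equals the codimension of that face. One then verifies that these target cells are exactly the faces of $\bigcup_{C\in\RR}Z(\HH,C)$, glued according to their $C$-to-$(-C)$ orientations --- that is, exactly $\Sal(\HH)$ as recalled in \Cref{subsec:shards_and_loops}. An equivalent route recasts the same idea as an acyclic (discrete Morse) matching on the poset of admissible pieces, pairing each admissible $F\times G$ with $F$ not a chamber to the admissible piece obtained by moving $F$ one step toward $F\circ G$, so that the pieces with $F$ a chamber are exactly the critical cells. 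Either way, following the resulting retraction by the radial retraction of the previous paragraph yields simultaneously the embedding $|\Sal(\HH)|\hookrightarrow\CC^n\setminus\HH_\CC$, the deformation retraction, and the equality $\pi_1(\Sal(\HH),B)=\pi_1(\CC^n\setminus\HH_\CC,x_B)$.

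The main obstacle is this middle step. A cell of $\Sal(\HH)$ has dimension equal to a codimension and so is typically far smaller than $\dim F+\dim G$, so the retraction must collapse many directions at once; checking that the flow is continuous and compatible with every gluing --- or that the matching is well defined on admissible pieces and acyclic --- requires careful control of the Tits face semigroup and of the cone structure. Subtler still is the identification of the critical complex: one must verify that it agrees with the oriented-dual-zonotope complex of \Cref{subsec:shards_and_loops} not merely in its number of cells in each dimension but cell-for-cell and attaching-map-for-attaching-map. Once that identification is in hand, the embedding, the deformation retraction, and the fundamental-group statement all follow formally.
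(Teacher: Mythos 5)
The paper cites Salvetti's 1987 paper for this theorem and gives no in-paper proof, so your attempt is being compared against Salvetti's original argument rather than anything internal to this paper. Your framework --- decomposing $\CC^n$ into strata $F\times G$ via the real and imaginary faces, characterizing the complement via admissibility of the Tits product $F\circ G$, radially retracting, and then collapsing combinatorially onto $\Sal(\HH)$ --- is precisely the strategy used by Salvetti and refined later by Gelfand--Rybnikov, Bj\"orner--Ziegler, Paris, and Delucchi. The first two steps are correct as you state them: a point $x+iy$ avoids $\HH_\CC$ if and only if no hyperplane contains both $x$ and $y$, which on strata is exactly the condition that $F\circ G$ is a chamber.

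Both of your proposed routes for the collapse, however, have concrete gaps. The Morse rule is not a partial matching: along any saturated chain $F_0\lessdot F_1\lessdot\cdots\lessdot F_k$ inside the interval $[F_0, F_0\circ G]$ your rule simultaneously pairs $(F_1,G)$ downward with $(F_0,G)$ and upward with $(F_2,G)$, which a matching cannot do. And even granting a repair, the cells you call critical (those with $F$ a chamber) have dimension $n+\dim G$ and are indexed by \emph{arbitrary} pairs (chamber, face), hence occupy dimensions $n,\dots,2n$, whereas $\Sal(\HH)$ has dimension $n$ and its cells are indexed only by pairs $(C,K)$ with $K\leq\overline{C}$; the critical complex cannot be $\Sal(\HH)$. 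In the flow route, the target cell you identify --- ``the chamber $F\circ G$ together with the face supporting $G$'' --- is generally not a Salvetti-cell index, since the support face of $G$ need not lie in $\overline{F\circ G}$ (already fails for $F$ and $G$ on opposite sides of a single hyperplane). The stratum $F\times G$ should retract onto the cell indexed by $\bigl(F\circ G,\,F\bigr)$, which is always well defined since $F\leq\overline{F\circ G}$; showing that a flow realizing the fibers of $(F,G)\mapsto(F\circ G,F)$ is continuous and compatible with the face identifications is precisely the heart of Salvetti's proof. You correctly flag that obstacle, but the misidentified target and the invalid matching mean the plan as written cannot yet be carried out.
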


Salvetti further described the abelianization $H_1(\PP(\HH),B)$ of the group $\pi_1(\PP(\HH),B)$ (see also~\Cref{thm:gens_to_other_shards} below).

\begin{theorem}[{\cite{salvetti1987topology}}]  \label{thm:abelianization}
  The abelianization $H_1(\PP(\HH),B)$ of $\pi_1(\PP(\HH),B)$ is a free abelian group with generating set $\{\overline{\tt}_H\}_{H \in \HH}$ indexed by the hyperplanes in $\HH$. The natural homomorphism $\pi_1(\PP(\HH),B) \to H_1(\PP(\HH),B)$ sends $\tt_e$ to $\overline{\tt}_{H_e}$.
\end{theorem}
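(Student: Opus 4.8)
The plan is to reduce the statement to two facts about Salvetti's model: that in homology each generating loop $\tt_e$ of $\pi_1(\PP(\HH),B)$ becomes a hyperplane meridian, and that the resulting $N$ meridian classes are free. I would start from the Hurewicz isomorphism identifying $H_1(\PP(\HH),B)$ with the abelianization of $\pi_1(\PP(\HH),B)$; by the discussion following \Cref{eq:tt}, this group is generated by the loops $\tt_e$ as $C'\xrightarrow{e}C$ ranges over the edges of $\Weak(\HH,B)$. It then suffices to prove (i) that the class of $\tt_e$ in $H_1$ depends only on the hyperplane $H_e$, and (ii) that the $N$ classes $\overline{\tt}_H$ ($H\in\HH$) obtained this way are linearly independent.

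For (i) I would work inside the embedding $\PP(\HH)\hookrightarrow\CC^n\setminus\HH_\CC$ provided by \Cref{thm:salvetti_kpi1}. In this model the two $1$-cells $e$ and $e^*$ joining adjacent regions $C'$ and $C$ are the two arcs of a small complex disk transverse to the shared wall $H_e$, so the bigon $ee^*$ is freely homotopic in the complement to a small positively oriented meridian around $H_e$; since $\tt_e$ is a conjugate of $ee^*$, its homology class is that meridian class. Because the complexified hyperplane $H$, with the other complexified hyperplanes removed, is connected, all small meridians around $H$ are homotopic in $\CC^n\setminus\HH_\CC$, so this class, which we denote $\overline{\tt}_H$, depends only on $H$. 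In particular $H_1(\PP(\HH),B)$ is generated by $\{\overline{\tt}_H\}_{H\in\HH}$ and $\tt_e\mapsto\overline{\tt}_{H_e}$.

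For (ii) I would introduce the coordinate map $f=(\ell_1,\dots,\ell_N)\colon\CC^n\setminus\HH_\CC\to(\CC^*)^N$ built from linear forms $\ell_i$ with $\ker\ell_i=H_i$. A small meridian of $H_i$ based at a smooth point of $H_i$ winds once around $0$ in the $i$-th coordinate and not at all in the others, so $f_*$ sends $\overline{\tt}_{H_i}$ to $\pm$ the $i$-th standard generator of $H_1((\CC^*)^N)\cong\mathbb{Z}^N$. Thus $f_*\colon H_1(\PP(\HH),B)\to\mathbb{Z}^N$ is surjective and carries the generating set $\{\overline{\tt}_H\}$ bijectively onto a basis of $\mathbb{Z}^N$. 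Finally I would invoke the elementary fact that a finitely generated abelian group which is generated by $N$ elements and surjects onto $\mathbb{Z}^N$ must be free of rank $N$ (split the surjection to write the group as $\mathbb{Z}^N\oplus K$ with $K$ finitely generated, then compare minimal numbers of generators to force $K=0$); hence $f_*$ is an isomorphism and $\{\overline{\tt}_H\}_{H\in\HH}$ is a free basis, which together with (i) gives the theorem.

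The main obstacle is step (i): justifying rigorously that the Salvetti $1$-cell word $ee^*$ represents a meridian of $H_e$ requires unwinding the explicit cell structure of $\PP(\HH)$ and its placement in $\CC^n\setminus\HH_\CC$---this is the geometric heart of Salvetti's construction, and is where I would lean on \cite{salvetti1987topology}. A minor secondary point is the orientation bookkeeping in $f_*$, which must be fixed consistently so that $\tt_e$ (rather than its inverse) is sent to $\overline{\tt}_{H_e}$.
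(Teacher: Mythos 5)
The paper cites this theorem to Salvetti \cite{salvetti1987topology} and gives no proof of its own, so there is nothing internal to compare against. Your argument is correct, but it takes a different route from Salvetti's. Salvetti's argument is essentially combinatorial and is already implicit in the paper's \Cref{thm:gens_to_other_shards,thm:relations}: he presents $\pi_1(\Sal(\HH),B)$ with generating set $\TT_\galb=\{\bb_1,\dots,\bb_N\}$ (one generator per hyperplane, read off a gallery from $B$ to $-B$) and relations $[\AA]_{\TT_\galb}$; after abelianizing, each relation in $[\AA]_{\TT_\galb}$ becomes vacuous, so $H_1$ is free abelian on the images $\{\overline{\bb}_i\}$, and since \Cref{thm:gens_to_other_shards} exhibits every $\tt_e$ as a conjugate of the $\bb_k$ with $H_{e_k}=H_e$, one obtains $\tt_e\mapsto\overline{\tt}_{H_e}$. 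You instead argue topologically inside $\CC^n\setminus\HH_\CC$: push $\tt_e$ to a meridian of $H_e$, use that the stratum $H_\CC\setminus\bigcup_{H'\neq H}H'_\CC$ is connected (real codimension $\ge 2$) to identify all meridians of a fixed $H$ in homology, and then detect freeness via the coordinate map $f=(\ell_1,\dots,\ell_N)\colon\CC^n\setminus\HH_\CC\to(\CC^*)^N$ built from defining linear forms, in the Brieskorn--Orlik--Solomon style. Both approaches are standard; yours is independent of the explicit presentation in \Cref{thm:relations}, while Salvetti's stays entirely inside the combinatorics of the CW model. Your group-theoretic closing step (a finitely generated abelian group generated by $N$ elements and surjecting onto $\mathbb{Z}^N$ is free of rank $N$) is correct. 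The one place you flag as leaning on \cite{salvetti1987topology} — that the bigon $ee^*$, transported along the embedding $\Sal(\HH)\hookrightarrow\CC^n\setminus\HH_\CC$ of \Cref{thm:salvetti_kpi1}, is a positively-oriented meridian of $H_e$ — is genuinely the geometric content of that embedding, so the deferral is appropriate rather than a gap.
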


A direct description of the restriction of $\Sal(\HH)$ to its 1-skeleton can be built directly from the Hasse diagram of $\Weak(\HH,B)$ (see~\cite[Part Two]{salvetti1987topology}).  The vertices are given by the regions of $\HH$.  The edges are given by the cover relations of the Hasse diagram of $\Weak(\HH,B)$ as follows.  For each cover relation $C' \lessdot C$ in $\Weak(\HH,B)$, introduce an oriented edge $e$ from $C'$ to $C$ and an oriented edge $e^*$ from $C$ to $C'$.   We extend the notation $C' \xrightarrow{e} C$ to accommodate these extra (oriented) edges using $C \xrightarrow{e^*} C'$. Note that $e^*$ is not the same as $e^{-1}$.  We will always use un-starred letters for edges oriented away from $B$ and starred letter for edges oriented toward $B$.

Given arbitrary regions $C,C'\in\RA$, we define a \defn{gallery} from $C'$ to $C$ to be a sequence of edges (of the form $e$, $e^*$, $e^{-1}$, and $(e^*)^{-1}$) that starts at $C'$ and ends at $C$.  A gallery from $C'$ to $C$ is \defn{positive} if it only uses edges of the form $e$ and $e^*$ (not $e^{-1}$ or $(e^*)^{-1}$). The gallery is \defn{minimal} if its length is equal to $|\inv(C',C)|$, and it is called a \defn{loop} if $C=C'$.  If $\alpha$ and $\beta$ are galleries such that the ending point of $\alpha$ is the starting point of $\beta$, then $\alpha\cdot\beta$ is the gallery that we traverse by first traversing $\alpha$ and then traversing $\beta$.  
We say two galleries $\alpha,\beta$ in $\Sal(\HH)$ are \defn{homotopic} and write $\alpha \cong \beta$ if $\alpha$ can be obtained from $\beta$ by repeated insertion or deletion of boundary paths of 1-cells and 2-cells in $\Sal(\HH)$.  A gallery is minimal if and only if it has the shortest length among all galleries in its homotopy class.  Any two positive minimal galleries from a region $C'$ to a region $C$ are homotopic, and we write $\gal(C',C)$ for a choice of one such gallery.  We abbreviate the concatenation of a sequence of positive minimal galleries with the notation $\gal(C_1,C_2,C_3\ldots,C_{k-1},C_k)=\gal(C_1,C_2)\cdot \gal(C_2,C_3)\cdot \cdots\cdot\gal(C_{k-1},C_k)$.

\subsection{Generators and Relations of the Fundamental Group}\label{subsec:generators_and_relations}

This subsection follows~\cite[Part Two]{salvetti1987topology}. If $C' \xrightarrow{e} C$, we define the corresponding loop $\tt_e$ by \[\tt_e := \gal(B,C') \cdot e e^* \cdot \gal(B,C')^{-1}\in\pi_1(\Sal(\HH),B).\]
 Note that $\tt_e$ does not depend on the choice of the minimal gallery $\gal(B,C')$. Write $\TT_{\mathrm{edge}}$ for the set of all such loops $\tt_e$; this set generates $\pi_1(\PP(\HH),B)$, but it is redundant.  It turns out to be enough to reduce to a single generator from each hyperplane, as we now explain. 

Fix a positive minimal gallery \[\galb=\left(B=C_0 \xrightarrow{e_1} C_1 \xrightarrow{e_2} C_2 \xrightarrow{e_3} \cdots \xrightarrow{e_N} C_N=-B\right)\] from $B$ to $-B$ in $\Sal(\HH)$.  Writing $\bb_i:=\tt_{e_i}$ and $H_i:=H_{e_i}$ specifies the subset $\TT_\galb=\{\bb_i\}_{i=1}^N \subseteq \TT_{\mathrm{edge}}$
and totally orders the hyperplanes by \[H_1 <_\galb H_2 <_\galb \cdots <_\galb H_N.\]

\begin{example}
\label{ex:rank2_1}
	Continuing the example drawn in~\Cref{fig:rank2}, we fix \[\galb=\left(B \xrightarrow{e_1} C_1 \xrightarrow{e_2} C_2 \xrightarrow{e_3} C_3 \xrightarrow{e_4} -B\right),\] which gives the loops in $\TT_\galb$:
	\begin{align*}
		\bb_1&=\tt_{e_1}=e_1e_1^*,& 
		\bb_2&=\tt_{e_2}=e_1e_2e_2^*e_1^{-1},\\
		\bb_3&=\tt_{e_3}=e_1e_2e_3e_3^*e_2^{-1}e_1^{-1}, &
		\bb_4&=\tt_{e_4}=e_1e_2e_3e_4e_4^*e_3^{-1}e_2^{-1}e_1^{-1}.
	\end{align*} 
\end{example}

The next theorem shows that any generator in $\TT_{\mathrm{edge}}$ is conjugate to an element of $\TT_\galb$.

\begin{theorem}[{\cite[Lemma 12, Corollary 12]{salvetti1987topology}}]\label{thm:gens_to_other_shards}
For any positive minimal gallery $\galb$ from $B$ to $-B$, $\TT_\galb$ is a generating set of $\pi_1(\PP(\HH),B)$.  Specifically, for any edge $D' \xrightarrow{e} D$, there is a unique edge $C_{k-1} \xrightarrow{e_k} C_k$ such that $H_{e_k}=H_e$, and we have \[\tt_e = \left(\bb_{i_s}\bb_{i_{s-1}}\cdots\bb_{i_1} \right)^{-1} \bb_k\left(\bb_{i_s}\bb_{i_{s-1}}\cdots\bb_{i_1}   \right), \] where $i_1<\cdots<i_s$ are the elements $i$ of $\{1,\ldots,k-1\}$ such that $H_i\not\in\inv(D)$.
\end{theorem}

\begin{example}\label{ex:rank2_2}
Continuing~\Cref{ex:rank2_1}, we have
\begin{align*}
	\tt_{e_5}&=\bb_{1}, &
	\tt_{e_6}&=\bb_{1}^{-1}\bb_{2}\bb_{1}, \\
	\tt_{e_7}&=\bb_{1}^{-1}\bb_{2}^{-1}\bb_{3}\bb_{2}\bb_{1}, &
	\tt_{e_8}&=\bb_{1}^{-1}\bb_{2}^{-1}\bb_{3}^{-1}\bb_{4}\bb_{3}\bb_{2}\bb_{1}=\bb_{4},
\end{align*}
where the identity $\bb_{1}^{-1}\bb_{2}^{-1}\bb_{3}^{-1}\bb_{4}\bb_{3}\bb_{2}\bb_{1}=\bb_{4}$ follows from \Cref{thm:relations} below.
\end{example}

Having found a set $\TT_\galb$ of generators for the group $\pi_1(\Sal(\HH),B)$, we now need relations.  To present $\pi_1(\Sal(\HH),B)$, we need one family of relations for each full rank-2 subarrangement $\AA$ of $\HH$. 

\begin{theorem}[{\cite[Page 616]{salvetti1987topology}}]
Fix a positive minimal gallery $\galb$ from $B$ to $-B$ in $\PP(\HH)$.  For each full rank-2 subarrangement $\AA$ of $\HH$, choose one 2-cell in $\PP(\HH)$ with edges named $e_1,e_2,\ldots,e_m$ and $e_{m+1},\ldots,e_{2m}$ as in~\Cref{fig:rank2} so that $\{H_{e_i}\}_{i=1}^m = \AA$ and $H_{e_1} <_\galb \cdots <_\galb H_{e_m}$.  Write $[\AA]_{\TT_\galb}$ for the family of relations \[\tt_{e_m}\cdots \tt_{e_2}\tt_{e_1}=\tt_{e_1}\tt_{e_m}\cdots\tt_{e_2}=\cdots=\tt_{e_{m{-}1}} \cdots \tt_{e_2}\tt_{e_1}\tt_{e_m},\] where each $\tt_{e_i}$ has been rewritten using the elements of $\TT_{\galb}$ using~\Cref{thm:gens_to_other_shards}.  Then \[\pi_1(\PP(\HH),B) = \left\langle \TT_{\galb} : [\AA]_{\TT_\galb} \right\rangle,\] where the relations range over all full rank-2 subarrangements $\AA \subseteq \HH$.
\label{thm:relations}
\end{theorem}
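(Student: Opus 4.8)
The plan is to read the presentation directly off the CW structure of $\Sal(\HH)$. By \Cref{thm:salvetti_kpi1} the group $\pi_1(\Sal(\HH),B)$ already \emph{is} the fundamental group of the complexified complement, and $\pi_1$ of a connected CW complex depends only on its $2$-skeleton, so it suffices to write down the standard (van Kampen) presentation coming from the $1$- and $2$-cells of $\Sal(\HH)$. First I would choose a maximal tree $T$ in the $1$-skeleton and take one generator for each $1$-cell outside $T$ and one relator for the boundary word of each $2$-cell; since each loop $\tt_e$ of \Cref{eq:tt} is visibly obtained by running along a tree-path to $C'$, then along $e\,e^*$, then back, the full set $\TT_{\mathrm{edge}}$ is a (redundant) generating set and every $2$-cell relator is automatically a word in the $\tt_e$.

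The second step is to understand the $2$-cells. Every $2$-cell of $\Sal(\HH)=\bigcup_C Z(\HH,C)$ is a $2$-dimensional face of one of the oriented dual zonotopes $Z(\HH,C)$; such a face is a centrally symmetric $2m$-gon, and the hyperplanes $H_i$ whose summand $\alpha_i$ lies in the plane it spans form exactly a full rank-$2$ subarrangement $\AA$ with $|\AA|=m$, every such $\AA$ arising this way (in general from several faces, one for each codimension-$2$ face of the arrangement inside the flat $\bigcap\AA$ and each incident chamber). Orienting the boundary of such a $2$-cell from $C$ to $-C$ as prescribed and reading its two halves yields precisely a homotopy between two positive minimal galleries of the shape pictured in \Cref{fig:rank2}; basing this boundary loop at an incident chamber, conjugating it to $B$ by a minimal gallery, and feeding it through the definition of $\tt_e$ turns it into one of the cyclic "rotation" equalities appearing in $[\AA]_{\TT_\galb}$. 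The care here is purely bookkeeping: tracking starred versus un-starred edges so the word comes out in the displayed cyclic form, and recording that many $2$-cells are attached to the same rank-$2$ flat.

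The third step is to shrink the generating set to $\TT_\galb$ and prune the relators. By \Cref{thm:gens_to_other_shards} every $\tt_e$ equals an explicit conjugate $(\bb_{i_s}\cdots\bb_{i_1})^{-1}\bb_k(\bb_{i_s}\cdots\bb_{i_1})$ of some $\bb_k\in\TT_\galb$; substituting these rewrites each $2$-cell relator as a word over $\TT_\galb$, which is exactly the meaning of the phrase "each $\tt_{e_i}$ has been rewritten using the elements of $\TT_\galb$" in the statement, and it also lets one discard the tree-edge relators. What remains---and this is the part I expect to be the main obstacle, being the combinatorial core of Salvetti's argument---is a Tietze-transformation argument showing that, once the relators $[\AA']_{\TT_\galb}$ over the \emph{other} full rank-$2$ subarrangements are present, the many $2$-cell relators attached to a fixed flat $\AA$ all follow from the single displayed family $[\AA]_{\TT_\galb}$: one must check that the cyclic rotations in $[\AA]_{\TT_\galb}$ already encode the homotopies among all galleries supported near $\bigcap\AA$, using that conjugating by the $\bb_i$ in the order dictated by $\galb$ is compatible with the way consecutive $2$-cells around a zonotope face are glued. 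Granting this, the CW presentation collapses to $\langle\TT_\galb:[\AA]_{\TT_\galb}\rangle$ with $\AA$ ranging over the full rank-$2$ subarrangements, as asserted.
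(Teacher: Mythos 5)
Your sketch captures the right overall shape: van Kampen on the $2$-skeleton of $\Sal(\HH)$, identification of the $2$-cells with centrally symmetric $2m$-gons lying over full rank-$2$ subarrangements, rewriting everything in $\TT_\galb$ via \Cref{thm:gens_to_other_shards}, and then reducing the relator set. Two things should be flagged, though.

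First, a point of context: the paper does not prove this statement. It is cited to Salvetti (note the bracketed page reference in the theorem header), and the surrounding prose says ``This subsection follows~\cite[Part Two]{salvetti1987topology}.'' So there is no proof in the paper to compare yours against; what you are really being asked to reconstruct is Salvetti's own argument.

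Second, and more substantively, your final step is a declared gap rather than a proof. You write ``Granting this, the CW presentation collapses\ldots'' --- but ``this'' is precisely where nearly all of the content lives. For a fixed full rank-$2$ subarrangement $\AA$, there are many $2$-cells of $\Sal(\HH)$ lying over the flat $\bigcap\AA$, each contributing a boundary relator, and the claim to be proved is that all of them follow from the single displayed family $[\AA]_{\TT_\galb}$ together with the families $[\AA']_{\TT_\galb}$ for the other flats. Establishing this is a genuine gallery-transport argument: one must conjugate each $2$-cell relator along a gallery toward the fixed $\galb$ and check that the conjugations picked up when crossing intermediate flats $\AA'$ are absorbed by the corresponding relations $[\AA']_{\TT_\galb}$, typically by induction on distance from $\galb$. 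This is not mere bookkeeping of starred and unstarred edges. As it stands you have correctly set up the $2$-cell structure, correctly handled the passage from $\TT_{\mathrm{edge}}$ to $\TT_\galb$, and correctly located where the difficulty sits --- but you have not closed that difficulty, so the proposal is an outline rather than a complete proof.
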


\begin{example}\label{ex:rank2_3}
	Continuing~\Cref{ex:rank2_1}, we have the relations
\[\bb_4 \bb_3 \bb_2 \bb_1 = \bb_1 \bb_4 \bb_3 \bb_2  = \bb_2 \bb_1 \bb_4 \bb_3  = \bb_3 \bb_2 \bb_1 \bb_4;\] by~\Cref{thm:gens_to_other_shards}, these relations imply the further relations \[\tt_{e_5} \tt_{e_6} \tt_{e_7} \tt_{e_8} = \tt_{e_8} \tt_{e_5} \tt_{e_6} \tt_{e_7} =  \tt_{e_7} \tt_{e_8} \tt_{e_5} \tt_{e_6}=\tt_{e_6}\tt_{e_7}\tt_{e_8}\tt_{e_5}.\]
\end{example}

We end this section with a corollary of \Cref{thm:gens_to_other_shards} that we will find useful in subsequent sections. 

\begin{corollary}\label{lem:reverse_order_pow}
Let $C \in \RR$. Consider a positive minimal gallery \[B=C_0\xrightarrow{e_1}C_{1}\xrightarrow{e_{2}} \cdots \xrightarrow{e_{k-1}}C_{k-1}\xrightarrow{e_k}C_k= C\] from $B$ to $C$, and let \[-C=-C_k\xrightarrow{f_k} -C_{k-1}\xrightarrow{f_{k-1}}\cdots \xrightarrow{f_{2}}-C_{1}\xrightarrow{f_1} -C_0= -B\] be the corresponding positive minimal gallery from $-C$ to $-B$.  Then 
\[\tt_{e_k}\tt_{e_{k-1}}\cdots \tt_{e_1}=\tt_{f_1}\tt_{f_{2}}\cdots \tt_{f_k}.\]
\end{corollary}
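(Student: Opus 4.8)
The idea is to rewrite both products as words in a single generating set $\TT_\galb$ of $\pi_1(\Sal(\HH),B)$ and check that the two words represent the same group element; in fact both will visibly equal $\bb_k\bb_{k-1}\cdots\bb_1$.

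First I would extend the given positive minimal gallery $B=C_0\xrightarrow{e_1}\cdots\xrightarrow{e_k}C_k=C$ to a positive minimal gallery $\galb$ from $B$ to $-B$. This is possible because $\inv(C)\subseteq\HH=\inv(-B)$, so any positive minimal gallery from $C$ to $-B$ may be appended without destroying minimality. Adopting the notation of \Cref{subsec:generators_and_relations}, write $\bb_j=\tt_{e_j}$ and $H_j=H_{e_j}$ for the edges of $\galb$; then $\inv(C_j)=\{H_1,\dots,H_j\}$ for every $j$, and the first $k$ elements of $\TT_\galb$ are exactly the loops occurring in the statement. Hence $\tt_{e_k}\tt_{e_{k-1}}\cdots\tt_{e_1}=\bb_k\bb_{k-1}\cdots\bb_1$ immediately.

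The substantive step is to identify each $\tt_{f_i}$ inside $\TT_\galb$ using \Cref{thm:gens_to_other_shards}. Since $C_{i-1}\lessdot C_i$ forces $-C_i\lessdot -C_{i-1}$, the edge $f_i$ realizes this latter cover and has underlying hyperplane $H_{f_i}=H_i$; thus the distinguished edge of $\galb$ carrying this hyperplane is $e_i$, and \Cref{thm:gens_to_other_shards} expresses $\tt_{f_i}$ as a conjugate of $\bb_i$. The key point---and the one place requiring a small combinatorial check---is that the conjugating word is the \emph{entire} prefix $\bb_{i-1}\bb_{i-2}\cdots\bb_1$: the indices appearing are those $j\in\{1,\dots,i-1\}$ with $H_j\notin\inv(-C_{i-1})$, and since $\inv(-C_{i-1})=\HH\setminus\{H_1,\dots,H_{i-1}\}$ this holds for every such $j$. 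Therefore $\tt_{f_i}=(\bb_{i-1}\cdots\bb_1)^{-1}\,\bb_i\,(\bb_{i-1}\cdots\bb_1)$.

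Finally, setting $w_i:=\bb_{i-1}\bb_{i-2}\cdots\bb_1$ (the empty product $\id$ when $i=1$, so that $w_{i+1}=\bb_i w_i$), the right-hand product telescopes: a straightforward induction on $j$ shows $\tt_{f_1}\tt_{f_2}\cdots\tt_{f_j}=w_{j+1}$, since at each stage the new factor contributes $w_i\cdot(w_i^{-1}\bb_i w_i)=\bb_i w_i=w_{i+1}$. Taking $j=k$ gives $\tt_{f_1}\cdots\tt_{f_k}=\bb_k\bb_{k-1}\cdots\bb_1=\tt_{e_k}\cdots\tt_{e_1}$, as desired. I expect the only genuine obstacle to be the bookkeeping identity $\inv(C_{i-1})=\{H_1,\dots,H_{i-1}\}$ (equivalently $\inv(-C_{i-1})=\{H_i,\dots,H_N\}$), which is precisely what makes the conjugating words output by \Cref{thm:gens_to_other_shards} complete prefixes and hence makes the telescoping collapse cleanly.
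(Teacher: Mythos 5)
Your proposal is correct and follows the same approach as the paper's proof: extend the gallery from $B$ to $C$ to a positive minimal gallery from $B$ to $-B$, apply \Cref{thm:gens_to_other_shards} to each $f_i$ to conclude $\tt_{f_i}=(\bb_{i-1}\cdots\bb_1)^{-1}\bb_i(\bb_{i-1}\cdots\bb_1)$, and observe the telescoping. You spell out the two points the paper leaves implicit---that $H_{f_i}=H_{e_i}$ and that $\inv(-C_{i-1})=\HH\setminus\{H_1,\ldots,H_{i-1}\}$ forces the conjugating word to be the full prefix---but the argument is identical in substance.
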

\begin{proof}
We can extend the given positive minimal gallery from $B$ to $C$ to a positive minimal gallery $\delta$ from $B$ to $-B$. In the notation of \Cref{thm:gens_to_other_shards}, we have $\delta_i=\tt_{e_{i}}$ for all $1\leq i\leq k$. Therefore, that theorem tells us that \[\tt_{f_{i}}=\big(\tt_{e_1}^{-1}\cdots \tt_{e_{i-1}}^{-1}\big)\tt_{e_{i}} \big(\tt_{e_{i-1}} \cdots \tt_{e_1}\big)\] for all $1\leq i\leq k$. The result follows by multiplying and canceling terms. 
\end{proof}

\section{Shards}
\label{sec:shards}
We recall some constructions and results from~\cite{reading2003order,reading2011noncrossing}.   As before, fix a base region $B \in \RA$.  For a full rank-2 subarrangement $\AA$ of $\HH$, let $B_\AA$ be the region of $\AA$ containing $B$. We say a hyperplane $H\in\AA$ is \defn{basic} if its intersection with the boundary of $B_\AA$  has dimension $n-1$. For $H,H'\in\AA$, we say $H$ \defn{cuts} $H'$ if $H$ is basic in $\AA$ but $H'$ is not. Each hyperplane $H\in \HH$ is broken into a number of connected pieces if we remove all points in $H$ contained the the hyperplanes of $\HH$ that cut $H$---a \defn{shard} of $H$ is then the closure of one of these connected pieces.  We write $H_\Sigma$ for the unique hyperplane containing a shard $\Sigma$. Let $\Sha(\HH,B)$ denote the set of all shards. 

Extending the notation for hyperplanes, when $C' \xrightarrow{e} C$, we write $\Sigma(e)$ for the shard crossed by $e$ and call $\Sigma(e)$ a \defn{lower shard} of $C$.  Let $\covsha(C)$ be the set of lower shards of $C$. The \defn{shard intersection order} is the partial order $\preceq$ on $\RR$ defined by saying \[C'\preceq C\quad\text{if and only if}\quad \bigcap\covsha(C')\supseteq \bigcap\covsha(C).\]

For each shard $\Sigma$, we fix a minimal element $J_\Sigma$ of  $\{C\in\RR:\Sigma\in\covsha(C)\}$ in the poset $\Weak(\HH,B)$. Recall that an element in a finite lattice is called \defn{join-irreducible} if it covers exactly one element. When $\HH$ is simplicial, we know by \Cref{thm:simplicial_lattice} that $\Weak(\HH,B)$ is a lattice, and the following results of Reading show that shards give a geometric interpretation of this lattice structure.
 
\begin{theorem}[{\cite[Proposition 2.2]{reading2003order},\cite[Proposition 3.3]{reading2011noncrossing}}]
  If $\HH$ is simplicial, then there is a bijection between $\Sha(\HH,B)$ and the set of join-irreducible regions in $\Weak(\HH,B)$.  The shard $\Sigma$ corresponds to the join-irreducible region $J_\Sigma$, which is the unique minimal element of $\{ C\in\RR : \Sigma \in \covsha(C)\}$. The shard
  corresponding to a join-irreducible region $J$ is the unique element of $\covsha(J)$.
\label{thm:shards_and_join_irrs}
\end{theorem}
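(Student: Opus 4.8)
The idea is to exhibit the two maps in the statement directly and show they are mutually inverse, reducing everything to a single property of the ``upper set'' of a shard.

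\textbf{Step 1 (lower shards versus lower covers).} First I would record that, for any region $C$, the assignment sending a lower cover $C'\lessdot C$ to the shard $\Sigma(C'\lessdot C)$ is injective: a cover relation of $\Weak(\HH,B)$ crosses a single hyperplane $H\in\inv(C)\setminus\inv(C')$, transversally through the relative interior of the facet shared by $C$ and $C'$ (which meets no hyperplane other than $H$, hence no hyperplane cutting $H$, so it lies in one shard), and distinct lower covers remove distinct hyperplanes from $\inv(C)$. Thus $\lvert\covsha(C)\rvert$ equals the number of lower covers of $C$; since $\Weak(\HH,B)$ is a lattice by \Cref{thm:simplicial_lattice}, a region $J$ is join-irreducible if and only if $\lvert\covsha(J)\rvert=1$. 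For join-irreducible $J$ it therefore makes sense to write $\psi(J)$ for the unique element of $\covsha(J)$, giving a map $\psi$ from join-irreducible regions to $\Sha(\HH,B)$, and it remains to prove $\psi$ is a bijection with inverse $\Sigma\mapsto J_\Sigma$.

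\textbf{Step 2 (nonemptiness and injectivity).} For a shard $\Sigma$ on a hyperplane $H$, put $U_\Sigma:=\{C\in\RR:\Sigma\in\covsha(C)\}$. Choosing a point $p$ in the relative interior of $\Sigma$ off every other hyperplane of $\HH$, the two regions locally adjacent at $p$ form a cover relation crossing $H$ with shard $\Sigma$, and the one whose inversion set contains $H$ lies in $U_\Sigma$; so $U_\Sigma\neq\emptyset$. Injectivity of $\psi$ follows from a short inversion-set argument: if $C\in U_\Sigma$ then $H\in\inv(C)$ (as $\Sigma$ labels a down-move across $H$), whereas if $C$ is join-irreducible with unique lower cover $C_*$ and $\psi(C)=\Sigma$ then $H\notin\inv(C_*)$ (it is exactly the descent hyperplane of $C$). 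Hence no region strictly below $C$ lies in $U_\Sigma$, since such a region is $\leq C_*$ and so cannot have $H$ in its inversion set; in particular every join-irreducible region in $U_\Sigma$ is a minimal element of $U_\Sigma$. Granting Step 3, this forces $\psi(J_1)=\psi(J_2)\Rightarrow J_1=J_\Sigma=J_2$, and surjectivity of $\psi$ is immediate once we know $J_\Sigma$ is join-irreducible with $\covsha(J_\Sigma)=\{\Sigma\}$, which gives $\psi(J_\Sigma)=\Sigma$.

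\textbf{Step 3 (the crux: $U_\Sigma$ has a unique minimal element, which is join-irreducible).} Everything reduces to showing that $U_\Sigma$ has a unique minimal element $J_\Sigma$ and that $J_\Sigma$ is join-irreducible, i.e.\ (by Step 1) has a single lower cover, the one crossing $H$. I would prove this in two parts: (a) if $C\in U_\Sigma$ is not minimal in $U_\Sigma$, then some lower cover $C'\lessdot C$ lies in $U_\Sigma$, and necessarily $C'$ is reached by crossing a hyperplane $H'\neq H$; iterating (the quantity $\lvert\inv(\cdot)\rvert$ strictly decreases) reaches a region of $U_\Sigma$ whose only lower cover crosses $H$, so every minimal element of $U_\Sigma$ is join-irreducible; and (b) $U_\Sigma$ is down-directed, i.e.\ any two of its elements have a common lower bound lying in $U_\Sigma$. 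Since $U_\Sigma$ is finite, (b) yields a unique minimal element $J_\Sigma$, and (a) yields that it is join-irreducible; then $\covsha(J_\Sigma)=\{\Sigma\}$ because $J_\Sigma\in U_\Sigma$ and $\lvert\covsha(J_\Sigma)\rvert=1$.

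\textbf{Main obstacle.} The real work is the two propagation statements in Step 3 --- that a down-move labeled $\Sigma$ ``survives'' when one passes to a suitable lower cover and to common lower bounds --- and this is exactly where simpliciality is indispensable. I would argue it by analyzing, for a region $C$ with two lower walls $H$ and $H'$, the full rank-$2$ subarrangement $\AA$ through the codimension-$2$ flat $H\cap H'$: simpliciality forces the localized region to have precisely $H$ and $H'$ as its two walls, and the cutting relation inside $\AA$ (which hyperplane of $\AA$ is basic) controls how the shard of a cover relation changes as one moves among the down-moves of that localized region --- this being the geometric content of Reading's forcing relation on cover relations. Alternatively, invoking semidistributivity of $\Weak(\HH,B)$, one could use the canonical join-irreducible labeling of cover relations in a finite semidistributive lattice and prove it coincides with the shard labeling; but matching those two labelings is essentially the same crux, so I expect the rank-$2$ analysis above to be the substantive step either way.
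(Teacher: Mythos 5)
The paper does not prove this theorem; it is imported directly from Reading's work \cite{reading2003order,reading2011noncrossing}, so there is no internal proof to compare against, and your proposal must stand on its own.

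Steps 1 and 2 are correct and the overall architecture is sensible, but claim~(a) in Step 3---which is load-bearing for the whole iteration---is false. You assert that if $C\in U_\Sigma$ is not minimal in $U_\Sigma$, then some lower cover of $C$ already lies in $U_\Sigma$. Consider the paper's own running example (\Cref{fig:rank2}), the reflection arrangement of $I_2(4)$, and take the basic shard $\Sigma=\Sigma_8$. Reading off the figure: $\covsha(C_8)=\{\Sigma_8\}$ and $\covsha(-B)=\{\Sigma_8,\Sigma_1\}$, while $\covsha(C_1)=\{\Sigma_1\}$, $\covsha(C_2)=\{\Sigma_2\}$, $\covsha(C_3)=\{\Sigma_3\}$, $\covsha(C_6)=\{\Sigma_6\}$, $\covsha(C_7)=\{\Sigma_7\}$. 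Hence $U_{\Sigma_8}=\{C_8,-B\}$, and $-B$ is not minimal in $U_{\Sigma_8}$ (since $C_8<-B$), yet neither lower cover of $-B$ (namely $C_3$ and $C_6$) lies in $U_{\Sigma_8}$. Your descent within $U_\Sigma$ by cover relations therefore stalls at $-B$: the set $U_\Sigma$ need not be ``cover-connected'' downward, and $|\inv(\cdot)|$ can drop by more than $1$ between consecutive elements of $U_\Sigma$. Since both the termination of your iteration and the assertion ``minimal elements of $U_\Sigma$ are join-irreducible'' lean on (a), the gap is essential, not cosmetic. You correctly isolate the crux as a propagation property coming from simpliciality and rank-$2$ localization, but the propagation statement you reduce to is the wrong one; the correct content (Reading's forcing among cover relations) lets one pass from $C\in U_\Sigma$ to a strictly \emph{smaller} element of $U_\Sigma$, possibly many covers below, and one still needs to show that $J_\Sigma$ is a \emph{minimum}, not merely a minimal element. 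Claim~(b), the down-directedness of $U_\Sigma$, is likewise asserted without proof and is not obviously any easier than the theorem itself. As written, the proof does not close.
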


\begin{theorem}[{\cite[Theorem 3.6]{reading2011noncrossing}}]  If $\HH$ is simplicial and $C \in \RA$, then
  \[C = \bigvee_{\Sigma \in \covsha(C)} J_\Sigma.\]  
\label{thm:shards}
\end{theorem}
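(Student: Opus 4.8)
The plan is to set $\widehat C := \bigvee_{\Sigma \in \covsha(C)} J_\Sigma$, which is a well-defined element of the lattice $\Weak(\HH,B)$ by \Cref{thm:simplicial_lattice} (with $\widehat C = \least = B$ in the degenerate case $\covsha(C) = \emptyset$, i.e.\ $C = B$), and to prove the two inequalities $\widehat C \le C$ and $C \le \widehat C$ separately.

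For $\widehat C \le C$: by \Cref{thm:shards_and_join_irrs}, for each shard $\Sigma$ the region $J_\Sigma$ is the \emph{unique} minimal element of $S_\Sigma := \{D \in \RR : \Sigma \in \covsha(D)\}$; since a finite poset that has a unique minimal element has that element as its minimum, $J_\Sigma$ is in fact the bottom of $S_\Sigma$. Now if $\Sigma \in \covsha(C)$, then $C \in S_\Sigma$, so $J_\Sigma \le C$. Taking the join over all $\Sigma \in \covsha(C)$ yields $\widehat C \le C$.

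For $C \le \widehat C$: I would argue by contradiction. If $\widehat C < C$, then finiteness of $\Weak(\HH,B)$ provides a region $C'$ with $\widehat C \le C' \lessdot C$. Let $\Sigma := \Sigma(C' \lessdot C)$ be the shard separating $C'$ from $C$; then $\Sigma \in \covsha(C)$, so $J_\Sigma \le \widehat C \le C'$ and hence $\inv(J_\Sigma) \subseteq \inv(C')$. The contradiction will come from pinning down the hyperplane $H_\Sigma$. On the one hand, $J_\Sigma \in S_\Sigma$ (being its minimum), i.e.\ $\Sigma \in \covsha(J_\Sigma)$, so there is a cover relation $J' \lessdot J_\Sigma$ with $\Sigma(J' \lessdot J_\Sigma) = \Sigma$, and therefore $H_\Sigma \in \inv(J_\Sigma) \setminus \inv(J') \subseteq \inv(J_\Sigma)$. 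On the other hand, for the cover relation $C' \lessdot C$ the hyperplane $H_\Sigma$ is by definition the unique element of $\inv(C) \setminus \inv(C')$, so $H_\Sigma \notin \inv(C')$. This contradicts $\inv(J_\Sigma) \subseteq \inv(C')$, so $\widehat C = C$.

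I do not expect a serious obstacle here: the argument is short and uses only that cover relations in the poset of regions change $\inv$ by exactly one hyperplane (true for any central arrangement) together with \Cref{thm:simplicial_lattice,thm:shards_and_join_irrs}. The two points that need a little care are promoting ``unique minimal element'' to ``minimum'' of $S_\Sigma$ (hence the appeal to finiteness), and observing that the very fact that $J_\Sigma$ lies in its own defining set $S_\Sigma$ is precisely what forces $H_\Sigma \in \inv(J_\Sigma)$. Simpliciality of $\HH$ enters only to guarantee that $\Weak(\HH,B)$ is a lattice and that \Cref{thm:shards_and_join_irrs} is available.
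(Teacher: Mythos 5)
The paper cites this theorem from Reading's \cite{reading2011noncrossing} without supplying a proof, so there is no in-paper argument to compare against. Your direct proof is correct: the inequality $\widehat C \le C$ follows because each $J_\Sigma$ is the minimum (not merely a minimal element, since $S_\Sigma$ is finite) of $S_\Sigma \ni C$; the converse is handled by a clean contradiction, noting that $H_\Sigma \in \inv(J_\Sigma)$ (because $\Sigma$ is the unique lower shard of $J_\Sigma$) while $H_\Sigma \notin \inv(C')$ (because $C' \lessdot C$ crosses exactly $H_\Sigma$), which is incompatible with $J_\Sigma \le C'$. The degenerate case $\covsha(C) = \emptyset$ is also addressed. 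One remark on scope: the paper's parenthetical note points out that Reading actually proves the stronger claim that $\{J_\Sigma : \Sigma \in \covsha(C)\}$ is the \emph{canonical} join representation of $C$ in the semidistributive lattice $\Weak(\HH,B)$; your elementary argument via inversion sets establishes only the join equality, which is exactly what the stated theorem asserts and what the paper subsequently uses, but it does not recover canonicity.
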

(There is actually a stronger version of \Cref{thm:shards}, which says that $\{J_\Sigma:\Sigma\in\covsha(C)\}$ is the \emph{canonical join representation} of $C$ in the semidistributive lattice $\Weak(\HH,B)$---the lattice property is strongly tied to the simpliciality of $\HH$; see \cite{reading2011noncrossing}.)

\section{Shards and the Salvetti Complex}
\label{sec:shard_generators}
Our goal in this section is to establish \Cref{thm:shard_generators}, which tells us that two loops $\tt_e$ and $\tt_f$ in $\TT_{\mathrm{edge}}$ are homotopic if and only if they go around the same shard (that is, $\Sigma(e)=\Sigma(f)$). Thus, while \Cref{thm:gens_to_other_shards,thm:abelianization} tell us that the elements of $\TT_{\mathrm{edge}}$ up to conjugation are indexed by the hyperplanes in $\HH$,  \Cref{thm:shard_generators} tells us that the elements of $\TT_{\mathrm{edge}}$ (not up to conjugation) are indexed by the shards of $\HH$.

\subsection{Proof of \texorpdfstring{\Cref{thm:shard_generators}}{Theorem 1.1}}
We work toward understanding general arrangements by first understanding rank-2 arrangements. We need the following general lemma, which will also be useful elsewhere in the paper. Given a subarrangement $\AA$ of $\HH$, we let \[\iota_\AA\colon\RR\to\RR_\AA\] be the map that sends a region $D\in\RR$ to the unique region of $\AA$ containing $D$, and we let $B_\AA=\iota_\AA(B)$.

\begin{lemma}\label{lem:quotient_sec4}
Let $\AA$ be a subarrangement of a central arrangement $\HH$. The map $\iota_\AA\colon\RR\to\RR_\AA$ induces a quotient map \[\iotab_\AA\colon\pi_1(\Sal(\HH),B)\to\pi_1(\Sal(\AA),B_\AA)\] that sends the generating set $\TT_{\mathrm{edge}}(\HH,B)$ to $\TT_{\mathrm{edge}}(\AA,B_\AA)\cup\{\id\}$, where $\id$ denotes the identity element of $\pi_1(\Sal(\AA),B_\AA)$. 
\end{lemma}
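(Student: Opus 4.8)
The plan is to produce an explicit cellular map $g\colon\Sal(\HH)\to\Sal(\AA)$ realizing $\iota_\AA$ on vertices, and then take $\iotab_\AA:=g_*$ on fundamental groups. The starting observation is that for any two regions $D,D'\in\RR$ one has $\inv_\AA(\iota_\AA(D),\iota_\AA(D'))=\AA\cap\inv(D,D')$, so $\iota_\AA$ is order-preserving and, for a cover relation $C'\lessdot C$ of $\Weak(\HH,B)$ with $H_e\in\AA$, it produces a cover relation $\iota_\AA(C')\lessdot\iota_\AA(C)$ of $\Weak(\AA,B_\AA)$ crossing $H_e$, while $\iota_\AA(C')=\iota_\AA(C)$ when $H_e\notin\AA$. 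On the $1$-skeleton I would therefore send a vertex $C$ to $\iota_\AA(C)$, send an edge $C'\xrightarrow{e}C$ (and its reverse $e^*$) to the corresponding edge (resp.\ starred edge) of $\Sal(\AA)$ crossing $H_e$ when $H_e\in\AA$, and to the constant path at $\iota_\AA(C')$ when $H_e\notin\AA$; since this is described purely in terms of regions and the hyperplanes crossed, it respects the identifications used to build $\Sal(\HH)$.

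Next I would check that $g$ extends over the $2$-cells, which is the step I expect to be the main obstacle. The cleanest tool is the presentation in \Cref{thm:relations}. Fix a positive minimal gallery $\galb$ from $B$ to $-B$ in $\Sal(\HH)$; deleting the edges that become constant (those crossing hyperplanes of $\HH\setminus\AA$) leaves a positive minimal gallery $\galb_\AA$ from $B_\AA$ to $-B_\AA$ in $\Sal(\AA)$, since it has the correct length $|\AA|$. For a full rank-$2$ subarrangement $\AA'$ of $\HH$, I claim $g$ carries the family of relations $[\AA']_{\TT_\galb}$ to the family $[\AA\cap\AA']_{\TT_{\galb_\AA}}$ when $|\AA\cap\AA'|\geq 2$ (the factors indexed by hyperplanes outside $\AA$ map to $\id$ and drop out of the cyclic words, and $\AA\cap\AA'$ is then a full rank-$2$ subarrangement of $\AA$), and to a trivial relation when $|\AA\cap\AA'|\leq 1$. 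Verifying this requires unwinding the cyclic structure of the defining relations together with the rewriting of \Cref{thm:gens_to_other_shards}, but once it is done, every defining relation of $\pi_1(\Sal(\HH),B)$ maps to a relation holding in $\pi_1(\Sal(\AA),B_\AA)$, so $g$ extends and $\iotab_\AA:=g_*$ is a well-defined homomorphism. Conceptually, the bare existence of such a homomorphism also follows from the inclusion $\CC^n\setminus\HH_\CC\hookrightarrow\CC^n\setminus\AA_\CC$ together with \Cref{thm:salvetti_kpi1}, based at $x_B\in B\subseteq B_\AA$; but that topological picture does not by itself pin down the combinatorial description of $\iotab_\AA$ that the rest of the paper needs, which is why I would carry out the cellular construction.

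Finally I would evaluate $\iotab_\AA$ on $\TT_{\mathrm{edge}}(\HH,B)$ and deduce surjectivity. For an edge $C'\xrightarrow{e}C$, the gallery $\gal(B,C')$ maps under $g$ to a positive gallery from $B_\AA$ to $\iota_\AA(C')$ of length $|\AA\cap\inv(C')|=|\inv_\AA(\iota_\AA(C'))|$, hence to a positive minimal gallery, hence to one homotopic to $\gal(B_\AA,\iota_\AA(C'))$. Plugging this into $\tt_e=\gal(B,C')\cdot ee^*\cdot\gal(B,C')^{-1}$ yields $\iotab_\AA(\tt_e)=\tt_{g(e)}\in\TT_{\mathrm{edge}}(\AA,B_\AA)$ when $H_e\in\AA$ and $\iotab_\AA(\tt_e)=\id$ when $H_e\notin\AA$; conversely, every edge of $\Sal(\AA)$ crossing a hyperplane of $\AA$ equals $g(e)$ for some edge $e$ of $\Sal(\HH)$ (take a region of $\HH$ inside the relevant region of $\AA$ having that hyperplane as a bounding hyperplane), so $\iotab_\AA(\TT_{\mathrm{edge}}(\HH,B))=\TT_{\mathrm{edge}}(\AA,B_\AA)\cup\{\id\}$. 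Since $\TT_{\mathrm{edge}}(\AA,B_\AA)$ generates $\pi_1(\Sal(\AA),B_\AA)$, the homomorphism $\iotab_\AA$ is onto, i.e.\ a quotient map, which completes the proof.
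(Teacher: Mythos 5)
Your proof is correct and follows the same basic approach as the paper's — both define the map on the level of the Salvetti CW complex via $\iota_\AA$ on regions (collapsing edges crossing hyperplanes outside $\AA$, identifying those crossing the same hyperplane of $\AA$) and then pass to $\pi_1$. The paper's own proof is a two-sentence sketch that omits the $2$-cell verification and the surjectivity check; your argument supplies those missing details, using the presentation of \Cref{thm:relations} to see that each rank-$2$ relation maps either to a rank-$2$ relation of $\AA$ or to a trivial relation, which is a reasonable way to fill the gap.
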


\begin{proof}
By identifying regions via $\iota_\AA$, we collapse all edges $C' \xrightarrow{e} C$ and $C\xrightarrow{e^*}C'$ with $\iota_{\AA}(C') = \iota_{\AA}(C)$ and identify all edges $C' \xrightarrow{e} C$ and $D' \xrightarrow{f} D$ with $\iota_{\AA}(C')=\iota_{A}(D')$ and $\iota_{A}(C)=\iota_{A}(D)$.  The quotient therefore introduces some relations of the form $\tt_e=\id$ (when $H_e \not \in \AA$) and some relations of the form $\tt_e = \tt_{e'}$.
\end{proof}

\begin{lemma}
\label{lem:A2}
	Suppose $\HH=\{H_1,H_2,H_3\}$ is a rank-2 hyperplane arrangement with three hyperplanes that has base region $B$ and edges named $e_1,e_2,e_3$ and $e_4,e_5,e_6$ (in the same manner as in~\Cref{fig:rank2}).  Then $\tt_{e_2}\neq\tt_{e_5}$.
\end{lemma}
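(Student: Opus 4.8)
The plan is to work directly in the rank-2 arrangement $\HH = \{H_1, H_2, H_3\}$ and exhibit an explicit homomorphism from $\pi_1(\Sal(\HH), B)$ to a group in which the images of $\tt_{e_2}$ and $\tt_{e_5}$ are visibly distinct. Since $\tt_{e_2}$ and $\tt_{e_5}$ lie over the \emph{same} hyperplane (in the labeling of \Cref{fig:rank2}, $e_2$ and $e_5$ cross opposite sides of one central hyperplane), the abelianization of \Cref{thm:abelianization} is useless here --- it sends both to the same generator $\overline{\tt}_{H}$. So the distinguishing invariant must be genuinely nonabelian, and the natural candidate is the action of $\pi_1$ on something, or a quotient onto a small nonabelian group.

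\textbf{Step 1: Pin down the relations.} Using the fixed gallery $\galb$ and \Cref{thm:gens_to_other_shards}, rewrite $\tt_{e_4}, \tt_{e_5}, \tt_{e_6}$ in terms of $\bb_1 = \tt_{e_1}, \bb_2 = \tt_{e_2}, \bb_3 = \tt_{e_3}$. For three hyperplanes in rank 2, the reindexing exponents from \Cref{thm:gens_to_other_shards} give $\tt_{e_4} = \bb_3$ (or $\bb_1$, depending on the basic/cut structure of the triangle --- this must be read off carefully from which hyperplane is non-basic), $\tt_{e_5} = \bb_1^{-1}\bb_2\bb_1$ or similar, and $\tt_{e_6} = \bb_1^{-1}\bb_2^{-1}\bb_3\bb_2\bb_1$. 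Then \Cref{thm:relations} (with $m=3$) gives the single braid-type relation family $\bb_3\bb_2\bb_1 = \bb_1\bb_3\bb_2 = \bb_2\bb_1\bb_3$. Thus $\pi_1(\Sal(\HH),B) = \langle \bb_1, \bb_2, \bb_3 \mid \bb_3\bb_2\bb_1 = \bb_1\bb_3\bb_2 = \bb_2\bb_1\bb_3\rangle$, which is the pure braid group $P_3$ on three strands (equivalently $F_2 \times \mathbb{Z}$, with center generated by the full twist $\Delta^2 = \bb_3\bb_2\bb_1$).

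\textbf{Step 2: Separate the two elements.} In this group, $\tt_{e_2} = \bb_2$ while $\tt_{e_5}$ is a nontrivial conjugate $\bb_1^{-1}\bb_2\bb_1$ (after the correct identification from Step 1). It suffices to show $\bb_2 \neq \bb_1^{-1}\bb_2\bb_1$, i.e.\ that $\bb_1$ and $\bb_2$ do not commute in $P_3$. This is standard: map $P_3 \twoheadrightarrow F_2 = \langle \bb_1, \bb_2\rangle$ by killing the central element $\Delta^2 = \bb_3\bb_2\bb_1$ (equivalently, setting $\bb_3 = \bb_1^{-1}\bb_2^{-1}$); the images of $\bb_1, \bb_2$ freely generate, so they do not commute, hence $\tt_{e_2} \neq \tt_{e_5}$. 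Alternatively --- and this is perhaps cleaner since it avoids identifying the precise form of $\tt_{e_5}$ --- use that the three loops $\tt_{e_1}, \tt_{e_2}, \tt_{e_3}$ are small loops around the three distinct punctures $H_1 \cap H_2 \cap H_3$-configuration points in a generic complex line transverse to the arrangement (a thrice-punctured affine line, so $\pi_1 \cong F_2$ after accounting for the point at infinity / center), and geometrically a small loop around one puncture on one side of the base region is not homotopic to a small loop around a different puncture.

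\textbf{Main obstacle.} The only real subtlety is bookkeeping: correctly determining, from the basic/non-basic (cutting) structure of a rank-2 triangle of hyperplanes, exactly which of $\bb_1, \bb_2, \bb_3$ each of $\tt_{e_4}, \tt_{e_5}, \tt_{e_6}$ equals after applying \Cref{thm:gens_to_other_shards}, and in particular confirming that $\tt_{e_5}$ is a \emph{nontrivial} conjugate of $\tt_{e_2}$ rather than literally equal to some $\bb_i$. I expect one needs to check the (at most two) cases for which hyperplane of the triangle is cut, but in every case $e_2$ and $e_5$ cross the same hyperplane from genuinely different regions relative to $B$, so the conjugating element is a nontrivial product of the \emph{other} two generators and hence nontrivial in $F_2$. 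Once that is nailed down, the non-commutativity argument in Step 2 closes the proof immediately. (This lemma is exactly the rank-2 base case that bootstraps the general \Cref{thm:shard_generators} via the quotient map of \Cref{lem:quotient_sec4}.)
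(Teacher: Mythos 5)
Your proposal is correct and follows essentially the same route as the paper: reduce via Theorem~\ref{thm:gens_to_other_shards} to showing $\tt_{e_1}$ and $\tt_{e_2}$ do not commute, then produce a homomorphism to $F_2$ killing the central full twist (the paper's explicit map $\tt_{e_1}\mapsto y^{-1}x^{-1}$, $\tt_{e_2}\mapsto x$, $\tt_{e_3}\mapsto y$ is exactly your ``kill $\Delta^2$'' quotient). The bookkeeping you flag as the ``main obstacle'' does work out to $\tt_{e_5}=\tt_{e_1}^{-1}\tt_{e_2}\tt_{e_1}$, as the paper states outright.
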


\begin{proof}
If follows from \Cref{thm:gens_to_other_shards} that $\tt_{e_5}=\tt_{e_1}^{-1}\tt_{e_2}\tt_{e_1}$, so we just need to show that $\tt_{e_1}$ and $\tt_{e_2}$ do not commute in $\pi_1(\PP(\HH),B)$. By \Cref{thm:relations}, this group has presentation $\langle \tt_{e_1},\tt_{e_2},\tt_{e_3}:\tt_{e_1}\tt_{e_2}\tt_{e_3}=\tt_{e_3}\tt_{e_1}\tt_{e_2}=\tt_{e_2}\tt_{e_3}\tt_{e_1}\rangle$. Consider the free group $F_2=\langle x,y\rangle$. The map $\{\tt_{e_1},\tt_{e_2},\tt_{e_3}\}\to F_2$ given by $\tt_{e_1}\mapsto y^{-1}x^{-1}$, $\tt_{e_2}\mapsto x$, $\tt_{e_3}\mapsto y$ extends to a homomorphism from $\pi_1(\PP(\HH),B)$ to $F_2$; this implies that $\tt_{e_2}$ and $\tt_{e_3}$ do not commute. 
\end{proof}

\begin{lemma}
\label{lem:rank22}
	Let $\HH$ be a rank-2 hyperplane arrangement with base region $B$ and with edges named $e_1,e_2,\ldots,e_m$ and $e_{m+1},\ldots,e_{2m}$ as in~\Cref{fig:rank2}.  For all distinct $i,j\in\{1,\ldots,2m\}$ such that $\{i,j\}$ is not $\{m,2m\}$ or $\{1,m+1\}$, we have $\tt_{e_i}\neq\tt_{e_j}$.
\end{lemma}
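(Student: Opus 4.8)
The plan is to split the pairs $\{i,j\}$ according to whether or not $e_i$ and $e_j$ cross the same hyperplane of $\HH$.

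If $H_{e_i}\neq H_{e_j}$, then \Cref{thm:abelianization} settles it immediately: the abelianization map $\pi_1(\Sal(\HH),B)\to H_1(\Sal(\HH),B)$ sends $\tt_{e_i}$ and $\tt_{e_j}$ to the distinct free generators $\overline{\tt}_{H_{e_i}}$ and $\overline{\tt}_{H_{e_j}}$ of a free abelian group, so $\tt_{e_i}\neq\tt_{e_j}$.

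Next I would analyze the case $H_{e_i}=H_{e_j}$. The Hasse diagram of $\Weak(\HH,B)$ consists of two maximal chains from $B$ to $-B$, each of length $m$, so each hyperplane of $\HH$ is crossed by exactly one edge of each chain, hence by exactly two of $e_1,\dots,e_{2m}$. Matching up the two galleries $e_1e_2\cdots e_m$ and $e_{2m}e_{2m-1}\cdots e_{m+1}$ from $B$ to $-B$, which cross the hyperplanes of $\HH$ in opposite cyclic orders, shows that $H_{e_{k+m}}=H_{e_k}$ for $1\le k\le m$ and that the two bounding hyperplanes of $B$ are $H_{e_1}$ and $H_{e_m}$. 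Hence $\{i,j\}=\{k,k+m\}$ for a unique $k\in\{1,\dots,m\}$, and the two excluded pairs $\{1,m+1\}$ and $\{m,2m\}$ are precisely those for which $H_{e_k}$ is basic. So the remaining task is to show $\tt_{e_k}\neq\tt_{e_{k+m}}$ whenever $2\le k\le m-1$, i.e.\ whenever $H_{e_k}$ is a non-basic hyperplane of $\HH$.

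For that step I would pass to the subarrangement $\AA=\{H_{e_1},H_{e_k},H_{e_m}\}$ and apply \Cref{lem:quotient_sec4}. Since $B\subseteq B_\AA$, the bounding hyperplanes $H_{e_1},H_{e_m}$ of $B$ also bound $B_\AA$, so they are basic in $\AA$ while the remaining ``middle'' hyperplane $H_{e_k}$ is not; thus $\AA$ is a rank-$2$ arrangement with three hyperplanes to which \Cref{lem:A2} applies, with $H_{e_k}$ playing the role of the non-basic hyperplane. Tracking inversion sets through the identification $\iota_\AA$, the edge $e_k$ of $\Sal(\HH)$ maps to the edge of $\Sal(\AA)$ joining the region with $\AA$-inversion set $\{H_{e_1}\}$ to the one with $\AA$-inversion set $\{H_{e_1},H_{e_k}\}$, while $e_{k+m}$ maps to the edge joining the region with $\AA$-inversion set $\{H_{e_m}\}$ to the one with $\AA$-inversion set $\{H_{e_k},H_{e_m}\}$; these are the two edges of $\Sal(\AA)$ crossing $H_{e_k}$, one on each maximal chain---in the labeling of \Cref{fig:rank2} they are ``$e_2$'' and ``$e_5$''. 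Therefore $\iotab_\AA(\tt_{e_k})=\tt_{e_2}$ and $\iotab_\AA(\tt_{e_{k+m}})=\tt_{e_5}$ in $\pi_1(\Sal(\AA),B_\AA)$, and these are distinct by \Cref{lem:A2}. As $\iotab_\AA$ is a homomorphism, $\tt_{e_k}\neq\tt_{e_{k+m}}$, completing the argument.

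The only real obstacle here is the bookkeeping: pinning down the cyclic order of the hyperplanes well enough to see that the bounding hyperplanes of $B$ are exactly $H_{e_1}$ and $H_{e_m}$ (so the two excluded pairs coincide with the ``basic'' cases), and that $e_k$ and $e_{k+m}$ descend under $\iota_\AA$ to the two distinct ``$e_2$''- and ``$e_5$''-type edges of $\Sal(\AA)$ rather than to a single edge. Both points are routine once the conventions of \Cref{fig:rank2} are fixed, and neither involves any nontrivial computation.
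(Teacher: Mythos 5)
Your proof takes essentially the same route as the paper: abelianization via \Cref{thm:abelianization} when $H_{e_i}\neq H_{e_j}$, and, when $H_{e_i}=H_{e_j}$ with $\{i,j\}=\{k,k+m\}$ and $1<k<m$, passing to the subarrangement $\AA=\{H_{e_1},H_{e_k},H_{e_m}\}$ via \Cref{lem:quotient_sec4} and invoking \Cref{lem:A2}. You just spell out more carefully the bookkeeping that the paper compresses (where $e_k$ and $e_{k+m}$ land under $\iotab_\AA$, and why the excluded pairs are exactly the basic ones), which is a fine thing to do.
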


\begin{proof}
We may assume $i<j$. If $j\neq i+m$, then the hyperplanes $H_{e_i}$ and $H_{e_j}$ are distinct. In this case, \Cref{thm:abelianization} tells us that the natural homomorphism $\pi_1(\PP(\HH),B) \to H_1(\PP(\HH))$ sends $\tt_{e_i}$ and $\tt_{e_j}$ to different elements, so $\tt_{e_i}\neq\tt_{e_j}$. Now suppose $j=m+1$ so that $H_{e_i}=H_{e_j}$. By hypothesis, we have $1<i<m$. Consider the subarrangement $\AA=\{H_{e_1},H_{e_i},H_{e_m}\}$. It follows from \Cref{lem:quotient_sec4} (and its proof) and \Cref{lem:A2} that the quotient map $\iotab_\AA$ sends $\tt_{e_i}$ and $\tt_{e_j}$ to distinct elements of $\TT_{\mathrm{edge}}(\Sal(\AA),B_\AA)$. Hence, $\tt_{e_i}\neq\tt_{e_j}$.  
\end{proof}

\begin{proposition}	
Let $\HH$ be a central arrangement.	 If $C' \xrightarrow{e} C$ and $D'\xrightarrow{f}D$ are edges in $\Sal(\HH)$ such that $\Sigma(e)=\Sigma(f)$, then $\tt_e = \tt_{f}$.
\label{prop:shard1}
\end{proposition}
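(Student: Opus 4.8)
The plan is to reduce to a rank-$2$ computation governed by the relations of \Cref{thm:relations}. Since $\Sigma(e)=\Sigma(f)$, the two edges cross the same hyperplane $H:=H_e=H_f$. Let $p_e$ and $p_f$ be points in the relative interiors of the codimension-$1$ faces $\overline{C'}\cap\overline{C}$ and $\overline{D'}\cap\overline{D}$ separating the respective pairs of regions; these points lie on $H$, generically on no other hyperplane, and because $\Sigma(e)=\Sigma(f)$ they lie in the same connected component of $H$ with the traces of all hyperplanes cutting $H$ removed. I would join $p_e$ to $p_f$ by a generic path inside this component (a slight perturbation of the straight segment), crossing the traces $H\cap H'$ of the remaining hyperplanes $H'\in\HH$ one at a time, away from deeper flats. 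Each such crossing takes place inside a uniquely determined full rank-$2$ subarrangement $\AA\ni H$, and because the path never meets a hyperplane that cuts $H$, the hyperplane $H$ is always \emph{basic} in this $\AA$. Following the edge of $\Sal(\HH)$ crossing $H$ between the two regions adjacent to the moving point yields a chain $e=g_0,g_1,\dots,g_r=f$ of edges, each crossing $H$, in which consecutive edges $g_{j-1},g_j$ are the two $H$-crossing edges of a common $2$-cell $F_j$ of $\Sal(\HH)$ whose associated rank-$2$ subarrangement $\AA_j$ has $H$ basic. This reduces the proposition to the assertion that if $g,g'$ are the two $H$-crossing edges of a $2$-cell $F$ whose rank-$2$ subarrangement $\AA$ has $H$ basic, then $\tt_g=\tt_{g'}$.

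To establish that assertion I would fix a positive minimal gallery $\galb$ from $B$ to $-B$ and use \Cref{thm:gens_to_other_shards} to rewrite $\tt_g$ and $\tt_{g'}$ in the generators $\TT_\galb$: each is a conjugate $w^{-1}\bb_k w$ of the single generator $\bb_k$ attached to $H$, where $w$ is an explicit ordered product of generators $\bb_i$ with $H_i<_\galb H$. Because $g$ and $g'$ are opposite edges of the $2$-cell $F$, the heads of $g$ and $g'$ are separated only by hyperplanes of $\AA$, so the conjugating words for $\tt_g$ and $\tt_{g'}$ differ only in generators indexed by hyperplanes of $\AA$; and since $H$ is basic in $\AA$, every such hyperplane fails to cut $H$. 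The relation $[\AA]_{\TT_\galb}$ of \Cref{thm:relations} is exactly what is needed to absorb these extra generators and identify the two conjugates of $\bb_k$, giving $\tt_g=\tt_{g'}$. (In the dihedral example of \Cref{fig:rank2} this amounts to the identity $\bb_4\bb_3\bb_2\bb_1=\bb_3\bb_2\bb_1\bb_4$ used in \Cref{ex:rank2_3}; it is the ``coincidence'' half of \Cref{lem:rank22}.) A more computational variant bypasses the geometric chaining: one checks the side condition that for every hyperplane $H'$ cutting $H$ one has $H'\in\inv(C)\iff H'\in\inv(D)$ --- because $\Sigma(e)$ lies strictly on one side of $H'$, while $C$ and $D$ lie on the same side of $H'$ as $p_e$ and $p_f$ respectively --- and similarly for the tails; feeding this into \Cref{thm:gens_to_other_shards} shows that the conjugating words for $\tt_e$ and $\tt_f$ differ only in generators of non-cutting hyperplanes, which can then be removed one at a time using rank-$2$ relations.

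The general-position argument, the verification that consecutive edges in the chain are opposite edges of a common $2$-cell, and the side-condition check are all routine. The step I expect to be the main obstacle is the rank-$2$ identity itself: proving that ``$H$ basic in $\AA$'' (equivalently, ``$H'$ does not cut $H$'') genuinely forces the relevant conjugates of $\bb_k$ to coincide. This is where \Cref{thm:relations} must be invoked in an essential way, and it is the equality counterpart of the inequalities established in \Cref{lem:A2,lem:rank22}; getting the indexing and the precise form of the relation right is the delicate point.
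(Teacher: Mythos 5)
Your reduction to rank 2 is essentially the paper's: the paper also chains one shard-crossing edge to another through rank-$2$ full subarrangements containing $H_\Sigma$, and also hinges on the observation that $H_\Sigma$ is forced to be \emph{basic} in each such subarrangement (else that subarrangement's basic hyperplanes would cut $\Sigma$). Your geometric framing---a generic path on $H$ staying inside the shard, crossing traces $H\cap H'$ one at a time---is a pleasant dual picture to the paper's, which instead chooses a positive gallery from $C$ to $D$ in $\RR$ and arranges that between consecutive $\Sigma$-crossings the gallery stays inside a single rank-$2$ localization. Both reductions are sound.

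The difference, and the place where your proposal is incomplete, is the rank-$2$ step itself. You plan to invoke \Cref{thm:gens_to_other_shards} to write $\tt_g$ and $\tt_{g'}$ as explicit conjugates of a single generator $\bb_k$ and then argue that the rank-$2$ relation $[\AA]_{\TT_\galb}$ of \Cref{thm:relations} ``absorbs'' the extra conjugating generators; you flag the indexing as the delicate point, and indeed you never carry it out. The paper sidesteps this entirely by producing an explicit two-step homotopy across two $2$-cells of $\Sal(\HH)$: one $2$-cell gives $e_1\cdots e_m\cong e_{2m}e_{2m-1}\cdots e_{m+1}$ and a second gives $e_{2m-1}\cdots e_{m+1}e_m^*\cong e_{2m}^*e_1\cdots e_{m-1}$, from which a short chain of substitutions collapses $\tt_{d_{i+1}}$ to $\tt_{d_i}$. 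This is cleaner than chasing conjugating words through Salvetti's presentation: it never needs a fixed reference gallery $\galb$, never needs the ordering $<_\galb$ on hyperplanes, and works at the level of homotopies rather than normal forms. Your algebraic route is likely salvageable (the key identity you would need in the rank-$2$ localization, when $H$ is the last hyperplane in the gallery order, is exactly the cyclic relation $\bb_m\bb_{m-1}\cdots\bb_1=\bb_{m-1}\cdots\bb_1\bb_m$ from \Cref{thm:relations}), but you should either carry out the bookkeeping or switch to the direct $2$-cell homotopy.

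One further caution on the geometric chaining: your claim that consecutive edges $g_{j-1},g_j$ are the two $H$-crossing edges of a \emph{common} $2$-cell of $\Sal(\HH)$ needs care, since the $2$-cells attached along a given codimension-$2$ face are indexed by the regions of the rank-$2$ localization (not just by the face), and the two $2$-cells visible from opposite sides of the crossing point are a priori different. The paper's proof handles this by explicitly using two different $2$-cells for each step; your sketch asserts a single $2$-cell, which is not quite right and masks the fact that each step already requires a two-$2$-cell homotopy.
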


\begin{proof}
Write $\Sigma := \Sigma(e)=\Sigma(f)$.  Choose a positive gallery $g$ from $C$ to $D$, and let $C=E_1,E_2,\ldots,E_r=D$ be the regions through which $g$ passes. Let $1=k_1<k_2<\cdots<k_s=r$ be the indices such that $\Sigma\in\cov(E_{k_i})$. We may assume $g$ is chosen so that for each $1\leq i\leq s-1$, the hyperplanes that $g$ crosses when passing from $E_{k_i}$ to $E_{k_{i+1}}$ all belong to a single full rank-2 subarrangement $\AA_i$ with $H_\Sigma\in\AA_i$. For $1\leq i\leq s-1$, there is an edge $E_{k_i}'\xrightarrow{d_i} E_{k_i}$ with $\Sigma(d_i)=\Sigma$. It suffices to prove that $\tt_{d_i}=\tt_{d_{i+1}}$ for each such $i$. 

Fix $1\leq i\leq s-1$, and let $B_{\AA_i}=\iota_{\AA_i}(B)$. Let $m=|\AA_i|$. The hyperplane $H_\Sigma$ must be basic in $\AA_i$ (otherwise, the basic hyperplanes in $\AA_i$ would cut $\Sigma$), so either $E_{k_i}$ or $E_{k_{i+1}}$ is contained in $-B_{\AA_i}$. For simplicity, let us assume $E_{k_{i+1}}\subseteq-B_{\AA_i}$; the other case is virtually the same. Then $E_{k_i}'\subseteq B_{\AA_i}$. We can find a positive minimal gallery $E_{k_i}'\xrightarrow{e_1}\cdots\xrightarrow{e_{m-1}}E_{k_{i+1}}'$ that only passes through hyperplanes in $\AA_i$. Let $E_{k_i}\xrightarrow{e_{2m-1}}\cdots\xrightarrow{e_{m+1}}E_{k_{i+1}}$ be the part of the gallery $g$ from $E_{k_i}$ to $E_{k_{i+1}}$, and note that this is a positive minimal gallery from $E_{k_i}$ to $E_{k_{i+1}}$. Let $e_{2m}=d_i$ and $e_m=d_{i+1}$. The collection of edges $e_1,\ldots,e_m,e_{m+1},\ldots,e_{2m}$ forms a $2$-cell in $\Sal(\HH)$, so we have the homotopy \[e_1e_2\cdots e_m\cong e_{2m}e_{2m-1}\cdots e_{m+1}.\] Similarly, the edges $e_1,\ldots,e_{m-1},e_m^*,e_{m+1},\ldots, e_{2m-1},e_{2m}^*$ form a $2$-cell in $\Sal(\HH)$, so we have the homotopy \[e_{2m-1}\cdots e_{m+1}e_m^*\cong e_{2m}^*e_1\cdots e_{m-1}.\] Hence, 
\begin{align*}
\tt_{d_{i+1}}&\cong\gal(B,E_{k_i}')(e_1e_2\cdots e_m)e_m^*e_{m-1}^{-1}\cdots e_1^{-1}\gal(B,E_{k_i}')^{-1} \\
&\cong\gal(B,E_{k_i}')e_{2m}(e_{2m-1}\cdots e_{m+1}e_m^*)e_{m-1}^{-1}\cdots e_1^{-1}\gal(B,E_{k_i}')^{-1} \\
&\cong\gal(B,E_{k_i}')e_{2m}(e_{2m}^*e_{1}\cdots e_{m-1})e_{m-1}^{-1}\cdots e_1^{-1}\gal(B,E_{k_i}')^{-1} \\
&\cong \gal(B,E_{k_i}')e_{2m}e_{2m}^*\gal(B,E_{k_i}')^{-1} \\
&\cong\tt_{d_i}.
\end{align*}
Thus, the generators $\tt_{d_i}$ and $\tt_{d_{i+1}}$ are the same. 
\end{proof}

\begin{proposition}	
Let $\HH$ be a central arrangement.	If $C' \xrightarrow{e} C$ and $D'\xrightarrow{f}D$ are edges in $\Sal(\HH)$ such that $\Sigma(e)\neq\Sigma(f)$, then $\tt_e \not\cong \tt_{f}$.
\label{prop:shard2}
\end{proposition}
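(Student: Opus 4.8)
The plan is to show that if $\Sigma(e)\neq\Sigma(f)$ then $\tt_e\not\cong\tt_f$ by producing, in each case, a quotient of $\pi_1(\Sal(\HH),B)$ in which the images of $\tt_e$ and $\tt_f$ are distinguishable. There are two cases according to whether $e$ and $f$ cross the same hyperplane. If $H_e\neq H_f$, then by \Cref{thm:abelianization} the abelianization map $\pi_1(\PP(\HH),B)\to H_1(\PP(\HH),B)$ sends $\tt_e$ and $\tt_f$ to $\overline{\tt}_{H_e}$ and $\overline{\tt}_{H_f}$, which are distinct basis elements of the free abelian group; hence $\tt_e\neq\tt_f$. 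This is the easy case and it parallels the first paragraph of the proof of \Cref{lem:rank22}.

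The substantive case is $H_e=H_f=:H$ but $\Sigma(e)\neq\Sigma(f)$. Here I would reduce to a rank-2 situation using \Cref{lem:quotient_sec4}. The two shards $\Sigma(e)$ and $\Sigma(f)$ lie on the common hyperplane $H$ but are separated by some hyperplane $H'$ that cuts $H$ — that is, $H'$ is basic in some full rank-2 subarrangement $\AA$ containing $H$, while $H$ is not basic in $\AA$, and $\Sigma(e)$, $\Sigma(f)$ restrict to different shards of $\AA$. (This is exactly Reading's description of how shards of $H$ arise: removing the cutting hyperplanes disconnects $H$, and $\Sigma(e)$, $\Sigma(f)$ are in different components, witnessed by at least one cutting hyperplane $H'$; choose the full rank-2 subarrangement $\AA\supseteq\{H,H'\}$ fixing the intersection $H\cap H'$.) Now apply the quotient map $\iotab_\AA\colon\pi_1(\Sal(\HH),B)\to\pi_1(\Sal(\AA),B_\AA)$ of \Cref{lem:quotient_sec4}: it sends $\tt_e$ to $\tt_{\iota_\AA(e)}$ and $\tt_f$ to $\tt_{\iota_\AA(f)}$, where $\iota_\AA(e)$ and $\iota_\AA(f)$ are edges of $\Sal(\AA)$ crossing the shards of $\AA$ that $\Sigma(e)$ and $\Sigma(f)$ restrict to — and these are distinct shards of the rank-2 arrangement $\AA$. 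So it remains to handle the rank-2 case, which is precisely \Cref{lem:rank22}: for $\HH$ of rank $2$ with edges labeled as in \Cref{fig:rank2}, distinct edges $e_i,e_j$ with $\{i,j\}$ not equal to the ``same shard'' pairs $\{m,2m\}$ or $\{1,m+1\}$ satisfy $\tt_{e_i}\neq\tt_{e_j}$. Since $\Sigma(e_m)=\Sigma(e_{2m})$ and $\Sigma(e_1)=\Sigma(e_{m+1})$ are exactly the coincidences of shards in rank $2$, distinctness of the shards in $\AA$ gives $\tt_{\iota_\AA(e)}\neq\tt_{\iota_\AA(f)}$, hence $\tt_e\not\cong\tt_f$.

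The main obstacle is the geometric bookkeeping in the reduction step: one must check that the restriction of a shard of $\HH$ to a full rank-2 subarrangement $\AA$ really is a single shard of $\AA$ (so that $\iota_\AA(e)$ has a well-defined shard label in $\AA$), and that two shards of $\HH$ on the same hyperplane that are separated by a cut are separated by a cut coming from a full rank-2 subarrangement. This is where I would invoke the basic structural facts about shards from \cite{reading2003order} (cutting relations are determined rank-2-locally), together with \Cref{lem:quotient_sec4} to transport the distinction to $\pi_1$. Once that is in place, the argument is just: pick the right $\AA$, quotient, and cite \Cref{lem:rank22} — together with the abelianization argument in the disjoint-hyperplane case, this completes the proof. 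Combining with \Cref{prop:shard1}, this establishes \Cref{thm:shard_generators}.
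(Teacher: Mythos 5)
Your proposal follows essentially the same two-case structure as the paper's proof: if $H_e\neq H_f$, pass to the abelianization and invoke \Cref{thm:abelianization}; if $H_e=H_f=H$, pick a cutting hyperplane $H'$ separating $\Sigma(e)$ from $\Sigma(f)$, form the full rank-2 subarrangement $\AA\supseteq\{H,H'\}$ (in which $H$ is not basic), and apply the quotient map $\iotab_\AA$ from \Cref{lem:quotient_sec4} together with \Cref{lem:rank22}. The ``bookkeeping'' you flag as a potential obstacle is in fact immediate from the definitions of cutting and shards, and the paper handles it the same way without further comment.
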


\begin{proof}
Let $\Sigma=\Sigma(e)$ and $\Sigma'=\Sigma(f)$. If the hyperplanes $H_\Sigma$ and $H_{\Sigma'}$ are different, then the result follows from \Cref{thm:abelianization} by passing to the abelianization $H_1(\PP(\HH),B)$.

Now suppose $\Sigma$ and $\Sigma'$ are shards for the same hyperplane $H$. Let $H'$ be a hyperplane that cuts $H$ so that $\Sigma$ and $\Sigma'$ are on opposite sides of $H'$, and let $\AA$ be the full rank-2 subarrangement of $\HH$ containing $H$ and $H'$. Note that $H$ is not basic in $\AA$. It follows from \Cref{lem:rank22} and the proof of \Cref{lem:quotient_sec4} that the quotient map $\iotab_\AA$ sends $\tt_{e}$ and $\tt_{f}$ to different elements of $\TT_{\mathrm{edge}}(\AA,B_\AA)$, so $\tt_{e}\neq\tt_{f}$.
\end{proof}

Put together, \Cref{prop:shard1,prop:shard2} imply \Cref{thm:shard_generators}.

\subsection{The Pure Shard Monoid}\label{sec:pure_shard}
We now know that the generators in $\TT_{\mathrm{edge}}$ are indexed by shards, so we may write $\tt_\Sigma$ to refer to the \defn{shard loop} indexed by the shard $\Sigma$ and write $\TTsha=\TT_{\mathrm{edge}}$.

\begin{definition}
The \defn{pure shard monoid} $\PPP^+(\HH,B)$ is the submonoid of $\pi_1(\Sal(\HH),B)$ generated by $\TTsha$.
\end{definition}
Our name for $\PPP^+(\HH,B)$ comes from the fact that the generating set $\TTsha$ is indexed by shards and the fact that if $\HH$ is the reflection arrangement of a finite Coxeter group $W$, then $\pi_1(\Sal(\HH),B)$ is isomorphic to the pure braid group of $W$.

An \defn{$\TTsha$-word} is a word over the alphabet $\TTsha$. By definition, $\PPP^+(\HH,B)$ is the set of elements of $\pi_1(\Sal(\HH),B)$ represented by $\TTsha$-words. We view $\PPP^+(\HH,B)$ as a poset, where the partial order $\leq$ is defined by saying $p\leq p'$ if there is an $\TTsha$-word representing $p'$ that contains an $\TTsha$-word representing $p$ as a prefix. 

The \defn{full twist} is the element $\Delta^2$ of $\pi_1(\Sal(\HH),B)$ defined by \[\Delta^2:=\gal(B,-B)\cdot \gal(-B,B).\] For $\HH$ a finite irreducible simplicial arrangement, it is known that the center of $\pi_1(\Sal(\HH),B)$ is an infinite cyclic group generated by $\Delta^2$~\cite{cordovil1994center}.  We are especially interested in $[\id,\Delta^2]_{\PPP^+}$, the interval between the identity element $\id$ and the full twist $\Delta^2$ in $\PPP^+(\HH,B)$. This interval is ``tall and wide,'' but it is \emph{not} a lattice in general, preventing the use of Garside theory to study $\pi_1(\Sal(\HH),B)$ (see~\Cref{fig:a3}).  Note that if \[B=C_0\xrightarrow{e_1}C_{1}\xrightarrow{e_{2}} \cdots \xrightarrow{e_{k-1}}C_{k-1}\xrightarrow{e_k}C_k= -B\] is a positive minimal gallery from $B$ to $-B$, then 
\begin{equation}\label{eq:YY}
\Delta^2=\ell_{e_k}\ell_{e_{k-1}}\cdots\ell_{e_2}\ell_{e_1}. 
\end{equation}

\subsection{Proof of \texorpdfstring{\Cref{thm:self-dual}}{Theorem 1.2}}\label{sec:self-dual}
To initiate the study of the pure shard monoid, we consider an arbitrary central arrangement $\HH$ and prove that the interval $[\id,\Delta^2]_{\PPP^+}$ is self-dual. 

Observe that the set $\Sha(\HH,B)$ of shards defined with respect to the base region $B$ is equal to the set $\Sha(\HH,-B)$ of shards defined with respect to the base region $-B$. Moreover, for each shard $\Sigma\in\Sha(\HH,B)$, the polyhedral cone $-\Sigma$ is also a shard in $\Sha(\HH,B)$. Let us write $\widehat\ell_{e}$ for the loop in $\pi_1(\Sal(\HH),-B)$ corresponding to an edge $C' \xrightarrow{e} C$; that is, 
\[\widehat\ell_{\Sigma}=\gal(C,-B)^{-1}e^{-1}(e^*)^{-1}\gal(C,-B).\] It follows from \Cref{thm:shard_generators} that $\widehat\ell_e$ only depends on the shard $\Sigma(e)$. Using Salvetti's presentation in \Cref{thm:gens_to_other_shards,thm:relations}, we find that there is an isomorphism $\Psi\colon\pi_1(\Sal(\HH),B)\to\pi_1(\Sal(\HH),-B)$ satisfying $\Psi(\ell_\Sigma)=\widehat\ell_{-\Sigma}$ for all $\Sigma\in\Sha(\HH,B)$. 

For $\Sigma\in\Sha(\HH,B)$, we have 
\begin{equation}\label{eq:conjugate_delta}
\gal(B,-B)\,\widehat\ell_\Sigma\,\gal(B,-B)^{-1}\cong\ell_{\Sigma}^{-1}. 
\end{equation}
Indeed, if $C'\xrightarrow{e} C$ is an edge such that $\Sigma=\Sigma(e)$, then
\begin{align*}
\gal(B,-B)\,\widehat\ell_\Sigma\,\gal(B,-B)^{-1} &\cong \gal(B,-B)\gal(C,-B)^{-1}e^{-1}(e^*)^{-1}\gal(C,B)\gal(B,-B)^{-1} \\ 
&\cong \gal(B,C)e^{-1}(e^*)^{-1}\gal(B,C')^{-1} \\
&\cong \gal(B,C')(e^*)^{-1}e^{-1}\gal(B,C')^{-1} \\ 
&=\ell_\Sigma^{-1}. 
\end{align*}

Define a map $\kappa\colon\PPP^+(\HH,B)\to\PPP^+(\HH,B)$ by $\kappa(\ell_{\Sigma_1}\cdots\ell_{\Sigma_k})=\ell_{-\Sigma_k}\cdots\ell_{-\Sigma_1}$. Let us check that this map is well defined. Suppose $\Sigma_1,\ldots,\Sigma_k,\Sigma_1',\ldots,\Sigma_{k'}'\in\Sha(\HH,B)$ are such that ${\ell_{\Sigma_1}\cdots\ell_{\Sigma_k}=\ell_{\Sigma_1'}\cdots\ell_{\Sigma_{k'}'}}$. Applying the isomorphism $\Psi$, we find that \[\widehat\ell_{-\Sigma_1}\cdots\widehat\ell_{-\Sigma_k}=\widehat\ell_{-\Sigma_1'}\cdots\widehat\ell_{-\Sigma_{k'}'}.\] If we conjugate each side of this equation by $\gal(B,-B)$ and use the identity \eqref{eq:conjugate_delta}, we obtain 
\[
\ell_{-\Sigma_1}^{-1}\cdots\ell_{-\Sigma_k}^{-1}\cong\ell_{-\Sigma_1'}^{-1}\cdots\ell_{-\Sigma_{k'}'}^{-1}.
\]
This shows that $\kappa(\ell_{\Sigma_1}\cdots\ell_{\Sigma_k})^{-1}\cong \kappa(\ell_{\Sigma_1'}\cdots\ell_{\Sigma_{k'}'})^{-1}$. Since elements of $\pi_1(\Sal(\HH),B)$ are defined up to homotopy, it follows that $\kappa$ is well defined. It is also clear that $\kappa$ is an antihomomorphism (meaning $\kappa(xy)=\kappa(y)\kappa(x)$ for all $x,y\in\PPP^+(\HH,B)$) and an involution. 

Let \[B=C_0\xrightarrow{e_1}C_{1}\xrightarrow{e_{2}} \cdots \xrightarrow{e_{k-1}}C_{k-1}\xrightarrow{e_k}C_k= -B\] be a positive minimal gallery from $B$ to $-B$. For $1\leq i\leq k$, there is an edge $-C_{i}\xrightarrow{-e_{i}}-C_{i-1}$, and we have $\Sigma(-e_i)=-\Sigma(e_i)$. Note that \[B=-C_{k}\xrightarrow{-e_k}-C_{k-1}\xrightarrow{-e_{k-1}}\cdots\xrightarrow{-e_2}-C_1\xrightarrow{-e_1}-C_0=-B\] is a positive minimal gallery from $B$ to $-B$. It follows from \eqref{eq:YY} that 
\[\ell_{\Sigma(e_k)}\cdots\ell_{\Sigma(e_1)}=\Delta^2=\ell_{-\Sigma(e_1)}\cdots\ell_{-\Sigma(e_k)}.\] Hence, 
\begin{equation}\label{eq:kappa_Delta}
\kappa(\Delta^2)=\Delta^2. 
\end{equation}
If $x\in[\id,\Delta^2]_{\PPP^+}$, then 
\begin{equation}\label{eq:ZZ}
\Delta^2=\kappa(xx^{-1}\Delta^2)=\kappa(x^{-1}\Delta^2)\kappa(x), 
\end{equation}
so $\kappa(x^{-1}\Delta^2)\in[\id,\Delta^2]_{\PPP^+}$. Define $\Omega\colon[\id,\Delta^2]_{\PPP^+}\to[\id,\Delta^2]_{\PPP^+}$ by $\Omega(x)=\kappa(x^{-1}\Delta^2)$. The following proposition implies \Cref{thm:self-dual}. 

\begin{proposition}
The map $\Omega$ is an antiautomorphism of the interval $[\id,\Delta^2]_{\PPP^+}$. 
\end{proposition}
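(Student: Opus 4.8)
The plan is to show directly that $\Omega$ is an order-reversing involution of $[\id,\Delta^2]_{\PPP^+}$, which is exactly the assertion that it is an antiautomorphism. The starting point is a convenient restatement of the partial order on the pure shard monoid: unwinding the definition, for $p,p'\in\PPP^+(\HH,B)$ we have $p\leq p'$ if and only if $p'=pq$ for some $q\in\PPP^+(\HH,B)$ (the prefix $\TTsha$-word for $p$ is completed by a $\TTsha$-word representing $q=p^{-1}p'$), and in particular $p$ belongs to the interval exactly when $\Delta^2=pq$ for some $q\in\PPP^+(\HH,B)$, in which case we may write $q=p^{-1}\Delta^2$.

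With this in hand, the first step is to verify that $\Omega$ is an involution. For $x$ in the interval, relation \eqref{eq:ZZ} says precisely that $\Delta^2=\Omega(x)\,\kappa(x)$; hence $\kappa(x)=\Omega(x)^{-1}\Delta^2$ lies in $\PPP^+(\HH,B)$, and applying $\Omega$ once more gives $\Omega(\Omega(x))=\kappa(\Omega(x)^{-1}\Delta^2)=\kappa(\kappa(x))=x$, since $\kappa$ is an involution. This already shows $\Omega$ is a bijection of $[\id,\Delta^2]_{\PPP^+}$ onto itself. The second step is order-reversal: given $x\leq y$ in the interval, write $y=xz$ and $\Delta^2=yw$ with $z,w\in\PPP^+(\HH,B)$; then $x^{-1}\Delta^2=zw$ and $y^{-1}\Delta^2=w$, so, using that $\kappa$ is an antihomomorphism, $\Omega(x)=\kappa(zw)=\kappa(w)\kappa(z)=\Omega(y)\,\kappa(z)$, and since $\kappa(z)\in\PPP^+(\HH,B)$ we conclude $\Omega(y)\leq\Omega(x)$. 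Feeding the pair $\Omega(x),\Omega(y)$ back into this implication (they lie in the interval by the first step) and using the involution property yields the converse implication, so $x\leq y\iff\Omega(y)\leq\Omega(x)$, completing the proof.

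Because the heavy lifting has already been done — $\kappa$ is a well-defined antihomomorphism and an involution of $\PPP^+(\HH,B)$, $\kappa(\Delta^2)=\Delta^2$, and \eqref{eq:ZZ} holds — I do not expect a serious obstacle; the argument is essentially formal bookkeeping. The only subtleties to watch are that the elements $x^{-1}\Delta^2$, $zw$, and $w$ are genuinely in the monoid $\PPP^+(\HH,B)$ and not merely in the ambient group $\pi_1(\Sal(\HH),B)$ (guaranteed by membership in the interval together with \eqref{eq:ZZ}), and that the order on the interval is the one restricted from $\PPP^+(\HH,B)$ so that the prefix description of $\leq$ applies. It is worth noting that, unlike the discussion of $\Delta^2$ as a central element, this argument never uses centrality of $\Delta^2$, which is consistent with the proposition being stated for an arbitrary central arrangement.
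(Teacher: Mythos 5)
Your proof is correct and follows essentially the same approach as the paper: verify $\Omega$ is an involution via \eqref{eq:ZZ} and then show order-reversal by writing $y=xz$ and using that $\kappa$ is an antihomomorphism. The only difference is cosmetic notation (you name $w=y^{-1}\Delta^2$ whereas the paper writes it out directly), and you spell out the converse direction of order-reversal, which the paper leaves implicit since it follows formally from bijectivity and the involution property.
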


\begin{proof}
For $x\in[\id,\Delta^2]_{\PPP^+}$, it follows from \eqref{eq:ZZ} that 
\[\Omega^2(x)=\kappa(\kappa(x^{-1}\Delta^2)^{-1}\Delta^2)=\kappa(\kappa(x))=x.\] This shows that $\Omega$ is an involution; in particular, it is a bijection. 

To see that $\Omega$ is order-reversing, suppose $x,y\in[\id,\Delta^2]_{\PPP^+}$ are such that $x\leq y$, and let $z\in\PPP^+(\HH,B)$ be such that $y=xz$. We have 
\[\Omega(x)=\kappa(x^{-1}\Delta^2)=\kappa(y^{-1}\Delta^2)\kappa(z)=\Omega(y)\kappa(z), \]
so $\Omega(y)\leq \Omega(x)$. 
\end{proof}

\subsection{Proof of \texorpdfstring{\Cref{thm:rank2interval}}{Theorem 1.3}}\label{sec:rank2_enumeration}
We now delve deeper into the combinatorics of the interval $[\id,\Delta^2]_{\PPP^+}$ when $\HH$ is an arrangement of rank $2$ with $m$ hyperplanes.  As in~\Cref{fig:rank2}, index the shards of $\HH$ as $\Sigma_1,\Sigma_2,\ldots,\Sigma_{2m}$ so that $\Sigma_1=\Sigma_{m+1}$ and $\Sigma_m=\Sigma_{2m}$ are basic hyperplanes and so that $H_{\Sigma_i}=H_{\Sigma_{m+i}}$ for all $1 \leq i \leq m$. For simplicity, let us write $H_i=H_{\Sigma_i}$.  Index the generators of $\TTsha$ as $\rr_i=\tt_{\Sigma_i}$ and $\ttt_i=\tt_{\Sigma_{m+i}}$, and note that $\rr_1=\ttt_{1}$ and $\rr_m=\ttt_{m}$. Let $\binom{[m]}{k}$ be the collection of $k$-element subsets of $[m]$. For each $I=\{i_1<\cdots<i_k\}\in\binom{[m]}{k}$, let \[\rr_I = \rr_{i_k}\rr_{i_{k-1}} \cdots \rr_{i_1} \quad \text{and}\quad\ttt_I = \ttt_{i_1} \ttt_{i_{2}} \cdots \ttt_{i_k}.\] \Cref{fig:e_to_delta_P2,fig:e_to_delta_P3} depict the interval $[\id,\Delta^2]_{\PPP^+}$ when $m=4$ and $m=5$, respectively. 

A finite poset is called \defn{planar} if its Hasse diagram can be drawn in the $xy$-plane so that each cover relation is represented by a curve with a strictly increasing $y$-coordinate and so that no two curves representing cover relations intersect except possibly at their endpoint. Our goal in this subsection is to prove \Cref{thm:rank2interval}, which states that $[\id,\Delta^2]_{\PPP^+}$ is a planar lattice with rank generating function $1+\left(\sum_{k=1}^{m-1} \left(2\binom{m}{k}-2\right)q^k \right)+q^m$ and with $m 2^{m-2}$ maximal chains. We break the proof into \Cref{prop:rank2maxchain,prop:rank2rankgen,prop:planar} below. 

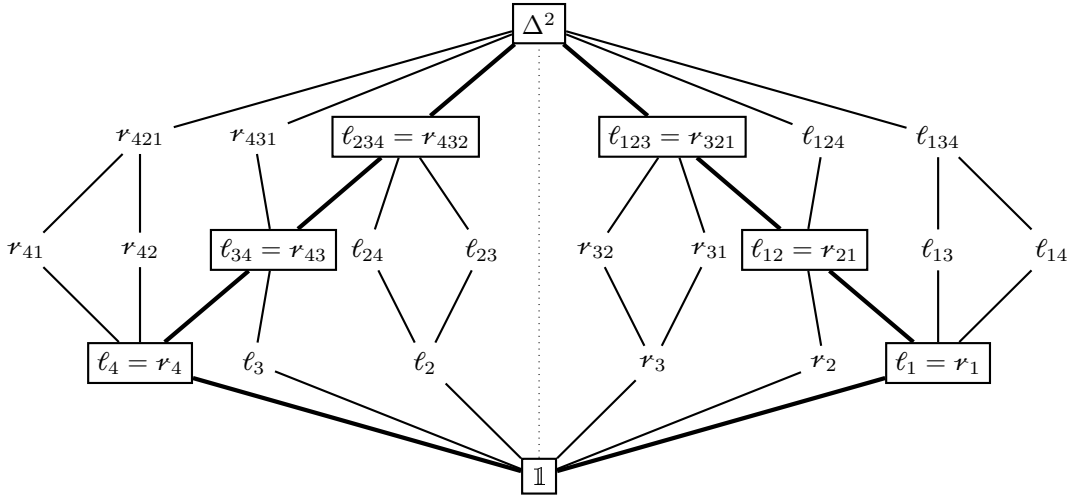
\begin{figure}[htbp]
\begin{tikzpicture}[scale=1.5]
\node[draw,thick] (1) at (0,0) {$\id$};

\node (l2) at (-1,1) {$\ttt_2$};
\node (l3) at (-2.5,1) {$\ttt_3$};
\node[draw,thick] (l4) at (-3.5,1) {$\ttt_4=\rr_4$};
\node[draw,thick] (r1) at (3.5,1) {$\ttt_1=\rr_1$};
\node (r2) at (2.5,1) {$\rr_2$};
\node (r3) at (1,1) {$\rr_3$};

\node (23) at (-.5,2) {$\ttt_{23}$};
\node (24) at (-1.5,2) {$\ttt_{24}$};
\node[draw,thick] (34) at (-2.33,2) {$\ttt_{34}=\rr_{43}$};
\node (42) at (-3.5,2) {$\rr_{42}$};
\node (41) at (-4.5,2) {$\rr_{41}$};
\node (14) at (4.5,2) {$\ttt_{14}$};
\node (13) at (3.5,2) {$\ttt_{13}$};
\node[draw,thick] (12) at (2.33,2) {$\ttt_{12}=\rr_{21}$};
\node (31) at (1.5,2) {$\rr_{31}$};
\node (32) at (.5,2) {$\rr_{32}$};

\node[draw,thick] (234) at (-1.17,3) {$\ttt_{234}=\rr_{432}$};
\node (431) at (-2.5,3) {$\rr_{431}$};
\node (421) at (-3.5,3) {$\rr_{421}$};
\node (134) at (3.5,3) {$\ttt_{134}$};
\node (124) at (2.5,3) {$\ttt_{124}$};
\node[draw,thick] (123) at (1.17,3) {$\ttt_{123}=\rr_{321}$};

\node[draw,thick] (d) at (0,4) {$\Delta^2$};

\draw[-,thick] (1) to node[midway] {} (l2);
\draw[-,thick] (1) to node[midway] {} (l3);
\draw[-,ultra thick] (1) to node[midway] {} (l4);
\draw[-,ultra thick] (1) to node[midway] {} (r1);
\draw[-,thick] (1) to node[midway] {} (r2);
\draw[-,thick] (1) to node[midway] {} (r3);

\draw[-,thick] (l2) to node[midway] {} (23);
\draw[-,thick] (l2) to node[midway] {} (24);
\draw[-,thick] (l3) to node[midway] {} (34);
\draw[-,ultra thick] (l4) to node[midway] {} (34);
\draw[-,thick] (l4) to node[midway] {} (42);
\draw[-,thick] (l4) to node[midway] {} (41);
\draw[-,thick] (r1) to node[midway] {} (14);
\draw[-,thick] (r1) to node[midway] {} (13);
\draw[-,ultra thick] (r1) to node[midway] {} (12);
\draw[-,thick] (r2) to node[midway] {} (12);
\draw[-,thick] (r3) to node[midway] {} (31);
\draw[-,thick] (r3) to node[midway] {} (32);

\draw[-,thick] (23) to node[midway] {} (234);
\draw[-,thick] (24) to node[midway] {} (234);
\draw[-,ultra thick] (34) to node[midway] {} (234);
\draw[-,thick] (34) to node[midway] {} (431);
\draw[-,thick] (42) to node[midway] {} (421);
\draw[-,thick] (41) to node[midway] {} (421);
\draw[-,thick] (14) to node[midway] {} (134);
\draw[-,thick] (13) to node[midway] {} (134);
\draw[-,thick] (12) to node[midway] {} (124);
\draw[-,ultra thick] (12) to node[midway] {} (123);
\draw[-,thick] (31) to node[midway] {} (123);
\draw[-,thick] (32) to node[midway] {} (123);

\draw[-,ultra thick] (234) to node[midway] {} (d);
\draw[-,thick] (431) to node[midway] {} (d);
\draw[-,thick] (421) to node[midway] {} (d);
\draw[-,thick] (134) to node[midway] {} (d);
\draw[-,thick] (124) to node[midway] {} (d);
\draw[-,ultra thick] (123) to node[midway] {} (d);
\draw[-,dotted] (1) to (d);
\end{tikzpicture}
\caption{A redrawing of the interval $[\id,\Delta^2]_{\PPP^+}$ in the pure shard monoid $\PPP^+(\HH,B)$ from~\Cref{fig:e_to_delta_P}, where elements are labeled using the conventions of~\Cref{sec:rank2_enumeration}. Boxed elements are in the image of $\Pow$ (see~\Cref{sec:pow}).}
\label{fig:e_to_delta_P2}
\end{figure}

\begin{figure}[htbp]
\scalebox{0.6}{
\begin{tikzpicture}[scale=1.5]
\node[draw,thick] (e) at (0,0) {$\id$};

\node[draw,thick] (l5) at (-8+.5,1) {$5$};
\node (l4) at (-6+.5,1) {$\ttt_4$};
\node (l3) at (-4+.5,1) {$\ttt_3$};
\node (l2) at (-2+.5,1) {$\ttt_2$};
\node (r4) at (2-.5,1) {$\rr_4$};
\node (r3) at (4-.5,1) {$\rr_3$};
\node (r2) at (6-.5,1) {$\rr_2$};
\node[draw,thick] (r1) at (8-.5,1) {$1$};

\node (51) at (-9+.5,2) {$51$};
\node (52) at (-8+.5,2) {$52$};
\node (53) at (-7+.5,2) {$53$};
\node[draw,thick] (45) at (-6+.5,2) {$45$};
\node (35) at (-5+.5,2) {$35$};
\node (34) at (-4+.5,2) {$34$};
\node (25) at (-3+.5,2) {$25$};
\node (24) at (-2+.5,2) {$24$};
\node (23) at (-1+.5,2) {$23$};

\node (15) at (9-.5,2) {$15$};
\node (14) at (8-.5,2) {$14$};
\node (13) at (7-.5,2) {$13$};
\node[draw,thick] (21) at (6-.5,2) {$21$};
\node (31) at (5-.5,2) {$31$};
\node (32) at (4-.5,2) {$32$};
\node (41) at (3-.5,2) {$41$};
\node (42) at (2-.5,2) {$42$};
\node (43) at (1-.5,2) {$43$};

\node (521) at (-9+.5,3) {$521$};
\node (531) at (-8+.5,3) {$531$};
\node (532) at (-7+.5,3) {$532$};
\node (541) at (-6+.5,3) {$541$};
\node (542) at (-5+.5,3) {$542$};
\node[draw,thick] (345) at (-4+.5,3) {$345$};
\node (245) at (-3+.5,3) {$245$};
\node (235) at (-2+.5,3) {$235$};
\node (234) at (-1+.5,3) {$234$};

\node (145) at (9-.5,3) {$145$};
\node (135) at (8-.5,3) {$135$};
\node (134) at (7-.5,3) {$134$};
\node (125) at (6-.5,3) {$125$};
\node (124) at (5-.5,3) {$124$};
\node[draw,thick] (321) at (4-.5,3) {$321$};
\node (421) at (3-.5,3) {$421$};
\node (431) at (2-.5,3) {$431$};
\node (432) at (1-.5,3) {$432$};

\node (5321) at (-8+.5,4) {$5321$};
\node (5421) at (-6+.5,4) {$5421$};
\node (5431) at (-4+.5,4) {$5431$};
\node[draw,thick] (2345) at (-2+.5,4) {$2345$};
\node[draw,thick] (4321) at (2-.5,4) {$4321$};
\node (1235) at (4-.5,4) {$1235$};
\node (1245) at (6-.5,4) {$1245$};
\node (1345) at (8-.5,4) {$1345$};

\node[draw,thick] (d) at (0,5) {$\Delta^2$};
\draw[-,thick] (l5) to (51) to (521) to (5321) to (d);
\draw[-,thick] (l5) to (52) to (521);
\draw[-,ultra thick] (e) to (l5) to (45) to (345) to (2345) to (d) to (4321) to (321) to (21) to (r1) to (e);
\draw[-,thick] (l5) to (53) to (531) to (5321);
\draw[-,thick] (53) to (532) to (5321);
\draw[-,thick] (e) to (l4) to (45) to (541) to (5421) to (d) to (5431) to (345) to (35) to (l3) to (e);
\draw[-,thick] (5421) to (542) to (45);
\draw[-,thick] (345) to (34) to (l3);
\draw[-,thick] (e) to (l2) to (25) to (245) to (2345) to (235) to (23) to (234) to (2345);
\draw[-,thick] (l2) to (24) to (245);
\draw[-,thick] (l2) to (23);
\draw[-,thick] (e) to (r1) to (15) to (145) to (1345) to (d) to (1245) to (125) to (21) to (r2) to (e) to (r3) to (31) to (321) to (1235) to (d);
\draw[-,thick] (e) to (r4) to (41) to (421) to (4321) to (432) to (43) to (r4) to (42) to (421);
\draw[-,thick] (r1) to (13) to (135) to (1345) to (134) to (13);
\draw[-,thick] (r1) to (14) to (145);
\draw[-,thick] (21) to (124) to (1245);
\draw[-,thick] (r3) to (32) to (321);
\draw[-,thick] (43) to (431) to (4321);
\draw[-,dotted] (e) to (d);
\end{tikzpicture}}
\caption{The interval $[\id,\Delta^2]_{\PPP^+}$ in the pure shard monoid $\PPP^+(\HH,B)$, where $\HH$ is a rank-2 arrangement with $m=5$ hyperplanes. Elements are labeled using the conventions of~\Cref{sec:rank2_enumeration}.  With the exception of singleton sets $I=\{i\}$ for $1<i<m$, elements $\rr_I$ are abbreviated as decreasing sequences, elements $\ttt_I$ are abbreviated as increasing sequences, and boxed elements are in the image of $\Pow$ (see~\Cref{sec:pow}) and may be viewed as increasing or decreasing.} 
\label{fig:e_to_delta_P3}
\end{figure}

\begin{figure}[htbp]
\includegraphics[width=.9\textwidth]{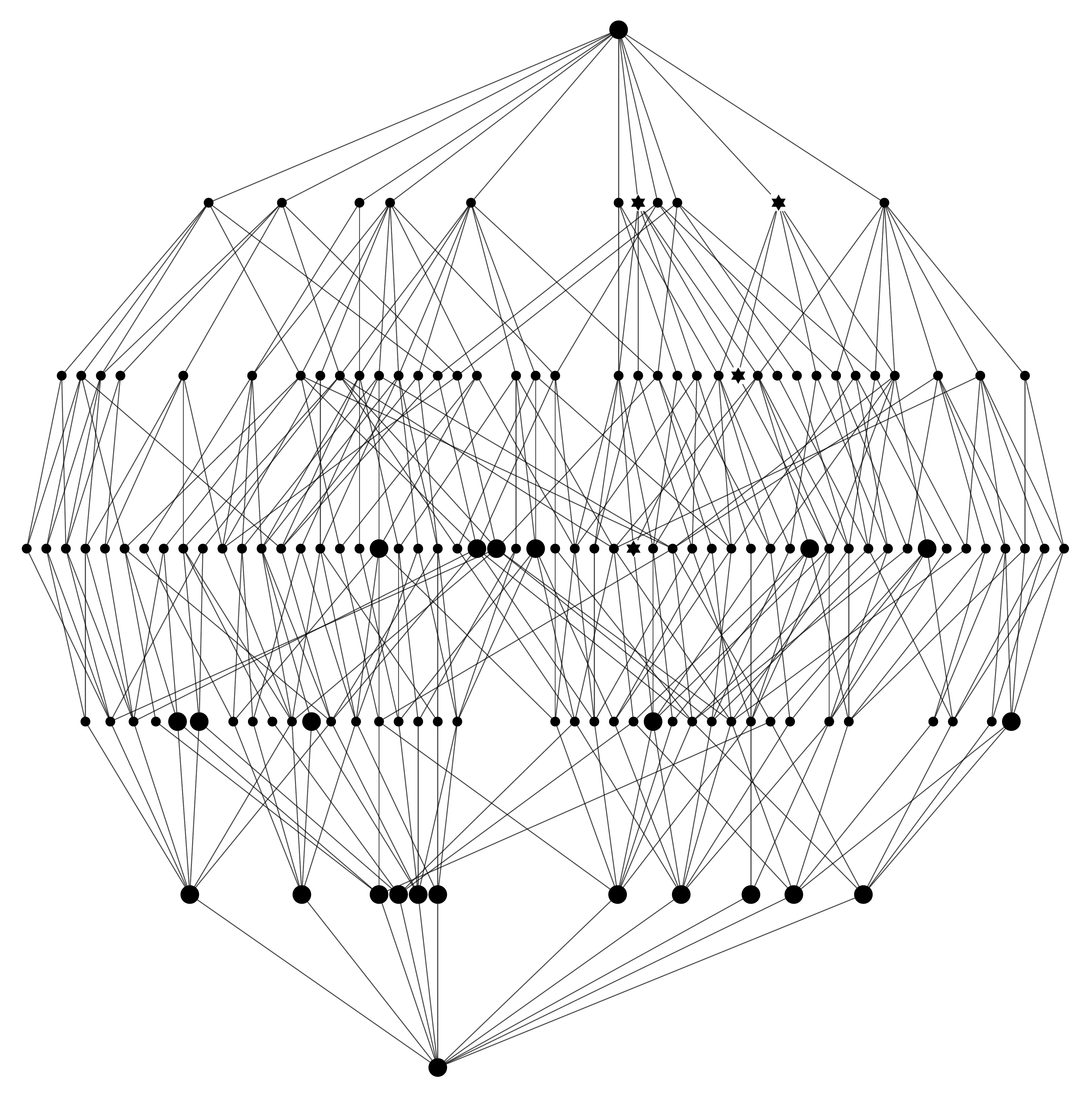}
\caption{The interval $[\id,\Delta^2]_{\PPP^+}$ in $\PPP^+(\HH,B)$ for the braid arrangement of type $A_3$.  It has 152 elements and 588 maximal chains.  Large circles represent elements in the image of $\Crackle$.  This interval is not a lattice, as witnessed by the 4 elements that have been drawn as stars.}
\label{fig:intervala3}
\end{figure}

Maximal chains in $[\id,\Delta^2]_{\PPP^+}$ correspond to $\TTsha(\HH,B)$-words for $\Delta^2$; we will first characterize these words, showing that there are $m2^{m-2}$ of them. Let us say an $\TTsha$-word is \defn{unimodal} if it is of the form $\ttt_I\rr_{[m]\setminus I}$, where $I\subseteq[m]$ is such that $1\in I$ and $m\not\in I$. The name comes from the fact that the indices in this word increase from $1$ up to $\max(I)$ and then decrease from $m$ down to $\min([m]\setminus I)$. Note that there are $2^{m-2}$ unimodal $\TTsha$-words. 

\begin{proposition}\label{prop:rank2maxchain}
Let $\HH$ be a rank-2 arrangement with $m$ hyperplanes. An $\TTsha$-word represents $\Delta^2$ if and only if it is a cyclic rotation of a unimodal $\TTsha$-word. Hence, the interval $[\id,\Delta^2]_{\PPP^+}$ has exactly $m2^{m-2}$ maximal chains.  
\end{proposition}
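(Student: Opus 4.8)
The plan is to identify $\pi_1(\Sal(\HH),B)$ with $F_{m-1}\times\mathbb Z$ and reduce the characterization to a triviality question in the free factor. Set $P_i:=\rr_i\rr_{i-1}\cdots\rr_1$ with $P_0:=\id$; generalizing the computation of \Cref{ex:rank2_2}, \Cref{thm:gens_to_other_shards} gives $\ttt_i=P_{i-1}^{-1}P_i$ and $\rr_i=P_iP_{i-1}^{-1}$ for $1\le i\le m$, whence $\ttt_1\cdots\ttt_p=P_p$ for all $p$ by telescoping; by \eqref{eq:YY} we have $P_m=\Delta^2$, so in particular $\ttt_{[m]}=\Delta^2$. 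The single family of defining relations of \Cref{thm:relations} (for $\AA=\HH$) is, after a short manipulation, equivalent to requiring that $\rr_m\cdots\rr_1$ commute with each $\rr_i$; hence $\Delta^2$ is central, and introducing $\Delta^2$ as a generator and eliminating $\rr_m$ gives $\pi_1(\Sal(\HH),B)\cong F_{m-1}\times\langle\Delta^2\rangle$, where $F_{m-1}$ is free on the images of $\rr_1,\dots,\rr_{m-1}$. Let $q$ be the projection onto $F:=F_{m-1}$, so $\ker q=\langle\Delta^2\rangle$, and write $\bar g=q(g)$.

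For the ``if'' direction I would first prove that every unimodal $\TTsha$-word $\ttt_I\rr_{[m]\setminus I}$ (so $1\in I$, $m\notin I$) represents $\Delta^2$, by induction on $|[m]\setminus I|$: the base case $I=[m-1]$ is the identity $\ttt_{[m]}=\Delta^2$ noted above, and in the inductive step, with $j=\min([m]\setminus I)$ and $I'=I\cup\{j\}$, one uses $\{1,\dots,j-1\}\subseteq I$ to see $\ttt_{I'}=\rr_j\,\ttt_I$ and $\rr_{[m]\setminus I}=\rr_{[m]\setminus I'}\,\rr_j$, so the inductive hypothesis $\ttt_{I'}\rr_{[m]\setminus I'}=\Delta^2$ yields $\ttt_I\rr_{[m]\setminus I'}=\rr_j^{-1}\Delta^2$ and then $\ttt_I\rr_{[m]\setminus I}=\rr_j^{-1}\Delta^2\rr_j=\Delta^2$ by centrality. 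Since $\Delta^2$ is central, a conjugation shows that every cyclic rotation of a unimodal word also represents $\Delta^2$.

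For the ``only if'' direction, let $w$ be an $\TTsha$-word representing $\Delta^2$. By the abelianization of \Cref{thm:abelianization}, $w$ has length $m$ and contains exactly one letter of each index $1,\dots,m$; and since $\ker q=\langle\Delta^2\rangle$ while $\Delta^2$ abelianizes to $(1,\dots,1)$, such a word represents $\Delta^2$ if and only if $q(w)=\id$ in $F$. Write $w=x_1\cdots x_m$, where $x_p$ has index $\sigma(p)$ ($\sigma\in S_m$) and type $\rr$ or $\ttt$. Because $\bar P_1,\dots,\bar P_{m-1}$ is again a free basis of $F$ (a triangular change of generators) and $\bar P_0=\bar P_m=\id$, expanding $q(w)=\prod_p\bar x_p$ and discarding the trivial symbols $\bar P_0,\bar P_m$ produces a word of length $2m-2$ in which every free generator $\bar P_j$ ($1\le j\le m-1$) occurs exactly twice, once positively (in the block of the letter of index $j$) and once negatively (in the block of index $j+1$). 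Such a word is trivial in a free group precisely when the matching pairing the two occurrences of each $\bar P_j$ is non-crossing; since triviality of $q(w)$ is invariant under cyclic rotation of $w$, this matching should be read on a circle. The remaining, and I expect hardest, step is the case analysis of this non-crossing condition: inside a block the two symbols appear in an order dictated by the type ($\rr$ or $\ttt$) of the letter, while the blocks sit in the cyclic order encoded by $\sigma$, and tracking these one shows the matching is non-crossing exactly when the cyclic sequence of indices is unimodal---one ascending run followed by one descending run---with each middle index on the ascending side of type $\ttt$ and each middle index on the descending side of type $\rr$. This is precisely the condition that $w$ be a cyclic rotation of a unimodal $\TTsha$-word.

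For the count: the $2^{m-2}$ unimodal $\TTsha$-words (indexed by $I\cap\{2,\dots,m-1\}$) use pairwise distinct $m$-letter alphabets, so each has exactly $m$ distinct cyclic rotations and no two of them share a cyclic rotation; hence there are exactly $m2^{m-2}$ $\TTsha$-words representing $\Delta^2$. The abelianization also guarantees that every such word has pairwise distinct prefixes, so these words correspond bijectively with the maximal chains of $[\id,\Delta^2]_{\PPP^+}$, and the maximal chain count $m2^{m-2}$ follows.
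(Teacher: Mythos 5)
Your approach is genuinely different from the paper's. The paper establishes that every unimodal word represents $\Delta^2$ using \Cref{lem:reverse_order_pow}, and then proves the converse by induction on $m$, repeatedly passing to rank-$2$ subarrangements with fewer hyperplanes via the quotient maps of \Cref{lem:quotient_sec4} and splitting into cases according to whether $\rr_2$ or $\ttt_2$ appears. You instead identify $\pi_1(\Sal(\HH),B)\cong F_{m-1}\times\langle\Delta^2\rangle$ explicitly, pass to the free basis $\bar P_1,\dots,\bar P_{m-1}$, and reduce the question to whether a chord diagram is non-crossing. This is an attractive reformulation, and the scaffolding you build is correct: the identities $\ttt_i=P_{i-1}^{-1}P_i$ and $\rr_i=P_iP_{i-1}^{-1}$ follow from \Cref{thm:gens_to_other_shards}; the presentation collapses to $F_{m-1}\times\mathbb Z$; your ``if'' induction (using $\ttt_{I'}=\rr_j\,\ttt_I$, $\rr_{[m]\setminus I}=\rr_{[m]\setminus I'}\rr_j$, and centrality of $\Delta^2$) is sound; the abelianization argument pins down that a word for $\Delta^2$ has length $m$ with one letter of each index; and the reduction ``represents $\Delta^2$ $\Leftrightarrow$ lies in $\ker q$'' is correct for such words. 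The criterion that a word in which each free generator appears exactly once positively and once negatively is trivial iff the (unique) chord matching is non-crossing is a genuine fact and is used correctly.

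However, the argument has a genuine gap at precisely the place you flag. The ``only if'' direction needs the equivalence: the circular chord matching is non-crossing $\Leftrightarrow$ the cyclic sequence of indices is unimodal and the types are $\ttt$ on the ascending side and $\rr$ on the descending side. You state this and say ``one shows,'' but you do not prove it, and it is not an afterthought --- it is the entire content of the converse. The analysis is nontrivial because two adjacent arcs share a block, and whether they cross there depends on the internal order of $\bar P_{j-1}^{-1}$ and $\bar P_j$ within block $j$, i.e.\ on the type; so one has to simultaneously control the cyclic order of the blocks (governed by $\sigma$) and the orientation inside each block (governed by the type). The paper's proof avoids this delicate double bookkeeping by doing induction on $m$, using quotients to three-hyperplane subarrangements to force the positions of the index-$2$ letter, and then peeling off $H_1$. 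Until the non-crossing $\Leftrightarrow$ cyclic-unimodal equivalence is actually established, your proposal does not constitute a proof of the ``only if'' direction, and hence does not yet prove the proposition. (The enumeration at the end is fine conditional on the characterization, and your observation that distinct unimodal words use distinct alphabets, so no two share a cyclic rotation, is a clean way to count --- in fact this detail is left implicit in the paper.)
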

\begin{proof}
We will write $\TTsha(\HH,B)$ instead of $\TTsha$ since we will need to pass to subarrangements. For each subarrangement we consider, we will employ the quotient map from \Cref{lem:quotient_sec4}. One can check directly that the desired result holds when $m=3$, so we may assume $m\geq 4$ and proceed by induction on $m$. By direct computation, we find that $\Delta^2$ is represented by the unimodal $\TTsha(\HH,B)$-word ${\ttt_{[m-1]}\rr_{m}=\ttt_1\ttt_2\cdots\ttt_{m-1}\rr_m}$. Therefore, we can pass to the abelianization of $\pi_1(\PP(\HH),B)$ and invoke \Cref{thm:abelianization} to see that for each $1\leq i\leq m$, an $\TTsha(\HH,B)$-word for $\Delta^2$ must use exactly one of $\rr_i$ or $\ttt_i$; in particular, such a word has no repeated letters. Since $\Delta^2$ is in the center of $\pi_1(\Sal(\HH),B)$, the set of $\TTsha(\HH,B)$-words for $\Delta^2$ is closed under taking cyclic rotations. Thus, it suffices to prove that the $\TTsha(\HH,B)$-words for $\Delta^2$ that begin with $\ttt_1$ are precisely the unimodal $\TTsha(\HH,B)$-words. As illustrated in \Cref{fig:tree}, each unimodal $\TTsha(\HH,B)$-word can be obtained from the word $\ttt_1\ttt_2\cdots\ttt_{m-1}\rr_m$ by repeatedly using relations of the form $\ttt_i\ttt_{i+1}\cdots\ttt_{m-1}\rr_m=\rr_m\rr_{m-1}\cdots\rr_i$, which follow from \Cref{lem:reverse_order_pow}. Hence, all unimodal $\TTsha(\HH,B)$-words represent $\Delta^2$.

Now suppose we have an arbitrary $\TTsha(\HH,B)$-word $\calw_1\cdots\calw_m$ for $\Delta^2$ such that $\calw_1=\ttt_1$; we will show that it is unimodal. We consider two cases depending on whether $\rr_2$ or $\ttt_2$ appears in $\calw_1\cdots\calw_m$.

\medskip

\noindent {\bf Case 1.} Assume $\calw_p=\rr_2$ for some $p\geq 2$. We claim that $p=m$. Suppose instead that $p<m$, and let $i$ be such that $\calw_m\in\{\ttt_i,\rr_i\}$. Consider the subarrangement $\AA=\{H_1,H_2,H_i\}$. Let $\iotab_\AA$ be the quotient map from \Cref{lem:quotient_sec4}. The basic hyperplanes of $\AA$ are $H_{1}$ and $H_{i}$, so $\iotab_\AA(\ttt_i)=\iotab_\AA(\rr_i)$. It follows that $\iotab_\AA(\calw_1\cdots\calw_m)=\iotab_\AA(\ttt_1)\iotab_\AA(\rr_2)\iotab_\AA(\rr_i)$, so $\iotab_\AA(\ttt_1)\iotab_\AA(\rr_2)\iotab_\AA(\rr_i)$ is an $\TTsha(\AA,B_\AA)$-word representing $\iotab_\AA(\Delta^2)$, which is the full twist in $\pi_1(\Sal(\AA),B_\AA)$. However, this word is not a cyclic shift of a unimodal $\TTsha(\AA,B_\AA)$-word, so this contradicts our induction hypothesis. Hence, $p=m$. 

We now know that our $\TTsha(\HH,B)$-word is of the form $\ttt_1\calw_2\cdots\calw_{m-1}\rr_2$. Let $\HH'$ be the subarrangement $\HH\setminus\{H_1\}$ of $\HH$. The basic hyperplanes of $\HH'$ are $H_{2}$ and $H_{m}$, so $\iotab_{\HH'}(\ttt_{2})=\iotab_{\HH'}(\rr_{2})$. Now, $\iotab_{\HH'}(\calw_2)\cdots\iotab_{\HH'}(\calw_{m-1})\iotab_{\HH'}(\rr_{2})$ is an $\TTsha(\HH',B_{\HH'})$-word representing the full twist in $\pi_1(\Sal(\HH'),B_{\HH'})$, so by induction, it is a cyclic rotation of a unimodal $\TTsha(\HH',B_{\HH'})$-word. This shows that the $\TTsha(\HH',B_{\HH'})$-word $\iotab_{\HH'}(\rr_{2})\iotab_{\HH'}(\calw_2)\cdots\iotab_{\HH'}(\calw_{m-1})$ is unimodal, so $\ttt_1\calw_2\cdots\calw_{m-1}\rr_2$ is a unimodal $\TTsha(\HH,B)$-word. 

\medskip 

\noindent{\bf Case 2.} Assume $\calw_p=\ttt_{2}$ for some $p\geq 2$. We claim that $p=2$. Suppose instead that $p>2$, and let $i$ be such that $\calw_2\in\{\ttt_i,\rr_i\}$. Consider the subarrangement $\AA=\{H_{1},H_{2},H_{i}\}$. The basic hyperplanes of $\AA$ are $H_{1}$ and $H_{i}$, so $\iotab_\AA(\ttt_i)=\iotab_\AA(\rr_i)$. It follows that $\iotab_\AA(\ttt_1)\iotab_\AA(\ttt_i)\iotab_\AA(\ttt_2)$ is an $\TTsha(\AA,B_\AA)$-word representing $\iotab_\AA(\Delta^2)$, which is the full twist in $\pi_1(\Sal(\AA),B_\AA)$. This word is not a cyclic rotation of a unimodal $\TTsha(\AA,B_\AA)$-word, so this contradicts our induction hypothesis. Hence, $p=2$.

We now know that our $\TTsha(\HH,B)$-word is of the form $\ttt_1\ttt_2\calw_3\cdots\calw_{m}$. Let $\HH'$ be the subarrangement $\HH\setminus\{H_1\}$ of $\HH$. The basic hyperplanes of $\HH'$ are $H_{2}$ and $H_{m}$, so $\iotab_{\HH'}(\ttt_{2})=\iotab_{\HH'}(\rr_{2})$. Now, $\iotab_{\HH'}(\ttt_2)\iotab_{\HH'}(\calw_3)\cdots\iotab_{\HH'}(\calw_{m})$ is an $\TTsha(\HH',B_{\HH'})$-word representing the full twist in $\pi_1(\Sal(\HH'),B_{\HH'})$. By induction, we know that $\iotab_{\HH'}(\ttt_2)\iotab_{\HH'}(\calw_3)\cdots\iotab_{\HH'}(\calw_{m})$ is a unimodal $\TTsha(\HH',B_{\HH'})$-word. It follows that $\ttt_1\ttt_2\calw_3\cdots\calw_{m}$ is a unimodal $\TTsha(\HH,B)$-word. 
\end{proof}

\begin{figure}[htbp]
\begin{tikzpicture}[scale=2.5]
\node (432) at (0,0) {$\ttt_1\ttt_2 \ttt_3 \ttt_4 \rr_5$};
\node (43) at (-1,-1) {$\ttt_1\ttt_2\ttt_3\rr_5\rr_4$};
\node (4) at (0,-1) {$\ttt_1\ttt_2\rr_5\rr_4\rr_3$};
\node (0) at (1,-1) {$\ttt_1\rr_5\rr_4\rr_3\rr_2$};
\node (42) at (0,-2) {$\ttt_1\ttt_2\ttt_4\rr_5\rr_3$};
\node (32) at (1,-2) {$\ttt_1\ttt_3\ttt_4\rr_5\rr_2$};
\node (2) at (2,-2) {$\ttt_1\ttt_4\rr_5\rr_3\rr_2$};
\node (3) at (1,-3) {$\ttt_{1}\ttt_3\rr_5\rr_4\rr_2$};
\draw[-,thick] (432) to node[midway] {} (43);
\draw[-,thick] (432) to node[midway] {} (4);
\draw[-,thick] (432) to node[midway] {} (0);
\draw[-,thick] (4) to node[midway] {} (42);
\draw[-,thick] (0) to node[midway] {} (2);
\draw[-,thick] (0) to node[midway] {} (32);
\draw[-,thick] (32) to node[midway] {} (3);
\end{tikzpicture}
\caption{The $2^{m-2}=8$ different unimodal $\TTsha$-words when $\HH$ is a rank-2 hyperplane arrangement with $m=5$ hyperplanes.  All other words representing $\Delta^2$ are cyclic rotations of these.}
\label{fig:tree}
\end{figure}
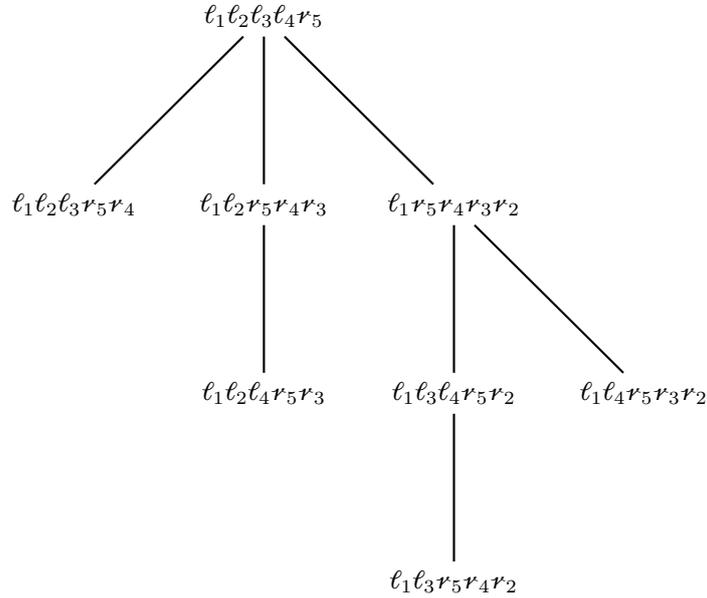

\begin{proposition}\label{prop:rank2rankgen}
Let $\HH$ be a rank-2 arrangement with $m$ hyperplanes. The elements of $[\id,\Delta^2]_{\PPP^+}$ are precisely the elements of the form $\ttt_I$ and $\rr_I$ for $I\subseteq [m]$, and all of these elements are distinct except that $\ttt_{\{1,\ldots,k\}}=\rr_{\{1,\ldots,k\}}$ and $\ttt_{\{m-k+1,\ldots,m\}}=\rr_{\{m-k+1,\ldots,m\}}$ for all $0\leq k\leq m$. In particular, the rank generating function of $[\id,\Delta^2]_{\PPP^+}$ is $1+\left(\sum_{k=1}^{m-1} \left(2\binom{m}{k}-2\right)q^k \right)+q^m$.
\end{proposition}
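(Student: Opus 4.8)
The plan is to derive this from \Cref{prop:rank2maxchain}: since the maximal chains of $[\id,\Delta^2]_{\PPP^+}$ are exactly the cyclic rotations of the $2^{m-2}$ unimodal $\TTsha$-words, an element of the interval is precisely one represented by a prefix of such a rotation, i.e.\ by a contiguous ``arc'' of a cyclic unimodal word $\ttt_J\rr_{[m]\setminus J}$ (with $1\in J$, $m\notin J$). Two families of relations will be used throughout: $\ttt_{\{1,\dots,k\}}=\rr_{\{1,\dots,k\}}$ and $\ttt_{\{i,\dots,m\}}=\rr_{\{i,\dots,m\}}$, valid for all $1\le k\le m$ and $1\le i\le m$; these follow from \Cref{lem:reverse_order_pow} applied to the galleries from $B$ to the regions with inversion sets $\{H_1,\dots,H_k\}$ and $\{H_i,\dots,H_m\}$ (the second family is the one already invoked in the proof of \Cref{prop:rank2maxchain}). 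I will also use the abelianization of \Cref{thm:abelianization}: since $H_{\Sigma_i}=H_{\Sigma_{m+i}}=H_i$, both $\rr_i$ and $\ttt_i$ are sent to the $i$th basis vector of $H_1(\PPP(\HH),B)\cong\mathbb Z^m$, so any $\TTsha$-word representing $\ttt_I$ or $\rr_I$ has exactly $\abs{I}$ letters. Hence $[\id,\Delta^2]_{\PPP^+}$ is graded with $\ttt_I$ and $\rr_I$ of rank $\abs{I}$ (making the rank generating function well defined), and each of $\ttt_I=\ttt_{I'}$, $\rr_I=\rr_{I'}$, $\ttt_I=\rr_{I'}$ forces $I=I'$.

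To show every element of the interval is some $\ttt_I$ or $\rr_I$, I would classify the arcs of a cyclic unimodal word. An arc contained in the increasing $\ttt$-block is literally $\ttt_X$ for a set $X$ of consecutive elements of $J$, and an arc contained in the decreasing $\rr$-block is literally $\rr_Y$; thus the only arcs requiring work straddle the $\ttt$-to-$\rr$ junction --- of the form $\ttt_X\rr_Y$ with $X$ a suffix of $J$ and $Y$ a top-segment of $[m]\setminus J$ --- or the wrap-around $\rr$-to-$\ttt$ junction --- of the form $\rr_{Y'}\ttt_{X'}$ with $X'$ a prefix of $J$ containing $1$ and $Y'$ a bottom-segment of $[m]\setminus J$. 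For the first kind I would split $Y=Y_{\mathrm{hi}}\sqcup Y_{\mathrm{lo}}$ with $Y_{\mathrm{hi}}=Y\cap[\max J+1,m]=[\max J+1,m]$ (a genuine interval because $\max J<m$), rewrite $\rr_{Y_{\mathrm{hi}}}$ as $\ttt_{Y_{\mathrm{hi}}}$ via $\rr_{\{i,\dots,m\}}=\ttt_{\{i,\dots,m\}}$, absorb it into $\ttt_X$ to form a single $\ttt$-run, and iterate (flipping each newly exposed final interval) until the expression collapses to some $\ttt_I$ or $\rr_I$; the wrap-around kind is handled symmetrically with $\ttt_{\{1,\dots,k\}}=\rr_{\{1,\dots,k\}}$, or alternatively by observing that a straddling arc $w$ and its complementary straddling arc $w'$ satisfy $ww'=\Delta^2$, so $w'$ is determined from $w$ together with the antiautomorphism of \Cref{thm:self-dual}. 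Conversely, each $\ttt_I$ and $\rr_I$ occurs as such a prefix --- e.g.\ $\ttt_I$ with $1\in I$, $m\notin I$ is a prefix of $\ttt_I\rr_{[m]\setminus I}$, and the general case reduces to this after adjoining or deleting $1$ or $m$ --- so $[\id,\Delta^2]_{\PPP^+}=\{\ttt_I:I\subseteq[m]\}\cup\{\rr_I:I\subseteq[m]\}$.

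Next I would pin down the coincidences. By the abelianization remark it suffices to decide when $\ttt_I=\rr_I$. Equality holds when $I$ is an initial interval $\{1,\dots,k\}$ or a final interval $\{m-k+1,\dots,m\}$ (hence also for $\emptyset$ and $[m]$), by the two relation families. When $I$ is neither, I would produce a three-hyperplane subarrangement $\AA=\{H_a,H_b,H_c\}$ with $a<b<c$ for which the quotient map $\iotab_\AA$ of \Cref{lem:quotient_sec4} separates $\ttt_I$ and $\rr_I$, chosen so that $I\cap\{a,b,c\}$ is $\{b\}$ or $\{a,c\}$. In the first case $\iotab_\AA(\ttt_I)=\ttt_b^{(\AA)}\ne\rr_b^{(\AA)}=\iotab_\AA(\rr_I)$ since $H_b$ is the middle, hence non-basic, hyperplane of $\AA$ (\Cref{lem:A2}); in the second case $\iotab_\AA(\ttt_I)=\rr_a^{(\AA)}\rr_c^{(\AA)}$ and $\iotab_\AA(\rr_I)=\rr_c^{(\AA)}\rr_a^{(\AA)}$, which differ because $\rr_a^{(\AA)}$ and $\rr_c^{(\AA)}$ do not commute (visible from the homomorphism to $F_2$ constructed in the proof of \Cref{lem:A2}). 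A short case analysis on whether $1,m\in I$ --- using that $I$ fails to be an initial or final interval to locate a ``gap'' --- shows such an $\AA$ always exists.

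Finally, granting the first two assertions the rank generating function is immediate: in rank $k$ the $\binom{m}{k}$ sets $I$ of size $k$ give pairwise distinct $\ttt_I$, likewise pairwise distinct $\rr_I$, and $\ttt_I=\rr_{I'}$ with $\abs{I}=\abs{I'}=k$ happens exactly for the size-$k$ initial/final intervals --- $2$ of them when $0<k<m$ and $1$ when $k\in\{0,m\}$ --- so inclusion--exclusion gives $2\binom{m}{k}-2$ elements of rank $k$ for $0<k<m$ and a single element in ranks $0$ and $m$, i.e.\ the stated polynomial. The main obstacle, I expect, is the treatment of the junction-straddling arcs in the first assertion: turning an arbitrary $\ttt_X\rr_Y$ into normal form requires iterating the two interval-flip relations with care, and the bookkeeping needed to decide which of the two forms $\ttt_I$ or $\rr_I$ results is the delicate point.
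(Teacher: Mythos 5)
Your overall strategy matches the paper's: both combine the abelianization (\Cref{thm:abelianization}) to separate by hyperplane support, the interval relations $\ttt_{\{1,\dots,k\}}=\rr_{\{1,\dots,k\}}$ and $\ttt_{\{i,\dots,m\}}=\rr_{\{i,\dots,m\}}$ from \Cref{lem:reverse_order_pow}, quotients to $3$-hyperplane subarrangements via \Cref{lem:quotient_sec4} and \Cref{lem:A2}, and \Cref{prop:rank2maxchain} to reduce to prefixes of cyclic unimodal words. Your distinctness argument (choosing $\AA=\{H_a,H_b,H_c\}$ with $I\cap\{a,b,c\}$ equal to $\{b\}$ or $\{a,c\}$, then invoking \Cref{lem:A2} and the $F_2$-homomorphism in its proof to detect noncommutativity) is essentially what the paper does in its induction step, just phrased without the induction wrapper, and it works.

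However, the completeness half has a real gap: your classification of arcs into four types (inside the $\ttt$-block, inside the $\rr$-block, straddling the $\ttt\to\rr$ junction as $\ttt_X\rr_Y$, and straddling the wrap-around $\rr\to\ttt$ junction as $\rr_{Y'}\ttt_{X'}$) omits the arcs that cross \emph{both} junctions. Since the cyclic unimodal word has exactly two junctions, a prefix can contain an entire block and spill over on both sides, producing arcs of the form $\rr_{K_1}\ttt_J\rr_{\{m\}\cup K_3}$ (containing the whole $\ttt$-block) or $\ttt_{X''}\rr_{[m]\setminus J}\ttt_{X'}$ (containing the whole $\rr$-block). The word $\rr_2\ttt_1\ttt_4\ttt_7\ttt_8\rr_{10}\rr_9\rr_6$ in \Cref{ex:rank2_4} is exactly such an arc, and it is precisely this double-straddle case that the paper works out in detail via the normal form $\rr_{K_1}\ttt_1\ttt_{K_2}\rr_m\rr_{K_3}$ and two alternating passes of the two interval-flip relations; the single-junction cases the paper explicitly says it omits ``because very similar.'' Your antiautomorphism remark ($w'=w^{-1}\Delta^2$ and $\Omega$ from \Cref{thm:self-dual}) could in fact rescue the omitted case --- the complement of a double-straddle arc lies entirely in one block, so it is literally $\ttt_X$ or $\rr_Y$, whence $w=\Omega(\kappa(w'))$ gives a normal form --- but you offer that trick only as an alternative handling of the $\rr\to\ttt$ arcs, which is not where it is needed. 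You should either add the double-straddle case to your arc classification and run the ping-pong flips (as the paper does), or state clearly that the complement of every non-normal arc lies in a single block and invoke $\Omega$ there. Also, your assertion $Y_{\mathrm{hi}}=Y\cap[\max J+1,m]=[\max J+1,m]$ holds only when $Y_{\mathrm{lo}}\neq\emptyset$; when $Y\subseteq[\max J+1,m]$, $Y$ is itself a final interval and the word normalizes in one step, so this should be split off as a base case rather than asserted as an identity.
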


\begin{proof}
It is clear that there is $1$ element of rank $0$ (the identity $\id$) and that there is $1$ element of rank $m$ (the full twist $\Delta^2$). Now fix $1\leq k\leq m-1$. For each $I\in\binom{[m]}{k}$, we can use the fact that $\rr_1=\ttt_{1}$ and $\rr_m=\ttt_{m}$ to see that $\ttt_I\rr_{[m]\setminus I}$ and $\rr_I\ttt_{[m]\setminus I}$ (viewed as words) are cyclic rotations of unimodal $\TTsha$-words. Therefore, it follows from \Cref{prop:rank2maxchain} that $\ttt_I\leq\Delta^2$ and $\rr_I\leq\Delta^2$. If $I,I'\in\binom{[m]}{k}$ as distinct, then we can pass to the abelianiztion of $\pi_1(\PP(\HH),B)$ and invoke \Cref{thm:abelianization} to see that $\ttt_I\neq\ttt_{I'}$, $\rr_I\neq\rr_{I'}$, and $\ttt_I\neq\rr_{I'}$. It follows from \Cref{thm:gens_to_other_shards,thm:relations} that $\ttt_{\{1,\ldots,k\}}=\rr_{\{1,\ldots,k\}}$ and $\ttt_{\{m-k+1,\ldots,m\}}=\rr_{\{m-k+1,\ldots,m\}}$. We claim that if $I$ is not $\{1,\ldots,k\}$ or $\{m-k+1,\ldots,m\}$, then $\ttt_I\neq\rr_I$. It will then follow that we have found $2\binom{m}{k}-2$ distinct elements of rank $k$ in $[\id,\Delta^2]_{\PPP^+}$. 

We can check the claim directly when $m=3$, so we may assume $m\geq 4$ and proceed by induction on $m$. Suppose $I$ is not $\{1,\ldots,k\}$ or $\{m-k+1,\ldots,m\}$. Let $x=\min(I)$. If there exist $y\in [m]\setminus I$ and $z\in I$ with $x<y<z$, then we can let $\AA=\{H_x,H_y,H_z\}$ and use the quotient map $\iotab_\AA$ from \Cref{lem:quotient_sec4}. In this case, it follows by induction that $\iotab_\AA(\ttt_I)=\iotab_\AA(\ttt_x)\iotab_\AA(\ttt_z)\neq\iotab_\AA(\rr_z)\iotab_\AA(\rr_x)=\iotab_\AA(\rr_I)$, so $\ttt_I\neq\rr_I$. Now suppose no such $y$ and $z$ exist. This prohibits $1$ and $m$ from being in $I$. Let $\AA'=\{H_1,H_x,H_m\}$, and consider the quotient map $\iotab_{\AA'}$. In this case, it follows by induction (or, more directly, from \Cref{lem:A2}) that $\iotab_{\AA'}(\ttt_I)=\iotab_{\AA'}(\ttt_x)\neq\iotab_{\AA'}(\rr_x)=\iotab_{\AA'}(\rr_I)$, so $\ttt_I\neq\rr_I$. This establishes the claim.

To complete the proof, we need to show that every element of rank $k$ in $[\id,\Delta^2]_{\PPP^+}$ is equal to $\ttt_I$ or $\rr_I$ for some $I\in\binom{[m]}{k}$. Thus, let us choose an arbitrary element $u\in[\id,\Delta^2]_{\PPP^+}$ of rank $k$. Let $\calw_1\cdots\calw_k$ be an $\TTsha$-word representing $u$. 
Let $I=\{i_1<\cdots<i_k\}\in\binom{[m]}{k}$ be such that $\{H_{i_1},\ldots,H_{i_k}\}$ is the set of hyperplanes containing the shards that index the shard loops $\calw_1,\ldots,\calw_k$. 
It is possible to extend the word $\calw_1\cdots\calw_k$ to a word $\calw_1\cdots\calw_k\calw_{k+1}\cdots\calw_m$ representing $\Delta^2$, and we know by \Cref{prop:rank2maxchain} that $\calw_1\cdots\calw_k\calw_{k+1}\cdots\calw_m$ is a cyclic rotation of a unimodal $\TTsha$-word. If $1$ and $m$ are not in $I$, then this immediately implies that $\calw_1\cdots\calw_k$ is either $\ttt_I$ or $\rr_I$. Let us now assume $1$ and $m$ are both in $I$; we omit the proof in the case when exactly one of these indices is in $I$ because it is very similar. Let us also assume $\ttt_1$ appears before $\rr_m$ in the word $\calw_1\cdots\calw_k$; the other case is similar. Then $\calw_1\cdots\calw_k$ is of the form $\rr_{K_1}\ttt_1\ttt_{K_2}\rr_m\rr_{K_3}$, where $K_1\sqcup K_2\sqcup K_3\sqcup\{1,m\}=I$ and $x<y<z$ for all $x\in K_1$, $y\in [m]\setminus I$, and $z\in K_3$. By repeatedly applying relations of the form $\ttt_i\ttt_{i+1}\cdots\ttt_{m-1}\rr_m=\rr_m\rr_{m-1}\cdots\rr_i$ (which follow from \Cref{lem:reverse_order_pow}), we can transform $\rr_{K_1}\ttt_1\ttt_{K_2}\rr_m\rr_{K_3}$ into the word $\rr_{K_1}\ttt_1\ttt_{K_2\cup K_3}\rr_m$ (see \Cref{ex:rank2_4}). Then, by repeatedly applying relations of the form $\ttt_1\ttt_{2}\cdots\ttt_{i}=\rr_i\rr_{i-1}\cdots\rr_2\ttt_1$ (which follow from \Cref{lem:reverse_order_pow}), we can transform $\rr_{K_1}\ttt_1\ttt_{K_2\cup K_3}\rr_m$ into $\ttt_1\ttt_{K_1\cup K_2\cup K_3}\rr_m$, which is the same as $\ttt_I$ because $\rr_m=\ttt_m$. Hence, $u$ is represented by $\ttt_I$. 
\end{proof}

\begin{example}\label{ex:rank2_4}
Suppose $m=10$. The word $\rr_2\ttt_1\ttt_4\ttt_7\ttt_8\rr_{10}\rr_9\rr_6$ represents an element $u$ of rank $8$ in $[\id,\Delta^2]_{\PPP^+}$ because it is a prefix of $\rr_2\ttt_1\ttt_4\ttt_7\ttt_8\rr_{10}\rr_9\rr_6\rr_5\rr_3$, which is a cyclic rotation of a unimodal $\TTsha$-word. In the notation of the proof of \Cref{prop:rank2rankgen}, we have $I=\{1,2,4,6,7,8,9,10\}$, $K_1=\{2\}$, $K_2=\{4,7,8\}$, and $K_3=\{6,9\}$. We can apply relations of the form $\ttt_i\ttt_{i+1}\cdots\ttt_{9}\rr_{10}=\rr_{10}\rr_{9}\cdots\rr_i$ to find that \[\rr_2\ttt_1\ttt_4\ttt_7\ttt_8\rr_{10}\rr_9\rr_6=\rr_2\ttt_1\ttt_4\ttt_7\ttt_8\ttt_9\rr_{10}\rr_6=\rr_2\ttt_1\ttt_4\rr_{10}\rr_9\rr_8\rr_7\rr_6=\rr_2\ttt_1\ttt_4\ttt_6\ttt_7\ttt_8\ttt_9\rr_{10};\] this last expression is $\rr_{K_2}\ttt_1\ttt_{K_2\cup K_3}\rr_m$. Applying the relation $\ttt_1\ttt_{2}\cdots\ttt_{i}=\rr_i\rr_{i-1}\cdots\rr_2\ttt_1$ when $i=2$, we find that \[\rr_2\ttt_1\ttt_4\ttt_6\ttt_7\ttt_8\ttt_9\rr_{10}=\ttt_1\ttt_2\ttt_4\ttt_6\ttt_7\ttt_8\ttt_9\rr_{10};\] this last expression is the same as $\ttt_I$ because $\rr_{10}=\ttt_{10}$. 
\end{example}

\begin{proposition}\label{prop:planar}
If $\HH$ is a rank-2 arrangement, then the interval $[\id,\Delta^2]_{\PPP^+}$ is a planar lattice. 
\end{proposition}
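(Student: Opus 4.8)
The plan is to read off the Hasse diagram of $[\id,\Delta^2]_{\PPP^+}$ explicitly from \Cref{prop:rank2maxchain} and \Cref{prop:rank2rankgen}, to use it to produce an upward planar drawing, and finally to deduce that the poset is a lattice.

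First I would record all cover relations. By \Cref{prop:rank2rankgen} the elements are the $\ttt_I$ and $\rr_I$ for $I\subseteq[m]$, with the only coincidences being $\ttt_{\{1,\dots,k\}}=\rr_{\{1,\dots,k\}}$ and $\ttt_{\{m-k+1,\dots,m\}}=\rr_{\{m-k+1,\dots,m\}}$, and by \Cref{prop:rank2maxchain} the maximal chains are exactly the cyclic rotations of the unimodal words $\ttt_I\rr_{[m]\setminus I}$ with $1\in I$ and $m\notin I$. Listing all prefixes of all such rotations --- and using $\rr_1=\ttt_1$, $\rr_m=\ttt_m$, the relations $\ttt_i\ttt_{i+1}\cdots\ttt_{m-1}\rr_m=\rr_m\rr_{m-1}\cdots\rr_i$ and $\ttt_1\ttt_2\cdots\ttt_i=\rr_i\rr_{i-1}\cdots\rr_2\ttt_1$ from \Cref{lem:reverse_order_pow} to detect when two words name the same element --- yields a complete combinatorial description of the order. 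As \Cref{fig:e_to_delta_P2} and \Cref{fig:e_to_delta_P3} show, the outcome is a ranked poset of height $m$, with bottom $\id$ and top $\Delta^2$, that splits as the union of a ``$\ttt$-fan'' $\{\ttt_I:I\subseteq[m]\}$ and an ``$\rr$-fan'' $\{\rr_I:I\subseteq[m]\}$ glued along the two boundary chains $\id\lessdot\ttt_{\{1\}}\lessdot\ttt_{\{1,2\}}\lessdot\dots\lessdot\Delta^2$ and $\id\lessdot\rr_{\{m\}}\lessdot\rr_{\{m-1,m\}}\lessdot\dots\lessdot\Delta^2$ and sharing only the top $\Delta^2$ and bottom $\id$; each cover relation appends a single generator of $\TTsha$, and the options at each step are dictated by these two boundary chains.

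For planarity I would give coordinates directly, generalizing \Cref{fig:e_to_delta_P2} and \Cref{fig:e_to_delta_P3}: place each element at height equal to its rank, put the $\ttt$-fan to the left and the $\rr$-fan to the right of the vertical axis through $\id$ and $\Delta^2$, and within each rank order elements by the natural cyclic (``angular'') parameter recording which shards of $\HH$ occur in the corresponding word. Since a cover relation moves this parameter by one minimal step --- it extends the underlying block of shards by one at one of its two ends --- the resulting straight-line drawing has no crossing cover-curves. Thus the Hasse diagram has an upward planar drawing with $\id$ and $\Delta^2$ on the outer face, so $[\id,\Delta^2]_{\PPP^+}$ is planar.

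Finally, to see it is a lattice: since $\id$ and $\Delta^2$ are the least and greatest elements it suffices to show that every pair of elements $x,y$ has a join (finiteness then supplies all meets as well; alternatively one can transport joins to meets using the self-duality in \Cref{thm:self-dual}). From the explicit description, the join of $x$ and $y$ is the smallest common upper bound, constructed by cases from the fan decomposition: if $x,y$ lie in the same fan the join lies in that fan, while if they lie in opposite fans every common upper bound lies on or above one of the two boundary chains and there is a least such element. (Alternatively, one may simply quote the known fact that a finite bounded poset whose Hasse diagram is upward planar with $\hat 0$ and $\hat 1$ on the outer face is a lattice, which applies by the previous paragraph.) The main obstacle I anticipate is the first step: correctly determining \emph{every} cover relation of $[\id,\Delta^2]_{\PPP^+}$, in particular the ``mixed'' covers near $\id$, near $\Delta^2$, and along the boundary chains where a $\ttt$-word and an $\rr$-word represent the same element, and then carrying out the bookkeeping for joins of elements in opposite fans; once the diagram is nailed down, planarity and the lattice property follow by inspection.
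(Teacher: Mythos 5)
Your overall plan — enumerate the elements and covers, draw the Hasse diagram with ranks at heights, and then cite Birkhoff's observation that a bounded planar poset is a lattice — is exactly the paper's strategy, and step (3) is carried out the same way (the paper cites \cite[Ex.~7, p.~20]{birkhoff1940lattice}). The trouble is in step (2), where you need to produce a drawing and \emph{justify} that it is planar; neither of your two justifications holds up.

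First, the ``$\ttt$-fan on the left, $\rr$-fan on the right, glued along the two boundary chains'' layout cannot be made literal: the two chains $\id\lessdot\ttt_{\{1\}}\lessdot\ttt_{\{1,2\}}\lessdot\cdots\lessdot\Delta^2$ and $\id\lessdot\rr_{\{m\}}\lessdot\rr_{\{m-1,m\}}\lessdot\cdots\lessdot\Delta^2$ meet only at $\id$ and $\Delta^2$, so you cannot place them both on a single vertical axis separating a left half from a right half. The paper's drawing does \emph{not} separate $\ttt$'s from $\rr$'s; instead, each rank is laid out as a concatenation of four subsequences (the $\rr_I$ with $m\in I$, then the $\ttt_I$ with $1\not\in I$, then the $\rr_I$ with $m\not\in I$, then the $\ttt_I$ with $1\in I$, each ordered lexicographically), so that $\ttt$'s and $\rr$'s alternate in blocks, with the two boundary chains living at the two seams where the subsequences overlap. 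You can see this in~\Cref{fig:e_to_delta_P2}: the rank-2 row reads $\rr,\rr,[\ttt{=}\rr],\ttt,\ttt,\rr,\rr,[\ttt{=}\rr],\ttt,\ttt$ from left to right.

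Second, the claim that ``a cover relation moves this parameter by one minimal step --- it extends the underlying block of shards by one at one of its two ends'' is simply false, and it is the entire substance of your planarity argument. For $m=5$, the element $\ttt_{\{2\}}$ uses the shard $\Sigma_{7}$, and $\ttt_{\{2\}}\lessdot\ttt_{\{2,4\}}$ adds the shard $\Sigma_{9}$, which is not adjacent to $\Sigma_{7}$ on the circle of shards (it skips $\Sigma_8$). More generally, the shards appearing in a word $\ttt_I$ with $I$ non-contiguous do not form an arc, and the covers above such an element fan out to non-adjacent shards. So the ``angular parameter'' is not well defined in the way you need, and the non-crossing of cover-curves cannot be read off from a ``one-step'' heuristic. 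The paper's proof deals with this by a genuine combinatorial induction: it isolates the southwest piece $\{\ttt_I : 1\notin I\}$, decomposes it as $\{\id\}\sqcup Q_2\sqcup\cdots\sqcup Q_m$ with $Q_i=[\ttt_i,\ttt_{\{i,\ldots,m\}}]_{\PPP^+}$, splits each $Q_i$ further into $Q_i^{\swarrow}$ and $Q_i^{\nearrow}$ (each isomorphic to $Q_{i+1}$), and checks that the only covers between pieces occur at predictable single edges. That bookkeeping is what your proposal is missing. You correctly anticipate that pinning down the cover relations is the main work, but you have misplaced the difficulty: the real issue is not just listing the covers but proving that your particular layout has no crossings, and the informal ``block of shards'' picture does not do that.
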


\begin{proof}
According to \cite[Ex.~7, p.~20]{birkhoff1940lattice}, a finite planar poset with a unique minimal element and a unique maximal element is automatically a lattice. Hence, we just need to prove planarity. We are going to describe an explicit drawing of the Hasse diagram of $[\id,\Delta^2]_{\PPP^+}$, and we will give a full description of the edges in this Hasse diagram. Using the ideas discussed earlier in this subsection, one can verify that these are in fact all of the edges. 

Let $m$ be the number of hyperplanes in $\HH$. Suppose $1\leq k\leq m-1$. \Cref{prop:rank2rankgen} gives us an explicit description of the elements of rank $k$ in $[\id,\Delta^2]_{\PPP^+}$; we will construct a sequence that lists these elements. The sequence consists of four subsequences, where the first two subsequences overlap in a single element and the last two subsequences also overlap in a single element. The first subsequence lists the elements $\rr_I$ with $m\in I$ so that the indexing subsets appear in lexicographically-increasing order; thus, this subsequence starts with $\rr_{\{1,\ldots,k-1,m\}}$ and ends with $\rr_{\{m-k+1,\ldots,m\}}$. The second subsequence lists the elements $\ttt_I$ with $1\not\in I$ so that the indexing subsets appear in lexicographically-decreasing order; this subsequence starts with $\ttt_{\{m-k+1,\ldots,m\}}$ (which is the same as the last element in the first subsequence) and ends with $\ttt_{\{2,\ldots,k+1\}}$. The third subsequence lists the elements $\rr_I$ with $m\not\in I$ so that the indexing subsets appear in lexicographically-decreasing order; this subsequence starts with $\rr_{\{m-k,\ldots,m-1\}}$ and ends with $\rr_{\{1,\ldots,k\}}$. The fourth subsequence lists the elements $\ttt_I$ with $1\in I$ so that the indexing subsets appear in lexicographically-increasing order; this subsequence starts with $\ttt_{\{1,\ldots,k\}}$ (which is the same as the last element in the third subsequence) and ends with $\ttt_{\{1,m-k+2,\ldots,m\}}$. 

Let us draw the Hasse diagram of $[\id,\Delta^2]_{\PPP^+}$ so that elements of rank $k$ are drawn at height $k$ and so that all edges are line segments. For each $1\leq k\leq m-1$, we can draw the elements of rank $k$ so that they appear from left to right in the order specified by the sequence in the preceding paragraph; see \Cref{fig:e_to_delta_P2,fig:e_to_delta_P3}. We claim that this drawing is planar. To see this, we note that we can break this drawing of the Hasse diagram into four pieces, where some pieces overlap on their boundaries. We have a \emph{northwest} piece consisting of elements of the form $\rr_I$ with $m\in I$, a \emph{southwest} piece consisting of elements $\ttt_I$ with $1\not\in I$, a \emph{southeast} piece consisting of elements $\rr_I$ with $m\not\in I$, and a \emph{northeast} piece consisting of elements $\ttt_I$ with $1\in I$. If we can show that each of these pieces is planar, then it will follow that the entire drawing of the Hasse diagram is planar. We will prove that the southwest piece is planar; the other pieces are similar.  

For $2\leq i\leq m$, let $Q_i$ be the interval $[\ttt_i,\ttt_{\{i,i+1,\ldots,m\}}]_{\PPP^+}$. The set of elements in the southwest piece is the disjoint union $\{\id\}\sqcup Q_2\sqcup\cdots\sqcup Q_m$. For $2\leq i<i'\leq m$, there are no cover relations between elements of $Q_i$ and elements of $Q_{i'}$ unless $i'=i+1$, in which case the only such cover relation is $\ttt_{\{i+1,\ldots,m\}}\lessdot\ttt_{\{i,\ldots,m\}}$. Thus, it suffices to show that our drawing of each interval $Q_i$ is planar. We do so by reverse induction on $i$, noting first that our drawing of $Q_m$ is certainly planar because $Q_m$ is the single element $\ttt_m$. Now suppose $2\leq i\leq m-1$. The interval $Q_i$ is the disjoint union of the two subintervals $Q_i^{\swarrow}=[\ttt_i,\ttt_{\{i,i+2,i+3,\ldots,m\}}]_{\PPP^+}$ and $Q_i^{\nearrow}=[\ttt_{\{i,i+1\}},\ttt_{\{i,\ldots,m\}}]_{\PPP^+}$. The subinterval $Q_i^\swarrow$ consists of all elements $\ttt_I\in Q_i$ with $i+1\not\in I$, while the subinterval $Q_I^\nearrow$ consists of all elements $\ttt_I\in Q_i$ with $i+1\in I$. Both $Q_i^\swarrow$ and $Q_i^\nearrow$ are isomorphic to $Q_{i+1}$, so by induction, they are planar (and our drawing represents them in a planar manner). Finally, the only cover relations between elements of $Q_i^\swarrow$ and elements of $Q_i^\nearrow$ are $\ttt_i\lessdot\ttt_{\{i,i+1\}}$ and $\ttt_{\{i,i+2,i+3,\ldots,m\}}\lessdot\ttt_{\{i,\ldots,m\}}$. Thus, our drawing of $Q_i$ is planar.  
\end{proof}

\section{Pop}\label{sec:pop}

As in \Cref{subsec:pop_intro}, we consider the \defn{pop-stack sorting operator} $\pop\colon L\to L$, where $L$ is a locally finite meet-semilattice, defined by
\begin{equation}
    \pop(x):=x\wedge\bigwedge\{y\in L:y\lessdot x\}
\end{equation}
for all $x\in L$. For a simplicial arrangement $\HH$ with $C\in\RR$, let 
\[\Sigma([\pop(C),C]):=\left\{\Sigma(D'\lessdot D):\pop(C)\leq D'\lessdot D\leq C\right\}.\]
In \cite{reading2011noncrossing}, Reading provided the following characterization of the shard intersection order using $\Pop$.

\begin{proposition}[{\cite[Proposition 5.7]{reading2011noncrossing}}]\label{prop:shard_intersection_via_labels}
Suppose $\HH$ is a simplicial arrangement. For $C,C'\in\RR$, we have
\[C'\preceq C\quad\text{if and only if}\quad \Sigma([\pop(C'),C'])\subseteq\Sigma([\pop(C),C]),
\] where $\pop$ denotes the pop-stack sorting operator on the lattice $\Weak(\HH,B)$. 
\end{proposition}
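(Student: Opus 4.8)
The plan is to reduce the statement to a single geometric identity and then conclude by a formal manipulation of intersections of shards. Recall that, by definition, $C'\preceq C$ means $\bigcap\covsha(C')\supseteq\bigcap\covsha(C)$. The crucial step is the following \emph{Key Lemma}: for every region $C\in\RR$,
\[
\Sigma([\pop(C),C])=\{\Sigma\in\Sha(\HH,B): \bigcap\covsha(C)\subseteq\Sigma\}.
\]
Writing $\mathcal S(C)$ for the right-hand side, and using that $\covsha(J_\Sigma)=\{\Sigma\}$, one sees that $\mathcal S(C)$ is equivalently the set of atoms $J_\Sigma$ of $\Shard(\HH,B)$ with $J_\Sigma\preceq C$; so the Key Lemma says that the shards labeling cover relations in $[\pop(C),C]$ are exactly the atoms of the shard intersection order lying below $C$.

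Granting the Key Lemma, the proposition is immediate. Since $\covsha(C)\subseteq\mathcal S(C)$ and every shard in $\mathcal S(C)$ contains $\bigcap\covsha(C)$, a squeeze gives $\bigcap\covsha(C)=\bigcap_{\Sigma\in\mathcal S(C)}\Sigma$. Hence if $\mathcal S(C')\subseteq\mathcal S(C)$ then $\bigcap\covsha(C')=\bigcap_{\Sigma\in\mathcal S(C')}\Sigma\supseteq\bigcap_{\Sigma\in\mathcal S(C)}\Sigma=\bigcap\covsha(C)$, i.e.\ $C'\preceq C$; conversely, if $C'\preceq C$ then every $\Sigma\in\mathcal S(C')$ satisfies $\Sigma\supseteq\bigcap\covsha(C')\supseteq\bigcap\covsha(C)$, so $\Sigma\in\mathcal S(C)$. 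Thus $C'\preceq C$ if and only if $\mathcal S(C')\subseteq\mathcal S(C)$, which by the Key Lemma is exactly the condition $\Sigma([\pop(C'),C'])\subseteq\Sigma([\pop(C),C])$.

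To prove the Key Lemma I would set $F:=\bigcap\covsha(C)$ and pass to the full subarrangement $\HH_C:=\{H\in\HH: F\subseteq H\}$. The heart of the argument is to show that $D\mapsto\iota_{\HH_C}(D)$ restricts to a poset isomorphism from $[\pop(C),C]$ onto the entire poset of regions $\Weak(\HH_C,B_{\HH_C})$, carrying $\pop(C)$ to the base region and $C$ to its antipode; the inputs are the simpliciality of $\HH$, the explicit formula $\pop(C)=C\wedge\bigwedge\{D:D\lessdot C\}$, and the observation that a cover relation inside $[\pop(C),C]$ crosses a hyperplane of $\HH_C$ while conversely every hyperplane of $\HH_C$ is crossed somewhere inside the interval. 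Under this isomorphism, the shard of $\HH$ labeling a cover inside $[\pop(C),C]$ is carried to the shard of $\HH_C$ labeling the image cover, and the shards of the full subarrangement $\HH_C$ are in natural bijection with the shards of $\HH$ that contain $F$. Because the cover labels of a poset of regions exhaust its shards --- each shard $\Sigma$ of $\HH_C$ is the unique lower shard of $J_\Sigma$, so the cover $\pop(J_\Sigma)\lessdot J_\Sigma$ is labeled $\Sigma$ --- it follows that $\Sigma([\pop(C),C])$ is exactly the set of shards of $\HH$ containing $F$, i.e.\ $\mathcal S(C)$.

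The main obstacle is the localization step itself: establishing the isomorphism $[\pop(C),C]\cong\Weak(\HH_C,B_{\HH_C})$ and checking that shard labels transport correctly across it --- in particular that a shard of $\HH$ meeting the flat spanned by $F$ restricts to a well-defined shard of $\HH_C$, and conversely. Of the two containments in the Key Lemma, $\Sigma([\pop(C),C])\subseteq\mathcal S(C)$ is the more approachable one, but the reverse containment genuinely relies on $[\pop(C),C]$ witnessing \emph{every} shard through $F$, which is precisely what identifying this interval with the full poset of regions $\Weak(\HH_C,B_{\HH_C})$ delivers.
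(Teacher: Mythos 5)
The paper states this proposition purely as a citation to Reading's \cite[Proposition 5.7]{reading2011noncrossing} and supplies no proof, so the right comparison is to Reading's original argument. Your formal reduction to the Key Lemma is clean and correct: since each lower shard of $C$ contains $\bigcap\covsha(C)$ and hence $\covsha(C)\subseteq\mathcal{S}(C)$, the squeeze $\bigcap\covsha(C)=\bigcap_{\Sigma\in\mathcal{S}(C)}\Sigma$ holds, and both implications then fall out by unwinding definitions. Your reformulation of $\mathcal{S}(C)$ as $\{\Sigma : J_\Sigma\preceq C\}$ is also right, since $\covsha(J_\Sigma)=\{\Sigma\}$.

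The Key Lemma is the true content, and you correctly identify where the work lies. The assertion that $\iota_{\HH_C}$ restricts to a poset isomorphism $[\pop(C),C]\to\Weak(\HH_C,B_{\HH_C})$, carrying $\pop(C)$ to $B_{\HH_C}$ and $C$ to $-B_{\HH_C}$, together with the bijection between shards of $\HH_C$ and shards of $\HH$ containing the flat spanned by $\bigcap\covsha(C)$, is precisely what Reading establishes in \cite[Propositions 5.7 and 5.8]{reading2011noncrossing}; the paper itself quotes the resulting bijection $\iota_C^\pop$ in \Cref{sec:crackle}. The shard bijection is not automatic: one must check that a hyperplane of $\HH_C$ cuts $H_\Sigma$ within $\HH_C$ if and only if it cuts $H_\Sigma$ within $\HH$, which comes down to an argument about how basic hyperplanes of full rank-2 subarrangements behave under restriction. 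You flag this as the main obstacle, which is accurate. In short, your proposal is correct and follows essentially the same route Reading does; the paper delegates exactly this geometric step to the citation.
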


Generalizing \cite[Theorem 2.10.5]{stump2015cataland}, Reading gave a second characterization of the shard intersection order.

\begin{theorem}[{\cite[Theorem 9-7.24]{reading2016lattice},\cite[Theorem 2.10.5]{stump2015cataland}}]\label{thm:shard_intersection_via_hyperplanes}
  Suppose $\HH$ is a simplicial arrangement.  For $C,C' \in \RR$, we have
  \[C' \preceq C \text{ if and only if } C' \leq C \text{ and } \bigcap \cov(C') \supseteq \bigcap \cov(C) .\]
\end{theorem}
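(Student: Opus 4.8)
The plan is to reduce the whole statement to Proposition~\ref{prop:shard_intersection_via_labels} together with one description of the label set $\Sigma([\pop(C),C])$. Concretely, I would prove the following claim $(\star)$: for a simplicial arrangement $\HH$ and $C\in\RR$,
\[\Sigma([\pop(C),C])=\{\Theta\in\Sha(\HH,B):J_\Theta\le C\ \text{ and }\ \bigcap\cov(C)\subseteq H_\Theta\}.\]
Observe that $(\star)$ is consistent with the theorem: $\cov(C)=\{H_\Theta:\Theta\in\covsha(C)\}$, and each lower shard of $C$ lies in $\Sigma([\pop(C),C])$ because every lower cover $D\lessdot C$ satisfies $\pop(C)\le D$.

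Granting $(\star)$, the theorem follows by short lattice manipulations, which is the second step. First, since $J_\Theta\le C$ for every $\Theta\in\Sigma([\pop(C),C])$ by $(\star)$, and since $\covsha(C)\subseteq\Sigma([\pop(C),C])$, Theorem~\ref{thm:shards} gives $C=\bigvee_{\Theta\in\Sigma([\pop(C),C])}J_\Theta$. For the forward direction, if $C'\preceq C$ then Proposition~\ref{prop:shard_intersection_via_labels} gives $\Sigma([\pop(C'),C'])\subseteq\Sigma([\pop(C),C])$; applying the displayed join formula to $C'$ and to $C$ yields $C'\le C$, and for every $\Sigma\in\covsha(C')\subseteq\Sigma([\pop(C'),C'])\subseteq\Sigma([\pop(C),C])$ the second condition of $(\star)$ gives $\bigcap\cov(C)\subseteq H_\Sigma$, whence $\bigcap\cov(C)\subseteq\bigcap_{\Sigma\in\covsha(C')}H_\Sigma=\bigcap\cov(C')$. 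Conversely, if $C'\le C$ and $\bigcap\cov(C)\subseteq\bigcap\cov(C')$, then for any $\Theta\in\Sigma([\pop(C'),C'])$, applying $(\star)$ to $C'$ gives $J_\Theta\le C'\le C$ and $\bigcap\cov(C)\subseteq\bigcap\cov(C')\subseteq H_\Theta$, so applying $(\star)$ to $C$ puts $\Theta\in\Sigma([\pop(C),C])$; thus $\Sigma([\pop(C'),C'])\subseteq\Sigma([\pop(C),C])$, and Proposition~\ref{prop:shard_intersection_via_labels} gives $C'\preceq C$.

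The third step is to prove $(\star)$. The inclusion ``$\subseteq$'' is easy: if $\Theta$ labels $D'\lessdot D$ with $\pop(C)\le D'\lessdot D\le C$, then $J_\Theta\le D\le C$ by Theorem~\ref{thm:shards_and_join_irrs}, and $H_\Theta\in\inv(D)\setminus\inv(D')\subseteq\inv(C)\setminus\inv(\pop(C))$, so $\bigcap\cov(C)\subseteq H_\Theta$ by the identity $\inv(\pop(C))=\inv(C)\setminus\{K\in\HH:\bigcap\cov(C)\subseteq K\}$, a standard property of $\pop$ on the poset of regions of a simplicial arrangement. The reverse inclusion is the hard part. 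The geometric input I would use is a \emph{span lemma}: $\langle\bigcap\covsha(C)\rangle=\bigcap\cov(C)$; the inclusion ``$\subseteq$'' there is immediate, while ``$\supseteq$'' holds because (as $\HH$ is simplicial) the face $\overline C\cap\bigcap\cov(C)$ of the simplicial cone $\overline C$ has dimension $n-|\cov(C)|=\dim\bigcap\cov(C)$ and is contained in every shard $\Sigma(D\lessdot C)$, since it lies in each facet $\overline C\cap H_{D\lessdot C}$ and each such facet sits inside a single shard of $H_{D\lessdot C}$. Given a shard $\Theta$ with $J_\Theta\le C$ and $\bigcap\cov(C)\subseteq H_\Theta$, one then has to promote the cover relation $\pop(J_\Theta)\lessdot J_\Theta$ labeled by $\Theta$ to a cover relation inside $[\pop(C),C]$ -- equivalently, to certify that $\Theta$ itself (not merely its hyperplane) is crossed somewhere in $[\pop(C),C]$. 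I expect to do this by joining the facet $\overline{J_\Theta}\cap H_\Theta$ to the relative interior of $\overline C\cap\bigcap\cov(C)$ by a path inside $H_\Theta$ that avoids every hyperplane cutting $H_\Theta$, so that both lie in the same shard. Executing this last step cleanly is the main obstacle, and it is exactly what is carried out in \cite[Theorem 9-7.24]{reading2016lattice} (and, for reflection arrangements, \cite[Theorem 2.10.5]{stump2015cataland}); absent a fully self-contained treatment, $(\star)$ and hence the theorem can be imported from there.
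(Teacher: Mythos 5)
The paper does not prove this theorem -- it is cited from Reading \cite[Theorem~9-7.24]{reading2016lattice} and Stump \cite[Theorem~2.10.5]{stump2015cataland} -- so there is no internal argument to compare against, and your proposal should be judged on its own terms. Your reduction is correct and slightly sharpens the citation: the claim $(\star)$ is exactly the join-irreducible instance of the theorem (take $C'=J_\Theta$, for which $\cov(C')=\{H_\Theta\}$), reformulated via Proposition~\ref{prop:shard_intersection_via_labels}. Your derivation of the general statement from $(\star)$ using Proposition~\ref{prop:shard_intersection_via_labels} and Theorem~\ref{thm:shards} is sound, and it is worth noting that the forward implication of the theorem uses only the easy inclusion of $(\star)$ together with the identity $\inv(\pop(C))=\inv(C)\setminus\HH_C$; that identity is a genuine consequence of the bijection $\iota_C^{\pop}\colon[\pop(C),C]\to\RR_C$ recorded in Section~\ref{sec:crackle}, so the forward direction is complete as you wrote it.

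The remaining content is the hard inclusion of $(\star)$: given a shard $\Theta$ with $J_\Theta\le C$ and $\bigcap\cov(C)\subseteq H_\Theta$, one must show $\Theta$ itself (not just some shard of $H_\Theta$) labels a cover in $[\pop(C),C]$. You correctly isolate this as the geometric crux, but be aware that the span lemma $\langle\bigcap\covsha(C)\rangle=\bigcap\cov(C)$, while true by the facet argument you sketch, does not by itself put $\overline{J_\Theta}\cap H_\Theta$ and the piece of $H_\Theta$ crossed within $[\pop(C),C]$ into the same shard; the connectivity argument inside $H_\Theta$ that you gesture at is precisely the nontrivial step of Reading's proof and is not filled in here. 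Since you end by importing that step from the same references the paper cites, the proposal is a correct reduction-plus-citation rather than a self-contained proof -- which is appropriate, given the paper likewise treats the result as known, and your factorization at least makes clear that only the join-irreducible converse carries genuine geometric content.
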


\section{Crackle}\label{sec:crackle}

We assume throughout this section that $\HH$ is a simplicial real arrangement with base region $B$.

\subsection{Full Twists, Shard Generators, and Crackle}\label{subsec:full_twist}

We have $[\pop(-B),-B]=\Weak(\HH,B)$ since $\pop(-B)=B$ by \cite[Corollary 1.6]{edelman1984partial}.  Given a region $C\in\RR$, we can think of the interval $[\pop(C),C]$ in $\Weak(\HH,B)$ as a ``nonstandard parabolic subarrangement'' of $\HH$. This motivates us to define $\Crackle(C)$ to be the ``parabolic full twist'' of this subarrangement.

\begin{definition}\label{def:crackle}  Let $C \in \RR$.  We define
\[\Crackle(C) := \gal(B,\pop(C))\cdot \gal(\pop(C),C,\pop(C))\cdot \gal(B,\pop(C))^{-1} \in \pi_1(\Sal(\HH),B).
\]
\end{definition}

\Cref{def:crackle} generalizes both the full twist and the shard loops of~\Cref{sec:shard_generators}:  on the one hand, we have $\Crackle(-B)=\Delta^2$ by construction, so $\Crackle$ generalizes the full twist; on the other hand, if $J$ is a join-irreducible region of $\Weak(\HH,B)$, then $\pop(J)$ is the unique region covered by $J$, so $\Crackle(J)=\tt_{\Sigma(\pop(J)\lessdot J)}$.  

Our primary goal in this section is to prove \Cref{thm:crackle_hom}, which uses $\Crackle$ to characterize the shard intersection order. Recall that this theorem states that $\Crackle$ is a poset embedding from $\Shard(\HH,B)$ into $[\id,\Delta^2]_{\PPP^+}$ and is illustrated in~\Cref{fig:e_to_delta_P,fig:intervala3}.

\subsection{Crackle as a Product of Shard Generators}
Our first task is to prove \Cref{cor:crackle_shard_generators}, which tells us that $\Crackle$ maps $\RR$ into the interval $[\id,\Delta^2]_{\PPP^+}$ in the pure shard monoid. 

\begin{lemma}\label{lem:crackle_shard_generators}
Consider $E',E\in\RR$ with $E'\leq E$. If
\[E'=E_0 \xrightarrow{e_1} E_{1} \xrightarrow{e_{2}}  \cdots \xrightarrow{e_{k-1}} E_{k-1} \xrightarrow{e_{k}} E_k = E\] is a positive minimal gallery in $\Sal(\HH)$ from $E'$ to $E$, then 
\[\gal(B,E')\cdot\gal(E',E,E')\cdot\gal(B,E')^{-1} = \tt_{\Sigma(e_k)} \tt_{\Sigma(e_{k-1})}\cdots \tt_{\Sigma(e_1)}.\] 
\end{lemma}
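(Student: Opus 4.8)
The plan is to reduce everything to manipulations of explicit galleries in the $1$-skeleton of $\Sal(\HH)$ together with a short induction on $k$. Write $g=\gal(B,E')$ for brevity. Since the displayed gallery is a positive minimal gallery from $E'$ to $E$, we may take $\gal(E',E)\cong e_1e_2\cdots e_k$. Reversing each edge produces the gallery $E=E_k\xrightarrow{e_k^*}E_{k-1}\xrightarrow{e_{k-1}^*}\cdots\xrightarrow{e_1^*}E_0=E'$, which is positive and has length $k=|\inv(E,E')|$, hence is a positive minimal gallery from $E$ to $E'$; so $\gal(E,E')\cong e_k^*e_{k-1}^*\cdots e_1^*$. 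Consequently the left-hand side of the lemma is homotopic to $g\,(e_1\cdots e_k)(e_k^*\cdots e_1^*)\,g^{-1}$, and the task becomes showing that this word equals $\tt_{\Sigma(e_k)}\tt_{\Sigma(e_{k-1})}\cdots\tt_{\Sigma(e_1)}$.

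The next step is to write each generator explicitly. Because each edge $e_i$ is oriented away from $B$, we have $\inv(E_{i-1})=\inv(E')\sqcup\{H_{e_1},\dots,H_{e_{i-1}}\}$, so $E'\le E_{i-1}$ and $g\cdot e_1\cdots e_{i-1}$ is a positive minimal gallery from $B$ to $E_{i-1}$; thus $\gal(B,E_{i-1})\cong g\,e_1\cdots e_{i-1}$. Since $\tt_{\Sigma(e_i)}=\tt_{e_i}$ by \Cref{thm:shard_generators}, the definition of $\tt_{e_i}$ gives
\[
\tt_{\Sigma(e_i)}\;\cong\; g\,(e_1\cdots e_{i-1})\,e_ie_i^*\,(e_1\cdots e_{i-1})^{-1}\,g^{-1},
\]
where the empty prefix is understood when $i=1$.

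Finally I would run an induction on $k$, the case $k=0$ being immediate ($E'=E$, both sides are $\id$). For $k\ge 1$, the truncated gallery $E'=E_0\to\cdots\to E_{k-1}$ is again a positive minimal gallery with $E'\le E_{k-1}$ and length $k-1$, so the inductive hypothesis yields
\[
\tt_{\Sigma(e_{k-1})}\cdots\tt_{\Sigma(e_1)}\;\cong\; g\,(e_1\cdots e_{k-1})(e_{k-1}^*\cdots e_1^*)\,g^{-1}.
\]
Left-multiplying by the expression above for $\tt_{\Sigma(e_k)}$ and cancelling the adjacent factor $(e_1\cdots e_{k-1})^{-1}g^{-1}\cdot g\,(e_1\cdots e_{k-1})=\id$ leaves $g\,(e_1\cdots e_k)(e_k^*\cdots e_1^*)\,g^{-1}$, which is exactly the reduced form of the left-hand side computed in the first paragraph. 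I do not expect a real obstacle: the only genuine verifications are the two homotopy identifications $\gal(E,E')\cong e_k^*\cdots e_1^*$ and $\gal(B,E_{i-1})\cong g\,e_1\cdots e_{i-1}$, each of which reduces to checking that the concatenation in question is positive and of minimal length via additivity of inversion sets along a positive gallery; everything else is bookkeeping with free cancellation inside $\pi_1(\Sal(\HH),B)$.
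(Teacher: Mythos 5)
Your argument is correct and is essentially the paper's proof: both expand each shard loop $\tt_{\Sigma(e_i)}$ as the conjugate $\gal(B,E')\,(e_1\cdots e_{i-1})\,e_ie_i^*\,(e_1\cdots e_{i-1})^{-1}\,\gal(B,E')^{-1}$ and then cancel adjacent factors, the only difference being that you package the telescoping as an induction on $k$ while the paper writes it as a single product.
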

  
\begin{proof}
We have 
\begin{align*}
\gal(B,E')\cdot\gal(E',E,E')\cdot\gal(B,E')^{-1}
    &= \gal(B,E')\cdot \left(e_1\cdots e_k e_k^*\cdots e_1^*\right) \cdot\gal(B,E')^{-1}\\
    &\cong \prod_{i=k}^1\left(\gal(B,E') \cdot \left(e_1\cdots e_{i-1}e_ie_i^*e_{i-1}^{-1} \cdots e_k^{-1}\right) \cdot  \gal(B,E')^{-1}\right)\\
    &= \tt_{\Sigma(e_k)} \tt_{\Sigma(e_{k-1})}\cdots \tt_{\Sigma(e_1)}. \qedhere
\end{align*}
\end{proof}  

\begin{proposition}\label{cor:crackle_shard_generators}
Let $C\in\RR$. If
\[\pop(C)=E_0 \xrightarrow{e_1} E_{1} \xrightarrow{e_{2}}  \cdots \xrightarrow{e_{k-1}} E_{k-1} \xrightarrow{e_{k}} E_k = C\] is a positive minimal gallery in $\Sal(\HH)$ from $\pop(C)$ to $C$, then 
\[\Crackle(C) = \tt_{\Sigma(e_k)} \tt_{\Sigma(e_{k-1})}\cdots \tt_{\Sigma(e_1)}.\] Moreover, $\Crackle(C)\leq\Delta^2$ in $\PPP^+(\HH,B)$.  
\end{proposition}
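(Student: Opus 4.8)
The first assertion is essentially a restatement of \Cref{lem:crackle_shard_generators}: the plan is to apply that lemma with $E'=\pop(C)$ and $E=C$, which is legitimate because $\pop(C)\le C$. Since $\gal(\pop(C),C,\pop(C))=\gal(\pop(C),C)\cdot\gal(C,\pop(C))$ is exactly the loop appearing in \Cref{def:crackle}, the lemma immediately yields $\Crackle(C)=\tt_{\Sigma(e_k)}\tt_{\Sigma(e_{k-1})}\cdots\tt_{\Sigma(e_1)}$. In particular this exhibits $\Crackle(C)$ as a product of shard loops, so $\Crackle(C)\in\PPP^+(\HH,B)$, and the statement $\Crackle(C)\le\Delta^2$ makes sense.

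For the inequality $\Crackle(C)\le\Delta^2$, the idea is to embed the given gallery from $\pop(C)$ to $C$ inside a maximal positive gallery and then use that $\Delta^2$ is central. Concretely, I would fix positive minimal galleries $\gal(B,\pop(C))$ and $\gal(C,-B)$ and form the concatenation
\[\gamma\colon\quad B\ \longrightarrow\ \pop(C)\ \xrightarrow{\,e_1\,}E_1\xrightarrow{\,e_2\,}\cdots\xrightarrow{\,e_k\,}C\ \longrightarrow\ -B,\]
whose middle stretch is the gallery from the hypothesis. Using $B\le\pop(C)\le C\le -B$ together with the elementary identity $\inv(D,D')=\inv(D)\triangle\inv(D')$, the three stretches of $\gamma$ have lengths $|\inv(\pop(C))|$, $|\inv(C)\setminus\inv(\pop(C))|$, and $|\HH|-|\inv(C)|$, which sum to $|\HH|$; hence $\gamma$ is a positive \emph{minimal} gallery from $B$ to $-B$. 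Writing $g_1,\dots,g_j$ for the edges of the first stretch and $f_1,\dots,f_l$ for those of the last, identity \eqref{eq:YY} applied to $\gamma$ gives
\[\Delta^2=\bigl(\tt_{\Sigma(f_l)}\cdots\tt_{\Sigma(f_1)}\bigr)\cdot\bigl(\tt_{\Sigma(e_k)}\cdots\tt_{\Sigma(e_1)}\bigr)\cdot\bigl(\tt_{\Sigma(g_j)}\cdots\tt_{\Sigma(g_1)}\bigr)=P\cdot\Crackle(C)\cdot Q,\]
where $P$ and $Q$ are products of shard loops. Since $\Delta^2$ is central in $\pi_1(\Sal(\HH),B)$ (using that $\HH$ is finite, irreducible, and simplicial; see \cite{cordovil1994center}), conjugating by $P$ gives $\Delta^2=P^{-1}\Delta^2P=\Crackle(C)\cdot Q\cdot P$. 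Concatenating the shard-loop word $\tt_{\Sigma(e_k)}\cdots\tt_{\Sigma(e_1)}$ for $\Crackle(C)$ with shard-loop words for $Q$ and $P$ thus yields an $\TTsha$-word representing $\Delta^2$ that has a word for $\Crackle(C)$ as a prefix, which by the definition of the order on $\PPP^+(\HH,B)$ means $\Crackle(C)\le\Delta^2$.

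The only step that needs genuine care is the claim that $\gamma$ is minimal, i.e., the inversion-set bookkeeping showing the three lengths sum to $|\HH|$; everything else is formal once \Cref{lem:crackle_shard_generators}, identity \eqref{eq:YY}, and the centrality of $\Delta^2$ are available. I would also dispose of the degenerate case $C=B$ at the start: there $\pop(C)=C$, the gallery is empty, $\Crackle(C)=\id$, and $\id\le\Delta^2$ holds trivially.
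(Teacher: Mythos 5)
Your proof is correct and takes essentially the same approach as the paper. The paper likewise obtains the first equality by specializing \Cref{lem:crackle_shard_generators} to $E'=\pop(C)$, $E=C$, then proves $\Crackle(C)\le\Delta^2$ by concatenating positive minimal galleries $B\to\pop(C)\to C\to -B$, applying \Cref{lem:crackle_shard_generators} once more (with $E'=B$, $E=-B$, which is exactly what your invocation of identity \eqref{eq:YY} amounts to) to get $\Delta^2$ as a product of shard loops with $\Crackle(C)$ as a middle block, and then cyclically rotating via the centrality of $\Delta^2$. The only cosmetic difference is that you verify explicitly, via the inversion-count bookkeeping, that the concatenated gallery is minimal; the paper leaves this implicit.
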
    
\begin{proof}
    The first statement is immediate from  \Cref{lem:crackle_shard_generators} (with $E=C$ and $E'=\pop(C)$). Now let $B \xrightarrow{f_1} \cdots \xrightarrow{f_{r}} \pop(C)$ be a positive minimal gallery from $B$ to $\pop(C)$, and let $C \xrightarrow{f_{r+1}} \cdots \xrightarrow{f_m} -B$ be a positive minimal gallery from $C$ to $-B$.  Applying~\Cref{lem:crackle_shard_generators} (with $E'=B$ and $E=-B$), we find that
\begin{align*}
\Delta^2 =\Crackle(-B) &=  \tt_{\Sigma(f_{m})} \cdots \tt_{\Sigma(f_{r+1})} \tt_{\Sigma(e_k)}\cdots\tt_{\Sigma(e_1)} \tt_{\Sigma(f_{r})} \cdots \tt_{\Sigma(f_1)} \\
&=\tt_{\Sigma(e_k)}\cdots\tt_{\Sigma(e_1)} \tt_{\Sigma(f_{r})} \cdots \tt_{\Sigma(f_1)}\tt_{\Sigma(f_m)}\cdots\tt_{\Sigma(f_{r+1})},
\end{align*}
where the last equality follows from the fact that $\Delta^2$ is central in $\pi_1(\Sal(\HH),B)$.  This proves the second statement since $\tt_{\Sigma(e_k)}\cdots \tt_{\Sigma(e_1)}=\Crackle(C)$. 
\end{proof}

\subsection{Intervals, Subarrangements, and Crackle}
Let $\AA$ be a subarrangement of $\HH$. Let ${\iota_\AA\colon\RR\to\RR_\AA}$ denote the map that sends a region $D$ of $\HH$ to the unique region of $\AA$ containing $D$, and let $\iotab_\AA$ be the quotient map from \Cref{lem:quotient_sec4}. For $A \in \RR$, let $\HH_A$ be the subarrangement of $\HH$ consisting of all hyperplanes that contain the intersection of the lower covers of $A$; in this case, we write $\RR_A$, $B_A$, $\iota_A$, and $\iotab_A$ instead of $\RR_\AA$, $B_\AA$, $\iota_\AA$, and $\iotab_\AA$. We will often intentionally confuse $\HH_A$ with its essentialization, which is simplicial; thus, it also makes sense to write $\pop_A$ and $\Crackle_A$ for the pop-stack sorting operator and the crackle map on $\Weak(\HH_A,B_A)$. 

\begin{lemma}\label{lem:subarrangement_quotient}
For all $A,C\in\RR$, we have \[\iotab_A(\Crackle(C))=\Crackle_A(\iota_A(C)).\]
\end{lemma}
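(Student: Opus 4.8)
The plan is to push the definition of $\Crackle$ through the homomorphism $\iotab_A$, which reduces the statement to a purely order-theoretic identity relating $\pop$ and $\iota_A$.

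\emph{Step 1: galleries descend.} First I would record that $\iotab_A$ is compatible with positive minimal galleries: if $D' = F_0 \xrightarrow{g_1} \cdots \xrightarrow{g_j} F_j = D$ is a positive minimal gallery in $\Sal(\HH)$, then deleting the edges $g_i$ with $H_{g_i}\notin\HH_A$ and applying $\iota_A$ to the surviving regions yields a positive minimal gallery in $\Sal(\HH_A)$ from $\iota_A(D')$ to $\iota_A(D)$. Positivity survives because $\iota_A$ is order-preserving, so an edge of $\Sal(\HH)$ directed away from $B$ that is not deleted maps to an edge of $\Sal(\HH_A)$ directed away from $B_A$; minimality survives because the hyperplanes of $\HH_A$ separating $\iota_A(D')$ from $\iota_A(D)$ are exactly $\inv(D',D)\cap\HH_A$. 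Hence $\iotab_A$ carries $\gal(D',D)$ to a representative of $\gal(\iota_A(D'),\iota_A(D))$ whenever $D'\le D$. (Equivalently, one can run this argument directly on the expression $\Crackle(C)=\tt_{\Sigma(e_k)}\cdots\tt_{\Sigma(e_1)}$ of \Cref{cor:crackle_shard_generators}: by \Cref{lem:quotient_sec4} each $\iotab_A(\tt_{\Sigma(e_i)})$ is either the shard loop of the projected edge or $\id$, and the surviving factors reassemble via \Cref{lem:crackle_shard_generators} applied inside $\HH_A$.)

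\emph{Step 2: apply $\iotab_A$ to the definition.} Applying $\iotab_A$ to the three galleries in \Cref{def:crackle}, using that $\iotab_A$ is a homomorphism and that $\pop(C)\le C$ (so $\iota_A(\pop(C))\le\iota_A(C)$), gives
\[\iotab_A(\Crackle(C)) = \gal(B_A,\iota_A(\pop(C)))\cdot\gal(\iota_A(\pop(C)),\iota_A(C),\iota_A(\pop(C)))\cdot\gal(B_A,\iota_A(\pop(C)))^{-1}.\]
The right-hand side is precisely $\Crackle_A(\iota_A(C))$ except that $\iota_A(\pop(C))$ appears where $\pop_A(\iota_A(C))$ should. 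So the lemma follows once one proves the identity $\iota_A(\pop(C)) = \pop_A(\iota_A(C))$.

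\emph{Step 3 (the hard part): $\pop$ is compatible with $\iota_A$.} This is where I expect the real work to lie. The approach is to understand the lower covers of $\iota_A(C)$ in $\Weak(\HH_A,B_A)$: if $D\lessdot C$ in $\Weak(\HH,B)$ and $H_D$ is the hyperplane with $\inv(D)=\inv(C)\setminus\{H_D\}$, then either $H_D\notin\HH_A$, in which case $\iota_A(D)=\iota_A(C)$, or $H_D\in\HH_A$, in which case $\iota_A(D)\lessdot\iota_A(C)$ in $\Weak(\HH_A,B_A)$ (a size-one difference of inversion sets is always a covering in a simplicial arrangement). The delicate point—and the main obstacle—is the converse direction: identifying exactly which lower covers of $\iota_A(C)$ arise from lower covers of $C$. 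This is where the specific geometry of $\HH_A$ as the localization at the flat $\bigcap\cov(A)$ must be used, together with the description of the interval $[\pop(C),C]$ in $\Weak(\HH,B)$ as (isomorphic to) a poset of regions of a localization, which one can extract from \Cref{thm:shard_intersection_via_hyperplanes}. Once this lower-cover dictionary is established, $\pop(C)\le D$ for every lower cover $D$ of $C$ gives $\iota_A(\pop(C))\le\pop_A(\iota_A(C))$, while the reverse inequality follows from $\iota_A$ restricting to a lattice isomorphism from $[\pop(C),C]$ onto $[\pop_A(\iota_A(C)),\iota_A(C)]$. Substituting $\iota_A(\pop(C))=\pop_A(\iota_A(C))$ into the displayed formula then yields $\iotab_A(\Crackle(C))=\Crackle_A(\iota_A(C))$.
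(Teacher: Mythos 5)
Your Steps 1 and 2 are a correct reduction: pushing $\Crackle(C)$ through $\iotab_A$ gives the conjugated double gallery based at $\iota_A(\pop(C))$, and an abelianization argument (via \Cref{thm:abelianization}: the image of a $\Crackle$-value in $H_1$ records exactly the hyperplanes crossed by the inner gallery) shows that this can equal $\Crackle_A(\iota_A(C))$ only if $\iota_A(\pop(C)) = \pop_A(\iota_A(C))$. You correctly flag Step 3 as the crux. The problem is that Step 3 cannot be carried out: the identity $\iota_A(\pop(C)) = \pop_A(\iota_A(C))$ is false in general, so the ``lower-cover dictionary'' and the claimed lattice isomorphism $[\pop(C),C]\to[\pop_A(\iota_A(C)),\iota_A(C)]$ simply do not exist.

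Concretely, let $\HH$ be the braid arrangement of $\mathfrak S_4$ with $B$ the identity chamber, writing $H_{ij}$ for the hyperplane $x_i=x_j$. Take $A=3214$ (one-line notation), so $\cov(A)=\{H_{12},H_{23}\}$ and $\HH_A=\{H_{12},H_{13},H_{23}\}$. Take $C=2341$, so $\inv(C)=\{H_{12},H_{13},H_{14}\}$, $\cov(C)=\{H_{14}\}$, and $\pop(C)=2314$. Since $H_{14}\notin\HH_A$, both $C$ and $\pop(C)$ project to the \emph{same} region $R$ of $\HH_A$, the one with $\HH_A$-inversion set $\{H_{12},H_{13}\}$; but $R$ is join-irreducible in $\Weak(\HH_A,B_A)$ and $\pop_A(R)$ has $\HH_A$-inversion set $\{H_{12}\}$. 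Hence $\iota_A(\pop(C))=R\neq\pop_A(R)=\pop_A(\iota_A(C))$, and $\iota_A$ collapses the two-element chain $[\pop(C),C]$ to a single point of $\HH_A$, so it is not a bijection onto $[\pop_A(\iota_A(C)),\iota_A(C)]$.

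This example is in fact a counterexample to the lemma exactly as stated: here $\Crackle(C)$ is a single shard loop $\tt_\Sigma$ with $H_\Sigma=H_{14}\notin\HH_A$, so $\iotab_A(\Crackle(C))=\id$, while $\Crackle_A(\iota_A(C))$ is a single nontrivial shard loop for a shard of $H_{13}$ in $\HH_A$ (the two are already distinguished in $H_1(\Sal(\HH_A),B_A)$). For what it is worth, the paper's own proof is essentially the parenthetical remark in your Step 1, applying $\iotab_A$ letterwise to the $\TTsha$-word $\tt_{\Sigma(e_k)}\cdots\tt_{\Sigma(e_1)}$ of \Cref{cor:crackle_shard_generators}; that too produces a word based at $\iota_A(\pop(C))$ rather than $\pop_A(\iota_A(C))$ and hence meets the same obstruction. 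So your diagnosis of where the difficulty lies is exactly right, but the missing step is not merely hard: the lemma needs an additional hypothesis on the pair $(A,C)$ forcing the hyperplanes of $\HH_A$ separating $\pop(C)$ from $C$ to coincide with those separating $\pop_A(\iota_A(C))$ from $\iota_A(C)$. In the application inside the proof of \Cref{lem:all_shards}, the hyperplane $H_\Sigma$ is known to lie in $\cov(C)\cap\HH_A$, which is the sort of compatibility that such a corrected statement would have to encode.
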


\begin{proof}
The quotient $\iotab_A$ introduces the relation $\tt_\Sigma=\id$ when $H_\Sigma \not \in \HH_A$ and the relation $\tt_\Sigma = \tt_{\Sigma'}$ when $\Sigma$ and $\Sigma'$ are contained in the same shard of $\HH_A$, so the desired identity follows from~\Cref{cor:crackle_shard_generators}.
\end{proof}

For $C\in\RR$, it follows from Reading's work in \cite[Propositions 5.7 and 5.8]{reading2011noncrossing} that there are two natural ways to identify some of the regions of $\HH$ with the regions of $\HH_C$: the map $\iota_C$ restricts to a bijection $\iota_{C}^\preceq \colon\{D \in \RR: D \preceq C\}\to\RR_C$ and also restricts to a bijection $\iota_C^\pop \colon[\pop(C),C]\to\RR_C$. The next result, which is illustrated in \Cref{fig:two_ways}, also follows from Reading's work. 

\begin{theorem}[{\cite[Propositions 5.7 and 5.8]{reading2011noncrossing}}]\label{thm:two_ways}
Consider regions $C,D\in\RR$ with $D\preceq C$. There is a poset isomorphism \[\omega_C:[\pop(D),D]\to[(\iota^\pop_C)^{-1}(\pop_C(\iota_C(D))),(\iota^\pop_C)^{-1}(\iota_C(D))],\] where both intervals are taken in $\Weak(\HH,B)$. Suppose $E'\xrightarrow{e}E$ is an edge in $\Sal(\HH)$ such that \[\pop(D)\leq E'\lessdot E\leq D,\] and let $\omega_C(e)$ denote the corresponding edge $\omega_C(E')\xrightarrow{\omega_C(e)}\omega_C(E)$. Then \[\Sigma(e)=\Sigma(\omega_C(e)).\]
\end{theorem}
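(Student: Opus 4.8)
The plan is to derive the statement from \cite[Propositions 5.7 and 5.8]{reading2011noncrossing}, reorganizing Reading's results into the present language; no essentially new argument is needed beyond careful bookkeeping with shard labels. From Reading's work we extract, for each region $C\in\RR$ of the simplicial arrangement $\HH$, not merely the two bijections $\iota_C^\pop\colon[\pop(C),C]\to\RR_C$ and $\iota_C^\preceq\colon\{D:D\preceq C\}\to\RR_C$, but two refinements: (i) $\iota_C^\pop$ is a poset isomorphism $[\pop(C),C]\to\Weak(\HH_C,B_C)$; and (ii) it is \emph{label-preserving}, meaning that if $F'\xrightarrow{f}F$ is an edge with $\pop(C)\le F'\lessdot F\le C$, then $\Sigma(f)$ corresponds to the shard $\Sigma(\iota_C^\pop(f))$ of $\HH_C$ under the canonical identification of the relevant shards of $\HH$ with the shards of $\HH_C$. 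We will also use the ``$\pop$-interval is a region poset'' statement phrased inside $\HH$ itself: for any $A\in\RR$ there is a label-preserving poset isomorphism $[\pop(A),A]\cong\Weak(\HH_A,B_A)$, which is just (i)--(ii) applied to $A$.

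The construction of $\omega_C$ is then a composition of label-preserving isomorphisms. By \Cref{thm:shard_intersection_via_hyperplanes}, $D\preceq C$ forces $D\le C$ and $\bigcap\cov(D)\supseteq\bigcap\cov(C)$, hence $\HH_D\subseteq\HH_C$; moreover one checks that the lower covers of $\iota_C(D)$ in $\Weak(\HH_C,B_C)$ are precisely the images of the lower covers of $D$ in $\Weak(\HH,B)$, so that $\bigcap\cov(D)$ and $\bigcap\cov_{\HH_C}(\iota_C(D))$ cut out the same full subarrangement, i.e. $(\HH_C)_{\iota_C(D)}=\HH_D$ after essentialization. Writing $\widetilde D=(\iota_C^\pop)^{-1}(\iota_C(D))$ and $\widetilde{\pop(D)}=(\iota_C^\pop)^{-1}(\pop_C(\iota_C(D)))$, one takes $\omega_C$ to be the composite
\[
[\pop(D),D]\ \xrightarrow{\ \sim\ }\ \Weak(\HH_D,B_D)\ \xrightarrow{\ \sim\ }\ [\pop_C(\iota_C(D)),\iota_C(D)]\ \xrightarrow{\ (\iota_C^\pop)^{-1}\ }\ [\widetilde{\pop(D)},\widetilde D],
\]
where the first arrow is the label-preserving isomorphism above applied to $A=D$, the second is that same identification carried out inside $\HH_C$ at the region $\iota_C(D)$ (legitimate because $(\HH_C)_{\iota_C(D)}=\HH_D$), and the third is the restriction of $(\iota_C^\pop)^{-1}$ to the subinterval $[\pop_C(\iota_C(D)),\iota_C(D)]$. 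Since each arrow is a shard-label-preserving poset isomorphism, so is $\omega_C$; and for an edge $E'\xrightarrow{e}E$ with $\pop(D)\le E'\lessdot E\le D$ we also have $\pop(C)\le E'\lessdot E\le C$, so the edge is tracked by all three arrows, and chasing it through the composite gives $\Sigma(e)=\Sigma(\omega_C(e))$. The remaining order-theoretic content (that $\omega_C$ has the stated source and target, is bijective, and is monotone in both directions) is then formal, since a composition of poset isomorphisms is one.

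Thus the proof reduces to two inputs: the combinatorial identity $(\HH_C)_{\iota_C(D)}=\HH_D$ for $D\preceq C$, and the label-preservation in (ii) together with its one-level-down analogue at $\iota_C(D)$.

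\textbf{Main obstacle.} The substantive point is the label-preservation: one must verify that a shard of $\HH$ cut by an edge of $[\pop(C),C]$ is carried to the corresponding shard of $\HH_C$, and then, after restricting at $\iota_C(D)$, back to the corresponding shard of $\HH$ lying in $[\pop(D),D]$. This is precisely the technical heart of Reading's Propositions 5.7 and 5.8, established there via his description of how a shard of a simplicial arrangement restricts to a shard of a full subarrangement; reproducing it here amounts to transcribing that analysis into the present notation. (The identity $(\HH_C)_{\iota_C(D)}=\HH_D$ is comparatively routine once \Cref{thm:shard_intersection_via_hyperplanes} is available.)
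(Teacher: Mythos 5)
The paper does not prove \Cref{thm:two_ways}; it cites it directly from Reading's Propositions 5.7 and 5.8 in \cite{reading2011noncrossing}, so there is no internal proof to compare against. Your plan---realizing $\omega_C$ as a composite $(\iota_C^\pop)^{-1}\circ(\iota_{\iota_C(D)}^\pop)^{-1}\circ\iota_D^\pop$ of shard-label-preserving poset isomorphisms and using the identity $(\HH_C)_{\iota_C(D)}=\HH_D$ to make the middle arrow land in the right place---is a reasonable reconstruction of Reading's argument, and correctly identifies that the substantive work lies in the label-preservation statements.

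However, one assertion you make is simply false, and you should excise it. You claim that an edge $E'\xrightarrow{e}E$ with $\pop(D)\le E'\lessdot E\le D$ also satisfies $\pop(C)\le E'\lessdot E\le C$. This would require $\pop(C)\le\pop(D)$, which does not follow from $D\preceq C$. Concretely, in $W=S_4$ take $C=2431$, so that $\cov(C)=\{(13),(34)\}$, $\HH_C=\{(13),(14),(34)\}$, and $\pop(C)=2134$. Take $D=1243$, a join-irreducible with $\cov(D)=\{(34)\}$; since $\inv(D)=\{(34)\}\subseteq\inv(C)$ and $\bigcap\cov(D)\supseteq\bigcap\cov(C)$, we have $D\preceq C$. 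But $\pop(D)=e$, and $2134\not\le e$, so the single edge $e\lessdot 1243$ of $[\pop(D),D]$ does not lie in $[\pop(C),C]$. Fortunately, this claim is unnecessary: the edge is ``tracked'' by the composite because it lies in the domain of the first arrow and each subsequent arrow has the previous arrow's codomain as its domain. It is the \emph{image} edge $\omega_C(e)$ that lies in $[\pop(C),C]$, not $e$ itself. Beyond this, there is a coherence issue buried in ``chasing it through the composite'': one needs the shard correspondences $\HH\leftrightarrow\HH_D$, $\HH_D\leftrightarrow(\HH_C)_{\iota_C(D)}$, and $\HH_C\leftrightarrow\HH$ to compose to the identity correspondence on shards, which is precisely the technical content you defer to Reading. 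Stating this compatibility explicitly would close the remaining gap.
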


\begin{figure}
\begin{tikzpicture}[scale=1.5]
\draw[red,thick,dotted] (-.5,2) to (2,3.5);
\draw[red,thick,dotted] (-.5,1) to (2,2.5);
\draw[dotted,thick,bend left=20] (0,0) to (-.5,1);
\draw[dotted,thick,bend right] (0,0) to (2,1.5);
\draw[red,thick,bend right=60] (-.5,1) to (-.5,2);
\draw[red,thick,bend left=60] (-.5,1) to (-.5,2);
\draw[red,thick,bend right=60] (2,2.5) to (2,3.5);
\draw[red,thick,bend left=60] (2,2.5) to (2,3.5);
\draw[thick,bend right=60] (2,1.5) to (2,5.5);
\draw[thick,bend left=60] (2,1.5) to (2,5.5);
\draw[dotted,thick,bend right=20] (2,5.5) to (1,6.5);
\draw[dotted,thick,bend left=80] (0,0) to (1,6.5);
\draw[dotted,thick,bend left=25] (-.5,2) to (1,6.5);
\node (popd) at (-.5,1) {$\bullet$};
\node (d) at (-.5,2) {$\bullet$};
\node at (popd)[below] {$\pop(D)$};
\node at (d)[above] {$D$};
\node (mB) at (1,6.5) {$\bullet$};
\node at (mB)[above] {$-B$};
\node (c) at (2,5.5) {$\bullet$};
\node at (c)[above] {$C$};
\node (popc) at (2,1.5) {$\bullet$};
\node at (popc)[below] {$\pop(C)$};
\node (B) at (0,0) {$\bullet$};
\node at (B)[below] {$B$};
\node (wpopd) at (2,2.5) {$\bullet$};
\node (wd) at (2,3.5) {$\bullet$};
\node at (wpopd)[below] {$\omega_C(\pop(D))$};
\node at (wd)[above] {$\omega_C(D)$};
\end{tikzpicture}
\caption{An illustration of~\Cref{thm:two_ways}.}
\label{fig:two_ways}
\end{figure}

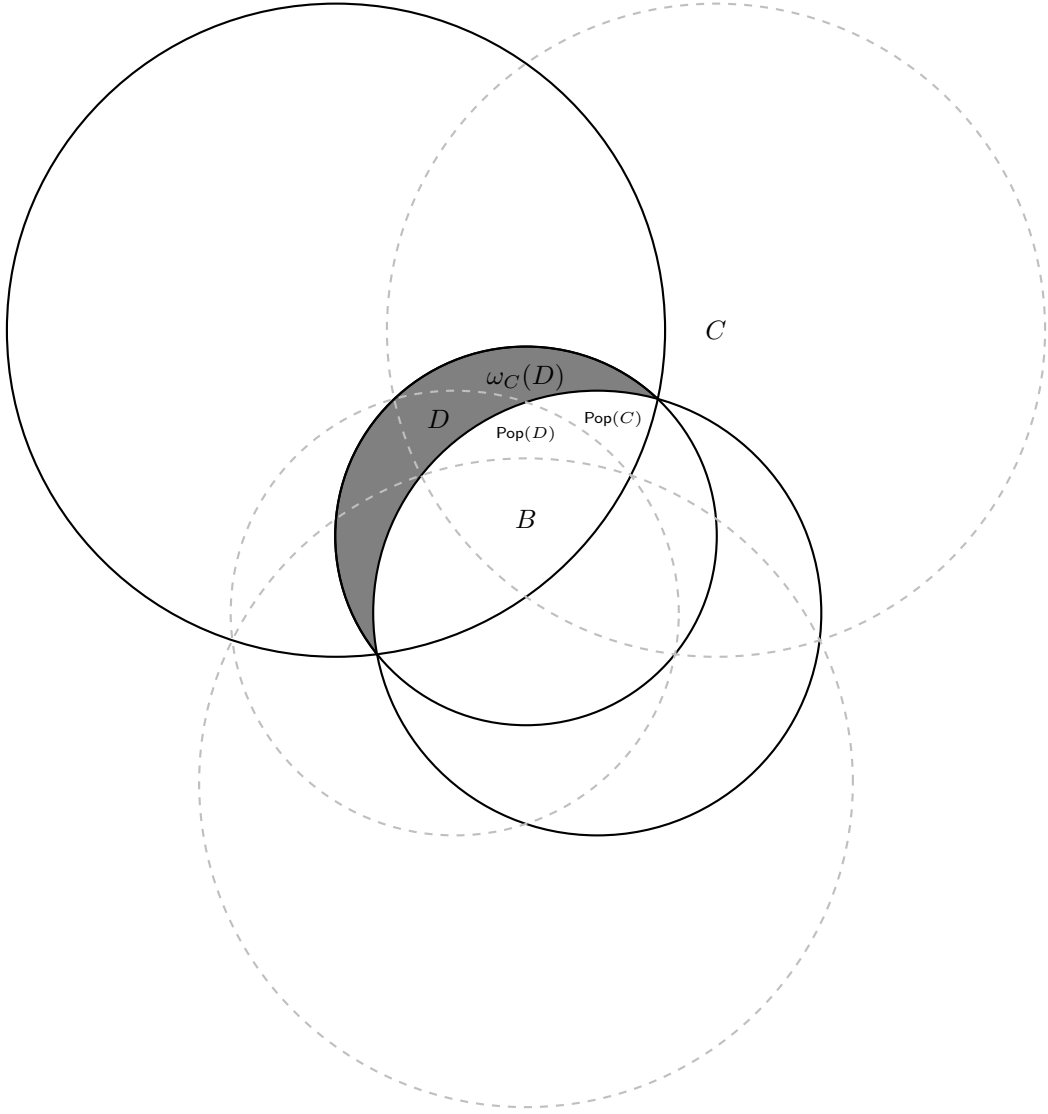
\begin{figure}[htbp]
\begin{tikzpicture}[scale=2.5]
\draw[black, thick,fill=gray] (0,-1/11) circle ({sqrt(122/121)});
\draw[black, thick,fill=white] (3/8,-1/2) circle ({sqrt(89/64)});
\draw[black, thick] (0,-1/11) circle ({sqrt(122/121)});
\draw[black, thick] (-1,1) circle ({sqrt(3)});
\draw[black, thick, lightgray,dashed] (1,1) circle ({sqrt(3)});
\draw[black, thick, lightgray,dashed] (-3/8,-1/2) circle ({sqrt(89/64)});
\draw[black, thick, lightgray,dashed] (0,-7/5) circle ({sqrt(74/25)});
\node at (1,1) {$C$};
\node at (.46,.53) {\tiny $\pop(C)$};
\node at (-.46,.53) {$D$};
\node at (0,.45) {\tiny $\pop(D)$};
\node at (0,.75) {$\omega_C(D)$};
\node at (0,0) {$B$};
\end{tikzpicture}
\caption{The braid arrangement $\HH$ of type $A_3$, drawn by intersecting hyperplanes with a sphere and applying a stereographic projection. Illustrating~\Cref{thm:two_ways}, we have indicated a region $C$, the region $\pop(C)$, a region $D\preceq C$, and the region $\omega_C(D)$.  The black hyperplanes are in $\HH_C$, while the gray dashed hyperplanes are not.  The gray shading indicates the region $\iota_C(D) \in \RR_C$.}
\label{fig:a3}
\end{figure}

\begin{lemma}\label{lem:omega_increasing}
Let $C,D\in\RR$ be such that $D\preceq C$. For every region $E\in[\pop(D),D]$, we have $E\leq\omega_C(E)$. 
\end{lemma}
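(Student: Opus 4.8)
The plan is to leverage \Cref{thm:two_ways}---in particular that $\omega_C$ preserves the shard labels of cover relations---to reduce the inequality $E\le\omega_C(E)$, uniformly in $E$, to the single comparison of bottom elements $\pop(D)\le\omega_C(\pop(D))$.

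First I would fix notation: write $\underline D:=\omega_C(\pop(D))=(\iota_C^\pop)^{-1}(\pop_C(\iota_C(D)))$ and $\overline D:=\omega_C(D)=(\iota_C^\pop)^{-1}(\iota_C(D))$, so that by \Cref{thm:two_ways} the map $\omega_C$ is an isomorphism of weak-order intervals $[\pop(D),D]\to[\underline D,\overline D]$ with $[\underline D,\overline D]\subseteq[\pop(C),C]$. Fix $E\in[\pop(D),D]$ and choose a saturated chain $\pop(D)=E_0\lessdot E_1\lessdot\cdots\lessdot E_r=E$ in $\Weak(\HH,B)$. Along such a chain the inversion set grows by exactly one hyperplane at each step, and the hyperplane added at step $j$ is the unique hyperplane $H_{\Sigma(E_{j-1}\lessdot E_j)}$ underlying the shard labelling that cover; hence $\inv(E)=\inv(\pop(D))\sqcup K$, where $K=\{H_{\Sigma(E_{j-1}\lessdot E_j)}:1\le j\le r\}$ and $K\cap\inv(\pop(D))=\emptyset$. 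Applying $\omega_C$ produces the saturated chain $\underline D=\omega_C(E_0)\lessdot\cdots\lessdot\omega_C(E_r)=\omega_C(E)$ whose $j$-th step carries the same shard label $\Sigma(E_{j-1}\lessdot E_j)$, so $\inv(\omega_C(E))=\inv(\underline D)\sqcup K$ with the same set $K$. Therefore
\[
E\le\omega_C(E)\iff\inv(E)\subseteq\inv(\omega_C(E))\iff\inv(\pop(D))\subseteq\inv(\underline D)\iff\pop(D)\le\underline D,
\]
and the last condition no longer involves $E$ (taking $E=D$ shows it is also equivalent to $D\le\overline D$). So it remains to treat the bottom case.

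For the bottom case I would argue with inversion sets relative to the subarrangement $\HH_C$. Since $D\preceq C$, \Cref{thm:shard_intersection_via_hyperplanes} gives $D\le C$ and $\bigcap\cov(D)\supseteq\bigcap\cov(C)$, and the latter inclusion forces $\HH_D\subseteq\HH_C$. Using the standard description $\inv(\pop(X))=\inv(X)\setminus\HH_X$ of the pop operator (which follows from $\iota_X^\pop$ being a poset isomorphism $[\pop(X),X]\to\RR_X$ carrying $\pop(X)$ to $B_X$; see~\cite{reading2011noncrossing}), I would split $\inv(\pop(D))$ according to membership in $\HH_C$. Outside $\HH_C$: $\inv(\pop(D))\setminus\HH_C\subseteq\inv(D)\setminus\HH_C\subseteq\inv(C)\setminus\HH_C=\inv(\pop(C))\subseteq\inv(\underline D)$, the last step because $\underline D\in[\pop(C),C]$. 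Inside $\HH_C$: $\inv(\pop(D))\cap\HH_C=\inv_{\HH_C}(\iota_C(\pop(D)))$ and $\inv(\underline D)\cap\HH_C=\inv_{\HH_C}(\pop_C(\iota_C(D)))$, so it suffices to show $\iota_C(\pop(D))\le\pop_C(\iota_C(D))$ in $\Weak(\HH_C,B_C)$; since $\cov(D)\subseteq\HH_D\subseteq\HH_C$, the map $\iota_C$ sends the lower covers of $D$ onto the lower covers of $\iota_C(D)$ in $\Weak(\HH_C,B_C)$, and $\iota_C(\pop(D))$---being $\le$ each of these and $\le\iota_C(D)$---is therefore $\le\pop_C(\iota_C(D))$. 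Combining the two cases yields $\inv(\pop(D))\subseteq\inv(\underline D)$, that is $\pop(D)\le\underline D$, completing the proof.

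The step I expect to be the main obstacle is this last one: verifying that $\iota_C$ carries the lower covers of $D$ exactly onto the lower covers of $\iota_C(D)$ in the localized arrangement (equivalently, $\cov_{\HH_C}(\iota_C(D))=\cov(D)$). That each lower cover of $D$ maps to a lower cover of $\iota_C(D)$ is a one-line inversion-set computation, but the surjectivity needs Reading's structural results on full subarrangements from~\cite{reading2011noncrossing}; alternatively one may try to read the needed compatibility of $\pop$ with $\iota_C$ directly off Reading's Propositions~5.7 and~5.8, which underlie \Cref{thm:two_ways} in any case.
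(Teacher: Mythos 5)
Your reduction of $E\le\omega_C(E)$ (uniformly in $E\in[\pop(D),D]$) to the single inequality $\pop(D)\le\omega_C(\pop(D))$ is correct and uses the same inversion-set propagation the paper uses, just run in the opposite direction: the paper propagates \emph{downward from $D$} (choosing a gallery $E\to D$, noting $\omega_C$ preserves the crossed hyperplanes, and subtracting them from $\inv(D)\subseteq\inv(\omega_C(D))$), whereas you propagate downward from $E$ to $\pop(D)$. What genuinely differs is the anchor step. The paper proves the equivalent anchor $D\le\omega_C(D)$ in two lines: \Cref{thm:two_ways} applied to the lower covers of $D$ gives $\covsha(D)\subseteq\covsha(\omega_C(D))$, and then \Cref{thm:shards} yields $D=\bigvee_{\Sigma\in\covsha(D)}J_\Sigma\le\bigvee_{\Sigma\in\covsha(\omega_C(D))}J_\Sigma=\omega_C(D)$. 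This completely avoids the subarrangement analysis.

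The step you flagged in your anchor is a genuine gap. Showing that $\iota_C(\pop(D))\le\iota_C(D')$ for each lower cover $D'\lessdot D$ only bounds $\iota_C(\pop(D))$ below the meet of the images $\{\iota_C(D')\}$; if that image set is a \emph{proper} subset of the lower covers of $\iota_C(D)$, its meet sits weakly \emph{above} $\pop_C(\iota_C(D))$, so the inequality you need does not follow. The surjectivity $\cov_{\HH_C}(\iota_C(D))=\cov(D)$ is in fact true under the hypothesis $D\preceq C$, but establishing it requires the stronger content of Reading's Propositions~5.7--5.8 (namely, that $\iota_C^{\preceq}$ is an isomorphism of shard intersection orders, so that $\covsha(D)$ corresponds bijectively to $\covsha_{\HH_C}(\iota_C(D))$), which is not explicitly packaged in \Cref{thm:two_ways} as stated. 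If you want to retain your route, you should either cite that isomorphism explicitly to get the surjectivity, or simply replace your anchor with the paper's two-line canonical-join argument for $D\le\omega_C(D)$ and then apply your propagation from $D$ downward.
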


\begin{proof}
It follows from \Cref{thm:two_ways} that $\covsha(D)\subseteq\covsha(\omega_C(D))$, so \[D=\bigvee_{\Sigma\in\covsha(D)}J_\Sigma\leq\bigvee_{\Sigma\in\covsha(\omega_C(D))}J_\Sigma=\omega_C(D)\] by \Cref{thm:shards}. 
Now let $E\xrightarrow{e_1}\cdots\xrightarrow{e_k}D$ be a positive minimal gallery from $E$ to $D$. Then \[\omega_C(E)\xrightarrow{\omega_C(e_1)}\cdots\xrightarrow{\omega_C(e_k)}\omega_C(D)\] is a positive minimal gallery from $\omega_C(E)$ to $\omega_C(D)$. 
For each $1\leq i\leq k$, we have $H_{e_i}=H_{\omega_C(e_i)}$ because $\Sigma(e_i)=\Sigma(\omega_C(e_i))$ by \Cref{thm:two_ways}. It follows that \[\inv(E)=\inv(D)\setminus\{H_{e_1},\ldots,H_{e_k}\}\subseteq\inv(\omega_C(D))\setminus\{H_{e_1},\ldots,H_{e_k}\}=\inv(\omega_C(E)).\] so $E\leq\omega_C(E)$.
\end{proof}

The map $(\iota_C^{\pop})^{-1}\colon\RR_C\to[\pop(C),C]$ is defined on regions in $\HH_C$; since a positive gallery $g$ in $\Sal(\HH_C)$ can be viewed as a sequence of regions, it makes sense to apply $(\iota_C^{\pop})^{-1}$ to all of $g$, thereby yielding a sequence of regions in the interval $[\pop(C),C]$. 

\begin{lemma}\label{lem:lift_up}
For each $C\in\RR$, there is an injective group homomorphism
\begin{align*}
    \Phi_C: \pi_1(\Sal(\HH_C),B_C) &\hookrightarrow \pi_1(\Sal(\HH),B) \\
    g &\mapsto \gal(B,\pop(C)) \cdot (\iota_C^\pop)^{-1}(g) \cdot \gal(B,\pop(C))^{-1}.
\end{align*}
If $D \preceq C$, then \[\Phi_C(\Crackle_C(\iota_C(D)))=\Crackle(D).\]
\end{lemma}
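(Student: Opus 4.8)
The plan is to realize $\Phi_C$ geometrically, exploiting that the interval $[\pop(C),C]$ in $\Weak(\HH,B)$, together with its shard labels, is a combinatorial copy of $\Weak(\HH_C,B_C)$. Write $K_C$ for the full subcomplex of $\Sal(\HH)$ on the vertex set $[\pop(C),C]$. First I would verify that $\iota^\pop_C$ upgrades to an isomorphism of CW complexes $\Sal(\HH_C)\xrightarrow{\sim}K_C$ carrying the base vertex $B_C$ to the vertex $\pop(C)$ (note $(\iota^\pop_C)^{-1}(B_C)=\pop(C)$, since $\iota^\pop_C$ is order-preserving and both are the bottoms of their posets). On $1$-skeleta this is exactly the statement that $\iota^\pop_C$ is a poset isomorphism $[\pop(C),C]\to\Weak(\HH_C,B_C)$ that matches shard labels on cover relations, which follows from Reading's Propositions 5.7 and 5.8 of~\cite{reading2011noncrossing} together with the explicit description of $\iotab_C$ in the proof of~\Cref{lem:quotient_sec4}; for $2$-cells one uses that the hyperplanes separating two regions of $[\pop(C),C]$ all lie in $\inv(C)\setminus\inv(\pop(C))$ and are precisely the hyperplanes of the essentialization of $\HH_C$, so the rank-$2$ configurations of $\Sal(\HH)$ lying in $K_C$ correspond bijectively to those of $\Sal(\HH_C)$. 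Granting this, $\Phi_C$ is the composite of the isomorphism $\pi_1(\Sal(\HH_C),B_C)\xrightarrow{\sim}\pi_1(K_C,\pop(C))$ induced by this CW isomorphism, the homomorphism $\pi_1(K_C,\pop(C))\to\pi_1(\Sal(\HH),\pop(C))$ induced by the inclusion $K_C\hookrightarrow\Sal(\HH)$, and the change-of-basepoint isomorphism $\pi_1(\Sal(\HH),\pop(C))\to\pi_1(\Sal(\HH),B)$ given by conjugation by $\gal(B,\pop(C))$; unwinding the definitions recovers the formula in the statement, and $\Phi_C$ is a group homomorphism because each of the three factors is.

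Next I would prove the displayed Crackle identity and then derive injectivity from it. Fix $D\preceq C$ and set $D''=(\iota^\pop_C)^{-1}(\pop_C(\iota_C(D)))$ and $\widehat D=(\iota^\pop_C)^{-1}(\iota_C(D))$, so $D''\leq\widehat D$ and, by~\Cref{thm:two_ways}, $\omega_C$ is a shard-label-preserving poset isomorphism $[\pop(D),D]\to[D'',\widehat D]$. Pick a positive minimal gallery $\pop(D)\xrightarrow{e_1}\cdots\xrightarrow{e_k}D$; then $D''\xrightarrow{\omega_C(e_1)}\cdots\xrightarrow{\omega_C(e_k)}\widehat D$ is a positive minimal gallery that lies in $K_C$ and is still minimal in $\Sal(\HH)$ because $[\pop(C),C]$ is an interval in the weak order. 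Pushing the loop that defines $\Crackle_C(\iota_C(D))$ through the CW isomorphism of the previous paragraph, and using that the concatenation of $\gal(B,\pop(C))$ with a positive minimal gallery inside $[\pop(C),C]$ is again positive and minimal (the two stretches cross disjoint sets of hyperplanes), one obtains
\[\Phi_C(\Crackle_C(\iota_C(D)))\;\cong\;\gal(B,D'')\cdot\gal(D'',\widehat D,D'')\cdot\gal(B,D'')^{-1}.\]
By~\Cref{lem:crackle_shard_generators}, applied in $\HH$ with the minimal gallery $\omega_C(e_1),\ldots,\omega_C(e_k)$, the right-hand side equals $\tt_{\Sigma(\omega_C(e_k))}\cdots\tt_{\Sigma(\omega_C(e_1))}$, which is $\tt_{\Sigma(e_k)}\cdots\tt_{\Sigma(e_1)}$ since $\Sigma(\omega_C(e_i))=\Sigma(e_i)$, and this equals $\Crackle(D)$ by~\Cref{cor:crackle_shard_generators}. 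Hence $\Phi_C(\Crackle_C(\iota_C(D)))=\Crackle(D)$.

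For injectivity I would use~\Cref{lem:subarrangement_quotient} with the subarrangement $\HH_C$, which gives $\iotab_C(\Crackle(D))=\Crackle_C(\iota_C(D))$; combined with the identity just proved, $\iotab_C\circ\Phi_C$ fixes $\Crackle_C(\iota_C(D))$ for every $D\preceq C$. Since $\iota^\preceq_C$ is a bijection from $\{D:D\preceq C\}$ onto $\RR_C$, it follows that $\iotab_C\circ\Phi_C$ fixes $\Crackle_C(R)$ for every region $R$ of $\HH_C$; in particular it fixes $\Crackle_C(J_\Sigma)=\tt_\Sigma$ for every shard $\Sigma$ of $\HH_C$ (here we use that the essentialization of $\HH_C$ is simplicial, so by~\Cref{thm:shards_and_join_irrs} the join-irreducible region $J_\Sigma$ has $\Sigma$ as its unique lower shard, whence $\Crackle_C(J_\Sigma)=\tt_\Sigma$). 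As these shard loops generate $\pi_1(\Sal(\HH_C),B_C)$, we get $\iotab_C\circ\Phi_C=\id$, so $\iotab_C$ is a left inverse of $\Phi_C$ and $\Phi_C$ is injective.

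The step I expect to be the main obstacle is the very first one: proving carefully that $\iota^\pop_C$ extends to an isomorphism of CW complexes, and in particular that the $2$-cells of $\Sal(\HH_C)$ correspond to the $2$-cells of $\Sal(\HH)$ lying in $K_C$ --- equivalently, that each rank-$2$ braid relation in Salvetti's presentation of $\pi_1(\Sal(\HH_C),B_C)$ (see~\Cref{thm:relations}) transports to a valid relation in $\pi_1(\Sal(\HH),B)$. This is where one must lean hardest on Reading's description of the interval $[\pop(C),C]$ as a ``nonstandard parabolic'' copy of $\HH_C$; everything afterward is bookkeeping with galleries together with the centrality and convexity facts already established in the paper.
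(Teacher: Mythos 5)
Your proof of the Crackle identity (the second statement) follows essentially the paper's argument: apply $(\iota_C^\pop)^{-1}$ to the defining loop, rewrite via \Cref{lem:crackle_shard_generators} with $E'=\omega_C(\pop(D))$, $E=\omega_C(D)$, use $\Sigma(\omega_C(e_i))=\Sigma(e_i)$ from \Cref{thm:two_ways}, and conclude via \Cref{cor:crackle_shard_generators}. The difference lies in the first statement, which the paper dismisses as ``immediate'': you unpack it into (i) a CW-isomorphism between $\Sal(\HH_C)$ and a subcomplex $K_C$ of $\Sal(\HH)$, (ii) inclusion-induced $\pi_1$, and (iii) change of basepoint, and then establish injectivity by exhibiting the quotient map $\iotab_C$ from \Cref{lem:quotient_sec4} as an explicit left inverse to $\Phi_C$.

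The left-inverse argument is both correct and a genuinely clean addition: checking $\iotab_C\circ\Phi_C=\id$ on the shard-loop generators, using \Cref{lem:subarrangement_quotient} together with the Crackle identity you just proved and the bijection $\iota_C^\preceq$ from Reading, sidesteps the nonobvious topological question of whether the inclusion of $K_C$ into $\Sal(\HH)$ is $\pi_1$-injective for a ``nonstandard parabolic'' subarrangement. The paper gives no argument for injectivity, so this is a welcome supplement. One caveat: the CW-isomorphism step for $2$-cells is indeed the load-bearing claim (as you note), and Reading's Propositions 5.7--5.8 as cited only give the poset-plus-shard-labels isomorphism of $1$-skeleta; upgrading this to $2$-cells requires a short argument that the codimension-$2$ faces of $\HH$ whose associated $2$-cells lie inside $K_C$ are exactly those coming from full rank-$2$ subarrangements of $\HH_C$. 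You correctly flag this, and it is in any case no worse than what the paper leaves implicit. Note also that your logical order is reversed relative to the lemma's phrasing --- you establish the homomorphism property, then the displayed identity, then injectivity from the identity --- but that causes no circularity.
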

\begin{proof}
The first statement is immediate. Now suppose $D\preceq C$. Let \[\pop(D)=E_0\xrightarrow{e_1}E_{1}\xrightarrow{e_{2}}\cdots\xrightarrow{e_{k-1}}E_{k-1}\xrightarrow{e_k}E_k=D\] be a positive minimal gallery in $\Sal(\HH)$ from $\pop(D)$ to $D$, and let \[\omega_C(\pop(D))=\omega_C(E_0)\xrightarrow{\omega_C(e_1)}\omega_C(E_{1})\xrightarrow{\omega_C(e_{2})}\cdots\xrightarrow{\omega_C(e_{k-1})}\omega_C(E_{k-1})\xrightarrow{\omega_C(e_k)}\omega_C(E_k)=\omega_C(D)\] be the corresponding positive minimal gallery obtained by applying the isomorphism $\omega_C$ from \Cref{thm:two_ways}. We have \[(\iota_C^\pop)^{-1}(B_C)=\pop(C),\quad(\iota_C^\pop)^{-1}(\pop_C(\iota_C(D)))=\omega_C(\pop(D)),\quad\text{and}\quad(\iota_C^\pop)^{-1}(\iota_C(D))=\omega_C(D).\] Consequently, since $\Crackle_C(\iota_C(D))$ is equal to \[\gal(B_C,\pop_C(\iota_C(D)))\cdot\gal(\pop_C(\iota_C(D)),\iota_C(D),\pop_C(\iota_C(D)))\cdot\gal(B_C,\pop_C(\iota_C(D)))^{-1} \] by definition, we can write $(\iota_C^\pop)^{-1}(\Crackle_C(\iota_C(D)))$ as \[\gal(B,\omega_C(\pop(D)))\cdot\gal(\omega_C(\pop(D)),\omega_C(D),\omega_C(\pop(D)))\cdot\gal(B,\omega_C(\pop(D)))^{-1}.
    \] Setting $E'=\omega_C(\pop(D))$ and $E=\omega_C(D)$ in \Cref{lem:crackle_shard_generators} allows us to rewrite this gallery as \[\tt_{\Sigma(\omega_C(e_k))}\tt_{\Sigma(\omega_C(e_{k-1}))}\cdots\tt_{\Sigma(\omega_C(e_1))},\] and we know by \Cref{thm:two_ways} that this is equal to \[\tt_{\Sigma(e_k)}\tt_{\Sigma(e_{k-1})}\cdots\tt_{\Sigma(e_1)}.\] This completes the proof since \Cref{cor:crackle_shard_generators} tells us that $\tt_{\Sigma(e_k)}\tt_{\Sigma(e_{k-1})}\cdots\tt_{\Sigma(e_1)}=\Crackle(D)$.  
\end{proof}

\subsection{Proof of \texorpdfstring{\Cref{thm:crackle_hom}}{Theorem 1.3}}

Recall that we use \Cref{thm:shards_and_join_irrs} to write $J_\Sigma$ for the join-irreducible region in $\Weak(\HH,B)$ corresponding to the shard $\Sigma$.

\begin{proposition}\label{lem:all_shards}
Let $\Sigma\in\Sha(\HH,B)$ be a shard, and let $C\in\RR$ be a region. Then $J_\Sigma\preceq C$ if and only if there exists an $\TTsha$-word representing $\Crackle(C)$ that uses the shard loop $\tt_\Sigma$. 
\end{proposition}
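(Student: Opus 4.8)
The plan is to reduce the statement to a claim about the set $\Sigma([\pop(C),C])$, settle the straightforward implication by an explicit gallery, and attack the harder implication using the abelianization together with the quotient to the subarrangement $\HH_C$ and the section $\Phi_C$ of \Cref{lem:lift_up}. First I would reformulate: applying \Cref{prop:shard_intersection_via_labels} with $C'=J_\Sigma$ gives $J_\Sigma\preceq C$ if and only if $\Sigma([\pop(J_\Sigma),J_\Sigma])\subseteq\Sigma([\pop(C),C])$, and since $J_\Sigma$ is join-irreducible the interval $[\pop(J_\Sigma),J_\Sigma]$ is a single cover relation, whose shard is $\Sigma$ by \Cref{thm:shards_and_join_irrs}; hence $\Sigma([\pop(J_\Sigma),J_\Sigma])=\{\Sigma\}$, and the proposition becomes: $\Sigma\in\Sigma([\pop(C),C])$ if and only if some $\TTsha$-word for $\Crackle(C)$ uses $\tt_\Sigma$. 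For the ``only if'' direction, write $\Sigma=\Sigma(D'\lessdot D)$ with $\pop(C)\le D'\lessdot D\le C$ and concatenate a positive minimal gallery $\gal(\pop(C),D')$, the edge $D'\to D$, and a positive minimal gallery $\gal(D,C)$; this is a positive minimal gallery from $\pop(C)$ to $C$ (minimality because $\inv(\pop(C),C)$ is the disjoint union of $\inv(\pop(C),D')$, $\{H_{D'\lessdot D}\}$, and $\inv(D,C)$), and \Cref{cor:crackle_shard_generators} turns it into a $\TTsha$-word for $\Crackle(C)$ containing the letter $\tt_{\Sigma(D'\lessdot D)}=\tt_\Sigma$.

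The substantial part is the ``if'' direction: given a $\TTsha$-word $w$ for $\Crackle(C)$ that uses $\tt_\Sigma$, I must show $\Sigma\in\Sigma([\pop(C),C])$. The first step pins down the hyperplane of $\Sigma$: by \Cref{cor:crackle_shard_generators} the class of $\Crackle(C)$ in $H_1(\Sal(\HH),B)$ is $\sum_{H\in\inv(\pop(C),C)}\overline{\tt}_H$, so \Cref{thm:abelianization} forces $w$ to use exactly one shard loop on each hyperplane of $\inv(\pop(C),C)$ and none elsewhere; in particular $H_\Sigma$ is a hyperplane of $\HH_C$, since $\inv(\pop(C),C)$ is the hyperplane set of $\HH_C$ under Reading's identification $[\pop(C),C]\cong\Weak(\HH_C,B_C)$. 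Next I would pass to $\HH_C$: by \Cref{lem:subarrangement_quotient}, $\iotab_C(\Crackle(C))=\Crackle_C(\iota_C(C))=\Crackle_C(-B_C)$ equals the full twist $\Delta_C^2$ of the simplicial arrangement $\HH_C$, so $\iotab_C(w)$ is an $\TTsha(\HH_C,B_C)$-word for $\Delta_C^2$ and $\iotab_C(\tt_\Sigma)=\tt_\tau$ is the nontrivial shard loop indexed by the $\HH_C$-shard $\tau\supseteq\Sigma$. Finally, \Cref{lem:lift_up} provides the injective homomorphism $\Phi_C$ with $\Phi_C(\Delta_C^2)=\Phi_C(\Crackle_C(\iota_C(C)))=\Crackle(C)$; a computation along the lines of its proof (using \Cref{lem:crackle_shard_generators} and \Cref{thm:two_ways}) shows that $\Phi_C$ is a section of $\iotab_C$ carrying $\TTsha(\HH_C,B_C)$ bijectively onto $\{\tt_{\Sigma'}:\Sigma'\in\Sigma([\pop(C),C])\}$. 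If one knows that the submonoid $\Phi_C(\PPP^+(\HH_C,B_C))$ is \emph{divisor-closed} in $\PPP^+(\HH,B)$, then since $\Crackle(C)=\Phi_C(\Delta_C^2)$ lies in this submonoid and $w$ factors $\Crackle(C)$ as a product of $\TTsha$-generators, every letter of $w$ — in particular $\tt_\Sigma$ — must lie in $\{\tt_{\Sigma'}:\Sigma'\in\Sigma([\pop(C),C])\}$, which gives $\Sigma\in\Sigma([\pop(C),C])$ as desired.

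The hard part will be exactly the divisor-closedness of $\Phi_C(\PPP^+(\HH_C,B_C))$, equivalently the assertion that the shards occurring in \emph{some} $\TTsha$-word for $\Crackle(C)$ are precisely those in $\Sigma([\pop(C),C])$. This is delicate because ``which shard of a given hyperplane is crossed'' is \emph{not} an invariant of an element of $\PPP^+(\HH,B)$: distinct $\TTsha$-words for one element may cross different shards of the same hyperplane, as already happens in rank $2$ (compare the rewriting in the proof of \Cref{prop:rank2rankgen}), so $\Sigma$ cannot simply be read off from $\iotab_C(w)$. To control this I would use that every defining relation of $\pi_1(\Sal(\HH),B)$ is supported on a full rank-$2$ subarrangement $\AA$ (\Cref{thm:relations}): any relation that rewrites a word for $\Crackle(C)$, all of whose letters lie on hyperplanes of $\HH_C$, must have $\AA\subseteq\HH_C$, and within such an $\AA$ the explicit description of all words representing a fixed element of $[\id,\Delta^2]_{\PPP^+}$ from \Cref{prop:rank2maxchain} and \Cref{prop:rank2rankgen} pins down which shards can occur. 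Making this rigorous — i.e., understanding the monoid $\PPP^+(\HH,B)$ well enough to know that two positive words are equal only through positive, rank-$2$-supported rewritings internal to $\HH_C$, so that no such rewriting can ever carry a word for $\Crackle(C)$ outside $\{\tt_{\Sigma'}:\Sigma'\in\Sigma([\pop(C),C])\}$ — is the step I expect to require the most care.
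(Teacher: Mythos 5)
Your ``only if'' direction agrees with the paper's argument and is fine. For the ``if'' direction, you correctly reformulate the claim and correctly identify the real obstruction --- that the set of shards occurring in a word for a fixed element of $\PPP^+(\HH,B)$ is not an invariant of that element --- and you correctly use the abelianization to pin down $H_\Sigma\in\HH_C$. But your reduction to divisor-closedness of $\Phi_C(\PPP^+(\HH_C,B_C))$ leaves the crucial step unproved, and your proposed attack on it has a genuine flaw: \Cref{thm:relations} presents the \emph{group} $\pi_1(\Sal(\HH),B)$, not the \emph{monoid} $\PPP^+(\HH,B)$, so there is no a priori reason that two $\TTsha$-words representing the same element of $\PPP^+(\HH,B)$ are connected by a chain of \emph{positive} rewritings supported on rank-$2$ subarrangements --- in a group presentation one may need to pass through negative words. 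Establishing a positive rewriting normal form is exactly the Garside-type structure the paper points out is unavailable here ($[\id,\Delta^2]_{\PPP^+}$ is not a lattice). Note also that divisor-closedness of $\Phi_C(\PPP^+(\HH_C,B_C))$ is essentially equivalent to the proposition itself (via \Cref{prop:shard_intersection_via_labels}), so the reduction is nearly circular.

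The paper's ``if'' direction avoids this entirely by contradiction at a single well-chosen rank-$2$ subarrangement that is \emph{not} contained in $\HH_C$. After the abelianization step produces $j$ with $H_{\Sigma(e_j)}=H_\Sigma$ and $J_{\Sigma(e_j)}\preceq C$, suppose $J_\Sigma\not\preceq C$. Then $\covsha(J_\Sigma)=\{\Sigma\}$ gives a point $x\in\bigcap\covsha(C)\setminus\Sigma$ with $x\in\Sigma(e_j)$. From this one extracts a hyperplane $H$ cutting $H_\Sigma$ with $\Sigma$ and $\Sigma(e_j)$ on opposite sides; taking $\HH_A$ to be the full rank-$2$ subarrangement on $\{H_\Sigma,H\}$, one may arrange $H\notin\inv(C)$, and since also $H_\Sigma\in\inv(C)$, the region $\iota_A(C)$ is \emph{join-irreducible} in $\Weak(\HH_A,B_A)$. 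Therefore $\iotab_A(\Crackle(C))=\Crackle_A(\iota_A(C))$ is a \emph{single} generator of $\TTsha(\HH_A,B_A)$, and a single generator factors only trivially over $\TTsha(\HH_A,B_A)\cup\{\id\}$ (abelianize again). Applying $\iotab_A$ to your given word and to a gallery word using $\tt_{\Sigma(e_j)}$ then forces $\iotab_A(\tt_\Sigma)=\iotab_A(\Crackle(C))=\iotab_A(\tt_{\Sigma(e_j)})$, contradicting \Cref{thm:shard_generators} because $\Sigma$ and $\Sigma(e_j)$ lie in different $\HH_A$-shards. The key move you are missing is to quotient not to $\HH_C$, but to an $\HH_A$ transverse to $\HH_C$ chosen so that $\iotab_A(\Crackle(C))$ becomes irreducible; you should replace your divisor-closedness plan with this.
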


\begin{proof}
First suppose $J_\Sigma\preceq C$. By \Cref{prop:shard_intersection_via_labels}, there is an edge $D'\xrightarrow{e}D$ with $\pop(C)\leq D'\lessdot D\leq C$ such that $\Sigma(e)=\Sigma$. We can find a positive minimal gallery from $\pop(C)$ to $C$ that uses $e$, so it follows from~\Cref{cor:crackle_shard_generators} that there is an $\TTsha$-word representing $\Crackle(C)$ that uses $\tt_\Sigma$. 

To prove the converse, suppose we can write $\Crackle(C)=\tt_{\Sigma_1}\cdots\tt_{\Sigma_m}$, where $\Sigma_i=\Sigma$ for some $1\leq i\leq m$. Let $\pop(C)=E_0\xrightarrow{e_1}E_{1}\xrightarrow{e_{2}}\cdots\xrightarrow{e_{k-1}}E_{k-1}\xrightarrow{e_k}E_k=C$ be a positive minimal gallery in $\Sal(\HH)$ from $\pop(C)$ to $C$; by \Cref{cor:crackle_shard_generators}, we have $\Crackle(C)=\tt_{\Sigma(e_k)}\cdots\tt_{\Sigma(e_1)}$. When we pass to the abelianization $H_1(\PP(\HH),B)$, we obtain the equality $\overline\tt_{H_{\Sigma_1}}\cdots\overline\tt_{H_{\Sigma_m}}=\overline\tt_{H_{\Sigma(e_1)}}\cdots\overline\tt_{H_{\Sigma(e_m)}}$. \Cref{thm:abelianization} tells us that $H_1(\PP(\HH),B)$ is a free abelian group with free generating set $\{\overline\tt_H\}_{H\in\HH}$, so $H_{\Sigma_i}$ must be one of the hyperplanes $H_{\Sigma(e_1)},\ldots,H_{\Sigma(e_k)}$. In other words, there exists $1\leq j\leq k$ such that the shards $\Sigma=\Sigma_i$ and $\Sigma(e_j)$ belong to the same hyperplane $H_{\Sigma}$. It follows from \Cref{prop:shard_intersection_via_labels} that $J_{\Sigma(e_j)}\preceq C$.

Suppose by way of contradiction that $J_\Sigma\not\preceq C$. Since $\covsha(J_\Sigma)=\{\Sigma\}$, it follows from the definition of the shard intersection order that there exists a point $x\in\bigcap\covsha(C)\setminus\Sigma$. On the other hand, since $J_{\Sigma(e_j)}\preceq C$, we have $x\in\Sigma(e_j)$. The shard $\Sigma$ is a polyhedral cone defined as the intersection of $H_\Sigma$ with some closed half-spaces, where each closed half-space is determined by a hyperplane that cuts $H_\Sigma$. Because $x\not\in\Sigma$, at least one of these closed half-spaces does not contain $x$. In other words, there exists a hyperplane $H\in\HH$ that cuts $H_\Sigma$ such that $\Sigma$ and $x$ are on opposite sides of $H$ and $x\not\in H$. There exists a region $A\in\RR$ such that $\HH_A$ is the full rank-$2$ subarrangement of $\HH$ containing $H_\Sigma$ and $H$. Then $H_\Sigma$ is a basic hyperplane of $\HH_A$; let $H'$ be the other basic hyperplane of $\HH_A$. Then $H'$ also cuts $H_\Sigma$, and $\Sigma$ and $x$ are on opposite sides of $H'$ (with $x\not\in H'$). Since $x\in\Sigma(e_j)$, we find that $\Sigma$ and $\Sigma(e_j)$ are on opposite sides of $H$ and on opposite sides of $H'$. Either $H$ or $H'$ is in $\inv(J_\Sigma)$; without loss of generality, say $H\in\inv(J_\Sigma)$. Then $\Sigma$ and $B$ are on opposite sides of $H$, so $x$ and $B$ must be on the same side of $H$. Because $x\in\bigcap\covsha(C)$, this means that $C$ and $B$ are on the same side of $H$; that is, $H\not\in\inv(C)$. 

Because $H_\Sigma\in\HH_A$ and $\Sigma$ and $\Sigma(e_j)$ are contained in different shards of $\HH_A$, it follows from \Cref{thm:shard_generators} and \Cref{lem:subarrangement_quotient} that $\iotab_A(\tt_{\Sigma})$ and $\iotab_A(\tt_{\Sigma(e_j)})$ are generators in $\TTsha(\HH_A,B_A)$ and that 
\begin{equation}\label{eq:distinct}
\iotab_A(\tt_{\Sigma})\neq\iotab_A(\tt_{\Sigma(e_j)}).
\end{equation} Since $J_{\Sigma(e_j)}\preceq C$, we have $H_\Sigma\in\inv(C)$ and $H\not\in\inv(C)$, so $\iota_A(C)$ is join-irreducible in $\Weak(\HH_A,B_A)$. This means that $\Crackle_A(\iota_A(C))$ is a single generator in $\TTsha(\HH_A,B_A)$. According to \Cref{lem:subarrangement_quotient}, $\iotab_A(\Crackle(C))$ is a single generator in $\TTsha(\HH_A,B_A)$. We have assumed that there is an expression $\Crackle(C)=\tt_{\Sigma_1}\cdots\tt_{\Sigma_m}$ with $\Sigma=\Sigma_i$; if we apply the quotient map $\iotab_A$ to this expression, then, by \Cref{lem:quotient_sec4}, we obtain an expression for $\iotab_A(\Crackle(C))$ as a product of elements of $\TTsha(\HH_A,B_A)\cup\{\id\}$ such that one of the elements is $\iotab_A(\tt_\Sigma)$. Hence, $\iotab_A(\tt_\Sigma)=\iotab_A(\Crackle(C))$. On the other hand, $J_{\Sigma(e_j)}\preceq C$, so we know by the first paragraph of the proof that there is an $\TTsha$-word for $\Crackle(C)$ using $\tt_{\Sigma(e_j)}$. Applying $\iotab_A$ to this expression yields an expression for $\iotab_A(\Crackle(C))$ as a product of elements of $\TTsha(\HH_A,B_A)\cup\{\id\}$ such that one of the elements is $\iotab_A(\tt_{\Sigma(e_j)})$. Hence, $\iotab_A(\tt_{\Sigma(e_j)})=\iotab_A(\Crackle(C))$. This shows that $\iotab_A(\tt_{\Sigma})=\iotab_A(\tt_{\Sigma(e_j)})$, which contradicts \Cref{eq:distinct}. 
\end{proof}

We can now complete the proof of~\Cref{thm:crackle_hom}. We saw in \Cref{cor:crackle_shard_generators} that $\Crackle$ maps $\Shard(\HH,B)$ into the interval $[\id,\Delta^2]_{\PPP^+}$; we are left to show that it is a poset embedding. 

Suppose first that $C,D\in\RR$ are such that $D\preceq C$. We can restrict to the subarrangement $\HH_C$ and note that $\iota_C(C)=-B_C$. By \Cref{cor:crackle_shard_generators}, $\Crackle_C(\iota_C(D)) \leq \Crackle_C(\iota_C(C))$ in the pure shard monoid $\PPP^+(\HH_C,B_C)$. Thus, there exist $\tt_{e_1},\ldots,\tt_{e_k},\tt_{e_{k+1}},\ldots,\tt_{e_m}$ in
$\TTsha(\HH_C,B_C)$ such that \[\tt_{e_1}\cdots\tt_{e_k}=\Crackle_C(\iota_C(D))\quad\text{and}\quad\tt_{e_1}\cdots\tt_{e_k}\tt_{e_{k+1}}\cdots\tt_{e_m}=\Crackle_C(\iota_C(C)).\] For 
$1\leq i\leq m$, note that $\Phi_C(\tt_{e_i})=\gal(B,\pop(C))\cdot(\iota_C^\pop)^{-1}(\tt_{e_i})\cdot\gal(B,\pop(C))^{-1}$ is one of the shard loops in $\TTsha(\HH,B)$. Because $D\preceq C$, \Cref{lem:lift_up} tells us that \[\Crackle(D)=\Phi_C(\Crackle_C(\iota_C(D)))=\Phi_C(\tt_{e_1})\cdots\Phi_C(\tt_{e_k})\] and \[\Crackle(C)=\Phi_C(\Crackle_C(\iota_C(C)))=\Phi_C(\tt_{e_1})\cdots\Phi_C(\tt_{e_k})\Phi_C(\tt_{e_{k+1}})\cdots\Phi_C(\tt_{e_m}).\] This proves that $\Crackle(D)\leq \Crackle(C)$ in $\PPP^+(\HH,B)$. 

To prove the converse, assume $\Crackle(D) \leq \Crackle(C)$ in $\PPP^+(\HH,B)$. Then every $\TTsha$-word representing $\Crackle(D)$ can be extended to an $\TTsha$-word representing $\Crackle(C)$.  According to~\Cref{lem:all_shards}, we have \[\{\Sigma\in\Sha:J_\Sigma\preceq D\}\subseteq\{\Sigma\in\Sha:J_{\Sigma}\preceq C\}.\] It follows from \Cref{prop:shard_intersection_via_labels} that \[\{\Sigma\in\Sha:J_\Sigma\preceq D\}=\Sigma([\pop(D),D])\quad\text{and}\quad\{\Sigma\in\Sha:J_\Sigma\preceq C\}=\Sigma([\pop(C),C]).\] Therefore, \Cref{prop:shard_intersection_via_labels} tells us that $D\preceq C$, as desired. 

\section{Snap}\label{sec:snap}

We now specialize to the setting of reflection arrangements of finite Coxeter groups and prove~\Cref{thm:shard_generators2} and \Cref{thm:snap}.  We will also collect corollaries specializing these results to sortable elements and noncrossing partitions.

\subsection{Coxeter Groups and Braid Groups}
Let $\HH$ be the reflection arrangement of a finite Coxeter group $W$. Then $\HH$ is simplicial, and $W$ can be seen as the group of orthogonal transformations of $\mathbb R^n$ generated by the reflections through the hyperplanes in $\HH$. There is bijection between $W$ and $\RR$ that maps an element $w\in W$ to the region $w(B)$, where $B$ is the fixed base region of $\HH$. We use this bijection to identify regions of $\HH$ with elements of $W$. 

Let $S$ be the set of simple reflections of $W$. Then $W$ has a presentation of the form \[\langle S:(ss')^{m(s,s')}=\id\text{ for all }s,s'\in S\rangle,\] where $\id$ is the identity element of $W$, $m(s,s)=1$ for all $s\in S$, and $m(s,s')=m(s',s)\in\{2,3,\ldots\}\cup\{\infty\}$ for all distinct $s,s'\in S$. Given symbols $\alpha,\beta$ and a nonnegative integer $r$, we write $[\alpha\vert\beta]_r$ for the word $\alpha\beta\alpha\cdots$ of length $r$ that starts with $\alpha$ and alternates between $\alpha$ and $\beta$. Thus, the braid relations of $W$ can be written as $[s\vert s']_{m(s,s')}=[s'\vert s]_{m(s,s')}$. For each $s\in S$. let ${\bf s}$ be a formal copy of $s$. The braid group $\BB(W)$ has a generating set ${\bf S}=\{{\bf s}:s\in S\}$ and presentation \[\langle {\bf S}:[\s\vert \s']_{m(s,s')}=[\s'\vert \s]_{m(s,s')}\text{ for all distinct }\s,\s'\in{\bf S}\rangle.\] Thus, the generators in $\bf S$ have infinite order in $\BB(W)$, while the elements of $S$ are involutions in $W$. The \defn{positive braid monoid} of $W$ is the monoid $\BB^+(W)$ generated by ${\bf S}$. There is a natural homomorphic quotient map $\varphi\colon \BB(W)\to W$ defined by $\varphi({\bf s})=s$ for all $s\in S$. The \defn{pure braid group} of $W$, denoted $\PPP(W)$, is the kernel of this map: $\PPP(W)=\ker(\varphi)$. 

A \defn{reduced word} for an element $w\in W$ is a word in the alphabet $S$ that represents $w$ and has minimum length among all such words. The length of a reduced word for $w$ is called the \defn{length} of $w$ and is denoted by $\len(w)$. The (right) \defn{weak order} on $W$ is defined by saying $u\leq v$ if there is a reduced word for $v$ that contains a reduced word for $u$ as a prefix. This defines a poset $\Weak(W)$, which coincides with $\Weak(\HH,B)$ under the identification of $W$ with $\RR$. For each $w\in W$, there is a natural \defn{lift} ${\bf w}\in \BB^+(W)$ obtained by taking a reduced word for $w$ and replacing each simple reflection $s$ with the corresponding generator ${\bf s}$. We often denote the lift of an element of $W$ using bold, but we will also sometimes write $\lift(w)$ for the lift of $w$ when we are wary about the possibility of confusion. An \defn{${\bf S}$-word} for an element $\w\in \BB^+(W)$ is a word in the alphabet ${\bf S}$ that represents $\w$. We can also define the \defn{weak order} on $\BB^+(W)$ by saying ${\bf u}\leq {\bf v}$ if there is an ${\bf S}$-word for ${\bf v}$ that contains an ${\bf S}$-word for ${\bf u}$ as a prefix. We write $\Weak(\BB^+(W))$ for the poset $(\BB^+(W),\leq)$. For ${\bf u},{\bf v}\in \BB^+(W)$ with ${\bf u}\leq{\bf v}$, we write $[{\bf u},{\bf v}]_{\BB^+}$ for the interval between $\bf u$ and $\bf v$ in $\Weak(\BB^+(W))$. 

A \defn{reflection} in $W$ is an element of the form $usu^{-1}$, where $u\in W$ and $s\in S$. Reflections are precisely the elements that, when viewed as orthogonal transformations of $\mathbb R^n$, are reflections through hyperplanes in $\HH$. An \defn{inversion} of an element $w\in W$ is a reflection $t$ in $W$ such that $\len(tw)<\len(w)$; such an inversion is called a \defn{cover reflection} of $w$ if $tw$ is covered by $w$ in the weak order.  Let $\inv(w)$ and $\cov(w)$ be the set of inversions and the set of cover reflections of $w$, respectively. Write $\langle \cov(w)\rangle$ for the parabolic subgroup of $W$ generated by $\cov(w)$. 

For $u,v\in W$, we have $u\leq v$ in the weak order if and only if $\inv(u)\subseteq\inv(v)$. Furthermore, a reformulation of \Cref{thm:shard_intersection_via_hyperplanes} in this context states that \begin{equation}\label{eq:shard_reformulation}
    u \preceq v\quad\text{if and only if}\quad \inv(u) \subseteq \inv(v)\text{ and }\langle \cov(u)\rangle \subseteq \langle \cov(v)\rangle. 
\end{equation}

\subsection{Pop and Crackle for Coxeter Groups}

A \defn{descent} of an element $w\in W$ is a simple reflection $s$ such that there is a reduced word for $w$ that ends with $s$. Similarly, a descent of an element ${\bf w}\in \BB^+(W)$ is a generator ${\bf s}\in{\bf S}$ such that there is an ${\bf S}$-word for ${\bf w}$ that ends with ${\bf s}$. We write $\des(w)$ and $\des({\bf w})$ for the set of descents of $w$ and the set of descents of ${\bf w}$, respectively. Given a subset $J\subseteq S$, we write $w_\circ(J)$ for the longest element of the parabolic subgroup of $W$ generated by $J$. Since $\Weak(W)$ and $\Weak(\BB^+(W))$ are locally finite meet-semilattices, they come equipped with pop-stack sorting operators; we denote both of these operators by $\pop$. If $w\in W$ and ${\bf w}=\lift(w)$, then $\des({\bf w})$ is the set of lifts of descents of $w$. We have $\pop(w)=ww_\circ(\des(w))$, so it follows that \[\pop({\bf w})={\bf w} \cdot ({\bf w}_\circ(\des(w)))^{-1}=\lift(\pop(w)),\] where ${\bf w}_\circ(\des(w))=\lift(w_\circ(\des(w)))$.

There is a natural action of $W$ on $\mathbb C^n\setminus\HH_{\mathbb C}$, and the braid group $\BB(W)$ is isomorphic to the fundamental group $\pi_1((\CC^n \setminus \HH_\CC)/W,x_B)$. The pure braid group $\PPP(W)$ is isomorphic to $\pi_1(\CC^n \setminus \HH_\CC,x_B)$. The identification of $W$ with $\RR$ allows us to naturally label the edges of $\PP(\HH)$ by the generators in $\mathbf{S}$. More precisely, if $w\in W$ and $s\in S\setminus\des(w)$, then we label the edges $w\xrightarrow{e} ws$ and $ws\xrightarrow{e^*}w$ with the generator ${\bf s}\in{\bf S}$. This labeling allows us to rephrase notions and results about crackle maps from \Cref{sec:crackle} in the language of braid groups. For example, if $w\in W$ and $\w=\lift(w)$, then 
\begin{align*}\Crackle(w) &= \pop(\w)\cdot  \left(\w_\circ(\des(w))\right)^2 \cdot \pop(\w)^{-1} \in \PPP^+(W) \subseteq \BB(W),\end{align*} where 
$\w_\circ(\des(w))=\lift(w_\circ(\des(w)))$. 
\begin{proposition}
The pure shard monoid $\PPP^+(W)$ is the submonoid of $\PPP(W)$ generated by \[\{\Crackle(j) : j \in W, |\des(j)|=1\}.\] 
\end{proposition}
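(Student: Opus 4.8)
The plan is to show that the two generating sets coincide, i.e.\ that $\{\Crackle(j) : j \in W,\ |\des(j)|=1\}$ equals the set $\TTsha$ of shard loops; the proposition then follows immediately, since $\PPP^+(W)$ is by definition the submonoid of $\PPP(W)=\pi_1(\Sal(\HH),B)$ generated by $\TTsha$.

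First I would observe that, for $j\in W$, the condition $|\des(j)|=1$ is equivalent to $j$ being join-irreducible in $\Weak(W)=\Weak(\HH,B)$: the elements covered by $j$ in the weak order are exactly the $js$ with $s\in\des(j)$, so $j$ covers a unique element precisely when it has a unique descent. By \Cref{thm:shards_and_join_irrs}, the map $\Sigma\mapsto J_\Sigma$ is a bijection from $\Sha(\HH,B)$ onto the set of join-irreducible regions, and for such a region $j$ the set $\covsha(j)$ consists of the single shard whose associated join-irreducible region is $j$.

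Next I would invoke the identity recorded in \Cref{subsec:crackle_intro}: if $j$ is join-irreducible with $\des(j)=\{s\}$, then $\pop(j)=js$ is the unique region covered by $j$, and $\Crackle(j)=\tt_{\Sigma(\pop(j)\lessdot j)}=\tt_{\Sigma(js\lessdot j)}$. Since $\Sigma(js\lessdot j)$ is precisely the unique shard in $\covsha(j)$, it follows that as $j$ ranges over the join-irreducible elements of $W$, the loop $\Crackle(j)$ ranges bijectively over the shard loops $\tt_\Sigma$ with $\Sigma\in\Sha(\HH,B)$. Hence $\{\Crackle(j) : j \in W,\ |\des(j)|=1\}=\TTsha$, and taking the submonoid of $\PPP(W)$ generated by each side gives $\PPP^+(W)$.

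I do not anticipate a genuine obstacle here: the statement is essentially a repackaging of the bijection between shards and join-irreducible regions together with the already-noted fact that $\Crackle$ restricts to the shard loops on join-irreducibles. The only point requiring a word of care is that ``$\pop(j)$ is the unique lower cover of a join-irreducible $j$'' is valid in this setting because join-irreducibility in the weak order of a finite Coxeter group is exactly the unique-descent condition, which I verify at the outset.
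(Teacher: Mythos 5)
Your argument is correct and follows essentially the same route as the paper's own proof: observe that the unique-descent condition is join-irreducibility, use the bijection of \Cref{thm:shards_and_join_irrs} between shards and join-irreducibles, and use the identity $\Crackle(J_\Sigma)=\tt_\Sigma$ to conclude that the two generating sets coincide.
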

\begin{proof}
    An element $j\in W$ has exactly 1 descent if and only if it is join-irreducible in $\Weak(W)$. \Cref{thm:shards_and_join_irrs} tells us that there is bijection $\Sigma\mapsto J_\Sigma$ from $\Sha(\HH,B)$ to the set of join-irreducible elements of $W$ (which we identify with regions of $\HH$). Moreover, $\Crackle(J_\Sigma)=\tt_\Sigma$. Hence, the result follows from the original definition of the pure shard monoid. 
\end{proof}

By \Cref{thm:crackle_hom}, $\Crackle$ is a poset embedding from $\Shard(W)$ into $\PPP^+(W)$.

\subsection{Snap}
Let $w_\circ$ be the long element of $W$. The standard lift map $\lift\colon W\to \BB^+(W)$ is a poset isomorphism from $\Weak(W)$ to the interval $[\id,\w_\circ]_{\BB^+}$ in $\Weak(\BB^+(W))$, where $\w_\circ=\lift(w_\circ)$.  We now define a nonstandard lift from $W$ to $\BB^+(W)$.

\begin{definition}\label{def:snap}
The \defn{snap map} $\Snap\colon W \to \BB^+(W)$ is defined by
\begin{align*} \Snap(w)  &:= \pop(\w) \cdot (\w_\circ(\des(w)))^2, \end{align*} where $\w=\lift(w)$ and $\w_\circ(\des(w))=\lift(w_\circ(\des(w)))$.
\end{definition}

Observe that $\Snap(w)=\w\cdot\w_\circ(\des(w))$ and that $\varphi(\Snap(w))=\pop(w)$.  Our aim in this section is to prove \Cref{thm:snap}, which states that $\Snap$ is a poset embedding from $\Shard(W)$ into $[\id,\Delta^2]_{\BB^+}$.  This is illustrated in~\Cref{fig:e_to_delta_B}, which shows $[\id,\Delta^2]_{\BB^+}$ when $W=I_2(4)$ is the dihedral group of order $8$.  

The first step in this endeavor is the following lemma, which will allow us to consider inversion multisets of elements of $\BB^+(W)$. 

\begin{lemma}\label{lem:inv_well_defined}
Let ${\bf w}\in \BB^+(W)$, and let ${\bf s}_{1}\cdots{\bf s}_{r}$ and ${\bf s}_{1}'\cdots{\bf s}_{r}'$ be two ${\bf S}$-words for $\w$. The multisets \[\{s_{1}\cdots s_{k-1}s_{k}s_{k-1}\cdots s_{1}:1\leq k\leq r\}\quad\text{and}\quad\{s_{1}'\cdots s_{k-1}'s_{k}'s_{k-1}'\cdots s_{1}':1\leq k\leq r\}\] are equal.
\end{lemma}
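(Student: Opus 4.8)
The plan is to reduce the statement to the invariance of this multiset under a single braid move and then invoke the standard fact that two $\mathbf{S}$-words represent the same element of $\BB^+(W)$ if and only if one can be transformed into the other by a finite sequence of applications of the braid relations $[\mathbf{s}\vert\mathbf{s}']_{m(s,s')}=[\mathbf{s}'\vert\mathbf{s}]_{m(s,s')}$ to contiguous subwords (``braid moves''); this is immediate from the monoid presentation of $\BB^+(W)$ (and, if one prefers to view $\BB^+(W)$ as a submonoid of $\BB(W)$, is a theorem of Paris). For an $\mathbf{S}$-word $\mathbf{a}_1\cdots\mathbf{a}_r$ write $\mathrm{M}(\mathbf{a}_1\cdots\mathbf{a}_r)$ for the multiset $\{a_1\cdots a_{k-1}a_k a_{k-1}\cdots a_1 : 1\le k\le r\}$ of reflections of $W$; the goal is then that $\mathrm{M}$ depends only on the image of the word in $\BB^+(W)$, so it suffices to show $\mathrm{M}$ is unchanged by a single braid move.

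So suppose our two $\mathbf{S}$-words are $\mathbf{u}\cdot[\mathbf{s}\vert\mathbf{s}']_{m}\cdot\mathbf{v}$ and $\mathbf{u}\cdot[\mathbf{s}'\vert\mathbf{s}]_{m}\cdot\mathbf{v}$, where $\mathbf{u},\mathbf{v}$ are (possibly empty) $\mathbf{S}$-words, $s,s'\in S$ are distinct, and $m=m(s,s')$. I would split $\mathrm{M}$ into three sub-multisets according to whether the distinguished letter $\mathbf{a}_k$ lies in $\mathbf{u}$, in the alternating block, or in $\mathbf{v}$. The contribution of the letters of $\mathbf{u}$ depends only on $\mathbf{u}$ and is therefore unchanged. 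The contribution of a letter of $\mathbf{v}$ has the form $\varphi(\mathbf{u})\,\varphi\big([\mathbf{s}\vert\mathbf{s}']_{m}\big)\, x\, \varphi\big([\mathbf{s}\vert\mathbf{s}']_{m}\big)^{-1}\varphi(\mathbf{u})^{-1}$ for a reflection $x$ built only from a prefix of $\mathbf{v}$; since the braid relation already holds in $W$ we have $\varphi([\mathbf{s}\vert\mathbf{s}']_{m})=\varphi([\mathbf{s}'\vert\mathbf{s}]_{m})=w_\circ(\{s,s'\})$, so this contribution is unchanged as well.

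The one point requiring an actual idea is the contribution of the $m$ letters inside the alternating block. Here the key observation is that $[\mathbf{s}\vert\mathbf{s}']_{m}$ (and likewise $[\mathbf{s}'\vert\mathbf{s}]_{m}$) projects under $\varphi$ to a \emph{reduced} word for $w_\circ(\{s,s'\})$, because the parabolic $\langle s,s'\rangle$ is the dihedral group of order $2m$, whose longest element has length $m$. Now recall the classical fact that the inversion set $\inv(w)$ of an element $w\in W$ is independent of the chosen reduced word, and that $|\inv(w)|=\len(w)$, so that the $\len(w)$ reflections $a_1\cdots a_{k-1}a_k a_{k-1}\cdots a_1$ ($1\le k\le\len(w)$) arising from any reduced word $a_1\cdots a_{\len(w)}$ of $w$ are pairwise distinct and exhaust $\inv(w)$. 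Applying this with $w=w_\circ(\{s,s'\})$, the block's contribution, before conjugating by $\varphi(\mathbf{u})$, equals $\inv(w_\circ(\{s,s'\}))$ for either choice of block; since $x\mapsto\varphi(\mathbf{u})\,x\,\varphi(\mathbf{u})^{-1}$ is a bijection of the set of reflections of $W$, the block's contribution to $\mathrm{M}$ is the same set for both words. Summing the three parts shows $\mathrm{M}$ is unchanged by the braid move, which finishes the proof.

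I expect the only delicate aspect to be the careful bookkeeping of how $\mathrm{M}$ decomposes across $\mathbf{u}$, the alternating block, and $\mathbf{v}$, together with the realization that the within-block contributions are exactly governed by the reduced-word-invariance of inversion sets in the rank-two parabolic $\langle s,s'\rangle$; once that is isolated, everything else is routine.
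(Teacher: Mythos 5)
Your proof is correct, and it takes essentially the same overall strategy as the paper (reduce to a single braid move, then compare the multisets), but it handles the crucial step with a different key lemma. The paper reduces further to the case where the braid move changes the entire word, and then verifies by direct computation the identity
\[
[s\vert s']_{k}\big([s\vert s']_{k-1}\big)^{-1}=[s'\vert s]_{m(s,s')+1-k}\big([s'\vert s]_{m(s,s')-k}\big)^{-1}
\quad\text{for } 1\le k\le m(s,s'),
\]
which exhibits an explicit bijection (via $k\mapsto m(s,s')+1-k$) between the two lists of reflections coming from the block. You instead observe that both $[s\vert s']_{m(s,s')}$ and $[s'\vert s]_{m(s,s')}$ are reduced words for $w_\circ(\{s,s'\})$, invoke the classical invariance of $\inv(w)$ under the choice of reduced word, and conclude that either block contributes exactly the set $\inv(w_\circ(\{s,s'\}))$. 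This is more conceptual and requires no computation, at the cost of relying on the standard Coxeter-theoretic lemma about inversion sets. Your treatment of the letters in $\mathbf{u}$ and $\mathbf{v}$ is also more careful than the paper's: the paper simply asserts that one may assume the whole word is the alternating block, whereas you spell out that the $\mathbf{v}$-contributions are conjugates by $\varphi(\mathbf{u})\varphi(\text{block})$ of reflections built from $\mathbf{v}$, and that $\varphi$ kills the difference between the two blocks, so those contributions are literally equal (not merely equal as multisets). Both proofs are valid; yours is a touch more self-contained on the reduction step and outsources the block analysis to a well-known theorem, while the paper's is terser but demands that the reader check a specific dihedral identity.
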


\begin{proof}
It suffices to prove the result when the two ${\bf S}$-words differ by a braid move. In fact, it suffices to prove the result when the braid move changes the entire first word into the entire second word. Thus, we can assume the first word is $[{\bf s}\vert {\bf s}']_{m(s,s')}$ and the second word is $[{\bf s}'\vert {\bf s}]_{m(s,s')}$. In this case, the result follows from the observation that $[s\vert s']_{k}([s\vert s']_{k-1})^{-1}=[s'\vert s]_{m(s,s')+1-k}([s'\vert s]_{m(s,s')-k})^{-1}$ for all $1\leq k\leq m(s,s')$. 
\end{proof}

For each $\w\in \BB^+(W)$, \Cref{lem:inv_well_defined} allows us to define the multiset \[\Inv(\w)=\{s_{1}\cdots s_{k-1}s_{k}s_{k-1}\cdots s_{1}:1\leq k\leq r\},\] where ${\bf s}_{1}\cdots{\bf s}_{r}$ is an ${\bf S}$-word for $\w$. If $\w=\lift(w)$ for some $w\in W$, then $s_{1}\cdots s_{r}$ is a reduced word for $w$; in this case, the multiset $\Inv(\w)$ is actually a set, and it is equal to the inversion set $\inv(w)$. 

\begin{lemma}\label{lem:Inv}
Let $w\in W$. Let $s_{1}\cdots s_{m}$ be a reduced word for $\pop(w)$, and let $s_{m+1}\cdots s_{r}$ be a reduced word for $w_\circ(\des(w))$ so that $s_{1}\cdots s_{r}$ is a reduced word for $w$. Let $t_k=s_{1}\cdots s_{k-1}s_{k}s_{k-1}\cdots s_{1}$. Then $\Inv(\Snap(w))=\{t_1,\ldots,t_m,t_{m+1},\ldots,t_r,t_r,\ldots,t_{m+1}\}$. Furthermore, $\inv(w)=\{t_1,\ldots,t_r\}$, and $t_{m+1},\ldots,t_r$ are precisely the reflections appearing in the parabolic subgroup $\langle \cov(w)\rangle$. 
\end{lemma}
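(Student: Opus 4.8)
The plan is to read the multiset $\Inv(\Snap(w))$ off an explicit $\mathbf S$-word for $\Snap(w)$ and then identify its two halves with the claimed sets. Write $p=\pop(w)$ and $v=w_\circ(\des(w))$, so that $s_1\cdots s_m$ is reduced for $p$, $s_{m+1}\cdots s_r$ is reduced for $v$, and $s_1\cdots s_r$ is reduced for $w=pv$. Since $\Snap(w)=\pop(\w)\cdot(\w_\circ(\des(w)))^2$ and $\pop(\w)$, $\w_\circ(\des(w))$ are represented by $\s_1\cdots\s_m$ and $\s_{m+1}\cdots\s_r$ respectively, the word $\s_1\cdots\s_m\,\s_{m+1}\cdots\s_r\,\s_{m+1}\cdots\s_r$ is an $\mathbf S$-word for $\Snap(w)$, of length $2r-m$.

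I would then evaluate $\Inv$ on this word, which is legitimate by \Cref{lem:inv_well_defined}. Its first $r$ letters form the reduced word $s_1\cdots s_r$ for $w$, so they contribute exactly $t_1,\dots,t_r$; in particular $\inv(w)=\{t_1,\dots,t_r\}$, the first of the two closing assertions. For $1\le i\le r-m$, the $(r+i)$-th letter of the word is $s_{m+i}$ and the product of the first $r+i-1$ letters is $s_1\cdots s_r\cdot s_{m+1}\cdots s_{m+i-1}=w\,(s_{m+1}\cdots s_{m+i-1})$; hence the $(r+i)$-th inversion term equals $w\,r_{m+i}\,w^{-1}$, where $r_{m+i}=(s_{m+1}\cdots s_{m+i-1})\,s_{m+i}\,(s_{m+i-1}\cdots s_{m+1})$.

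Separately, for $m+1\le k\le r$ one factors $t_k=p\,r_k\,p^{-1}$ with the same reflections $r_k$. Because $s_{m+1}\cdots s_r$ is a reduced word for $v=w_\circ(\des(w))$, the $r_{m+1},\dots,r_r$ are distinct and are exactly the inversions of $w_\circ(\des(w))$, namely exactly the reflections lying in the parabolic $\langle\des(w)\rangle$. As $p=w\,w_\circ(\des(w))$ with $w_\circ(\des(w))\in\langle\des(w)\rangle$, conjugation by $v$ permutes $\{r_{m+1},\dots,r_r\}$ and $t_k=p\,r_k\,p^{-1}=w\,(v r_k v^{-1})\,w^{-1}$, so $\{t_{m+1},\dots,t_r\}=\{w r_{m+1}w^{-1},\dots,w r_r w^{-1}\}$ as multisets. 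Comparing with the previous paragraph, the last $r-m$ letters of the $\mathbf S$-word contribute precisely $\{t_{m+1},\dots,t_r\}$, whence $\Inv(\Snap(w))=\{t_1,\dots,t_r\}\uplus\{t_{m+1},\dots,t_r\}$, which is the multiset $\{t_1,\dots,t_m,t_{m+1},\dots,t_r,t_r,\dots,t_{m+1}\}$ of the statement. Finally, $w\langle\des(w)\rangle w^{-1}$ is generated by $\{wsw^{-1}:s\in\des(w)\}=\cov(w)$ and so equals $\langle\cov(w)\rangle$, and its set of reflections is $\{w r_{m+1}w^{-1},\dots,w r_r w^{-1}\}=\{t_{m+1},\dots,t_r\}$.

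The Coxeter-theoretic inputs used here — the inversion set of a reduced word, the fact that the inversions of $w_\circ(J)$ are exactly the reflections in $\langle J\rangle$, and the identification $\cov(w)=\{wsw^{-1}:s\in\des(w)\}$ — are all standard. The only genuinely delicate point is the bookkeeping: pinning down the correct $\mathbf S$-word for $\Snap(w)$ and keeping the prefix products straight when extracting the terms $w r_{m+i}w^{-1}$, then matching them to the $t_k$'s through the twist by $v$ and observing that these $r-m$ reflections are distinct (which is also what makes the final assertion about $\langle\cov(w)\rangle$ come out exactly). I expect that matching step to be the main obstacle.
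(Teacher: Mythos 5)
Your proof is correct, but it takes a mildly different route than the paper at the decisive step. You read $\Inv(\Snap(w))$ off the word $\s_1\cdots\s_m\,\s_{m+1}\cdots\s_r\,\s_{m+1}\cdots\s_r$, and then you need the extra argument that conjugation by $v=w_\circ(\des(w))$ permutes the reflections of $\langle\des(w)\rangle$ in order to identify the multiset of inversion terms coming from the last $r-m$ letters with $\{t_{m+1},\dots,t_r\}$. The paper instead exploits the fact that $w_\circ(\des(w))$ is an involution to use the \emph{reversed} reduced word $s_r\cdots s_{m+1}$ for the last block, so the $\mathbf S$-word is $\s_1\cdots\s_m\,\s_{m+1}\cdots\s_r\,\s_r\cdots\s_{m+1}$; then the inversion term at the position corresponding to $\s_k$ in the final block is
\[
s_1\cdots s_{k-1}\,(s_k\cdots s_r)(s_r\cdots s_k)\,(s_{k+1}\cdots s_r)(s_r\cdots s_{k+1})\,s_k\cdots s_1 = t_k
\]
by direct telescoping cancellation, with no permutation lemma needed. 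Both routes are legitimate; the paper's is shorter because it packages the ``twist by $v$'' into the choice of reduced word, while yours makes that twist explicit. Your argument for the last assertion (identifying $\langle\cov(w)\rangle$ with $w\langle\des(w)\rangle w^{-1}$ and its reflections with $\{t_{m+1},\dots,t_r\}$) is also sound; the paper phrases the same fact geometrically via the hyperplanes separating $\pop(w)$ from $w$.
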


\begin{proof}
Since $w_\circ(\des(w))$ is an involution, it also has $s_{r}\cdots s_{m+1}$ as a reduced word, so \[\s_{1}\cdots\s_{m}\s_{m+1}\cdots\s_{r}\s_{r}\cdots\s_{m+1}\] is an ${\bf S}$-word for $\Snap(w)$. Therefore, to prove the first statement, we just need to show that for each $m+1\leq k\leq r$, the reflection $t_k$ is equal to $s_{1}\cdots s_{r}s_{r}\cdots s_{k+1}s_{k}s_{k+1}\cdots s_{r}s_{r}\cdots s_{1}$. We can group the terms in this expression as \[
    s_{1}\cdots s_{k-1}(s_{k}\cdots s_{r})(s_{r}\cdots s_{k})(s_{k+1}\cdots s_{r})(s_{r}\cdots s_{k+1})s_{k}\cdots s_{1}\] to make it clear that it is indeed equal to $t_k$.
    
It is well known that $\inv(w)=\{t_1,\ldots,t_r\}$. For the last statement, note that, in the language of hyperplanes, the reflections $t_{m+1},\ldots,t_r$ correspond to the hyperplanes that separate the region $\pop(w)$ from the region $w$; these are the reflections in $\langle\cov(w)\rangle$. 
\end{proof}

\begin{example}
Suppose $W$ is the symmetric group $\mathfrak S_4$ so that $S=\{s_1,s_2,s_3\}$, where $s_i$ is the simple transposition $(i\,\, i+1)$. We have $m(s_1,s_2)=m(s_2,s_3)=3$ and $m(s_1,s_3)=2$. Let $w=s_1s_2s_3s_2$. Then $\des(w)=\{s_2,s_3\}$, $w_\circ(\des(w))=s_2s_3s_2$, and $\pop(w)=s_1$. Now, \[\Snap(w)=\w\,\w_\circ(\des(w))=\s_1\s_2\s_3\s_2\s_2\s_3\s_2,\] so \begin{align*}
\Inv(\Snap(w))&=\{s_1,s_1s_2s_1, s_1s_2s_3s_2s_1, s_1s_2s_3s_2s_3s_2s_1,s_1s_2s_3s_2s_2s_2s_3s_2s_1,s_1s_2s_3s_2s_2s_3s_2s_2s_3s_2s_1, \\ &\hspace{0.63cm}s_1s_2s_3s_2s_2s_3s_2s_3s_2s_2s_3s_2s_1\} \\ 
&= \{s_1,s_1s_2s_1,s_1s_2s_3s_2s_1,s_1s_2s_3s_2s_3s_2s_1,s_1s_2s_3s_2s_3s_2s_1,s_1s_2s_3s_2s_1,s_1s_2s_1\} \\ 
&= \{(1\,\,2),(1\,\,3),(1\,\,4),(3\,\,4),(3\,\,4),(1\,\,4),(1\,\,3)\}.
\end{align*}
\end{example}

\subsection{Proof of \texorpdfstring{\Cref{thm:snap}}{Theorem 1.5}}

We are now going to prove that $\Snap$ is a poset embedding of $\Weak(W)$ into $\Weak(\BB^+(W))$. We know that $\Snap(w_\circ)=\w_\circ^2=\Delta^2$, where $w_\circ$ is the long element of $W$, $\w_\circ$ is the lift of $w_\circ$, and $\Delta^2$ is the full twist. Therefore, it will follow immediately that $\Snap(W)$ is contained in the interval $[\id,\Delta^2]_{\BB^+}$.
 
Suppose first that $u,v\in W$ are such that $\Snap(u)\leq\Snap(v)$. Then there is an ${\bf S}$-word for $\Snap(v)$ that contains an ${\bf S}$-word for $\Snap(u)$ as a prefix. This implies that the multiset $\Inv(\Snap(u))$ is contained in the multiset $\Inv(\Snap(v))$, so it follows from the last sentence in \Cref{lem:Inv} that $\inv(u)\subseteq\inv (v)$ and $\langle\cov(u)\rangle\subseteq\langle\cov(v)\rangle$. According to \Cref{eq:shard_reformulation}, we have $u\preceq v$. 

To prove the converse, let us assume that $u,v\in W$ are such that $u\preceq v$. Let \[\pop(u)=u_0\lessdot u_1\lessdot\cdots\lessdot u_k=u\] be a saturated chain in $\Weak(W)$ from $\pop(u)$ to $u$. Let $t_i=u_iu_{i-1}^{-1}$, and let $p=t_k\cdots t_1=u\,\pop(u)^{-1}$. Each $t_i$ is the reflection through the hyperplane $H_{\Sigma(u_{i-1}\lessdot u_i)}$. Identifying $v$ with a region of $\HH$, we can consider the poset isomorphism $\omega_v$ from \Cref{thm:two_ways}. That theorem tells us that \[H_{\Sigma(u_{i-1}\lessdot u_i)}=H_{\Sigma(\omega_v(u_{i-1})\lessdot\omega_v(u_i))},\] so $t_i=\omega_v(u_i)\omega_v(u_{i-1})^{-1}$. This shows that $\omega_v(u)=p\,\omega_v(\pop(u))$, so
\begin{equation}\label{eq:snap_proof1}
u\,\pop(u)^{-1}=\omega_v(u)(\omega_v(\pop(u)))^{-1}.
\end{equation}

Since $\omega_v$ is a poset isomorphism by \Cref{thm:two_ways}, we have $\omega_v(\pop(u))\leq \omega_v(u)$. This means there exists $x\in W$ such that $\omega_v(u)=\omega_v(\pop(u))\,x$ and
\begin{equation}\label{eq:snap_proof2}
\lift(\omega_v(u))=\lift(\omega_v(\pop(u)))\,{\bf x}.
\end{equation}
Note that $\len(x)=\len(\omega_v(u))-\len(\omega_v(\pop(u)))=\len(u)-\len(\pop(u))=\len(w_\circ(\des(u)))$. \Cref{lem:omega_increasing} tells us that $\pop(u)\leq\omega_v(\pop(u))$, so there exists $y\in W$ such that $\omega_v(\pop(u))=\pop(u)\,y$ and
\begin{equation}\label{eq:snap_proof3}
\lift(\omega_v(\pop(u)))=\lift(\pop(u))\,{\bf y}.
\end{equation}
Then 
\begin{align*}
\pop(u)yxy^{-1}\pop(u)^{-1}&=\omega_v(\pop(u))x(\omega_v(\pop(u)))^{-1} \\ 
&=\omega_v(u)(\omega_v(\pop(u)))^{-1} \\
&=u\,\pop(u)^{-1} \\
&=\pop(u)w_\circ(\des(u))\pop(u)^{-1},
\end{align*}
where we have used \Cref{eq:snap_proof1}. Rearranging this equation yields $yx=w_\circ(\des(u))y$. Because $\len(\pop(u)yx)=\len(\omega_v(u))=\len(\omega_v(\pop(u)))+\len(x)=\len(\pop(u))+\len(y)+\len(x)$, we have $\len(yx)=\len(y)+\len(x)=\len(y)+\len(w_\circ(\des(u)))$. It follows that \[{\bf yx}=\lift(yx)=\lift(w_\circ(\des(u))y)={\bf w}_\circ(\des(u)){\bf y},\] where $\w_\circ(\des(u))=\lift(w_\circ(\des(u)))$.  Consequently, 
\begin{align*}
\Snap(u) &= {\bf u}\,{\bf w}_\circ(\des(u)) \\
&= \lift(\pop(u))\w_\circ(\des(u))^2 \\
&\leq \lift(\pop(u))\w_\circ(\des(u))^2\,{\bf y} \\
&=\lift(\pop(u))\w_\circ(\des(u))\,{\bf yx} \\
&=\lift(\pop(u))\,{\bf yx}^2.
\end{align*}
Invoking \Cref{eq:snap_proof2,eq:snap_proof3}, we find that $\lift(\pop(u))\,{\bf yx}^2=\lift(\omega_v(\pop(u)))\,{\bf x}^2=\lift(\omega_v(u))\,{\bf x}$. Therefore, to prove that $\Snap(u)\leq\Snap(v)$, we just need to show that $\lift(\omega_v(u))\,{\bf x}\leq\Snap(v)$. 

We defined $x$ via the equation $\omega_v(u)=\omega_v(\pop(u))\,x$; since $\omega_v(\pop(u))$ and $\omega_v(u)$ are both in the interval $[\pop(v),v]$ (by the definition of $\omega_v$ in \Cref{thm:two_ways}), this implies that $x$ is in the parabolic subgroup $\langle\des(v)\rangle$. It follows that ${\bf x}\leq{\bf w}_\circ(\des(v))$, so $\lift(\omega_v(u))\,{\bf x}\leq\lift(\omega_v(u))\,\w_\circ(\des(v))$. Therefore, the proof will be complete if we can demonstrate that $\lift(\omega_v(u))\,\w_\circ(\des(v))\leq\Snap(v)$. 

Because $\omega_v(u)\leq v$, there exists $z\in \langle\des(v)\rangle$ such that $v=\omega_v(u)\,z$ and ${\bf v}=\lift(\omega_v(u))\,{\bf z}$. It is well known that for every $s\in\des(v)$, there exists $s'\in\des(v)$ such that $\s\,\w_\circ(\des(v))=\w_\circ(\des(v))\,\s'$. It follows that there exists ${\bf z}'\in \BB^+(W)$ such that ${\bf z}\,\w_\circ(\des(v))=\w_\circ(\des(v))\,{\bf z}'$. Finally, 
\begin{align*}
\lift(\omega_v(u))\,\w_\circ(\des(v))&\leq \lift(\omega_v(u))\,\w_\circ(\des(v))\,{\bf z}' \\ 
&=\lift(\omega_v(u))\,{\bf z}\,\w_\circ(\des(v)) \\
&={\bf v}\,\w_\circ(\des(v)) \\
&=\Snap(v),
\end{align*}
as desired. 

\subsection{Catalan Combinatorics}
\label{sec:noncrossing}

A \defn{standard Coxeter element} $c$ of $W$ is a product of the simple reflections in some order; fix a reduced expression for $c$ as $s_1s_2\cdots s_n$.  The \defn{$c$-sorting word} $\w(c)$ of $\w \in \BB^+(W)$ is the lexicographically minimal subword of the $\bf S$-word $(\s_1\s_2\cdots\s_n)^\infty$ that represents $\w$~\cite{reading2007sortable,stump2015cataland}.  Write $\w(c) = \w_1 \w_2 \cdots \w_k$, with each $\w_i$ a subword of $\s_1 \s_2\cdots \s_n$.  We say $\w$ is \defn{$c$-sortable} if the set of letters appearing in $\w_i$ contains the set of letters appearing in $\w_{i+1}$ for all $1\leq i\leq k-1$. Let $m$ be a positive integer. Following~\cite{reading2007sortable,stump2015cataland}, we write $\Sort^m(W,c)$ for the set of all $c$-sortable elements of $\BB^+(W)$ in the interval $[\id,\w_\circ^m]_{\BB^+}$. Then $(\Sort^m(W,c),\leq)$ is the $m$-th \defn{$c$-Fuss--Cambrian lattice} of type $W$---in particular~\cite{galashin2022rational}, \[\left| \Sort^m(W,c)\right| = \mathrm{Cat}^m(W) := \prod_{i=1}^n \frac{mh+d_i}{d_i},\] where $h$ is the \emph{Coxeter number} of $W$ and $d_1,\ldots,d_n$ are the \emph{degrees} of $W$. Write ${\Sort(W,c)=\Sort^1(W,c)}$; under the usual identification of the interval $[\id,\w_\circ]_{\BB^+}$ in $\Weak(\BB^+(W))$ with $\Weak(W)$, the set $\Sort(W,c)$ forms a sublattice of $\Weak(W)$.

Continuing to identify $[\id,\w_\circ]_{\BB^+}$ with $\Weak(W)$, we find that the $c$-corting word $\w_\circ(c)$ naturally defines a positive minimal gallery from $\id$ (identified with $B$) to $\w_\circ$ (identified with $-B$); we say that a shard is \defn{$c$-noncrossing} if it labels one of the edges in this gallery.
The \defn{$c$-noncrossing partition lattice} $\NC(W,c)$ is the interval from $\id$ to $c$ in the absolute order on $W$. In~\cite{reading2007clusters}, Reading gave a beautiful bijection $\mathrm{nc}_c: \Sort(W,c) \to \NC(W,c)$, which is a poset isomorphism from $(\Sort(W,c),\preceq)$ to $\NC(W,c)$~\cite[Theorem 8.5]{reading2011noncrossing}.

We obtain the following corollaries of~\Cref{thm:crackle_hom,thm:snap}, establishing similar relationships between the noncrossing partition lattice and the images of the set of sortable elements under the crackle and snap maps.

\begin{corollary}\label{cor:nc2} The map $\Crackle$ restricts to a poset embedding of $(\Sort(W,c),\preceq)$ into $[\id,\Delta^2]_{\PPP^+}$.  In particular, the subposet $(\Crackle(\Sort(W,c)),\leq)$ of $[\id,\Delta^2]_{\PPP^+}$ is isomorphic to $\NC(W,c)$.  
\end{corollary}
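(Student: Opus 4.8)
The plan is to derive this corollary formally, with no new geometric input, by combining \Cref{thm:crackle_hom} with Reading's bijection $\nc_c$. The one structural fact I will use is that a poset embedding automatically restricts to a poset embedding on any induced subposet: if $\psi\colon P\to Q$ is a poset isomorphism onto its image and $A\subseteq P$ carries the order induced from $P$, then $\psi|_A$ is a poset isomorphism from $A$ onto $\psi(A)$ equipped with the order induced from $Q$.

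First I would recall that, under the identification of $[\id,\w_\circ]_{\BB^+}$ with $\Weak(W)$, the set $\Sort(W,c)$ of $c$-sortable elements sits inside $W=\RR$, so it makes sense to regard $(\Sort(W,c),\preceq)$ as an induced subposet of $\Shard(W)$. By \Cref{thm:crackle_hom}, $\Crackle\colon\Shard(W)\to[\id,\Delta^2]_{\PPP^+}$ is a poset embedding; restricting to the subset $\Sort(W,c)$ and invoking the structural fact above gives at once that $\Crackle$ restricts to a poset embedding of $(\Sort(W,c),\preceq)$ into $[\id,\Delta^2]_{\PPP^+}$, which is the first assertion. In particular, $\Crackle$ is a poset isomorphism from $(\Sort(W,c),\preceq)$ onto the subposet $(\Crackle(\Sort(W,c)),\leq)$ of $[\id,\Delta^2]_{\PPP^+}$.

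For the second assertion I would invoke the theorem of Reading (\cite[Theorem 8.5]{reading2011noncrossing}, recalled above) that $\nc_c$ is a poset isomorphism from $(\Sort(W,c),\preceq)$ onto $\NC(W,c)$. Composing this with $\big(\Crackle|_{\Sort(W,c)}\big)^{-1}$ produces a poset isomorphism $(\Crackle(\Sort(W,c)),\leq)\to\NC(W,c)$, as claimed. I do not anticipate a genuine obstacle: the substance is entirely contained in \Cref{thm:crackle_hom} and Reading's isomorphism theorem, both already available. The only point requiring attention is bookkeeping of the two partial orders involved—$\preceq$ on $\Sort(W,c)\subseteq\Shard(W)$ versus $\leq$ on $\Crackle(\Sort(W,c))\subseteq[\id,\Delta^2]_{\PPP^+}$—and the fact that $\Crackle$ being a poset embedding is precisely what makes these two orders match up under the bijection, so there is nothing further to check.
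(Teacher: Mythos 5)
Your proposal is correct and is essentially the same argument the paper intends: the corollary is stated as an immediate consequence of \Cref{thm:crackle_hom} combined with Reading's theorem that $\nc_c$ is a poset isomorphism from $(\Sort(W,c),\preceq)$ onto $\NC(W,c)$, exactly the two ingredients you compose. The one structural observation you highlight---that a poset embedding restricts to a poset embedding on any induced subposet---is the correct bookkeeping step, and the paper leaves it implicit.
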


\Cref{cor:nc2} is illustrated in~\Cref{fig:shard}, where $s$ is the reflection through $H_8$, $t$ is the reflection through $H_1$, and $c=st$.  In this example, the noncrossing shards are $H_1,\Sigma_6,\Sigma_7$ and $H_8$, with corresponding $c$-sortable regions $B$, $R_1$, $R_6$, $R_7$, $R_8$, and $-B$.

\begin{corollary}\label{cor:nc1} The map $\Snap$ restricts to a poset embedding of $(\Sort(W,c),\preceq)$ into $[\id,\Delta^2]_{\BB^+}$.  In particular, the subposet $(\Snap(\Sort(W,c)),\leq)$ of $[\id,\Delta^2]_{\BB^+}$ is isomorphic to $\NC(W,c)$.  
\end{corollary}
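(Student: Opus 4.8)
The plan is to obtain Corollary~\ref{cor:nc1} as an essentially formal consequence of Theorem~\ref{thm:snap} together with Reading's theorem~\cite[Theorem~8.5]{reading2011noncrossing} that the bijection $\mathrm{nc}_c\colon\Sort(W,c)\to\NC(W,c)$ is a poset isomorphism from $(\Sort(W,c),\preceq)$ to $\NC(W,c)$. The only structural fact needed beyond those inputs is the elementary observation that a poset embedding restricts to a poset embedding on any induced subposet: if $\psi\colon P\to Q$ is a poset isomorphism onto its image and $P'\subseteq P$, then $\psi$ restricts to a poset isomorphism from the subposet of $P$ induced on $P'$ onto the subposet of $Q$ induced on $\psi(P')$.

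First I would recall that, under the identification of $W$ with $\RR$, we have $\Sort(W,c)\subseteq W$, and the shard intersection order on $\Sort(W,c)$ appearing in the statement is exactly the order induced from $\Shard(W)=(\RR,\preceq)$. By Theorem~\ref{thm:snap}, $\Snap$ is a poset embedding of $\Shard(W)$ into $[\id,\Delta^2]_{\BB^+}$. Applying the restriction principle above with $P=\Shard(W)$, $Q=[\id,\Delta^2]_{\BB^+}$, and $P'=\Sort(W,c)$, we conclude that $\Snap$ restricts to a poset embedding of $(\Sort(W,c),\preceq)$ into $[\id,\Delta^2]_{\BB^+}$, and hence that the subposet $(\Snap(\Sort(W,c)),\leq)$ of $[\id,\Delta^2]_{\BB^+}$ is isomorphic to $(\Sort(W,c),\preceq)$. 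Composing this isomorphism with Reading's isomorphism $\mathrm{nc}_c$ gives $(\Snap(\Sort(W,c)),\leq)\cong(\Sort(W,c),\preceq)\cong\NC(W,c)$, which is the assertion of the corollary. This mirrors the argument for Corollary~\ref{cor:nc2}, with Theorem~\ref{thm:snap} replaced by Theorem~\ref{thm:crackle_hom}.

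I do not expect any genuine obstacle here: all of the substantive work—showing that $\Snap$ is simultaneously order-preserving and order-reflecting via inversion multisets and the reformulation~\eqref{eq:shard_reformulation} of the shard intersection order—has already been completed in the proof of Theorem~\ref{thm:snap}. The one point meriting a line of care is making sure the order on $\Sort(W,c)$ in the corollary's statement is the one for which $\mathrm{nc}_c$ is a poset isomorphism, namely the restriction of $\preceq$; this is precisely the content of~\cite[Theorem~8.5]{reading2011noncrossing}, so it requires only a citation rather than an argument.
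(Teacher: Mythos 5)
Your proposal is correct and matches the paper's (implicit) reasoning exactly: the paper presents Corollary~\ref{cor:nc1} without a separate proof precisely because it follows by restricting the embedding of Theorem~\ref{thm:snap} to the subposet $\Sort(W,c)$ of $\Shard(W)$ and then invoking Reading's isomorphism $\mathrm{nc}_c\colon(\Sort(W,c),\preceq)\to\NC(W,c)$, which is the same two-step argument you give.
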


\Cref{cor:nc1} is illustrated in~\Cref{fig:e_to_delta_B}.  We now link this result to the discussion in~\Cref{subsec:snap_intro}.  Recall that for $\s \in{\bf S}$ and a positive braid $\w=\s_{i_1} \cdots \s_{i_k}\in \BB^+(W)$ with projection $w=\varphi(\w) \in W$, we let $\s^\w = (t,j)$, where $t=s^w=w^{-1}sw$ and $j$ counts the number of times $t$ appears in the sequence $s^{s_{i_k}},s^{s_{i_k}s_{i_{k-1}}},\ldots,s^{s_{i_k}s_{i_{k-1}}\cdots s_1}$.

In~\cite{galashin2022rational}, a new set of noncrossing $W$-Catalan objects was introduced as the set of subwords of $\mathbf{c}^{h+1}$ that represent the full twist $\Delta^2=\w_\circ^2$ and satisfy an additional \emph{Deodhar condition}. When interpreted in the positive braid monoid $\BB^+(W)$, this Deodhar condition is equivalent to restricting to the subset of $\Sort^2(W,c)$ consisting of the $c$-sortable elements $\w$ with the property that for each $\s \in \des(\w)$, we have $\s^\w = (t,j)$ with $j$ even.  Thus, \Cref{cor:nc1} is equivalent to the observation in~\Cref{subsec:snap_intro} that the $c$-noncrossing partition lattice appears as the subposet of the $m$-th $c$-Fuss--Cambrian lattice for $m=2$ consisting of the $c$-sortable elements whose corresponding subword complex facets satisfy the Deodhar conditions. 

\section{Pow}\label{sec:pow}

In this section, $\HH$ is an arbitrary finite central irreducible real hyperplane arrangement. We will introduce a map $\Pow$ from the set $\RR$ of regions to the pure shard monoid $\PPP^+(\HH,B)$.  Just as $\Crackle$ embeds the ``short and wide'' poset $\Shard(\HH,B)$ into the ``tall and wide'' interval $[\id,\Delta^2]_{\PPP^+}$, \Cref{thm:pow_hom} states that $\Pow$ embeds the ``tall and slender'' poset $\Weak(\HH,B)$ inside $[\id,\Delta^2]_{\PPP^+}$; this theorem is illustrated in~\Cref{fig:e_to_delta_P2}.  Thus, the interval $[\id,\Delta^2]_{\PPP^+}$ simultaneously contains $\Shard(\HH,B)$ and $\Weak(\HH,B)$.

We require a short argument to show that $\Pow$ will be well-defined.

\begin{proposition}\label{prop:pow_well_defined}
Let 
\[B=C_0\xrightarrow{e_1}C_{1}\xrightarrow{e_{2}} \cdots \xrightarrow{e_{k-1}}C_{k-1}\xrightarrow{e_k}C_k= C\quad\text{and}\quad B=C_0'\xrightarrow{e_1'}C_{1}'\xrightarrow{e_{2}'} \cdots \xrightarrow{e_{k-1}'}C_{k-1}'\xrightarrow{e_k'}C_k'= C
\]
be two positive minimal galleries from the base region $B$ to a region $C \in \RR$.  Then \[\tt_{\Sigma(e_k)}\tt_{\Sigma(e_{k-1})}\cdots \tt_{\Sigma(e_1)} = \tt_{\Sigma(e'_k)}\tt_{\Sigma(e'_{k-1})}\cdots \tt_{\Sigma(e'_1)}.\]
\end{proposition}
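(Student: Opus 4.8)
The plan is to compute the product $\tt_{\Sigma(e_k)}\tt_{\Sigma(e_{k-1})}\cdots\tt_{\Sigma(e_1)}$ directly from the given positive minimal gallery and to observe that it simplifies to $\gal(B,C)\cdot\gal(C,B)$, an element of $\pi_1(\Sal(\HH),B)$ that visibly depends only on $C$. This is precisely the computation in the proof of \Cref{lem:crackle_shard_generators}, specialized to $E'=B$ and $E=C$ (that argument is purely formal and does not use simpliciality); for completeness I would reproduce the relevant step here.

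Concretely, I would first recall that $\tt_{\Sigma(e_i)}=\tt_{e_i}$ by \Cref{thm:shard_generators}, and that $\tt_{e_i}$ does not depend on the choice of positive minimal gallery from $B$ to $C_{i-1}$. I would therefore take $\gal(B,C_{i-1})=e_1\cdots e_{i-1}$, the initial segment of the given gallery (which is itself positive and minimal), so that $\tt_{e_i}=e_1\cdots e_{i-1}\,e_ie_i^*\,e_{i-1}^{-1}\cdots e_1^{-1}$. Multiplying these loops out from $i=k$ down to $i=1$, the tail $e_{i-1}^{-1}\cdots e_1^{-1}$ of the $i$-th factor cancels against the head $e_1\cdots e_{i-2}$ of the $(i-1)$-st, leaving a single $e_{i-1}^{-1}$ that then cancels the leading $e_{i-1}$ of that inner term; iterating this telescoping yields
\[
\tt_{\Sigma(e_k)}\tt_{\Sigma(e_{k-1})}\cdots\tt_{\Sigma(e_1)} \;=\; e_1e_2\cdots e_k\,e_k^*e_{k-1}^*\cdots e_1^*.
\]
Here $e_1\cdots e_k$ is the chosen positive minimal gallery $\gal(B,C)$, while $C=C_k\xrightarrow{e_k^*}C_{k-1}\xrightarrow{e_{k-1}^*}\cdots\xrightarrow{e_1^*}C_0=B$ is a positive gallery from $C$ to $B$ of length $\abs{\inv(C,B)}=\abs{\inv(B,C)}$, hence minimal; thus the right-hand side equals $\gal(B,C)\cdot\gal(C,B)$.

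Finally, since any two positive minimal galleries from $B$ to $C$ are homotopic in $\Sal(\HH)$, and likewise any two from $C$ to $B$, the loop $\gal(B,C)\cdot\gal(C,B)$ does not depend on the galleries chosen. Applying the displayed identity to each of the galleries $(e_1,\dots,e_k)$ and $(e_1',\dots,e_k')$ gives $\tt_{\Sigma(e_k)}\cdots\tt_{\Sigma(e_1)}=\gal(B,C)\cdot\gal(C,B)=\tt_{\Sigma(e_k')}\cdots\tt_{\Sigma(e_1')}$, so $\Pow$ is well-defined. I do not anticipate a genuine obstacle; the only points requiring care are the bookkeeping in the telescoping cancellation and the remark that the reversed starred gallery $e_k^*\cdots e_1^*$ is again positive and minimal.
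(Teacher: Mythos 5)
Your proof is correct and takes a genuinely different route from the paper's. The paper appeals to the fact that any two positive minimal galleries from $B$ to $C$ differ by a succession of homotopies across $2$-cells of $\Sal(\HH)$, then reduces to the rank-$2$, $C=-B$ case handled by \Cref{lem:reverse_order_pow}. You instead telescope the product directly, arriving at the closed form
\[\tt_{\Sigma(e_k)}\cdots\tt_{\Sigma(e_1)} \;=\; e_1\cdots e_k\,e_k^*\cdots e_1^* \;\cong\; \gal(B,C)\cdot\gal(C,B),\]
which manifestly depends only on $C$. This is more self-contained (no appeal to Salvetti's homotopy lemma or to the rank-$2$ reduction) and has the added virtue of exposing the identity $\Pow(C)=\gal(B,C)\cdot\gal(C,B)$, a clean description of the map. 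As you note, the telescoping is precisely the computation in the proof of \Cref{lem:crackle_shard_generators} with $E'=B$, $E=C$; you are right that this step is pure groupoid bookkeeping and nowhere uses simpliciality, so it is safe to invoke here despite \Cref{sec:crackle}'s standing hypothesis, or just to reproduce. The only small slip is the phrase that $e_1\cdots e_k$ ``is the chosen'' $\gal(B,C)$; it is merely some positive minimal gallery homotopic to $\gal(B,C)$, which is all you need and which your final paragraph already addresses.
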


\begin{proof}
Any two minimal galleries from $B$ to $C$ are homotopic by a succession of homotopies across $2$-cells of $\Sal(\HH)$~\cite[Lemma 11]{salvetti1987topology}. Hence, it suffices to prove the proposition when $\HH$ has rank $2$ and $C=-B$. In this case, the result is immediate from \Cref{lem:reverse_order_pow}. 
\end{proof}

\begin{definition}\label{def:pow}
Let $C \in \RR$, and let \[B=C_0\xrightarrow{e_1}C_{1}\xrightarrow{e_{2}} \cdots \xrightarrow{e_{k-1}}C_{k-1}\xrightarrow{e_k}C_k= C\] be a positive minimal gallery from $B$ to $C$.  Define \[\Pow(C):=\tt_{\Sigma(e_k)}\tt_{\Sigma(e_{k-1})}\cdots \tt_{\Sigma(e_1)}.\]  
\end{definition}

\subsection{Proof of \texorpdfstring{\Cref{thm:pow_hom}}{Theorem 1.6}}

Suppose $C,D\in\RR$ are such that $D\leq C$ in $\Weak(\HH,B)$. Let \[B=C_0\xrightarrow{e_1}C_{1}\xrightarrow{e_{2}} \cdots \xrightarrow{e_{k-1}}C_{k-1}\xrightarrow{e_k}C_k= C\] be a positive minimal gallery from $B$ to $C$, where $D=C_m$. Let \[-C=-C_k\xrightarrow{f_k} -C_{k-1}\xrightarrow{f_{k-1}}\cdots \xrightarrow{f_{2}}-C_{1}\xrightarrow{f_1} -C_0= -B\] be the corresponding positive minimal gallery from $-C$ to $-B$ (with $-D=-C_m$).  Using \Cref{lem:reverse_order_pow}, we find that 
\[\Pow(C)=\tt_{\Sigma(e_k)}\tt_{\Sigma(e_{k-1})}\cdots \tt_{\Sigma(e_1)}=\tt_{\Sigma(f_1)}\tt_{\Sigma(f_2)}\cdots \tt_{\Sigma(f_k)}\] and 
\[\Pow(D)=\tt_{\Sigma(e_m)}\tt_{\Sigma(e_{m-1})}\cdots \tt_{\Sigma(e_1)}=\tt_{\Sigma(f_1)}\tt_{\Sigma(f_2)}\cdots \tt_{\Sigma(f_m)}.\] This demonstrates that $\Pow(D)\leq \Pow(C)$ in $\PPP^+(\HH,B)$. 

Now consider the natural homomorphism from $\pi_1(\PP(\HH),B)$ to its abelianization $H_1(\PP(\HH),B)$. If $C\in\RR$ and we apply this homomorphism to $\Pow(C)$, then, by \Cref{thm:abelianization}, we obtain $\prod_{H\in\inv(C)}\overline{\tt}_H$. It follows that if $C,D\in\RR$ are such that $\Pow(D)\leq\Pow(C)$ in $\PPP^+(\HH,B)$, then $\inv(D)\subseteq\inv(C)$, so $D\leq C$ in $\Weak(\HH,B)$.

\section{Future Work}
\label{sec:future}

\subsection{Noncrossing Pure Braid Presentations} In future work, we will combine our \Cref{thm:shard_generators} with Salvetti's~\Cref{thm:relations}, Coxeter--Catalan combinatorics, Cambrian lattices, and noncrossing shards to write explicit presentations of the pure braid groups of finite Coxeter groups.  In the special case of the symmetric group and the Tamari lattice, our method will recover Artin's original presentation of the pure braid group~\cite{artin1925theorie,artin1947theory}.

\subsection{The Pure Shard Monoid} 
The pure shard monoid is an interesting algebraic and order-theoretic structure that deserves further study---in particular, we would like to better understand the elements and the maximal chains in the interval $[\id,\Delta^2]_{\PPP^+}$. These maximal chains correspond to $\TTsha$-words representing the full twist $\Delta^2$.  When $\HH$ is an arrangement of rank 2, we characterized these words for $\Delta^2$ as the cyclic rotations of unimodal $\TTsha$-words (\Cref{prop:rank2maxchain}). It would already be interesting to better understand $[\id,\Delta^2]_{\PPP^+}$ for special cases, such as rank-3 arrangements or reflection arrangements of type-$A$ Coxeter groups.

\subsection{Infinite Arrangements}
Throughout this paper, we have assumed that $\HH$ is finite. It is natural to ask what aspects of the above theory generalize to arrangements with infinitely many hyperplanes.

\subsection{Bubbles, Blossom, Buttercup (and Bliss)}

We view the bubble sort operator $\mathsf{Bubbles}$ as the 0-Hecke action of any reduced word for the long element $w_\circ$ in the symmetric group.  The \emph{higher Bruhat order} is a partial order defined on these reduced words~\cite{manin1989arrangements}.  We wonder if there are similar ``higher Bruhat orders'' built from the $\TTsha$-words for $\Delta^2$. One might expect to find relevant maps $\mathsf{Blossom}$ and $\mathsf{Buttercup}$ in this theory. We recommend this subsection's title as the logical name for this proposed work.

\section*{Acknowledgements}
Colin Defant was supported by the National Science Foundation under Award No.\ 2201907 and by a Benjamin Peirce Fellowship at Harvard University.  Nathan Williams was partially supported by the National Science Foundation under Award No.\ 2246877.  This work benefited from computations in \texttt{Sage}~\cite{sagemath}, the combinatorics features developed by the \texttt{Sage-Combinat} community~\cite{Sage-Combinat}, as well as CHEVIE~\cite{GH96}.  We thank Nathan Reading for the clarity of his exposition and Ariel Williams for advice.

Nathan Williams is very grateful to Jon McCammond for introducing him to this area of investigation, and in particular for showing him the Salvetti complex and the part of $[\id,\Delta^2]_{\PPP^+}$ generated by noncrossing shards.

\bibliographystyle{alpha}
\bibliography{literature}

\end{document}